\numberwithin{equation}{section}
\newtheorem{theorem}{Theorem}[section]
\newtheorem{corollary}[theorem]{Corollary}
\newtheorem{lemma}[theorem]{Lemma}
\newtheorem{proposition}[theorem]{Proposition}
\newtheorem{desideratum}[theorem]{Desideratum}
\newtheorem{remark}[theorem]{Remark}
\theoremstyle{definition}
\DeclareMathOperator{\Aut}{Aut}
\DeclareMathOperator{\End}{End}
\DeclareMathOperator{\Hom}{Hom}
\DeclareMathOperator{\WD}{WD}
\DeclareMathOperator{\id}{id}
\DeclareMathOperator{\Ind}{Ind}
\DeclareMathOperator{\ind}{ind}
\DeclareMathOperator{\GL}{GL}
\DeclareMathOperator{\SL}{SL}
\DeclareMathOperator{\SP}{Sp}
\DeclareMathOperator{\SO}{SO}
\DeclareMathOperator{\Irr}{Irr}
\DeclareMathOperator{\Irrt}{Irr_{temp}}
\DeclareMathOperator{\Para}{\Phi_{temp}}
\DeclareMathOperator{\disc}{disc}
\DeclareMathOperator{\JH}{JH}
\begin{document}

\title{Local Langlands Correspondence for Even Orthogonal Groups via Theta Lifts}
\author{Rui Chen \and Jialiang Zou}
	
	\address
	{
		\textsc{National University of Singapore,
			Singapore 119076, Republic of Singapore
	}}
	\email{e0046839@u.nus.edu.sg}
	
	\address
	{
		\textsc{National University of Singapore,
			Singapore 119076, Republic of Singapore
	}}
	\email{e0220154@u.nus.edu}
	\begin{abstract}
		Using theta correspondence, we obtain a classification of irreducible representations of an arbitrary even orthogonal group (i.e.\ the local Langlands correspondence) by deducing it from the local Langlands correspondence for symplectic groups due to Arthur. Moreover, we show that our classifications coincide with the local Langlands correspondence established by Arthur and formulated precisely by Atobe--Gan for quasi-split even orthogonal groups. 
	\end{abstract}

	\subjclass[2010]{
		11F70 ,  
		22E50  	 
	}
	
	\keywords{Local Langlands correspondence, even orthogonal groups, theta lifts}
	
	\maketitle

\section{Introduction}
In the monumental book \cite{MR3135650}, Arthur obtained a classification of irreducible representations of symplectic and quasi-split special orthogonal groups over local fields
of characteristic 0 (the local Langlands correspondence), as well as a description of the automorphic discrete spectra of these groups over number fields (the Arthur's multiplicity formula). He proved these results by using the theory of endoscopy and the stable trace formula. Later in \cite[Desideratum 3.9]{MR3708200}, Atobe--Gan formulated precisely the local Langlands correspondence (LLC for short) for quasi-split even orthogonal groups and their pure inner forms (using Vogan $L$-packet \cite{MR1216197}). They also highlighted that Arthur's results will imply the LLC for quasi-split even orthogonal groups. M\oe glin \cite[\S 1.4 Theorem 1.4.1]{MR2767522} and M\oe glin--Renard \cite{MR3839702} have partially extended Arthur's results to non-quasi-split even orthogonal groups, though we are not sure if all the statements in \cite[Desideratum 3.9]{MR3708200} were verified in their work.

The main goal of this paper is to construct an LLC for all even orthogonal groups (not necessarily quasi-split), and hence to provide an alternative approach to the works of \cite{MR2767522} and M\oe glin--Renard \cite{MR3839702}. More precisely, assuming the LLC for symplectic groups, we deduce the LLC for all pure inner forms of quasi-split even orthogonal groups by using results from theta correspondence. The LLC for symplectic groups is estabished by Arthur \cite{MR3135650}, with supplements by many others. But the experts may see that, to establish the LLC for symplectic groups, Arthur need to establish the LLC for quasi-split special orthogonal groups simultaneously, since they appear in the endoscopic groups for symplectic groups. Therefore, by assuming Arthur's LLC for symplectic groups, one actually implicitly assumes the LLC for quasi-split special orthogonal groups. Hence, comparing to Arthur's results, we deduce the LLC for non-quasi-split even orthogonal groups by assuming Arthur's LLC for symplectic groups and quasi-split special orthogonal groups.  We prove that the LLC we construct satisfies all the properties in \cite[Desideratum 3.9]{MR3708200}. We also prove the local intertwining relation stated in \cite[\S 3.7]{MR3708200}. As in other instances where the LLC was shown using the theta correspondence (such as \cite{MR2999299} and \cite{MR2800725}), 
we do not show the (twisted) endoscopic character relations for the $L$-packets we constructed. To show that our $L$-packets satisfy the endoscopic character relations, one would need to appeal to the stable trace formula (or a simple form of it), as was done in \cite{MR3267112} and \cite{luo2020endoscopic}. 

We would like to mention some related works. In \cite{MR3788848} and \cite{MR3708200}, Atobe--Gan proved the so-called Prasad conjecture, which describes the almost equal rank theta correspondence in terms of the LLC. In this paper, we turn the table around; namely, imitating the prediction of Prasad conjecture, we construct a Vogan version LLC for even orthogonal groups. We also write a parallel paper \cite{chen2020local}, in which we deal with the unitary group case. We write it separately to avoid making notation too complicated. In a sequel to this paper, with these LLC at hand, we shall investigate the Arthur's multiplicity formula for automorphic discrete spectra of even orthogonal and unitary groups.

This paper is organized as follows. First we recall some basic facts in representation theory and local theta correspondence in Sections 2 and 3. Then we formulate the main theorem \ref{desideratumall} (i.e. the desired LLC) in Section 4, taking the chance to recall the results from Arthur \cite{MR3135650} and Atobe--Gan \cite{MR3708200} that we will use. In Sections 5 and 6, we give our construction of the LLC and prove several properties of this construction. The local intertwining relation is stated and proved in Sections 7 and 8. In Section 9, we prove that the LLC we construct coincides with Arthur's LLC for quasi-split even orthogonal groups. Finally we finish the proof of the main theorem in Section 10. In Appendix A, following Atobe-Gan \cite{MR3708200}, we deduce the weak LLC for arbitrary even special orthogonal groups from the LLC for even orthogonal groups. In Appendix B, we recall the definition of the Plancherel measure and prove a lemma on the normalized intertwining operator. 

\section*{Acknowledgments} 
We would like to thank our supervisor Wee Teck Gan for useful advice. We would also thank Hiraku Atobe, Atsushi Ichino, Wen-Wei Li, and Sug Woo Shin for helpful conversations during the conference ``Workshop on Shimura varieties, representation theory and related topics, 2019'' in Hokkaido University. We thank Hiroshi Ishimoto, Caihua Luo, Xiaolei Wan, and Chuijia Wang for helpful discussions. Both authors are supported by an MOE Graduate Research Scholarship. 
\section*{Notation}
Let $F$ be a nonarchimedean local field of characteristic 0 and residue characteristic $p$. We denote by $|\cdot|_F$ the normalized absolute value of $F$. Let $W_F$ be the Weil group of $F$. We fix a non-trivial additive character $\psi$ of $F$, and for $c\in F^{\times}$, we define an additive character $\psi_{c}$ of $F$ by 
$$\psi_{c}(x)=\psi(cx)\quad \mbox{for $x\in F$}.$$ Note that any non-trivial additive character of $F$ is of the form $\psi_c$ for some $c\in F^{\times}$. Let $(\cdot,\cdot)_F$ denote the quadratic Hilbert symbol of $F$. If $G$ is a linear algebraic group over $F$, we denote by $\Irr(G)$ the set of equivalence classes of irreducible smooth representations of $G$, where we identify $G$ with its group of $F$-valued points $G(F)$. We also denote by $\Irrt(G)$ the subset of $\Irr(G)$ consisting of tempered representations. For any representations $\pi$, the contragredient representation of $\pi$ is denote by $\pi^\vee$. We denote by $\widehat{A}$ the Pontryagin dual of a finite abelian group $A$. 

\section{Orthogonal and Symplectic group}
\subsection{Orthogonal space}
Let $V=V_{2m}$ be an orthogonal space of dimension $2m$ over $F,$ i.e., a vector space equipped with a non-degenerate symmetric bilinear form 
$$\langle\cdot, \cdot\rangle_{V} : V \times V \rightarrow F.$$
We take a basis $\left\{e_{1}, \ldots, e_{2m}\right\}$ of $V,$ and define the discriminant of $V$ by

$$\disc(V)=(-1)^{m} \det ((\left\langle e_{i}, e_{j}\right\rangle_{V})_{i, j})  \in F^{ \times} / F^{ \times 2}.$$
Let $$\chi_{V}=(\cdot, \disc(V))_F$$ be the character of $F^{\times}$ associated to $F(\sqrt{\disc(V)})$. We call $\chi_{V}$ the discriminant character of $V$. 

Fix $(d,c)\in (F^\times)^2$. Let 
$$V_{(d, c)}=F[X] /(X^{2}-d)$$
be a $2$-dimensional vector space equipped with a bilinear form
$$(\alpha, \beta) \mapsto\langle\alpha, \beta\rangle_{V_{(d, c)}} \coloneqq c \cdot \operatorname{tr}(\alpha \overline{\beta}),$$
where $\beta \mapsto \overline{\beta}$ is the involution on $F[X] /\left(X^{2}-d\right)$ induced by $a+b X \mapsto a-b X$. This involution is regarded as an element $\epsilon \in \mathrm O\left(V_{(d, c)}\right)$. The images of $1, X \in F[X]$ in $V_{(d, c)}$ are denoted by $e, e^{\prime},$ respectively. For $m>1,$ we say that $V_{2m}$ is associated to $(d, c)$ if
\begin{align}\label{127}
V_{2m} \cong V_{(d,c)} \oplus \mathbb{H}^{m-1}
\end{align}
as orthogonal spaces, where $\mathbb{H}$ is the (orthogonal) hyperbolic plane, i.e., $\mathbb{H}=F v_{i}+F v_{i}^{*}$ with
$$\left\langle v_{i}, v_{i}\right\rangle_{V}=\left\langle v_{i}^{*}, v_{i}^{*}\right\rangle_{V}=0\quad \mbox{and}\quad  \left\langle v_{i}, v_{i}^{*}\right\rangle_{V}=\left\langle v_{i}^{*}, v_{i}\right\rangle_{V}= 1. $$  
Note that if $V_{2m}$ is associated to $(d,c)$, then $\disc (V_{2m})= d \bmod F^{\times 2}$. Moreover, 
$$V_{(d, c)} \oplus 
\mathbb{H}^{m-1} \cong V_{\left(d^{\prime}, c^{\prime}\right)} \oplus \mathbb{H}^{m-1}$$
as orthogonal spaces if and only if 
$$d \equiv d^{\prime} \bmod F^{\times 2}\quad \mbox{and}\quad c \equiv c^{\prime} \bmod N_{E / F}\left(E^{ \times}\right),$$ 
where $E= F(\sqrt{d})=F(\sqrt{d^{\prime}})$.

For a fixed pair $(d,c)\in (F^\times)^2$, we denote by $V_{2m}^+$ the orthogonal space associated to $(d,c)$ and $V_{2m}^-$ the orthogonal space such that 
\[
\dim (V_{2m}^+)=\dim (V_{2m}^-)=2m \quad \mbox{and}\quad \disc  (V_{2m}^+)=\disc  (V_{2m}^-)=d,
\]
but $V_{2m}^+\ncong V_{2m}^-$ as orthogonal spaces. Such $V_{2m}^-$ exists uniquely up to isomorphism unless $m=1$ and $d\in F^{\times 2}$. When $d\notin F^{\times 2}$, it is given so that $V_{2m}^-$ is the orthogonal space associated to $(d,c^\prime)$ with $c^\prime \notin cN_{E/F}(E^\times)$, where $E=F(\sqrt{d})$. When $d\in F^{\times 2}$ and $m>1$, it is given by $V_{2m}^-= D\oplus \mathbb H^{m-2} $, where $D$ is the unique quaternion algebra over $F$, which can be regarded as an orthogonal space with a bilinear form
$$(\alpha, \beta) \mapsto \tau\left(\alpha \beta^{\iota}\right),$$
where $\tau$ is the reduced trace and $\beta \mapsto \beta^{\iota}$ is the main involution. The orthogonal group $\mathrm{O}(V_{2m})$ associated to $V_{2m}$ is defined by $$\mathrm{O}(V_{2m})=\left\{g \in \GL(V_{2m}) |\left\langle g v, g v^{\prime}\right\rangle_{V}=\left\langle v, v^{\prime}\right\rangle_{V} \text { for all } v, v^{\prime} \in V_{2m}\right\},$$
and the special orthogonal group $\SO(V_{2m})$ is defined by 
\begin{align*}
\SO(V_{2m})=\{g\in \mathrm O(V_{2m})|\det g=1\}.
\end{align*}
The orthogonal group $\mathrm{O}\left(V_{2m}\right)$ and the special orthogonal group $\SO(V_{2m})$ are quasi-split if and only if $V_{2m}$ is associated to $(d,c)$ for some $d,c\in F^\times$. 

\subsection{Symplectic space}\label{sympleticspace}
Let $W=W_{2 n}$ be a symplectic space of dimension $2n$ over $F,$ i.e., a vector space equipped with a non-degenerate symplectic form
$$\langle\cdot, \cdot\rangle_{W} : W \times W \rightarrow F.$$
The symplectic space is always split, i.e.,
\begin{align}\label{128}
W_{2n}\cong \mathbb{H}^{n} ,
\end{align}
where $\mathbb{H}=F w_i+ Fw_{i}^{*}$ is the (symplectic) hyperbolic plane with 
$$\langle w_i,w_i\rangle_{W} =\langle w_{i}^{*},  w_{i}^{*}\rangle_{W}=0 \quad \mbox{and} \quad \langle w_i, w_{i}^{*}\rangle_{W}=-\langle w_{i}^{*}, w_{i}\rangle_{W}=1.$$
The symplectic group  $\SP(W)$ associated to $W$ is defined by 
$$\operatorname{Sp}(W)=\left\{g \in \mathrm{GL}(W) |\left\langle g w, g w^{\prime}\right\rangle_{W}=\left\langle w, w^{\prime}\right\rangle_{W} \text { for all } w, w^{\prime} \in W\right\}.$$

\subsection{Parabolic subgroups}\label{sectionparabolic}
Let $V=V_{2m}$ be an orthogonal space of dimension $2m$ over $F$. Let $r$ be the Witt index of $V$ and $V_{\mathrm{an}}$ be the anisotropic kernel of $V$. Choose a basis $\{v_i,v_i^*|i=1,\cdots,r\}$ of the orthogonal complement of $V_{\mathrm{an}}$ such that  
$$
\langle v_i, v_j\rangle_V=\langle v_i^*, v_j^*\rangle_V=0 \quad \mbox{and}\quad \langle v_i, v_j^*\rangle_V=\delta_{i, j}\quad \mbox{for $1\leq i,j\leq r$}.
$$
Let $k$ be a positive integer with $k\leq r$ and set 
$$
X=X_k=Fv_1+\cdots+Fv_k \quad \mbox{and}\quad X^\vee=X_k^{\vee}=Fv_1^*+\cdots+Fv_k^*.
$$
Let $V_0$ be the orthogonal complement of $X \oplus X^{\vee}$ in $V$ so that $V_0$ is an orthogonal space of dimension $2m_0=2m-2k$ over $F$. We shall write an element in $\mathrm{O}(V)$ as a block matrix relative to the decomposition $V=X\oplus V_0\oplus X^\vee$. Let $P=P_{k}=M_P U_{P}$ be the maximal parabolic subgroup of $\mathrm O(V)$ stabilizing $X$, where $M_{P}$ is the Levi component of $P$ stabilizing $X^{\vee}$. We have 
$$\begin{aligned} M_{P} &=\left\{m_{P}(a) \cdot h_0 | a \in \mathrm{GL}(X), h_0 \in \mathrm{O}(V_0)\right\} ,\\ U_{P} &=\left\{u_{P}(b) \cdot u_{P}(c) | b \in \operatorname{Hom}(V_0, X), c \in \operatorname{Sym}\left(X^{\vee}, X\right)\right\}, \end{aligned}$$ 
where 
\begin{align*}
m_{P}(a)&=\left(\begin{array}{ccc}{a} & {}& \\ {} & {\mathbf 1_{V_0}} & {} \\ {} & {} & {\left(a^{*}\right)^{-1}}\end{array}\right),\\
u_{P}(b)&=\left(\begin{array}{ccc}{\mathbf 1_{X}} & {b} & {-\frac{1}{2} b b^{*}} \\ {} & {\mathbf 1_{V_0}} & {-b^{*}} \\ {} & {} & {\mathbf 1_{X^{\vee}}}\end{array}\right),\\
u_{P}(c)&=\left(\begin{array}{ccc}{\mathbf 1_{X}} & {} & {c} \\ {} & {\mathbf 1_{V_0}} & {} \\ {} & {} & {\mathbf 1_{X^{\vee}}}\end{array}\right),
\end{align*}
and 
$$\operatorname{Sym}\left(X^{\vee}, X\right)=\left\{c \in \Hom\left(X^{\vee}, X\right) | c^{*}=-c\right\}.$$
Here, the elements $a^{*} \in \GL\left(X^{\vee}\right), b^{*} \in \operatorname{Hom}\left(X^{\vee}, V\right),$ and $c^{*} \in \Hom\left(X^{\vee}, X\right)$ are defined by requiring that 
\begin{align*}
\langle a x, x^{\prime}\rangle_{V}=\langle x, a^{*} x^{\prime}\rangle_{V},\quad \langle b v, x^{\prime}\rangle_{V}=\langle v, b^{*} x^{\prime}\rangle_{V}, \quad \mbox{and}\quad 
\langle c x^{\prime}, x^{\prime \prime}\rangle_{V}=\langle x^{\prime}, c^{*} x^{\prime \prime}\rangle_{V}
\end{align*}
for $x \in X, x^{\prime}, x^{\prime \prime} \in X^{\vee}$ and $v \in V_0$. In particular, $M_P\cong \GL(X)\times \mathrm O(V_0)$.  
Put 
$$\rho_{P}=\frac{2m_0+k-1}{2}, \quad w_{P}=\left(\begin{array}{ccc}
{} &{}& -I_{X} \\ 
{} &{\mathbf 1_{V_0}} & {}\\
{-I_{X}^{-1}}&{}& 
\end{array}\right),$$
where $I_{X} \in \Hom\left(X^{\vee}, X\right)$ is defined by $I_{X} v_{i}^{*}=v_{i}$ for $1 \leq i \leq k$. Then the modular character $\delta_{P}$ of $P$ is given by
$$\delta_{P}\left(m_{P}(a) h_0 u_{P}\right)=|\operatorname{det}(a)|_{F}^{2 \rho_{P}}$$
for $a \in \mathrm{GL}(X), h \in \mathrm{O}(V_0)$ and $u_{P} \in U_{P}$. 

Similarly, let $W=W_{2n}$ be a $2n$ dimensional symplectic space and  $k$ be a positive integer with $k\leq n$. Write $W\cong \mathbb H^n$ as in Subsection \ref{sympleticspace} and set 
$$
Y=Y_k=Fw_1+\cdots+Fw_k \quad \mbox{and}\quad Y^\vee=Y_k^\vee=Fw_1^*+\cdots+Fw_k^*.
$$
Let $W_0$ be the orthogonal complement of $Y \oplus Y^{\vee}$ in $W$ so that $W_0$ is a symplectic space of dimensional $2n_0=2n-2k$ over $F$. We define $Q=M_{Q} U_{Q}, m_{Q}(a), u_{Q}(b), u_{Q}(c), \operatorname{Sym}\left(Y^{\vee}, Y\right)$ and $I_Y$ as above. We put  
$$\rho_{Q}=\frac{2n_0+k+1}{2}, \quad w_{Q}=\left(\begin{array}{ccc}
{} &{}& -I_{Y} \\ 
{} &\mathbf 1_{W_0} & {}\\
{I_{Y}^{-1}}&{}& 
\end{array}\right).$$
Then the modular character $\delta_{Q}$ of $Q$ is given by
$$\delta_{Q}\left(m_{Q}(a) g_0 u_{Q}\right)=|\operatorname{det}(a)|_{F}^{2 \rho_{Q}}$$
for $a \in \mathrm{GL}(Y), g_0 \in \mathrm{Sp}(W_0)$ and $u_{Q} \in U_{Q}$.


\section{Theta correspondence}
In this section, we recall some general results on the local theta correspondence for the reductive dual pair $(\mathrm O(V), \SP(W))$. 
\subsection{Weil representation and local theta correspondence}\label{localthteta}
We fix a non-trivial additive character $\psi$ of $F$. Let $V=V_{2m}$ be a $2m$-dimensional orthogonal space over $F$ and $W=W_{2n}$ be a $2n$-dimensional symplectic space over $F$. We denote the Weil representation of $\mathrm O(V)\times Sp(W)$ by $\omega=\omega_{V,W,\psi}$; see \cite{MR1286835} for a detailed description. For $\pi\in 
\Irr(\mathrm O(V))$, the maximal $\pi$-isotypic quotient of $\omega$ is of the form
$$\pi \boxtimes \Theta_{W,V,\psi}(\pi),$$
where $\Theta_{W,V,\psi}(\pi)$ is a finite length smooth representation of $\SP(W)$. Let $\theta_{W,V, \psi}(\pi)$ be the maximal semisimple quotient of $\Theta_{W,V, \psi}(\pi)$. The Howe duality conjecture, which was proved by Waldspurger \cite{MR1159105} when the residue characteristic is not 2 and by Gan-Takeda \cite{MR3454380}, Gan-Sun \cite{MR3753911} in general, says that 
\begin{itemize}
	\item if $\theta_{W, V,  \psi}(\pi)$ is non-zero, then it is  irreducible;
	\item If $\pi_1\ncong \pi_2$ and both $\theta_{W, V,  \psi}(\pi_1)$ and $\theta_{W, V,  \psi}(\pi_2)$ are nonzero, then $\theta_{W, V, \psi}(\pi_1)\ncong \theta_{W, V,  \psi}(\pi_2)$. 
\end{itemize}
Similarly, for $\sigma \in \Irr(\SP(W))$, We obtain smooth finite length representations $\Theta_{V,W, \psi}(\sigma)$ and $\theta_{V, W, \psi}(\sigma)$ of $\mathrm O(V)$.

In the next two subsections, we will recall some general properties of the local theta correspondence. 
\subsection{First occurrence and tower property}
Let $V_{2m_0}$ be an anisotropic orthogonal space over $F$. 
For any $r\geq 0$, we put 
\begin{align*}
V_{2m_0+2r}=V_{2m_0}\oplus \mathbb H^r,
\end{align*}
where $\mathbb H$ is the (orthogonal) hyperbolic plane. The collection 
\begin{align*}
\mathcal V=\{V_{2m_0+2r}|r\geq 0\}
\end{align*} 
is called the Witt tower of orthogonal spaces associated to $V_{2m_0}$. One can consider a tower of the theta correspondence associated to reductive dual pairs $\{(\SP(W_{2n}),\mathrm O(V_{2m}))|V_{2m}\in \mathcal V\}$. For $\sigma \in \Irr(\SP(W_{2n}))$ and a Witt tower 
$\mathcal V=\{V_{2m_0+2r}|r\geq 0\}$, we define  
\begin{align}\label{09}
m_{\mathcal V}(\sigma)=\min \{2m |\Theta_{V_{2m},W_{2n},\psi}(\sigma)\neq 0\}. 
\end{align}
By \cite[P.67]{MR1041060}, $m_{\mathcal V}(\sigma)$ is finite.

On the other hand, every symplectic space $W_{2n}$ of dimension $2n$ is isomorphic to $\mathbb H^n$, where $\mathbb H$ is the (symplectic) hyperbolic plane. The collection 
\begin{align*}
\mathcal W=\{W_{2r}|r\geq 0\}
\end{align*}
is called a Witt tower of symplectic spaces. One can also consider a tower of the theta correspondence associated to reductive dual pairs $\{(\mathrm O(V_{2m}),\SP(W_{2n}))|W_{2n}\in \mathcal W\}$. For $\pi\in \Irr(\mathrm O(V_{2m}))$ and a Witt tower $\mathcal W=\{W_{2r}|r\geq 0\}$, we define 
\begin{align}\label{10}
m_{\mathcal W}(\pi)=\min \{2n |\Theta_{W_{2n},V_{2m},\psi}(\pi)\neq 0\}. 
\end{align}
By \cite[P.67]{MR1041060}, $m_{\mathcal W}(\pi)$ is also finite.

The following proposition is often referred to as the tower property of theta correspondence; see \cite{MR818351}. 
\begin{proposition}\label{tower}
For $\sigma \in \Irr(\SP(W_{2n}))$ and a Witt tower 
$\mathcal V=\{V_{2m_0+2r}|r\geq 0\}$ (resp. $\pi\in \Irr(\mathrm O(V_{2m}))$ and a Witt tower $\mathcal W=\{W_{2r}|r\geq 0\}$), let $m_{\mathcal V}(\sigma)$ (resp. $m_{\mathcal W}(\pi)$) be as in (\ref{09}) (resp. (\ref{10})). Then we have 

\begin{align*}
	\Theta_{V_{2m},W_{2n},\psi}(\sigma)\neq 0 \,\, \mbox{if}\,\, 2m\geq m_{\mathcal V}(\sigma)\quad \left(\mbox{resp.}\,\,  
	\Theta_{W_{2n},V_{2m},\psi}(\pi)\neq 0 \,\, \mbox{if}\,\, 2n\geq m_{\mathcal W}(\pi)\right).
\end{align*}
\end{proposition}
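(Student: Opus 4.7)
The plan is to reduce the proposition to a single-step persistence statement and then apply Kudla's computation of the Jacquet module of the Weil representation. The two claims in the proposition are symmetric under exchanging the roles of $\mathrm O(V)$ and $\SP(W)$, so I will treat the orthogonal tower case in detail and omit the parallel symplectic tower argument. By induction on $r$ starting from $2m_0+2r_0 = m_{\mathcal V}(\sigma)$, it suffices to show the one-step assertion: if $\Theta_{V_{2m},W_{2n},\psi}(\sigma)\neq 0$, then $\Theta_{V_{2m+2},W_{2n},\psi}(\sigma)\neq 0$.

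The central input is Kudla's Jacquet-module filtration. Writing $V_{2m+2} = V_{2m}\oplus \mathbb H$, I take $P = P_1 = M_P U_P$ to be the maximal parabolic of $\mathrm O(V_{2m+2})$ stabilizing an isotropic line inside the added hyperbolic plane, so that $M_P \cong \mathrm{GL}_1(F)\times \mathrm O(V_{2m})$. Let $J_P(\omega_{V_{2m+2},W_{2n},\psi})$ denote the normalized Jacquet module along $U_P$. Kudla's theorem provides a two-step short exact sequence of $M_P \times \SP(W_{2n})$-modules
\[
0\longrightarrow J_1\longrightarrow J_P(\omega_{V_{2m+2},W_{2n},\psi})\longrightarrow \chi\otimes \omega_{V_{2m},W_{2n},\psi}\longrightarrow 0,
\]
where $\chi$ is a specific character of $\mathrm{GL}_1(F)$ dictated by the normalization and $J_1$ is a compactly induced piece involving a Weil representation with a smaller symplectic factor. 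Only the existence of the displayed top quotient will enter the subsequent argument.

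Given the hypothesis $\Theta_{V_{2m},W_{2n},\psi}(\sigma)\neq 0$, I pick any irreducible quotient $\pi$ of $\Theta_{V_{2m},W_{2n},\psi}(\sigma)$ to obtain a nonzero surjection $\omega_{V_{2m},W_{2n},\psi}\twoheadrightarrow \pi \boxtimes \sigma$. Composing with the projection onto the top quotient of the above sequence produces a nonzero $M_P\times\SP(W_{2n})$-equivariant map $J_P(\omega_{V_{2m+2},W_{2n},\psi})\to (\chi\otimes \pi)\boxtimes \sigma$, which, via Frobenius reciprocity for the normalized Jacquet functor, corresponds to a nonzero $\mathrm O(V_{2m+2})\times\SP(W_{2n})$-equivariant map
\[
\omega_{V_{2m+2},W_{2n},\psi}\longrightarrow \Ind_P^{\mathrm O(V_{2m+2})}(\chi\otimes\pi)\boxtimes \sigma.
\]
Because the $\SP(W_{2n})$-action on the target factors only through $\sigma$, pairing with any nonzero linear functional on the $\mathrm O(V_{2m+2})$-factor of the image yields a nonzero element of $\Hom_{\SP(W_{2n})}(\omega_{V_{2m+2},W_{2n},\psi},\sigma)$; equivalently, $\sigma$ occurs as an $\SP(W_{2n})$-quotient of the Weil representation, which is exactly the statement $\Theta_{V_{2m+2},W_{2n},\psi}(\sigma)\neq 0$. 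The only nontrivial step, and hence the one that I expect to carry the real content, is the Kudla Jacquet-module computation; the remainder is a formal use of Frobenius reciprocity.
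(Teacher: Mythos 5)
Your proof is correct: the reduction to one-step persistence, the use of the top quotient $J^0\cong\chi\otimes\omega_{V_{2m},W_{2n},\psi}$ of Kudla's filtration of the Jacquet module along $P_1$, and Frobenius reciprocity (noting that the target is $\sigma$-isotypic as an $\SP(W_{2n})$-module, so a nonzero map into it yields a nonzero element of $\Hom_{\SP(W_{2n})}(\omega_{V_{2m+2},W_{2n},\psi},\sigma)$) is exactly the standard persistence argument. The paper itself gives no proof of this proposition, citing Kudla instead, and your argument is essentially the one from that reference, built on the same Jacquet-module filtration the paper later recalls in the proof of Lemma \ref{12}.
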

Note that any two spaces in the same Witt tower have the same discriminant. We define the discriminant of a Witt tower to be $\disc(\mathcal V)=\disc(V_{2m})$ for a (hence any) $V_{2m}$ belonging to $\mathcal V$. For a fixed $d\in F^\times /F^{\times 2}$, there are two different Witt towers $\mathcal V^+=\{V^+_{2m}\}$ and $\mathcal V^-=\{V^{-}_{2m}\}$ such that $\disc(\mathcal V^+)=\disc(\mathcal V^-)=d$. More explicitly, for any $c\in F^\times$, we write  
\begin{align*}
\mathcal V^+&=\{V_{(d,c)}\oplus \mathbb H^{m-1}|m\geq 1\}, \qquad\qquad\qquad\qquad\qquad\quad \\
\mathcal V^-&=\begin{cases*}
\{D\oplus \mathbb H^{m-2}|m\geq 2\} \quad &\mbox{if $\disc(V_{2m})=1\in F^{\times 2}$},\\
\{V_{(d,c^\prime)}\oplus \mathbb H^{m-1}|m\geq 1\}\,\, (c^\prime\notin cN_{E/F}(E^\times))\quad &\mbox{if $\disc(V_{2m})=d\notin F^{\times 2}$}. 
\end{cases*} 
\end{align*}
For $\sigma\in \Irr(\SP(W_{2n}))$, we have $m_{\mathcal V^+}(\sigma)$ and $m_{\mathcal V^-}(\sigma)$ as defined in (\ref{09}).  

On the other hand, there is only a single tower of symplectic spaces $\mathcal W=\{W_{2n}\}$. However, since $\pi$ is a representation of the orthogonal group $\mathrm O(V_{2m})$, we may consider its determinant twist $\pi\otimes\det$. Thus we have two towers of theta lifts 
\begin{align*}
\Theta_{W_{2n},V_{2m},\psi}(\pi)\quad \mbox{and}\quad  \Theta_{W_{2n},V_{2m},\psi}(\pi\otimes\det)
\end{align*}
for $\pi\in \Irr(\mathrm O(V_{2m}))$. Similarly, we have $m_{\mathcal W}(\pi)$ and $m_{\mathcal W}(\pi\otimes \det)$. 

The following theorem is often referred to as the conservation relation; see \cite{MR3369906}. 
\begin{theorem}\label{con2}
	For a fixed $d\in F^\times /F^{\times 2}$ and any $\sigma\in \Irr(\SP(W_{2n}))$ (resp. $\pi\in \Irr(\mathrm O(V_{2m}))$, we have 
	\begin{align*}
	m_{\mathcal V^+}(\sigma)+m_{\mathcal V^-}(\sigma)=4n+4 \quad  (\mbox{resp.}\,\, m_{\mathcal W}(\pi)+m_{\mathcal W}(\pi\otimes \det)=4m),
	\end{align*}
	where $\mathcal V^+$ and $\mathcal V^-$ are two Witt towers defined above such that $\disc(\mathcal V^+)=\disc(\mathcal V^-)=d$. 
\end{theorem}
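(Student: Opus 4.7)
The stated conservation relation is the main theorem of Sun-Zhu \cite{MR3369906}, so my plan is to follow their strategy of proving the equality as two opposing inequalities. Write $m^+ := m_{\mathcal V^+}(\sigma)$ and $m^- := m_{\mathcal V^-}(\sigma)$; the goal is $m^+ + m^- = 4n+4$. The two halves require different methods: one is relatively formal, using Kudla's filtration of the Weil representation, while the other is the true difficulty and relies on non-vanishing of theta lifts at the ``conservation point''.

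\textbf{Lower bound} $m^+ + m^- \ge 4n+4$. I would analyze the Jacquet module of the Weil representation along the maximal parabolic $P_k$ of $\mathrm{O}(V^{\epsilon}_{2m})$ stabilizing an isotropic flag. The filtration of $R_{P_k}(\omega_{V^{\epsilon}_{2m},W_{2n},\psi})$ has successive subquotients expressible in terms of Weil representations for smaller orthogonal groups of the \emph{same tower} twisted by degenerate principal series on the $\GL$-factor. Combining this with the tower property (Proposition \ref{tower}) and Frobenius reciprocity, one gets that if $\sigma$ is a quotient of the Weil representation in tower $\mathcal V^{\epsilon}$, then a ``persistence'' property forces $\sigma$ to also appear as a constituent of induced representations built from the smaller-dimensional theta lifts in the same tower. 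Assuming both $m^+$ and $m^-$ are below the conservation threshold leads to a contradiction with the Howe duality principle and the known structure of the degenerate principal series.

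\textbf{Upper bound} $m^+ + m^- \le 4n+4$. This is the heart of the matter. The approach is via the doubling method: one embeds $\SP(W_{2n}) \times \SP(W_{2n})$ into $\SP(W_{2n} \oplus W_{2n}^-)$ and considers see-saw dual pairs with the orthogonal towers $\mathcal V^+$ and $\mathcal V^-$ simultaneously. The non-vanishing of the theta lift at the conservation point is reflected in the existence of a non-trivial map from the Weil representation to $\sigma \boxtimes \sigma^\vee$, which is in turn controlled by the pole structure of the local doubling zeta integral $Z(s,\sigma,\sigma^\vee)$. The key technical input is the precise determination of the irreducible submodule structure of the degenerate principal series $\mathrm{Ind}_{P}^{\SP}(\chi_s)$ at the critical value of $s$, which guarantees that at least one of $\sigma$'s theta lifts must be non-zero by the time the dimension reaches the conservation point. \emph{This is the main obstacle}: it requires the delicate Jordan-H\"older analysis of degenerate principal series developed by Kudla-Rallis for the second term identity, now pushed to the non-split (both $\mathcal V^+$ and $\mathcal V^-$) setting and uniformly over all irreducible $\sigma$.

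\textbf{The determinant-twist formula} $m_{\mathcal W}(\pi) + m_{\mathcal W}(\pi\otimes\det) = 4m$. Here there is only one symplectic Witt tower, but the ``two theta towers'' for an orthogonal representation are parametrized by the twist $\pi \leftrightarrow \pi\otimes\det$. I would reduce this to the previous case using the MVW involution, which interchanges $\pi$ with a twist of $\pi^\vee$ by the determinant character and intertwines the theta lift from $\mathrm O(V)$ to $\SP(W)$ with that to the ``dual'' tower. Alternatively one can run the same see-saw doubling argument with the roles of orthogonal and symplectic groups swapped, noting that the ``discriminant'' of the orthogonal tower is replaced here by the sign discriminating $\pi$ from $\pi\otimes\det$. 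Either way, the technical core remains the non-vanishing statement at the conservation point, so the main difficulty is the same as in the first formula.
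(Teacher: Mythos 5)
The paper does not actually prove this statement: it is quoted as known, with the reference to Sun--Zhu \cite{MR3369906} (the ``at most'' half going back to Kudla--Rallis), so a citation is all that is required at this point of the paper. Your proposal is therefore not an alternative route but a sketch of that external proof, and as such it should be measured against what Sun--Zhu and Kudla--Rallis actually do.

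Measured that way, the sketch inverts where the difficulty lies, and the half you call ``relatively formal'' is exactly the one your outline does not prove. The inequality $m_{\mathcal V^+}(\sigma)+m_{\mathcal V^-}(\sigma)\le 4n+4$, i.e.\ that the two lifts cannot both vanish at complementary dimensions, is the older Kudla--Rallis ``first occurrence'' theorem; it is proved by the doubling see-saw together with the submodule structure of the degenerate principal series $I(s)$ (at the relevant point $I(s)$ is generated by the two submodules $R(V^+)$ and $R(V^-)$, and nonvanishing of the doubling zeta integral forces $\sigma\boxtimes\sigma^{\vee}$ to be a quotient of one of them) --- this is essentially what you describe, but it is not the new or hardest part. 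The genuinely hard half is the lower bound $m_{\mathcal V^+}(\sigma)+m_{\mathcal V^-}(\sigma)\ge 4n+4$: Kudla's filtration, the tower property, Frobenius reciprocity and Howe duality do not exclude early first occurrence in both towers simultaneously, and your step ``assuming both $m^+$ and $m^-$ are below the conservation threshold leads to a contradiction'' is an assertion, not an argument; this is precisely the statement that was open (beyond the supercuspidal case) until Sun--Zhu's new ideas, which are not a persistence argument. Finally, the identity $m_{\mathcal W}(\pi)+m_{\mathcal W}(\pi\otimes\det)=4m$ does not reduce via the MVW involution to the symplectic-side identity: there is only one symplectic Witt tower, and MVW relates $\pi$ to $\pi^{\vee}$, not to $\pi\otimes\det$, so it does not interchange the two ``towers'' $\Theta(\pi)$ and $\Theta(\pi\otimes\det)$. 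That identity is the same conservation theorem with the roles of the two members of the dual pair exchanged, and is proved by Sun--Zhu by the same method rather than transferred from the other case --- your alternative suggestion (rerunning the see-saw argument with the roles swapped) is the correct one, but it inherits the same missing hard inequality.
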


\subsection{Collection of some results}
We collect some results from \cite{MR3166215} which will be frequently used in this paper. We emphasize that the proofs of these results are independent of the local Langlands correspondence.

\begin{lemma}\label{temperedtotempered}
\begin{enumerate}
	\item Let $\pi\in \Irrt \left(\mathrm  O(V_{2n})\right)$.
	\begin{enumerate}[(i)]
		\item If $\Theta_{W_{2n}, V_{2n},\psi}(\pi)\neq 0$, then $\Theta_{W_{2n}, V_{2n},\psi}(\pi)=\theta_{W_{2n}, V_{2n},\psi}(\pi)$ is an irreducible tempered representation of $\SP(W_{2n})$; 
		\item If $\Theta_{W_{2n-2}, V_{2n},\psi}(\pi)\neq 0$, then $\theta_{W_{2n-2}, V_{2n},\psi}(\pi)$ is an irreducible tempered representation of $\SP(W_{2n-2})$.
	\end{enumerate}
\item Let $\sigma\in \Irrt \left(\SP(W_{2n})\right)$. 
\begin{enumerate}[(i)]
	\item If $\Theta_{V_{2n+2},W_{2n}, \psi}(\sigma)\neq 0$, then $\Theta_{V_{2n+2},W_{2n}, \psi}(\sigma)=\theta_{V_{2n+2},W_{2n}, \psi}(\sigma)$ is an irreducible tempered representation of $\mathrm O(V_{2n+2})$; 
	\item If $\Theta_{V_{2n},W_{2n}, \psi}(\sigma)\neq 0$, then $\theta_{V_{2n},W_{2n}, \psi}(\sigma)$ is an irreducible tempered representation of $\mathrm O(V_{2n})$.
\end{enumerate}
\end{enumerate}

\end{lemma}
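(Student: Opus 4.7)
The plan is to establish both the irreducibility assertions (parts (1)(i) and (2)(i)) and the temperedness of $\theta$ in all four cases via a combination of Kudla's filtration of the Jacquet modules of the Weil representation together with Casselman's criterion for temperedness. Since the statement is essentially the content of \cite{MR3166215} and the proofs there do not invoke any form of the local Langlands correspondence, the strategy is to specialize those arguments to the near-equal-rank setting $|\dim V-\dim W|\leq 2$ that is relevant here.

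First I would invoke Kudla's filtration: the normalized Jacquet module of $\omega_{V,W,\psi}$ along the unipotent radical of a maximal parabolic $P_k\subset\mathrm{O}(V)$ (resp.\ $Q_k\subset\SP(W)$) from subsection~\ref{sectionparabolic} admits a natural filtration whose successive quotients have the shape $\omega_{V_0,W_0,\psi}$ tensored with a parabolically induced representation on the $\mathrm{GL}_k$-factor of the Levi. Coupled with Frobenius reciprocity and the Bernstein--Zelevinsky geometric lemma, this translates any exponent appearing in a Jacquet module of $\theta_{W,V,\psi}(\pi)$ along $Q_k$ into an exponent of a Jacquet module of $\pi$ along a suitable parabolic of $\mathrm{O}(V)$, shifted by half-integers controlled by $\rho_P$, $\rho_Q$, and the gap $\dim V-\dim W$.

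With this dictionary in hand, temperedness of $\theta_{W,V,\psi}(\pi)$ reduces to a direct check using Casselman's criterion. Since $\pi\in\Irrt(\mathrm{O}(V))$, its Jacquet exponents satisfy the negativity bound $|\chi(a)|\leq\delta_P^{1/2}(a)$ on the positive chamber, and the shifts coming from Kudla's filtration are arranged so that, in each of the four dual pairs in the lemma, the resulting exponents of $\theta_{W,V,\psi}(\pi)$ obey the corresponding bound on $\SP(W)$ (resp.\ $\mathrm{O}(V)$). The numerology works out precisely because $|\dim V-\dim W|\leq 2$, so the symmetric statements for $\sigma\in\Irrt(\SP(W))$ in part (2) follow from the same computation with the roles of the two groups swapped.

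For the stronger claim in (1)(i) and (2)(i) that the full big theta lift $\Theta$ already coincides with its maximal semisimple quotient $\theta$, I would argue as follows. By Howe duality any irreducible quotient of $\Theta_{W,V,\psi}(\pi)$ must be $\theta_{W,V,\psi}(\pi)$; if the kernel of the projection $\Theta\twoheadrightarrow\theta$ were non-zero it would itself admit an irreducible quotient, contributing a subquotient to $\Theta$ whose Jacquet exponents would be forced, by the very same Kudla filtration analysis applied to $\Theta$ rather than to $\theta$, to be tempered, hence by Howe duality again equal to $\theta_{W,V,\psi}(\pi)$, which contradicts the definition of the maximal $\pi$-isotypic quotient. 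The main obstacle I anticipate is exactly this last step, namely controlling the full socle/head filtration of $\Theta$ in the equal-rank case rather than just its cosocle; this is where the detailed combinatorics of Kudla's filtration from \cite{MR3166215} does most of the work, and the proposal is to quote that computation rather than reproduce it in full.
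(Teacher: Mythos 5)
Your proposal is, in substance, the same route the paper takes: the paper's entire proof is a citation to \cite[Proposition C.4]{MR3166215}, and your plan is to specialize exactly the arguments of that appendix (Kudla's filtration of the Jacquet modules of $\omega$ plus Casselman's temperedness criterion in the almost-equal-rank range $|\dim V-\dim W|\leq 2$), quoting the computation rather than redoing it. The temperedness part of your sketch is indeed how the cited proof works.

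One caution about the inline argument you sketch for $\Theta_{W,V,\psi}(\pi)=\theta_{W,V,\psi}(\pi)$ in cases (1)(i) and (2)(i): as written it is not a proof. Howe duality controls the maximal semisimple \emph{quotient} of $\Theta_{W,V,\psi}(\pi)$; it says nothing about arbitrary subquotients. If the kernel of $\Theta\twoheadrightarrow\theta$ were nonzero, its irreducible quotients are merely subquotients of $\Theta$, so showing they are tempered and even isomorphic to $\theta_{W,V,\psi}(\pi)$ produces no contradiction with ``maximal $\pi$-isotypic quotient'' --- a priori $\theta_{W,V,\psi}(\pi)$ could occur in $\Theta$ with multiplicity $\geq 2$ as a subquotient while occurring only once as a quotient. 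Ruling out such extra constituents is precisely the nontrivial content of the equal-rank irreducibility statement, and it is carried out in \cite{MR3166215} by a finer analysis (reduction to the discrete series case via Kudla's filtration and induction in stages, not by the quotient/subquotient shortcut). Since you explicitly defer to \cite{MR3166215} for this step, your proposal stands, but the heuristic you give should not be mistaken for the argument.
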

\begin{proof}
	These follow from \cite[Proposition C.4]{MR3166215}. 
\end{proof}

\begin{lemma}\label{thetadiscrete} 
	\begin{enumerate}
		\item Let $\pi$ be an irreducible discrete series representation of $\mathrm O(V_{2n})$. 
		\begin{enumerate}[(i)]
			\item If $\sigma=\Theta_{W_{2n-2},V_{2n},\psi}(\pi)\neq 0$, then $\sigma$ is an irreducible discrete series representation of $\SP(W_{2n-2})$ and $\Theta_{W_{2n},V_{2n},\psi}(\pi)$ is an irreducible tempered  representation of $\SP(W_{2n})$ such that 
			$$
			\Theta_{W_{2n},V_{2n},\psi}(\pi)\subseteq \Ind_{Q}^{\SP(W_{2n})}\left(\chi_{V}\otimes \sigma \right),
			$$
			where $Q$ is the parabolic subgroup of $\SP(W_{2n})$ with Levi component $$M_{Q}\cong \GL_1(F)\times \SP(W_{2n-2}).$$
			In this case,  $\Theta_{W_{2n},V_{2n},\psi}(\pi)=\theta_{W_{2n},V_{2n},\psi}(\pi)$ is not a discrete series representation.
			\item If $\Theta_{W_{2n-2},V_{2n},\psi}(\pi)= 0$, then $\Theta_{W_{2n},V_{2n},\psi}(\pi)$ is either zero or an irreducible discrete series representation of $\SP(W_{2n})$. 
		\end{enumerate}
		\item Let $\sigma$ be an irreducible discrete series representation of $\SP(W_{2n})$ and $V_{2n+2}=V_{2n}\oplus \mathbb H$. 
		\begin{enumerate}[(i)]
			\item If $\pi=\Theta_{V_{2n},W_{2n},\psi}(\sigma)\neq 0$, then $\pi$ is an irreducible discrete series representation of $\mathrm O(V_{2n})$ and $\Theta_{V_{2n+2},W_{2n},\psi}(\sigma)$ is an irreducible tempered representation of $\mathrm O(V_{2n+2})$ such that 
			$$
			\Theta_{V_{2n+2},W_{2n},\psi}(\sigma)\subseteq \Ind_{P}^{\mathrm O(V_{2n+2})}\left(\mathrm{1}\otimes \pi \right),
			$$
			where $P$ is the parabolic subgroup of $\mathrm O(V_{2n+2})$ with Levi component $$M_{P}\cong \GL_1(F)\times \mathrm O(V_{2n}).$$
			 In this case, $\Theta_{V_{2n+2}^,W_{2n},\psi}(\sigma)=\theta_{V_{2n+2}^,W_{2n},\psi}(\sigma)$ is not a discrete series representation.
			\item  If $\Theta_{V_{2n},W_{2n},\psi}(\sigma)= 0$, then $\Theta_{V_{2n+2},W_{2n},\psi}(\sigma)$ is either zero or an irreducible discrete series representation of $\mathrm O(V_{2n+2})$.
		\end{enumerate}
	\end{enumerate}	
\end{lemma}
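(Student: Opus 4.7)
The plan is to prove parts (1) and (2) in parallel, since they are mirror-symmetric between the orthogonal and symplectic sides; I will sketch part (1) in detail and note that part (2) follows by the same argument with the dual pair reversed. The two main tools are Lemma \ref{temperedtotempered} (which already hands us irreducibility, temperedness, and $\Theta = \theta$ in all the cases appearing in the statement), and Kudla's filtration of the Jacquet module of the Weil representation with respect to a maximal parabolic, as carried out in \cite{MR3166215}.

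For part (1)(i): assume $\sigma := \Theta_{W_{2n-2}, V_{2n}, \psi}(\pi) \neq 0$. First one shows $\sigma$ is discrete series; temperedness is Lemma \ref{temperedtotempered}(1)(ii), and upgrading tempered to discrete series can be done by the argument used in (ii) below (inducted on $\dim W$) or by directly appealing to \cite[Proposition C.4]{MR3166215}. By the tower property (Proposition \ref{tower}), $\Theta_{W_{2n}, V_{2n}, \psi}(\pi) \neq 0$, and by Lemma \ref{temperedtotempered}(1)(i) it equals the irreducible tempered representation $\theta_{W_{2n}, V_{2n}, \psi}(\pi)$. To produce the embedding, I would compute the bottom piece of Kudla's filtration for the normalized Jacquet module $R_Q(\omega_{V_{2n}, W_{2n}, \psi})$: after normalization by $\delta_Q^{1/2}$ this bottom piece is of the form $\chi_V \otimes \omega_{V_{2n}, W_{2n-2}, \psi}$. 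Taking $\pi$-coinvariants and using $\Theta_{W_{2n-2}, V_{2n}, \psi}(\pi) = \sigma$ yields $\chi_V \otimes \sigma$ as a nonzero quotient of $R_Q(\Theta_{W_{2n}, V_{2n}, \psi}(\pi))$. Frobenius reciprocity then gives the desired embedding $\Theta_{W_{2n}, V_{2n}, \psi}(\pi) \hookrightarrow \mathrm{Ind}_Q^{\SP(W_{2n})}(\chi_V \otimes \sigma)$. Since $\sigma$ is discrete series and $\chi_V$ is unitary, the induction is tempered and its irreducible constituents are properly induced, so $\Theta_{W_{2n}, V_{2n}, \psi}(\pi)$ is not discrete series.

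For part (1)(ii): let $\tau := \Theta_{W_{2n}, V_{2n}, \psi}(\pi)$ and assume $\tau \neq 0$. By Lemma \ref{temperedtotempered}(1)(i), $\tau$ is irreducible tempered, so by Casselman's criterion it suffices to verify $R_{Q_k}(\tau) = 0$ for every maximal parabolic $Q_k \subset \SP(W_{2n})$ with Levi $\GL_k \times \SP(W_{2n-2k})$, $1 \leq k \leq n$. Kudla's filtration exhibits $R_{Q_k}(\omega_{V_{2n}, W_{2n}, \psi})$ as a successive extension of pieces whose $\SP(W_{2n-2k})$-component is the Weil representation $\omega_{V_{2n}, W_{2n-2k}, \psi}$. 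Taking $\pi$-coinvariants, each successive quotient of $R_{Q_k}(\tau)$ is therefore controlled by $\Theta_{W_{2n-2k}, V_{2n}, \psi}(\pi)$. The hypothesis $\Theta_{W_{2n-2}, V_{2n}, \psi}(\pi) = 0$, combined with the contrapositive of the tower property (Proposition \ref{tower}), forces $\Theta_{W_{2n-2k}, V_{2n}, \psi}(\pi) = 0$ for all $k \geq 1$, so $R_{Q_k}(\tau) = 0$ and $\tau$ is discrete series.

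The main technical obstacle is the bookkeeping inside Kudla's filtration for (1)(i): one must track the $|\det|$-exponent in each filtration piece and verify that normalization by $\delta_Q^{1/2}$ produces exactly the unitary character $\chi_V$ on $\GL_1$ asserted in the statement (rather than some twist $\chi_V|\cdot|^s$). This is a routine but delicate computation that I would quote from \cite[Appendix C]{MR3166215} rather than redo. Part (2) is handled by the same two steps with the roles of the two groups reversed: Kudla's filtration is applied instead to $R_P(\omega_{V_{2n+2}, W_{2n}, \psi})$ for the maximal parabolic $P \subset \mathrm{O}(V_{2n+2})$ with Levi $\GL_1 \times \mathrm{O}(V_{2n})$, and the relevant vanishing input comes from Lemma \ref{temperedtotempered}(2) together with the tower property on the orthogonal side.
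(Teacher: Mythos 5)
The paper offers no argument of its own here: it simply quotes \cite[Corollary C.3]{MR3166215}, so what you are really doing is reconstructing the Gan--Ichino appendix, and the reconstruction breaks at its central step. In (1)(ii) you assert that, by Casselman's criterion, it suffices to show $R_{Q_k}(\tau)=0$ for every maximal parabolic $Q_k$ of $\SP(W_{2n})$. Vanishing of all Jacquet modules along proper parabolics is the criterion for \emph{supercuspidality}, not square-integrability, so even if you could prove it you would be proving far too much --- and it is false in general: under the hypotheses of (1)(ii) the lift $\tau=\Theta_{W_{2n},V_{2n},\psi}(\pi)$ is typically a non-supercuspidal discrete series, so some $R_{Q_k}(\tau)\neq 0$. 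The step you use to force the vanishing also misquotes Kudla's filtration: only the top quotient $J^0$ of $R_{Q_k}(\omega_{V_{2n},W_{2n},\psi})$ has $\omega_{V_{2n},W_{2n-2k},\psi}$ as its Weil-representation component; the pieces $J^a$ with $a\geq 1$ are induced from the smaller Weil representations $\omega_{V_{2n-2a},W_{2n-2k},\psi}$ together with data coming from Jacquet restrictions of $\pi$ along parabolics of $\mathrm O(V_{2n})$, and these are not killed by the hypothesis $\Theta_{W_{2n-2k},V_{2n},\psi}(\pi)=0$. The actual argument in \cite[Appendix C]{MR3166215} does not prove vanishing of Jacquet modules: it computes the exponents contributed by each filtration piece (explicit twists $\chi_V|\cdot|^{s}$ plus exponents of $\pi$) and checks Casselman's strict, resp.\ weak, inequalities; the vanishing hypothesis is used only to remove the single piece that would contribute a non-positive exponent. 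Since in (1)(i) you upgrade $\sigma$ from tempered to discrete series ``by the argument used in (ii)'', that claim is likewise unproved; note also that (1)(i) asserts irreducibility of the full lift $\Theta_{W_{2n-2},V_{2n},\psi}(\pi)$, which Lemma \ref{temperedtotempered}(1)(ii) does not supply (it only concerns $\theta$), and which again is part of the content of \cite[Corollary C.3]{MR3166215}.

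The embedding step in (1)(i) is essentially sound, modulo a labeling slip: the piece $\chi_V|\cdot|^{s_0}\otimes\omega_{V_{2n},W_{2n-2},\psi}$ is the top \emph{quotient} $J^0$ of $R_Q(\omega)$, not the bottom piece (the bottom piece, a subrepresentation, involves $\omega_{V_{2n-2},W_{2n-2},\psi}$); because it is a quotient, the Hom-space argument, the factorization through the maximal $\pi$-isotypic quotient, and the irreducibility of $\theta_{W_{2n},V_{2n},\psi}(\pi)$ from Lemma \ref{temperedtotempered}(1)(i) do give the asserted inclusion, with the exponent bookkeeping (which you rightly propose to quote) showing the twist is the unitary character $\chi_V$; the ``not discrete series'' conclusion then follows as you say. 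But as written, the square-integrability assertions in (1)(i), (1)(ii) and their mirrors in (2) are not established: you must either redo the exponent estimates along the lines above, or do what the paper does and simply cite \cite[Corollary C.3]{MR3166215}.
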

\begin{proof}
	See \cite[Corollary C.3]{MR3166215}.
\end{proof}

Next we give a generalization of \cite[Lemma C.4]{MR3166215}. Put
$$V_{2n}=X_k\oplus V_{2n_0}\oplus X_k^{\vee}, \quad W_{2n}=Y_k\oplus W_{2n_0}\oplus Y_k^{\vee}$$ 
as in Subsection \ref{sectionparabolic}. Let $P=P_{k}=M_{P} U_{P}$ and $Q=Q_{k}=M_{Q} U_{Q}$ be the parabolic subgroups defined in Subsection \ref{sectionparabolic} such that 
$$M_{P} \cong \GL(X_k) \times \mathrm O(V_{2n_0}), \quad M_{Q} \cong \GL(Y_k) \times \SP(W_{2n_0}).$$
Then we have 
\begin{lemma}\label{12}
	Let $\pi$ be an irreducible constituent of $\Ind_{P}^{\mathrm O(V_{2n})}(\tau\otimes \pi_0)$, where $\tau$ is an irreducible discrete series representation of $\GL(X_k)$ and $\pi_0$ is an irreducible tempered representation of $\mathrm O(V_{2n_0})$. Then we have 
	$$
	\theta_{W_{2n},V_{2n},\psi}(\pi) \subseteq \Ind_{Q}^{\SP(W_{2n})}(\tau\chi_{V}\otimes \theta_{W_{2n_0},V_{2n_0},\psi}(\pi_0)).
	$$
 Hence $\theta_{W_{2n},V_{2n},\psi}(\pi)$ is either zero or an irreducible tempered representation of $\SP(W_{2n})$. In particular, $\theta_{W_{2n},V_{2n},\psi}(\pi)$ is zero if $\theta_{W_{2n_0},V_{2n_0},\psi}(\pi_0)$ is zero. 
\end{lemma}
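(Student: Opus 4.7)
Since $\pi$ is a constituent of $\mathrm{Ind}_P^{\mathrm{O}(V_{2n})}(\tau \otimes \pi_0)$ with $\tau$ a discrete series of $\GL(X_k)$ and $\pi_0$ tempered, $\pi$ is itself tempered; by Lemma \ref{temperedtotempered}(1)(i), $\theta_{W_{2n}, V_{2n}, \psi}(\pi)$ is then either zero or an irreducible tempered representation of $\SP(W_{2n})$. Setting $\sigma_0 := \theta_{W_{2n_0}, V_{2n_0}, \psi}(\pi_0)$, the remaining content is the inclusion
\[
\theta_{W_{2n}, V_{2n}, \psi}(\pi) \subseteq \Ind_Q^{\SP(W_{2n})}(\tau\chi_V \otimes \sigma_0),
\]
from which the ``in particular'' vanishing follows by taking $\sigma_0 = 0$.

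\textbf{Reduction.} Assume first that $\sigma_0 \ne 0$, so that $\sigma_0$ is irreducible tempered by Lemma \ref{temperedtotempered}(2)(i). Then $\tau\chi_V$ is a discrete series of $\GL(Y_k)$ (after identifying $X_k$ with $Y_k$), and $\Ind_Q^{\SP(W_{2n})}(\tau\chi_V \otimes \sigma_0)$ is tempered, in particular semisimple. Hence the desired inclusion will follow once we exhibit a nonzero $\SP(W_{2n})$-equivariant map $\Theta_{W_{2n}, V_{2n}, \psi}(\pi) \to \Ind_Q^{\SP(W_{2n})}(\tau\chi_V \otimes \sigma_0)$. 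By the defining property of $\Theta$ and Frobenius reciprocity for the parabolic $Q$, this is equivalent to producing a nonzero $\mathrm{O}(V_{2n}) \times M_Q$-equivariant homomorphism
\[
J_{U_Q}\!\bigl(\omega_{V_{2n}, W_{2n}, \psi}\bigr) \longrightarrow \pi \boxtimes \bigl(\tau\chi_V \otimes \sigma_0\bigr).
\]

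\textbf{Kudla filtration and construction of the map.} One computes $J_{U_Q}(\omega_{V_{2n}, W_{2n}, \psi})$ via Kudla's filtration, as in the proof of \cite[Lemma~C.4]{MR3166215}: it carries a filtration of $\mathrm{O}(V_{2n}) \times M_Q$-modules whose successive subquotients $R^{(i)}$, $0 \le i \le k$, are obtained by normalized induction from $\omega_{V_{2n-2i}, W_{2n_0}, \psi}$ twisted by a character $|\det|^{s_i}\chi_V$ on the $\GL(Y_k)$-factor, with $s_i$ shifting in $i$. The top piece $R^{(0)}$ supplies the required map: using the embedding $\pi \hookrightarrow \Ind_P^{\mathrm{O}(V_{2n})}(\tau \otimes \pi_0)$ together with the defining surjection $\omega_{V_{2n_0}, W_{2n_0}, \psi} \twoheadrightarrow \pi_0 \boxtimes \sigma_0$ of $\sigma_0$, one pairs the $\GL(X_k)$-factor $\tau$ with the twisted $\GL(Y_k)$-factor $\tau\chi_V$ and the residual $\mathrm{O}(V_{2n_0}) \times \SP(W_{2n_0})$-factor with $\pi_0 \boxtimes \sigma_0$, producing the nonzero map on $R^{(0)}$.

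\textbf{Main obstacle.} The delicate step is to verify that no intermediate subquotient $R^{(i)}$ with $1 \le i \le k$ can admit a nonzero map to $\pi \boxtimes (\tau\chi_V \otimes \sigma_0)$, and that the map constructed on $R^{(0)}$ lifts to the full Jacquet module. This is a cuspidal-support/exponent comparison: the shift $|\det|^{s_i}$ with $i \ge 1$ places the central exponent of the $\GL(Y_k)$-factor off the unitary axis occupied by the discrete series $\tau\chi_V$, while the remaining Weil factor $\omega_{V_{2n-2i}, W_{2n_0}, \psi}$ forces incompatibility on the $\mathrm{O}(V_{2n})$-side with the tempered $\pi_0$. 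Because $\tau$ is only discrete series rather than supercuspidal (as in \cite[Lemma~C.4]{MR3166215}), the argument cannot be closed by a pure supercuspidal-support comparison; instead one invokes Zelevinsky's segment classification of discrete series of $\GL$ and tracks the Langlands exponents appearing in $R^{(i)}$, using temperedness of $\pi_0$ and $\sigma_0$ to rule out matching. Granting this exponent analysis, Howe duality together with the semisimplicity of $\Ind_Q^{\SP(W_{2n})}(\tau\chi_V \otimes \sigma_0)$ then upgrades the map constructed on $R^{(0)}$ to the asserted inclusion of $\theta_{W_{2n}, V_{2n}, \psi}(\pi)$.
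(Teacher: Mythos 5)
Your reduction via Frobenius reciprocity and Kudla's filtration is in the right family of ideas, but as written the argument has two genuine gaps, both tied to the direction in which you run it. First, you have the filtration upside down: in Kudla's filtration of the Jacquet module (on either member of the dual pair), the graded piece carrying the coupling $\mathscr S(\mathrm{Isom}(Y_k,X_k))$ between $\GL(Y_k)$ and $\GL(X_k)$ together with the small Weil representation $\omega_{00}$ of $\mathrm O(V_{2n_0})\times\SP(W_{2n_0})$ is the \emph{deepest} piece $J^k$, a submodule; the top quotient ($a=0$) is $\chi_V|\det|^{\lambda}\otimes\omega_{V_{2n},W_{2n_0}}$ and contains neither a $\GL(X_k)$-factor nor $\omega_{00}$. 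So the map you describe cannot be built on the top quotient, and a nonzero map defined only on the bottom submodule does not automatically extend to the whole Jacquet module: extension is an $\Ext^1$-type obstruction, and your proposed remedy --- showing $\Hom$ from the intermediate pieces to $\pi\boxtimes(\tau\chi_V\otimes\sigma_0)$ vanishes --- only yields injectivity of the restriction of Homs to $J^k$, not the surjectivity you would need to ``lift'' your map. Second, the constructive (lower-bound) direction cannot give the last assertion of the lemma: if $\theta_{W_{2n_0},V_{2n_0},\psi}(\pi_0)=0$ your argument constructs nothing and hence says nothing about $\theta_{W_{2n},V_{2n},\psi}(\pi)$; ``taking $\sigma_0=0$'' is vacuous, whereas the vanishing statement is part of the lemma and is used later in the paper.

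The paper's proof runs in the opposite, upper-bound direction and on the orthogonal side: $\Theta_{W_{2n},V_{2n},\psi}(\pi)^\vee\cong\Hom_{\mathrm O(V_{2n})}(\omega,\pi)\hookrightarrow\Hom_{\mathrm O(V_{2n})}(\omega,\Ind_P(\tau\otimes\pi_0))\cong\Hom_{M_P}(R_P(\omega),\tau\otimes\pi_0)$, and the vanishing $\Hom(J^a,\tau\otimes\pi_0)=0$ for $a<k$ (this is exactly where discreteness of $\tau$ and temperedness of $\pi_0$ enter, as in \cite[Lemma C.4]{MR3166215} --- the exponent comparison you sketch is the right idea, but for this step) shows that restriction to the submodule $J^k$ is injective on Homs. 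Hence $\Theta_{W_{2n},V_{2n},\psi}(\pi)^\vee$ embeds into $\Hom(J^k,\tau\otimes\pi_0)\cong\bigl(\Ind_Q^{\SP(W_{2n})}(\tau^\vee\chi_V\otimes\Theta_{W_{2n_0},V_{2n_0},\psi}(\pi_0))\bigr)^\vee$, and dualizing gives an epimorphism from the induced representation onto $\Theta_{W_{2n},V_{2n},\psi}(\pi)$; this simultaneously forces the vanishing when $\Theta_{W_{2n_0},V_{2n_0},\psi}(\pi_0)=0$ and, after Lemma \ref{temperedtotempered} and the contragredient/MVW step, yields the stated inclusion with $\tau\chi_V$. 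No nonzero map ever has to be constructed or lifted. If you insist on the construction-direction strategy, the appropriate tool is an explicit equivariant map as in Section \ref{constructionof} of the paper (the maps $\mathcal T_s$), not the filtration alone --- and even then you would still lack the vanishing part of the statement.
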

\begin{proof}
	In \cite[Lemma C.4]{MR3166215}, they prove this when $\pi_0$ is a discrete series representation. Their proof can be easily extended to our case. We recall the proof here. 
	
	We denote by $\omega$ and $\omega_{00}$ the Weil representations of $\mathrm O(V_{2n})\times \SP(W_{2n})$ and $\mathrm O(V_{2n_0})\times \SP(W_{2n_0})$ respectively. Since $\pi\subseteq \Ind_{P}^{\mathrm O(V_{2n})}(\tau\otimes \pi_0)$, we have 
	\begin{align*}
	\Theta_{W_{2n},V_{2n},\psi}(\pi)^\vee &\cong \Hom_{\mathrm O(V_{2n})}(\omega, \pi)\\
	&\hookrightarrow \Hom_{\mathrm O(V_{2n})}(\omega, \Ind_{P}^{\mathrm O(V_{2n})}(\tau\otimes \pi_0))\\
	& \hookrightarrow \Hom_{\GL(X_k)\times \mathrm O(V_{2n_0})}(R_{P}(\omega), \tau\otimes \pi_0),
	\end{align*} 
	where $R_{P}(\omega)$ is the normalized Jacquet module of $\omega$ with respect to the parabolic $P$ of $\mathrm O(V_{2n})$. The normalized Jacquet module has been computed by Kudla \cite{MR818351}. More precisely, there is an $M_P\times \SP(W_{2n})$-invariant filtration 
	\begin{align*}
	R_{P}(\omega)= R^0 \supset R^1\supset \cdots \supset R^t \supset R^{k+1}
	\end{align*}
	with successive quotients $J^a\coloneqq R^a/R^{a+1}$ ($0\leq a\leq k$). The reader can consult \cite{MR818351} and \cite[Lemma 5.1]{MR3714507} for an explicit formula for $J_a$ ($0\leq a\leq k$). Here we only recall the formula for $J_k$:  
	$$J^k\cong \Ind_{\GL(X_k)\times \mathrm O(V_{2n_0})\times Q}^{\GL(X_k)\times \mathrm O(V_{2n_0})\times \SP(W_{2n})}(\mathscr S(\mathrm{Isom}(Y_k,X_k))\otimes \omega_{00}),$$
	where 
	\begin{itemize}
		\item $\mathrm{Isom}(Y_k,X_k)$ is the set of invertible linear maps from $Y_k$ to $X_k$ and $\mathscr S(\mathrm{Isom}(Y_k,X_k))$ is the space of locally constant, compactly supported functions on $\mathrm{Isom}(Y_k,X_k)$;
		\item the action of $\GL(X_k)\times \mathrm O(V_{2n_0})\times \GL(Y_k)\times \SP(W_{2n_0})$ on $\mathscr S(\mathrm{Isom}(Y_k,X_k))\otimes \omega_{00}$ is given by 
		\begin{itemize}
			\item $\GL(X_k)\times \GL(Y_k)$ acts on $\mathscr S(\mathrm{Isom}(Y_k,X_k))$ by 
			\begin{align*}
			\left((g,h)\cdot f\right)(x)=\chi_{V}(h)\cdot f(g^{-1}\circ x\circ h)
			\end{align*} 
			for $(g,h)\in \GL(X_k)\times \GL(Y_k)$, $f\in \mathscr S(\mathrm{Isom}(Y_k,X_k))$ and $x\in \mathrm{Isom}(Y_k,X_k)$. 
			\item $\mathrm O(V_{2n_0})\times \SP(W_{2n_0})$ acts by the Weil representation $\omega_{00}$. 
		\end{itemize}
	\end{itemize}
	By the same argument as \cite[Lemma C.4]{MR3166215}, we deduce 
	\begin{align*}
	\Hom_{\GL(X_k)\times \mathrm O(V_{2n_0})}(J^a, \tau\otimes \pi_0)=0\quad \mbox{for $0\leq a<k$}. 
	\end{align*}
	This implies 
	\begin{align*}
	\Theta_{W_{2n},V_{2n},\psi}(\pi)^\vee & \hookrightarrow \Hom_{\GL(X_k)\times \mathrm O(V_{2n_0})}(R_{P}(\omega), \tau\otimes \pi_0)\\
	& \hookrightarrow \Hom_{\GL(X_k)\times \mathrm O(V_{2n_0})}(J^k, \tau\otimes \pi_0)\\
	&\cong \left(\Ind_{Q}^{\SP(W_{2n})} (\tau^\vee\chi_{V}\otimes \Theta_{V_{2n_0},W_{2n_0},\psi}(\pi_{0}))\right)^\vee. 
	\end{align*}
	Take the contragredient functor, we get an epimorphism 
	\begin{align*}
	\Ind_{Q}^{\SP(W_{2n})} (\tau^\vee\chi_{V}\otimes \Theta_{V_{2n_0},W_{2n_0},\psi}(\pi_{0}))\twoheadrightarrow \Theta_{V_{2n},W_{2n},\psi}(\pi). 
	\end{align*} 
	It follows from Lemma \ref{temperedtotempered} that $$\Theta_{W_{2n},V_{2n},\psi}(\pi)=\theta_{W_{2n},V_{2n},\psi}(\pi)\quad\mbox{and}\quad    \Theta_{W_{2n_0},V_{2n_0},\psi}(\pi_0)=\theta_{W_{2n_0},V_{2n_0},\psi}(\pi_0).$$ 
	Apply both the contragredient functor and the MVW functor \cite[Lemma 2.2]{MR3714507}, we get 
	\begin{align*}
	\theta_{W_{2n},V_{2n},\psi}(\pi)\subseteq \Ind_{Q}^{\SP(W_{2n})} (\tau\chi_{V}\otimes \theta_{W_{2n_0}, V_{2n_0},\psi}(\pi_0)). 
	\end{align*}
	This finishes the proof. 
\end{proof}

For $\pi\in \Irr(\mathrm O(V_{2n})$, $\sigma\in \Irr(\SP(W_{2n}))$, and $\chi$ a character of $F^\times$, let $\gamma(s,\pi,\chi,\psi)$ and $\gamma(s,\sigma,\chi,\psi)$ be the standard $\gamma$-factors defined by Lapid-Rallis \cite{MR2192828} using doubling method; see also \cite[\S 10,\S 11]{MR3166215}. The following lemma describes how the standard $\gamma$-factors behave under the theta correspondence.
\begin{lemma}\label{comgamma}
	Let $V_{2n}$ be a $2n$-dimensional orthogonal space and $\chi_V$ be the discriminant character of $V_{2n}$. For any $\pi \in \Irrt \mathrm O(V_{2n})$, we have 
	\begin{enumerate}[(i)]
		\item If $\sigma= \theta_{W_{2n}, V_{2n},  \psi}(\pi)\neq 0$, then 	
		$$
		\frac{\gamma(s,\sigma,\chi,\psi)}{\gamma(s,\pi,\chi\chi_V,\psi)}= \gamma(s,\chi\chi_V,\psi),
		$$
		where $\gamma(s,\chi\chi_V,\psi)$ is Tate's $\gamma$-factor.
		\item If $\sigma= \theta_{W_{2n-2}, V_{2n},  ,\psi}(\pi)\neq 0$, then 
		$$
		\frac{\gamma(s,\pi,\chi\chi_V,\psi)}{\gamma(s,\sigma,\chi,\psi)}= \gamma(s,\chi\chi_V,\psi), 
		$$
		where $\gamma(s,\chi\chi_V,\psi)$ is Tate's $\gamma$-factor.
	\end{enumerate}
\end{lemma}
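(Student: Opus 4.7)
This gamma factor identity is the local reflection, via theta correspondence, of a global identity between standard $L$-functions. The approach I would follow is the doubling method of Piatetski-Shapiro--Rallis, in the normalization of Lapid--Rallis \cite{MR2192828}, together with the see-saw dual pair trick. This is parallel to Gan--Ichino's argument in \cite{MR3166215} (\S\S 10--11), whose basic structure adapts cleanly to our orthogonal/symplectic setting.

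First, I would recall that the $\gamma$-factor $\gamma(s,\pi,\chi,\psi)$ is defined as the proportionality constant in the local functional equation satisfied by the doubling zeta integral
\[
Z(s,f_s,\phi_\pi) \;=\; \int_{\mathrm{O}(V)} f_s(\delta \cdot (g,1))\,\phi_\pi(g)\,dg
\]
(and analogously $Z(s,f_s,\phi_\sigma)$ on the symplectic side), where $f_s$ is a degenerate principal series section on the doubled group $\mathrm{O}(V\oplus V^{-})$ (resp.\ $\SP(W\oplus W)$), $\phi_\pi$ is a matrix coefficient of $\pi$, and $\delta$ is the standard Weyl element attached to the Siegel parabolic. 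The ratio $Z(s,M(s)f_s,\phi_\pi)/Z(s,f_s,\phi_\pi)$ equals $\gamma(s,\pi,\chi,\psi)$ up to an elementary normalization.

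Second, I would invoke the see-saw diagram
\[
\begin{array}{ccc}
\mathrm{O}(V_{2n})\times\mathrm{O}(V_{2n}) & & \SP(W\oplus W) \\
\cup & & \cup \\
\mathrm{O}(V_{2n}) & & \SP(W)\times\SP(W)
\end{array}
\]
(with the appropriate adjustment for case (ii), where one replaces $W\oplus W$ by $W_{2n-2}\oplus W_{2n-2}$ and inserts the difference by a hyperbolic plane on the orthogonal side). Unwinding the see-saw identity on matrix coefficients yields an equality relating $Z(s,f_s,\phi_\pi)$ to a corresponding zeta integral $Z(s,f'_s,\phi_\sigma)$ on the symplectic side, up to a specific matching between the degenerate sections $f_s$ on $\mathrm{O}(V\oplus V^-)$ and $f'_s$ on $\SP(W\oplus W)$. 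This matching is governed by the explicit Schr\"odinger model of the Weil representation of $\SP(W\oplus W)\times\mathrm{O}(V)$ together with the Rallis trick (using a Schwartz function on a polarization that is nonzero at a single nondegenerate orbit), and it introduces the character $\chi_V$ via the splitting of $\mathrm{O}(V)\times\SP(W)$ inside the metaplectic cover.

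Third, I would apply the intertwining operator $M(s)$ on both sides and compare. The see-saw identity applies equally well to $M(s)f_s$ and $M(s)f'_s$, and so the ratio $\gamma(s,\pi,\chi\chi_V,\psi)/\gamma(s,\sigma,\chi,\psi)$ (resp.\ its reciprocal in case (i)) equals the ratio of the normalization factors relating $f_s \leftrightarrow f'_s$ with $M(s)f_s \leftrightarrow M(s)f'_s$. An explicit computation of this ratio, following Kudla--Rallis on the reducibility of degenerate principal series for symplectic and orthogonal groups, produces exactly one Tate gamma factor $\gamma(s,\chi\chi_V,\psi)$, arising because on one side the doubled group has Siegel Levi $\GL_n$ while on the other it has $\GL_{n+1}$ (or $\GL_{n-1}$).

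\textbf{Main obstacle.} The hardest part is the bookkeeping of the splitting of the metaplectic cover and the precise identification of the Tate factor. The shift in $s$ between the two doubling principal series, the Weil index appearing in the matching of Schwartz sections, and the discriminant character $\chi_V$ all have to be pinned down simultaneously; any sign or twist error shifts the claimed identity by a Tate epsilon factor. The proof in the unitary case \cite{MR3166215} provides a template, but one must re-derive the analogous matching formula in the orthogonal/symplectic setting, using in particular the explicit Schr\"odinger model description of $\omega_{V,W,\psi}$ and the Rallis unfolding to reduce to a local integral at a single open orbit in $\mathrm{Isom}(Y_k,X_k)$-type space, as in Lemma \ref{12}.
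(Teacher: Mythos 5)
Your outline is not wrong, but it is worth saying plainly how the paper itself handles this lemma: it does not prove it at all, it simply quotes \cite[Theorem 11.5]{MR3166215}, where precisely this identity for the dual pairs $\mathrm O(V_{2n})\times\SP(W_{2n})$ and $\mathrm O(V_{2n})\times\SP(W_{2n-2})$ is established (in the Lapid--Rallis normalization of the doubling $\gamma$-factor). Your doubling-method/see-saw sketch is essentially a reconstruction of the proof given in that reference, so you are not diverging from the paper so much as re-deriving its cited input; what the citation buys is that the genuinely delicate work --- the Rallis matching of degenerate principal series sections between the two doubled groups, its compatibility with the standard intertwining operators, and the identification of the resulting elementary factor --- is already carried out there, while your route would only buy self-containedness at the cost of redoing those computations, which your plan asserts rather than performs.

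Three corrections to the sketch itself. First, \cite{MR3166215} is not ``the unitary case'': it treats the symplectic/metaplectic--orthogonal pairs directly, so there is no analogous matching formula left to ``re-derive in the orthogonal/symplectic setting.'' Second, since $\dim V_{2n}$ is even, the Weil representation $\omega_{V,W,\psi}$ is an honest representation of $\SP(W)\times\mathrm O(V)$ (the splitting being encoded by $\chi_V$ and a Weil index), so the metaplectic bookkeeping you flag as the main obstacle is lighter than in the odd-orthogonal case. Third, the heuristic that the Tate factor $\gamma(s,\chi\chi_V,\psi)$ comes from Siegel Levis ``$\GL_n$ versus $\GL_{n+1}$'' is not literally the mechanism: both doubled groups $\mathrm O(V\oplus V^-)$ and $\SP(W\oplus W)$ have Siegel Levi $\GL_{2n}$; the degree-one discrepancy reflects that the standard representation of the $L$-group of $\SP(W_{2n})$ has dimension $2n+1$ versus $2n$ for $\mathrm O(V_{2n})$, and the extra factor is extracted from the shift in the induction parameter under the Rallis section matching and the constant of proportionality for the intertwining operators, exactly the computation done in \cite[\S 11]{MR3166215}. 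With those points repaired, your plan is a faithful account of the known proof of Lemma \ref{comgamma}.
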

\begin{proof}
See \cite[Theorem 11.5]{MR3166215}. 
\end{proof}

\begin{lemma}\label{pole}
	Let $V_{2n}$ be a $2n$-dimensional orthogonal space and $\chi_V$ be the discriminant character of $V_{2n}$. For any $\sigma\in \Irr(\SP(W_{2n}))$, if  
	$$	 
	\Theta_{V_{2n},W_{2n},\psi}(\sigma) \neq 0,
	$$
	then $\gamma(s,\sigma,\chi_{V},\psi)$ has a pole at $s=1$. 
\end{lemma}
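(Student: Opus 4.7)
My plan is to combine the $\gamma$-factor identity from Lemma~\ref{comgamma} with the pole structure of the $L$-factor of the trivial character. I would first treat the tempered case and then reduce the general case to it via the Langlands classification and multiplicativity of the doubling $\gamma$-factor.

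In the tempered case, let $\sigma \in \Irrt(\SP(W_{2n}))$ and assume $\Theta_{V_{2n},W_{2n},\psi}(\sigma) \neq 0$. Lemma~\ref{temperedtotempered}(2)(i) produces an irreducible tempered $\pi := \theta_{V_{2n},W_{2n},\psi}(\sigma)$ on $\mathrm O(V_{2n})$, and Howe duality (both spaces have equal rank $n$) yields $\sigma = \theta_{W_{2n},V_{2n},\psi}(\pi)$. Applying Lemma~\ref{comgamma}(i) with $\chi = \chi_V$ and using $\chi_V^2 = 1$, we get
\[
\gamma(s,\sigma,\chi_V,\psi) \;=\; \gamma(s,\pi,1,\psi)\cdot \gamma(s,1,\psi).
\]
The factor $\gamma(s,1,\psi) = \epsilon(s,1,\psi)\,L(1-s,1)/L(s,1)$ has a simple pole at $s=1$, supplied by the pole of $L(1-s,1)=(1-q^{s-1})^{-1}$ there. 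Temperedness of $\pi$ means its $L$-parameter is bounded, so $L(s,\pi)$ is holomorphic and nonzero at $s=1$, while $L(1-s,\pi)$ is either holomorphic nonzero or has a pole at $s=1$ (if the parameter of $\pi$ contains the trivial character). In either case $\gamma(s,\pi,1,\psi)$ has no zero at $s=1$, and so the product has a pole at $s=1$.

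For general $\sigma$, I would write it as the Langlands quotient of a standard module
\[
\Ind_Q^{\SP(W_{2n})}\bigl(\tau_1|\cdot|^{s_1}\boxtimes\cdots\boxtimes\tau_r|\cdot|^{s_r}\boxtimes\sigma_0\bigr),
\]
with $\tau_i$ discrete-series on $\GL_{k_i}$, $\sigma_0$ tempered on some $\SP(W_{2n_0})$, and $s_1\geq\cdots\geq s_r>0$. By multiplicativity of the doubling $\gamma$-factor under parabolic induction,
\[
\gamma(s,\sigma,\chi_V,\psi) \;=\; \gamma(s,\sigma_0,\chi_V,\psi)\,\prod_{i=1}^{r} \gamma(s+s_i,\tau_i\otimes\chi_V,\psi)\,\gamma(s-s_i,\tau_i^\vee\otimes\chi_V,\psi).
\]
A Jacquet-module analysis in the spirit of Lemma~\ref{12}, applied to the Kudla filtration of the Weil-representation Jacquet module and the hypothesis $\Theta_{V_{2n},W_{2n},\psi}(\sigma)\neq 0$, should force either (a) $\sigma_0$ to have a nonzero theta lift to an even orthogonal group of rank $n_0$ with the same discriminant as $V_{2n}$, in which case the tempered case supplies the pole of $\gamma(s,\sigma_0,\chi_V,\psi)$ at $s=1$; or (b) one of the inducing segments $\tau_i|\cdot|^{s_i}$ to contribute a pole of the corresponding $\gamma(s\pm s_i,\tau_i^{(\vee)}\otimes\chi_V,\psi)$ at $s=1$.

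The main technical obstacle is exactly this last step: tracking how the nonvanishing of $\Theta_{V_{2n},W_{2n},\psi}(\sigma)$ propagates through the Kudla filtration and identifying which piece of the Langlands data is ultimately responsible for the pole, while ruling out accidental cancellation between the numerator and denominator of the multiplicative expression. Once this structural input is in hand, the pole at $s = 1$ follows from the tempered case together with explicit pole locations of $\gamma$-factors of essentially discrete-series representations of $\GL_k$ twisted by $\chi_V$.
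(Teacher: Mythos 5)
There is a genuine gap, on two levels. First, the statement is for arbitrary $\sigma\in\Irr(\SP(W_{2n}))$, and your reduction of the non-tempered case is not a proof: the dichotomy you label (a)/(b) — that nonvanishing of $\Theta_{V_{2n},W_{2n},\psi}(\sigma)$ forces either a nonzero lift of the tempered piece $\sigma_0$ or a pole coming from one of the segments $\tau_i|\cdot|^{s_i}$, with no cancellation in the multiplicative formula — is exactly the hard part, and you leave it as an acknowledged "should force". Tracking nonvanishing of the big theta lift through the Kudla filtration for a Langlands quotient (rather than for the full standard module) and ruling out zero/pole cancellation among the factors $\gamma(s\pm s_i,\tau_i^{(\vee)}\otimes\chi_V,\psi)$ is not routine, so the lemma is not established in the generality in which it is stated and used. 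Second, even your tempered case has a circularity problem in the setting of this paper: you justify that $\gamma(s,\pi,\mathbf 1,\psi)$ has no zero at $s=1$ by appealing to the boundedness of the $L$-parameter of $\pi\in\Irrt(\mathrm O(V_{2n}))$, but the LLC for even orthogonal groups is precisely what is being constructed, and Lemma \ref{pole} is an input to that construction (it is used to show $\chi_V\subseteq\phi^+$ in Section 5.1). To make this step legitimate you would need LLC-independent analytic properties of the doubling $\gamma$- and $L$-factors for tempered representations (e.g.\ holomorphy of the doubling $L$-factor in $\Re(s)>0$, so that $\gamma(s,\pi,\mathbf 1,\psi)$ has no zero at $s=1$), which are available in \cite{MR3166215} but are not what you invoked. (Also a small slip: the relevant item is Lemma \ref{temperedtotempered}(2)(ii), not (2)(i).)

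For comparison, the paper does not reprove this lemma at all: it quotes \cite[Proposition 11.2]{MR3166215}, whose proof proceeds through the doubling zeta integrals and the Rallis inner product machinery, works directly for arbitrary irreducible $\sigma$, and is independent of any Langlands classification of the orthogonal or symplectic side. Your route via Lemma \ref{comgamma} plus the pole of $\gamma(s,\chi\chi_V,\psi)$ is an attractive shortcut in the tempered equal-rank case, but as written it neither covers the general case nor avoids relying on the correspondence being constructed.
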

\begin{proof}
	See \cite[Proposition 11.2]{MR3166215}. 
\end{proof}

Next we describe the behaviour of another representation theoretic invariant, called the Plancherel measure, under the theta correspondence. We denote by $\mu_{\psi}(\tau\otimes\pi)$ (resp. $\mu_{\psi}(\tau\otimes\sigma)$) the Plancherel measure associated to  $\pi\in \Irr(\mathrm O(V_{2n}))$ (resp. $\sigma\in \Irr(\SP(W_{2n}))$) and $\tau\in \Irr(\GL_{k}(F))$; see Appendix \ref{plancherelmeasure} for a definition. 
\begin{lemma}\label{complan}
	\begin{enumerate}[(i)]
		\item  Let $\pi$ and $\sigma$ be irreducible smooth representations of $\mathrm O(V_{2n})$ and $\SP(W_{2n})$ respectively, such that $\sigma=\theta_{W_{2n},V_{2n},\psi}(\pi)$. Let $\tau$ be an irreducible smooth representation of $\GL_{k}(F)$ and put $\tau_s=\tau|\cdot|_{F}^{s}$ for $s\in \mathbb{C}$. Then we have:
		$$
		\frac{\mu_{\psi}(\tau_s\chi_{V}\otimes\sigma)}{\mu_{\psi}(\tau_s\otimes\pi)}=\gamma(s,\tau,\psi)\times \gamma(-s,\tau^{\vee},\psi_{-1}).
		$$ 
		\item 
		Let $\pi$ and $\sigma$ be irreducible smooth representations of $\mathrm O(V_{2n})$ and $\SP(W_{2n-2})$ respectively, such that $\sigma=\theta_{W_{2n-2},V_{2n},\psi}(\pi)$. Let $\tau$ be an irreducible smooth representation of $\GL_{k}(F)$ and put $\tau_s=\tau|\cdot|_{F}^{s}$ for $s\in \mathbb{C}$. Then we have:
		$$
		\frac{\mu_{\psi}(\tau_s\otimes\pi)}{\mu_{\psi}(\tau_s\chi_{V}\otimes\sigma)}=\gamma(s,\tau,\psi)\times \gamma(-s,\tau^{\vee},\psi_{-1}).
		$$ 
	\end{enumerate}
\end{lemma}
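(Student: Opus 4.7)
My strategy is to deduce the Plancherel measure comparison from the $\gamma$-factor comparison of Lemma~\ref{comgamma} by exploiting the compatibility of theta correspondence with parabolic induction furnished by Lemma~\ref{12}. I would treat part (i) in detail; part (ii) follows by a parallel argument, swapping the roles of the ``big'' and ``small'' members of the Witt tower.

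The first step is to enlarge the pair $(\mathrm O(V_{2n}), \SP(W_{2n}))$ to the dual pair $(\mathrm O(\widetilde V), \SP(\widetilde W))$ with $\widetilde V = X_k \oplus V_{2n} \oplus X_k^{\vee}$ and $\widetilde W = Y_k \oplus W_{2n} \oplus Y_k^{\vee}$, and to consider the maximal parabolics $P = P_k$ and $Q = Q_k$ of \S\ref{sectionparabolic}. By Lemma~\ref{12}, any irreducible (tempered) constituent of $\Ind_P^{\mathrm O(\widetilde V)}(\tau_s \otimes \pi)$ has theta lift contained in $\Ind_Q^{\SP(\widetilde W)}(\tau_s \chi_V \otimes \sigma)$; this is already the structural reason that the discriminant twist $\chi_V$ appears on the symplectic side of the formula.

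Next, recall from Appendix~\ref{plancherelmeasure} that the Plancherel measure governs the composition $M(w^{-1}) \circ M(w)$ of standard intertwining operators on the induced representation. Using Kudla's filtration of the Jacquet module $R_P(\omega_{\widetilde V, \widetilde W, \psi})$ (the same filtration exploited in the proof of Lemma~\ref{12}), I would show that the theta correspondence intertwines the standard intertwining operator on the orthogonal side with the standard intertwining operator on the symplectic side, up to an explicit scalar $c(s)$. Consequently
\[
\frac{\mu_{\psi}(\tau_s \chi_V \otimes \sigma)}{\mu_{\psi}(\tau_s \otimes \pi)} = c(s)\cdot c(-s),
\]
and the problem reduces to computing $c(s)$ explicitly.

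The scalar $c(s)$ is determined by the action of the intertwining operator on the top Jacquet quotient $J^k \cong \Ind(\mathscr S(\mathrm{Isom}(Y_k,X_k)) \otimes \omega_{00})$, on which $\GL(X_k) \times \GL(Y_k)$ acts through translations on $\mathrm{Isom}(Y_k, X_k) \cong \GL_k$ (twisted by $\chi_V$). The intertwining operator descends, via convolution with a distribution arising from the Weyl element $w_Q w_P^{-1}$, to a Fourier-type operator on $\mathscr S(\GL_k)$ tested against matrix coefficients of $\tau_s$; its normalization is precisely the local Godement--Jacquet $\gamma(s, \tau, \psi)$, and the dual operator furnishes $\gamma(-s, \tau^\vee, \psi_{-1})$. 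I expect the main obstacle to be making this intertwining of intertwining operators fully rigorous and carrying out the explicit scalar computation on $J^k$. As a fallback, one may bootstrap from Lemma~\ref{comgamma} by induction on $k$: write $\tau$ as a sub-quotient of a principal series induced from characters, apply multiplicativity of Plancherel measures under Levi induction, and invoke Lemma~\ref{comgamma} in the rank-one case, at the cost of more elaborate bookkeeping of normalizing factors.
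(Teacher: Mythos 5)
The paper does not prove this lemma at all: it is quoted directly from \cite{MR3166215} (Theorem 12.1 there), so the only honest comparison is with that reference and with the machinery this paper later builds in \S\ref{mixed model}--\S\ref{constructionof}. Your main route is essentially the argument of the cited source: use the compatibility of theta with parabolic induction coming from Kudla's filtration, show that the (unnormalized) standard intertwining operators on the two sides are intertwined by an explicit equivariant map up to a scalar $c(s)$, identify $c(s)$ by a computation on the top quotient $J^k$ where the Godement--Jacquet $\gamma(s,\tau,\psi)$ appears, and then feed this into the identity expressing $\mu^{-1}$ as $M(w^{-1})\circ M(w)$ (Appendix \ref{plancherelmeasure}). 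This is exactly the shape of Proposition \ref{commuteunnormalize} and Corollary \ref{123} in the paper, so the outline is sound. Two caveats, though. First, the lemma is asserted for arbitrary irreducible smooth $\pi$, $\sigma$, $\tau$, whereas Lemma \ref{12} is stated for discrete series $\tau$ and tempered $\pi_0$; the reduction you describe through "irreducible (tempered) constituents" does not cover the general case. What one actually needs is the equivariant map $\mathcal T_s$ defined for all $s$ and general representations, together with the meromorphically continued operator identity and a nonvanishing statement (Proposition \ref{intertwingmap} (4)), plus careful bookkeeping of the Haar measures and the central-character constant $\omega_\tau(-1)^{k-1}$ that appears when one squares the intertwining operator (cf.\ \ref{1235}). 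These are precisely the hard technical points, and your sketch defers them.

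Second, the proposed fallback is not viable as stated. Multiplicativity of Plancherel measures reduces $\mu_\psi(\tau_s\otimes\pi)$ only to the supercuspidal support of $\tau$, i.e.\ to supercuspidal representations of smaller general linear groups $\GL_{k_i}(F)$, not to characters of $\GL_1(F)$; so there is no induction that terminates in a "rank-one case" governed by Lemma \ref{comgamma}. Moreover, even in the genuine $k=1$ case, Lemma \ref{comgamma} compares doubling $\gamma$-factors $\gamma(s,\pi,\chi,\psi)$, and there is no a priori identity between these and the rank-one Plancherel measure $\mu_\psi(\chi_s\otimes\pi)$ without an input of LLC type (indeed, relating the two is part of what Theorem \ref{desideratumall} (11)--(12) encode). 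So the bootstrap from Lemma \ref{comgamma} would be circular or simply unavailable, and the first route (the one the cited reference carries out) is the one you must complete.
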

\begin{proof}
	See \cite[Theorem 12.1]{MR3166215}. 
\end{proof}

\section{Local Langlands correspondence}
In this section, we state the local Langlands correspondence for symplectic groups and even orthogonal groups. We mainly follow \cite[\S 3]{MR3708200}. 
\subsection{$L$-parameters}\label{Lparmeter}
Let $W_F$ be the Weil group of $F$ and $WD_F=W_F\times \SL_2(\mathbb C)$ be the Weil-Deligne group of $F$. We say that a homomorphism
$\phi : WD_{F} \rightarrow \GL_n(\mathbb C)$ is a representation of $WD_F$ if
\begin{itemize}
	\item $\phi\left(\mathrm{Frob}_{F}\right)$ is semi-simple, where $
	\mathrm{Frob}_{F}$ is a geometric Frobenius element in
	$W_{F}$;
	\item the restriction of $\phi$ to $W_{F}$ is smooth;
	\item  the restriction of $\phi$ to $\mathrm{SL}_{2}(\mathbb{C})$ is algebraic.
\end{itemize}
We call $\phi$ tempered if the image of $W_{F}$ is bounded. We say that $\phi$ is orthogonal if there exists a non-degenerate bilinear form $B : \mathbb C^n \times \mathbb C^n \rightarrow \mathbb{C}$ such that
$$\left\{\begin{array}{l}{B(\phi(w) x, \phi(w) y)=B(x, y)}, \\ {B(y, x)= B(x, y)}\end{array}\right.$$
for $x, y \in \mathbb C^n$ and $w \in WD_{F}$. In this case, $\phi$ is self-dual, i.e., $\phi$ is equivalent to its contragredient $\phi^{\vee}$. 

Suppose that $\phi$ is orthogonal. We may decompose it as follows
\begin{align}\label{111}
\phi=m_{1} \phi_{1}+\cdots+m_{l} \phi_{l}+\varphi+\varphi^{\vee},
\end{align}
where $\phi_{1}, \ldots, \phi_{l}$ are pairwise distinct irreducible orthogonal representations of $WD_{F}$ and $\varphi$ is a sum of irreducible representations of $WD_{F}$ which are not orthogonal. We say that a representation $\phi$ is discrete if $m_{i}=1$ for any $i=1, \ldots, l$ and $\varphi=0$.

Let $\phi$ be an orthogonal representation of $WD_F$ with an invariant non-degenerate bilinear form $B$. We denote the space of $\phi$ by $M$. Let $\Aut(\phi,B)$ be the group of elements in $\GL(M)$ which centralize the image of $\phi$ and preserve $B$. Also we put $\Aut(\phi,B)^+= \Aut(\phi,B)\cap \SL(M)$. We define the component groups $\mathcal {S}_{\phi}$ and $\mathcal {S}_{\phi}^+$ of $\phi$ by 
$$
\mathcal {S}_{\phi}=\Aut(\phi,B)/\Aut(\phi,B)^{\circ}\quad \mbox{and}\quad \mathcal {S}_{\phi}^+=\Aut(\phi,B)^+/\left(\Aut(\phi,B)^{\circ}\cap \Aut(\phi,B)^{+}\right),
$$
where $\Aut(\phi,B)^{\circ}$ is the identity component of $\Aut(\phi,B)$. More explicitly, write $\phi=m_{1} \phi_{1}+\cdots+m_{l} \phi_{l}+ \varphi+\varphi^{\vee}$ as in (\ref{111}). Then 
$$\mathcal {S}_{\phi}\cong \bigoplus_{i=1}^{l}(\mathbb{Z} / 2 \mathbb{Z}) a_{i} \cong (\mathbb{Z} / 2 \mathbb{Z})^l,$$
where $a_i$ corresponds to $\phi_i$. The determinant map $\det: \GL(M) \rightarrow \mathbb{C}^{ \times}$ induces a homomorphism
\begin{equation}\label{detmap}
\begin{aligned}
\det : \mathcal {S}_{\phi} \rightarrow \mathbb{Z} / 2 \mathbb{Z},  \quad \sum_{i=1}^{l} \varepsilon_{i} a_{i} \mapsto \sum_{i=1}^{l} \varepsilon_{i} \cdot \dim\left(\phi_{i}\right),
\end{aligned}
\end{equation}
where $\varepsilon_{i} \in\{0,1\}=\mathbb Z/2\mathbb Z$. Then we have $\mathcal {S}_{\phi}^+=\ker(\det)$. Put
$$z_{\phi} \coloneqq\sum_{i=1}^{l} m_i \cdot a_{i} \in \mathcal {S}_{\phi},$$ 
which is the image of $-\mathbf 1\in \Aut(\phi,B)$ in $\mathcal S_\phi$. We call it the central element in $\mathcal {S}_{\phi}$. Also we put $\bar {\mathcal {S}}_{\phi}=\mathcal {S}_{\phi}/\langle z_{\phi}\rangle$. 
For $a=a_{i_{1}}+\cdots+a_{i_{k}} \in \mathcal {S}_{\phi}$ with $1 \leq i_{1}<\cdots<i_{k} \leq l$, we put
$$\phi^{a}=\phi_{i_{1}} + \cdots + \phi_{i_{k}}.$$
By \cite[\S 4]{MR3202556}, for each $c \in F^{ \times}$, we define a character $\eta_{\phi,c}$ of $\mathcal {S}_{\phi}$ by
\begin{align}\label{eta}
\eta_{\phi,c}(a)=\det(\phi^{a})(c).
\end{align}
Also let \begin{align*}
\upsilon: \mathbb Z/2\mathbb Z \hookrightarrow \mathbb C^{\times}
\end{align*} 
be the natural embedding of $\mathbb Z/2\mathbb Z$ into $\mathbb C^\times$. We define a character $\kappa_{\phi}\in \widehat {\mathcal {S}_{\phi}}$ by composing the map $\det$ in (\ref{detmap}) with $\upsilon$, i.e. 
\begin{align}\label{kappa}
\kappa_{\phi}(a)=\upsilon\circ \det(a)\quad \mbox{for}\,\,a\in \mathcal S_{\phi}. 
\end{align}

Next we define the $L$-parameters for $\mathrm O(V_{2n})$ and $\SP(W_{2n})$. For $G=\mathrm O(V_{2n})$ or $\SP(W_{2n})$, following \cite[\S 8]{MR3202556} and \cite[\S 3]{MR3708200}, we define   
\[
\Phi(G)= 
\begin{cases}
\{\phi:\WD_{F} \rightarrow \mathrm{O}(2n,\mathbb{C}) |\det(\phi)=\chi_{V}\} /(\mathrm{O}(2n, \mathbb{C})\mbox{-conjugacy})\quad &\mbox{if $G=\mathrm O(V_{2n})$};\\
\{\phi : \WD_{F} \rightarrow \SO(2n+1, \mathbb{C})\} /(\SO(2n+1, \mathbb{C})\mbox{-conjugacy})\quad &\mbox{if $G=\SP(W_{2n})$}. 
\end{cases}
\]
We call an element $\phi\in \Phi(G)$ an $L$-parameter for $G$. When $G=\mathrm O(V_{2n})$, we may view $\phi\in \Phi(G)$ as a $2n$-dimensional orthogonal representation of $\WD_F$ by composing $\phi$ with the standard embedding $\mathrm{O}(2n,\mathbb{C})\hookrightarrow \GL_{2n}(\mathbb C)$. Similarly, when $G=\SP(W_{2n})$, we may view $\phi\in \Phi(G)$ as a $2n+1$-dimensional orthogonal representation of $\WD_F$.  For $G=\mathrm O(V_{2n})$ or $\SP(W_{2n})$, we denote the subset of $\Phi(G)$ consisting of tempered (resp. discrete)
representations by $\Phi_{\mathrm{temp}}(G)$ (resp. $\Phi_{\mathrm{disc}}(G)$). Then we have a sequence 
\begin{align*}
\Phi_{\disc}(G)\subseteq \Phi_{\mathrm{ temp }}(G)\subseteq \Phi(G).
\end{align*}
When $G=\mathrm O(V_{2n})$, we define $\Phi^{\epsilon}(G)$ to be the subset of $\Phi(G)$ consisting of $\phi$ which contains an irreducible orthogonal subrepresentation of $WD_F$ of odd dimension. 

For a representation $\phi$ of $\WD_F$, we define the local factors $L(s,\phi)$, $\varepsilon(s,\phi,\psi)$ and $\gamma(s, \phi, \psi)$ as in \cite{MR546607}. Note that the $\varepsilon$-factor and $\gamma$-factor depend on the choice of the additive character $\psi$ while the $L$-factor does not. 

The following lemma in \cite[Lemma 12.3]{MR2999299} and \cite[Lemma A.6]{MR3573972} will be used later.  
\begin{lemma}\label{gammadetermine}
	Let $\phi_{1}$ and $\phi_{2}$ be two tempered orthogonal representations of $\WD_F$ of the same dimension. Assume that 
	$$\gamma\left(s, \phi_{1} \otimes \phi_{\rho}, \psi\right) \cdot \gamma\left(-s, \phi_{1} \otimes \phi_{\rho}^{\vee}, \psi_{-1}\right)=\gamma\left(s, \phi_{2} \otimes \phi_{\rho}, \psi\right) \cdot \gamma\left(-s, \phi_{2} \otimes \phi_{\rho}^{\vee}, \psi_{-1}\right)$$
	for every irreducible representation $\phi_{\rho}$ of $\WD_F$. Then
	$$\phi_{1} \cong \phi_{2}$$
	as representations of $\WD_F$. 
\end{lemma}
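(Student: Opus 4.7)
The plan is to read off, from the assumed identity, the multiplicity $m(\phi_\rho,\phi_i)$ of each irreducible tempered representation $\phi_\rho$ of $\WD_F$ inside $\phi_i$, and then conclude by comparing these multiplicities. The key point I want to establish is that the product $\gamma(s,\phi_i\otimes\phi_\rho,\psi)\cdot\gamma(-s,\phi_i\otimes\phi_\rho^\vee,\psi_{-1})$ vanishes at $s=0$ to order exactly $2\,m(\phi_\rho,\phi_i)$, at least when $\phi_\rho$ is irreducible and tempered.

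To verify this I would restrict attention to irreducible tempered $\phi_\rho$ (still covered by the hypothesis) and expand each $\gamma$-factor via the defining relation $\gamma(s,\pi,\psi)=\epsilon(s,\pi,\psi)\,L(1-s,\pi^\vee)/L(s,\pi)$. The $\epsilon$-factors are entire and nowhere vanishing, so they contribute neither zeros nor poles. Temperedness of both $\phi_i\otimes\phi_\rho$ and $\phi_i\otimes\phi_\rho^\vee$ ensures that the numerators $L(1-s,\cdot)$ are holomorphic and nonzero at $s=0$. Hence the order of vanishing at $s=0$ of each $\gamma$-factor equals the order of pole at $s=0$ of the corresponding denominator $L(s,\cdot)$, which in turn equals the multiplicity of the trivial representation of $\WD_F$ in its argument. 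This yields an order of vanishing $m(\phi_\rho^\vee,\phi_i)$ for the first factor and $m(\phi_\rho,\phi_i)$ for the second; orthogonality (hence self-duality) of $\phi_i$ makes these two numbers equal, giving the claimed $2\,m(\phi_\rho,\phi_i)$.

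Comparing orders of vanishing at $s=0$ on the two sides of the hypothesis then produces $m(\phi_\rho,\phi_1)=m(\phi_\rho,\phi_2)$ for every irreducible tempered $\phi_\rho$. Since $\phi_1$ and $\phi_2$ are tempered, all their irreducible constituents are tempered, so equality of all such multiplicities forces $\phi_1\cong\phi_2$. The only delicate step is the non-cancellation of pole/zero contributions, together with the holomorphy and non-vanishing of the numerator $L$-factors at $s=0$; both follow from the standard fact that tempered $L$-factors are holomorphic on $\{\mathrm{Re}(s)>0\}$, so I do not anticipate a serious obstacle.
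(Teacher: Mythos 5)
Your argument is correct, and it is essentially the argument behind the paper's proof: the paper simply cites \cite[Lemma 12.3]{MR2999299} and \cite[Lemma A.6]{MR3573972}, whose proof is exactly this comparison of the orders of vanishing at $s=0$ of the two $\gamma$-factor products, using that for a tempered argument the pole order of $L(s,\cdot)$ at $s=0$ counts the multiplicity of the trivial representation (hence of $\phi_\rho$ or $\phi_\rho^\vee$ in $\phi_i$), with self-duality of $\phi_i$ giving the factor $2$. The only cosmetic slip is attributing the non-vanishing of the numerator $L$-values to temperedness; non-vanishing holds for all local $L$-factors, while temperedness is what rules out poles at $\mathrm{Re}(s)>0$.
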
 

\subsection{Whittaker data}\label{whittaker}
In order to describe the (Vogan version) LLC for symplectic and even orthogonal groups, we need to choose Whittaker data on these groups.
Firstly we describe the Whittaker datum for symplectic groups. Let $W_{2n}$ be a symplectic space of dimension $2n$. In this case, a Whittaker datum of $\SP(W_{2n})$ is a conjugacy class of the pair $\mathfrak W^\prime=(B^\prime,\mu^\prime)$, where $B^\prime=T^\prime U^\prime$ is an $F$-rational Borel subgroup of  $\SP(W_{2n})$ and $\mu$ is a generic character of $U$. Write $W_{2n}=\mathbb H^n$ as in (\ref{128}). Let $B^{\prime}=T^{\prime} U^{\prime}$ be the $F$-rational Borel subgroup of $\SP(W_{2n})$ stabilizing the complete flag
$$0 \subset\left\langle w_{1}\right\rangle \subset\left\langle w_{1}, w_{2}\right\rangle \subset \cdots \subset\left\langle w_{1}, \ldots, w_{n}\right\rangle= Y_{n},$$
where $T^{\prime}$ is the $F$-split torus stabilizing the lines $F w_{i}$ for $i=1, \ldots, n $. For $c \in F^{\times}$, we define a generic
character $\mu_{c}^{\prime}$ of $U^{\prime}$ by
$$\mu_{c}^{\prime}(u)=\psi\left(\left\langle uw_{2}, w_{1}^{*}\right\rangle_{W}+\cdots+\left\langle u w_{n}, w_{n-1}^{*}\right\rangle_{W}+c\left\langle u w_{n}^{*}, w_{n}^{*}\right\rangle_{W}\right).$$
By \cite{MR3202556} $[\S 12]$ the map $c \mapsto \mu_{c}^{\prime}$ gives a bijection (depending on the choice of $\psi )$
$$F^{\times}/F^{\times 2} \longleftrightarrow \{\mbox{$T^{\prime}$-orbits of generic characters of $U^{\prime}$}\}.$$
For $c\in F^\times$, we define a Whittaker datum of $\SP(W_{2n})$ by $\mathfrak W^\prime_{\psi,c}=(B^\prime,\mu^\prime_c)$. 
Note that $\mathfrak W^\prime_{\psi,c}$ depends on the choice of $\psi$. More precisely, for any $a\in F^\times $, it is easy to check that 
\begin{align}\label{121}
\mathfrak W^{\prime}_{(\psi_a,c)}= \mathfrak W^{\prime}_{(\psi,ac)}. 
\end{align}
We say that $\sigma\in \Irr(\SP(W_{2n}))$ is $\mathfrak W_{\psi,c}^\prime$-generic if 
\begin{align*}
\Hom_{U^\prime}(\sigma, \mu^\prime_c)\neq 0.
\end{align*}

Next we describe the Whittaker datum for even orthogonal groups. In this case, the Whittaker datum is not only associated to the group $\mathrm O(V_{2n})$, but rather the orthogonal space $V_{2n}$ is a part of the Whittaker datum. This is precisely defined in \cite[\S 2.2]{MR3708200}. For convenience of the reader, we briefly recall it here. Fix $d\in F^\times/F^{\times 2}$, a Whittaker datum is an equivalence class of the $4$-tuple $(V_{2n},B_0,T,\mu)$, where
\begin{itemize}
	\item  $V_{2n}$ is an orthogonal space with discriminant $\disc(V_{2n})=d \mod F^{\times 2}$; 
	\item $B$ be the $F$-rational Borel subgroup of $\SO (V_{2n})$;
	\item $T$ is a maximal $F$-torus contained in $B$;
	\item $\mu$ is a generic character of $U$, where $U$ is the unipotent radical of $B$. 
\end{itemize}
The equivalence relation is defined in \cite[\S 2.2]{MR3708200}. For any $c\in F^\times$, let $V_{2n}$ be the orthogonal space associate to $(d,c)$. Write $V_{2n}\cong V_{(d,c)}\oplus \mathbb H^{n-1}$ as in (\ref{127}). We denote by 
$B=TU$ the $F$-rational Borel subgroup of $\SO (V_{2n})$ stabilizing the complete flag
	$$0 \subset\left\langle v_{1}\right\rangle \subset\left\langle v_{1}, v_{2}\right\rangle \subset \cdots \subset\left\langle v_{1}, \ldots, v_{n-1}\right\rangle= X_{n-1},$$
where $T$ is the $F$-torus contained in $B$ stabilizing the lines $F v_{i}$ for $i=1, \ldots, n-1$. We define a generic character $\mu_c$ of $U$ by 
 $$\mu_{c}(u)=\psi\left(\left\langle u v_{2}, v_{1}^{*}\right\rangle_{V}+\cdots+\left\langle u v_{n-1}, v_{n-2}^{*}\right\rangle_{V}+\left\langle u e, v_{n-1}^{*}\right\rangle_{V}\right).$$ 
By \cite[Proposition 2.1]{MR3708200}, the map $c \mapsto  (V_{2n},B,T,\mu_c)$ gives a bijection (not depending on the choice of $\psi$)
\begin{align*}
F^{\times} /F^{\times 2}\longleftrightarrow  \{\mbox{equivalence classes of tuples $(V_{2n},B,T,\mu)$ with $\disc(V_{2n})=d$}\}. 
\end{align*} 
For $c\in F^\times$, we define a Whittaker datum of even orthogonal groups by $\mathfrak W_{c}=(V_{2n},B,T,\mu_c)$. 
Next we define the notion of generic representation for $\mathrm O(V_{2n})$. We identify $\mathrm{O}\left(V_{(d, c)}\right)$ as the subgroup of $\mathrm{O}\left(V_{2 n}\right)$ which fixes $\mathbb{H}^{n-1} $. Via the canonical embedding $\mathrm{O}\left(V_{(d, c)}\right) \hookrightarrow \mathrm{O}\left(V_{2 n}\right),$ we regard $\epsilon$ as an element in $\mathrm{O}\left(V_{2 n}\right)$. Note that $\epsilon$ normalizes $U$ and fixes $\mu_{c}$, so we can extend $\mu_{c}$ to $\widetilde{U}=U \rtimes\langle\epsilon\rangle$. There are exactly two such extensions $\mu_{c}^{\pm} : \widetilde{U} \rightarrow \mathbb{C}^{\times}$ which are determined by
$$\mu_{c}^{\pm}\left(\epsilon\right)=\pm 1.$$
As in \cite[\S 2.2]{MR3708200}, we say that $\pi\in \Irr(\mathrm{O}(V_{2n}))$ is $\mathfrak W_c^{\pm}$-generic if
$$\Hom_{\widetilde{U}}(\pi, \mu_{c}^{\pm}) \neq 0.$$
It is easy to check that $\pi$ is $\mathfrak W_c^+$ generic if and only if $\pi\otimes\det$ is $\mathfrak W_c^-$-generic.

\subsection{Local Langlands correspondence for symplectic groups}
We first describe the LLC for symplectic groups. This was proved by Arthur\cite{MR3135650}, with supplements by many others. 
 
\begin{theorem}\label{llcsympletic}
Let $W_{2n}$ be a $2n$-dimensional symplectic space.  
	\begin{enumerate}[(1).]
		\item There exists a surjective map
		$$
		\mathcal L:  \Irr \left(\SP(W_{2n})\right) \longrightarrow \Phi(\SP(W_{2n})),$$
		which is finite-to-one. For any $\phi \in \Phi(\SP(W_{2n}))$, we denote $\mathcal L^{-1}(\phi)$ by $\Pi_{\phi}$ and call it the $L$-packet of $\phi$. 
		\item For each Whittaker datum $\mathfrak W^\prime_{\psi,c}$ of $\SP(W_{2n})$, there is a canonical map
		\begin{align}
		\mathcal J_{\mathfrak W^\prime_{\psi,c}} : \Pi_{\phi} \longrightarrow \widehat{\bar {\mathcal {S}}_{\phi}}. 
		\end{align}
		We write $\sigma=\sigma_{\mathfrak W^\prime_{\psi,c}}(\phi,\eta)$ if $\sigma \in \Pi_{\phi}$ corresponds to $\eta \in \widehat{\bar {\mathcal S}_{\phi}}$ under $\mathcal J_{\mathfrak W^\prime_{\psi,c}}$. 
		\item Assume that $\phi$ is a tempered $L$-parameter of $\SP(W_{2n})$. Then for each Whittaker datum $\mathfrak W^\prime_{\psi,c}$ of $\SP(W_{2n})$, there is a unique $\mathfrak W^\prime_{\psi,c}$-generic representation $\sigma$ in $\Pi_{\phi}$, which corresponds to the trivial character of $\bar {\mathcal {S}}_{\phi}$ under $\mathcal J_{\mathfrak W^\prime_{\psi,c}}$. 
		\item 
		Let $\mathfrak W^\prime_{\psi,c_1}$ and $\mathfrak W^\prime_{\psi,c_2}$ be two Whittaker data of $\SP(W_{2n})$. Then for any $\sigma\in \Pi_{\phi}$, we have 
		$$
		\mathcal J_{\mathfrak W^\prime_{\psi,c_2}}(\sigma)=\mathcal J_{\mathfrak W^\prime_{\psi,c_1}}(\sigma)\otimes \eta_{\phi,c_2/c_1},
		$$
		where $\eta_{\phi,c_2/c_1}$ is defined in (\ref{eta}). 
		\item
		The map $\mathcal L$ preserves temperedness and discreteness, i.e., $\sigma\in \Pi_{\phi}$ is a tempered representation if and only if $\phi$ is a tempered parameter and $\sigma\in \Pi_{\phi}$ is a discrete series representation if and only if $\phi$ is a discrete parameter.
		\item
		\textbf{(Local intertwining relation)}
		Assume that $\phi=\phi_{\tau}+\phi_{0}+\phi_{\tau}^{\vee}$, where $\phi_{0}$ is an element in $\Phi_{\mathrm {temp}}\left(\SP(W_{2n_0})\right)$
		and $\phi_{\tau}$ is an irreducible tempered representation of $\WD_{F}$ corresponding to $\tau \in \Irr(\GL_{k}(F))$, so there is a natural embedding $\mathcal S_{\phi_{0}}\hookrightarrow \mathcal {S}_{\phi}$. Let $Q$ be a parabolic subgroup of $\SP(W_{2n})$ with Levi subgroup $$M_{Q}\cong \GL_{k}(F) \times \SP(W_{2n_0})$$ and $\sigma_{0}$ be the irreducible tempered representation of $\SP(W_{2n_0})$ corresponding to $(\phi_{0},\eta_0)$ under $\mathcal L$ and $\mathcal J_{\mathfrak W^\prime_{\psi,1}}$. Then the induced representation $\Ind_{Q}^{\SP(W_{2n})}\left(\tau \otimes \sigma_{0}\right)$ has a decomposition 
		$$
		\Ind_{Q}^{\SP(W_{2n})}(\tau \otimes \sigma_{0})=\bigoplus_{\eta}\sigma_{\mathfrak W^\prime_{\psi,1}}(\phi,\eta),
		$$
		where the sum runs over all $\widehat{\bar{ \mathcal S}_{\phi}}$ such that $\eta|_{ \bar{\mathcal S}_{\phi_0}}=\eta_0$. Moreover if $\phi_{\tau}$ is self-dual and of orthogonal type, let 
		$$
		R_{\mathfrak W^\prime_{\psi,1}}(w, \tau \otimes \sigma_0)
		\in \End_{\SP(W_{2n})}\left(
		\Ind_{Q}^{\SP(W_{2n})}(\tau \otimes \sigma_{0})\right)$$ be the normalized intertwining operator associated to the Whittaker datum $\mathfrak W^\prime_{\psi,1}$ (see Section \ref{normalizingintertwing} below), where $w$ is the unique non-trivial element in the relative Wely group for $M_Q$. Then
		$$R_{\mathfrak W^\prime_{\psi,1}}(w, \tau \otimes \sigma_0) | _{\sigma}=\mathcal J_{\mathfrak W^\prime_{\psi,1}}(\sigma)(a),$$
		where $a \in \mathcal {S}_{\phi}$ corresponds to $\phi_{\tau}$.
		\item \textbf{(Compatibility with Langlands quotients)}
		Assume that
		$$\phi=\phi_1|\cdot|_{F}^{s_{1}}+ \cdots+ \phi_{r}|\cdot|_{F}^{s_{r}}+\phi_{0}+ \phi_{r}^\vee |\cdot|_{F}^{-s_{r}}+ \cdots+ \phi_1^\vee|\cdot|_{F}^{-s_{1}},$$ 
		where 
		\begin{itemize}
			\item $\phi_i$ is an irreducible tempered representation of $\WD_F$ of dimension $d_i$; 
			\item $\phi_0\in \Phi_{\mathrm{temp}}(\SP(W_{2n_0}))$;
			\item $s_1\geq \cdots \geq s_r>0$;
			\item $d_1+\cdots +d_r+n_0=n$. 
		\end{itemize}
		We denote by $\tau_{i}$ the irreducible tempered representation of $\GL_{d_i}(F)$ corresponding to $\phi_i$. Then the $L$-packet $\Pi_{\phi}$ consists of the unique irreducible quotients $\sigma$ of the standard modules $$\Ind_{Q}^{\SP(W_{2n})}\left(\tau_{1}|\cdot|_{F}^{s_{1}} \otimes \cdots \otimes \tau_{r}|\cdot|_{F}^{s_{r}} \otimes \sigma_{0}\right),$$
		where $Q$ is a parabolic subgroup of $\SP(W_{2n})$ with Levi subgroup $$M_{Q}\cong \GL_{k_{1}}(F) \times \cdots \times \GL_{k_{r}}(F) \times \SP(W_{2n_0})$$ and $\sigma_{0}$ runs over elements of $\Pi_{\phi_{0}}(\SP(W_{2n_0}))$. Moreover, the natural embedding $\mathcal S_{\phi_0}\hookrightarrow  \mathcal S_{\phi}$ is an isomorphism and $$\mathcal J_{\mathfrak W^\prime_{\psi,c}}(\sigma)=\mathcal J_{\mathfrak W^\prime_{\psi,c}}(\sigma_0)$$ if we identify $\mathcal S_{\phi_0}$ with $\mathcal S_{\phi}$ via the above isomorphism. 
		\item
		The map $\mathcal L$ respects standard $\gamma$-factors. Namely, we have 
		$$
		\gamma(s,\sigma,\chi,\psi)=\gamma(s,\phi\otimes\chi,\psi)
		$$
		for $\sigma\in \Pi_{\phi}$ and any character $\chi$ of $F^{\times}$. Here $\gamma(s,\sigma,\chi,\psi)$ is the standard $\gamma$-factor defined by Lapid-Rallis \cite{MR2192828} and $\gamma(s,\phi\otimes\chi,\psi)$ is the $\gamma$-factor defined in \cite{MR546607}.  
		\item 
		The map $\mathcal L$ respects Plancherel measures. Namely, we have 
		\begin{align*}
		\mu_{\psi}(\tau_s\otimes \sigma)&=\gamma(s,\phi_{\tau}\otimes \phi^{ \vee},\psi)\times \gamma(-s,\phi_{\tau}^{\vee}\otimes\phi, \psi_{-1} )\\
		& \times \gamma (2s, \wedge^2\circ \phi_{\tau}, \psi)\times \gamma(-2s, \wedge^{2} \circ \phi_{\tau}^{\vee},\psi_{-1})
		\end{align*}
		for any $\sigma\in \Pi_{\phi}$ and $\tau\in \Irr (\GL_k(F))$ with $L$-parameter $\phi_{\tau}$. 
	\end{enumerate}
\end{theorem}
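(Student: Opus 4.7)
The plan is to assemble this theorem entirely from the literature, since essentially every piece has already been established by Arthur and subsequent authors; nothing new is needed on our side, and our later work will take Theorem \ref{llcsympletic} as a black box. I would proceed part by part, treating (1)--(5) as one cluster, (6)--(7) as a second cluster requiring some care about normalizations, and (8)--(9) as a third cluster that is essentially automatic from the characterization of the map $\mathcal L$.

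For parts (1)--(5), I would invoke Arthur's main local theorem in \cite{MR3135650} (Theorems 1.5.1 and 2.2.1 there, together with the construction of tempered packets in Chapter 6). The surjectivity of $\mathcal L$ and the parametrization by $\widehat{\bar{\mathcal S}_\phi}$ is Arthur's Theorem 1.5.1. The distinguished role of the trivial character in (3), i.e.\ that the unique $\mathfrak W'_{\psi,c}$-generic member of a tempered packet corresponds to the trivial character, is the content of Atobe's work \cite{MR3708200} (via Konno's and Shahidi's tempered $L$-packet conjecture, proved in this setting by Arthur). The Whittaker-change formula (4) is a direct consequence of the definition \eqref{eta} of $\eta_{\phi,c_2/c_1}$ and the relation \eqref{121} between Whittaker data, together with the analogous formula for the change of the additive character in Arthur's labeling; this is essentially \cite[\S 3]{MR3708200}. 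Part (5) is Arthur's characterization of tempered and discrete packets via the boundedness/discreteness of the parameter.

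For (6), the local intertwining relation, I would cite Arthur's Theorem 2.4.1 in \cite{MR3135650}, together with the explicit identification of the normalizing factors with Shahidi's normalization. The careful matching of the sign character $a\mapsto \mathcal J_{\mathfrak W'_{\psi,1}}(\sigma)(a)$ with the eigenvalue of $R_{\mathfrak W'_{\psi,1}}(w,\tau\otimes \sigma_0)$ at the summand $\sigma$ requires the compatibility lemma on normalized intertwining operators in Appendix \ref{plancherelmeasure}; this is where one has to be most careful, and I expect this to be the most delicate step to verify, because Arthur's normalization must be compared to the one we use (which follows the conventions of Atobe--Gan). For (7), compatibility with Langlands quotients, I would combine the standard Langlands classification of irreducible admissible representations of $\SP(W_{2n})$ in terms of standard modules with Arthur's result that the map $\mathcal L$ is defined precisely by sending the Langlands quotient of a standard module to the corresponding parameter obtained by adding the twists $\phi_i|\cdot|_F^{s_i}\oplus \phi_i^\vee|\cdot|_F^{-s_i}$ to $\phi_0$; the natural isomorphism $\mathcal S_{\phi_0}\cong \mathcal S_\phi$ then follows from the fact that non-self-dual constituents contribute nothing to the component group.

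Finally for (8) and (9), the preservation of standard $\gamma$-factors is established by Lapid--Rallis \cite{MR2192828} for the doubling $\gamma$-factor combined with Arthur's compatibility of his classification with the Shahidi $\gamma$-factors, and by the identification of the two $\gamma$-factors (see the discussion in \cite[\S 10--11]{MR3166215}). The Plancherel measure formula in (9) follows from Shahidi's formula for Plancherel measures in terms of local coefficients, combined with (8) applied in the expected way. The main obstacle throughout is bookkeeping: keeping the normalizations of Whittaker data, intertwining operators, and parameters consistent with those of \cite{MR3708200} and \cite{MR3166215}, which is needed so that this theorem can be fed directly into the theta-correspondence machinery in the later sections.
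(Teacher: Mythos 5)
Your proposal takes essentially the same route as the paper: Theorem \ref{llcsympletic} is not proved there at all, but is quoted as Arthur's theorem \cite{MR3135650} ``with supplements by many others,'' which is exactly the assembly-from-the-literature you describe. The only adjustments worth noting are that for part (6) the paper does not carry out the normalization comparison by hand: it states the intertwining relation in the form of Gan--Ichino \cite[\S 6.6]{MR3788848} and appeals to Atobe \cite{MR3801418} for its deduction from Arthur's Theorem 2.4.1, and the generic-member statement (3) for symplectic groups is part of Arthur's own Whittaker normalization rather than \cite{MR3708200}, which concerns even orthogonal groups.
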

\begin{remark}
	The local intertwining relation we used here is the same as in \cite[\S 6.6]{MR3788848}, which is a little bit different from the local intertwining relation formulated by Arthur \cite[\S 2.4]{MR3135650}. In \cite[Theorem 2.2]{MR3801418}, Atobe proved that the local intertwining relation we used here is a consequence of the local intertwining relation formulated by Arthur. 
\end{remark}





 
For any $a\in F^\times$, let $\delta_a\in \GL(W_{2n})$ be such that 
\begin{align}\label{112}
\langle \delta_a w, \delta_a w^\prime\rangle_W=a \langle w,w^\prime\rangle_W \,\,\mbox{for all $w,w^\prime\in W_{2n}$}. 
\end{align}
For any $\sigma\in \Irr(\SP(W_{2n}))$, we define a new representation $\sigma^{\delta_a}$ of $\SP(W_{2n})$ by 
\begin{align}\label{113}
\sigma^{\delta_a}(g)=\sigma(\delta_a^{-1}g\delta_a). 
\end{align}
Note that a different choice of $\delta_a$ differs by an element in $\SP(W_{2n})$. Hence the isomorphic class of $\sigma^{\delta_a}$ is independent of the choice of $\delta_a$. The map  
\begin{align*}
\SP(W_{2n})&\rightarrow \SP(W_{2n})\\
g &\mapsto \delta_a^{-1}g\delta_a
\end{align*}
transfers $\mathfrak W^\prime_{\psi,c}$ to $\mathfrak W^\prime_{\psi,ac}$. It follows from \cite[Theorem 4.3]{MR3194648} that 
\begin{align}\label{103}
\mathcal L(\sigma^{\delta_a})=\mathcal L(\sigma)\quad\mbox{and}\quad \mathcal J_{\mathfrak W^\prime_{\psi,c}}(\sigma^{\delta_a})=\mathcal J_{\mathfrak W^\prime_{\psi,ac}}(\sigma)=\mathcal J_{\mathfrak W^\prime_{\psi,c}}(\sigma)\otimes \eta_{\phi,a}, 
\end{align}
where $\phi=\mathcal L(\sigma)$. Recall that according to \cite[Chapter 4. II.1]{MR1041060}, we have $\sigma^\vee\cong \sigma^{\delta_{-1}}$. Hence in particular, it follows that  
\begin{align}\label{104}
\mathcal L(\sigma^\vee)=\mathcal L(\sigma)\quad\mbox{and}\quad \mathcal J_{\mathfrak W^\prime_{\psi,c}}(\sigma^\vee)=\mathcal J_{\mathfrak W^\prime_{\psi,c}}(\sigma)\otimes \eta_{\phi,-1}. 
\end{align}

\subsection{Local Langlands correspondence for even orthogonal groups}\label{LLCeven}
Next we state the LLC for even orthogonal groups, which is the main theorem of this paper.  

\begin{theorem}\label{desideratumall}
	Fix $(d,c)\in (F^\times)^2$. Let $V_{2n}=V_{2n}^+$ be the orthogonal space associated to $(d,c)$, and $\chi_V=(\cdot, d)_F$ be the discriminant character of $V_{2n}$. 
	\begin{enumerate}[(1).]
		\item There exists a surjective map
		$$
		\mathcal L: \bigsqcup_{\delta\in \{\pm 1\}} \Irr \left(\mathrm O(V_{2n}^\delta)\right) \longrightarrow \Phi(\mathrm O(V_{2n})),$$
		which is finite-to-one. For any $\phi \in \Phi(\mathrm O(V_{2n}))$, we denote $\mathcal L^{-1}(\phi)$ by $\Pi_{\phi}$ and call it the $L$-packet of $\phi$. We also write $\Pi_{\phi}(\mathrm O(V_{2n}))=\Pi_{\phi}\cap \Irr(\mathrm O(V_{2n}))$. 
		\item For each $c^\prime \in F^\times$ and Whittaker datum $\mathfrak W_{c^\prime}$, there exists a canonical bijection
		\begin{align}\label{51}
		\mathcal J_{\mathfrak W_{c^\prime}} : \Pi_{\phi} \longrightarrow \widehat {\mathcal {S}_{\phi}}. 
		\end{align}
		We write $\pi=\pi_{\mathfrak W_{c^\prime}}(\phi,\eta)$ if $\pi\in  \Pi_{\phi}$ corresponds to $\eta \in \widehat{\mathcal {S}_{\phi}}$ under $\mathcal J_{\mathfrak W_{c^\prime}}$.
		\item Assume that $\phi$ is a tempered $L$-parameter of $\mathrm O(V_{2n})$. Then for each Whittaker datum $\mathfrak W_{c^\prime}$, 
		\begin{itemize}
			\item there is a unique $\mathfrak W^+_{c^\prime}$-generic representation $\pi$ in $\Pi_{\phi}$, which corresponds to the trivial character of $\mathcal {S}_{\phi}$ under $\mathcal J_{\mathfrak W_{c^\prime}}(\pi)$; 
			\item there is a unique $\mathfrak W^{-}_{c^\prime}$-generic representation $\pi$ in $\Pi_{\phi}$, which corresponds to $\kappa_{\phi}$ under $\mathcal J_{\mathfrak W_{c^\prime}}(\pi)$. Here $\kappa_{\phi}$ is defined in (\ref{kappa}). 
		\end{itemize}
		\item 
		Let $\mathfrak W_{c_1}$ and $\mathfrak W_{c_2}$ be two Whittaker data. Then for any $\pi\in \Pi_{\phi}$, we have 
		$$
		\mathcal J_{\mathfrak W_{c_2}}(\pi)=\mathcal J_{\mathfrak W_{c_1}}(\pi)\otimes \eta_{\phi\chi_{V},c_2/c_1},
		$$
		where $\phi\chi_{V}=\phi\otimes\chi_{V}$ and $\eta_{\phi\chi_{V},c_2/c_1}$ is defined in (\ref{eta}). 
		\item
		The map $\mathcal L$ preserves temperedness and discreteness, i.e., $\pi\in \Pi_{\phi}$ is a tempered representation if and only if $\phi$ is a tempered parameter and $\pi\in \Pi_{\phi}$ is a discrete series representation if and only if $\phi$ is a discrete parameter.
		\item
		For each Whittaker datum $\mathfrak W_{c^\prime}$, $\pi$ is a representation of $\mathrm O(V_{2n}^+)$ if and only if
		$$\mathcal J_{\mathfrak W_{c^\prime}}(\pi)(z_\phi)=  \chi_{V}(c^\prime/c).$$
		\item
		The following are equivalent:
		\begin{itemize}
			\item 	$\phi \in \Phi^{\epsilon}\left(\mathrm{O}\left(V_{2 n}\right)\right)$;
			\item some $\pi\in \Pi_{\phi}$ satisfies $\pi \otimes \det \neq \pi$;
			\item all $\pi\in \Pi_{\phi}$ satisfy $\pi \otimes\det 
			\neq \pi$. 
		\end{itemize}
	    \item
		 For $\pi\in \Pi_{\phi}$, the determinant twist $\pi\otimes \det$ also belongs to $\Pi_{\phi}$, and 
		$$\mathcal J_{\mathfrak W_{c^\prime}}(\pi \otimes \det)=\mathcal J_{\mathfrak W_{c^\prime}}(\pi)\otimes  \kappa_{\phi}.$$
		\item
		\textbf{(Local intertwining relation)}
		Assume that $\phi=\phi_{\tau}+\phi_{0}+\phi_{\tau}^{\vee}$, where $\phi_{0}$ is an element in $\Phi_{\mathrm {temp}}\left(\mathrm O(V_{2n_0})\right)$
		and $\phi_{\tau}$ is an irreducible tempered representation of $WD_{F}$ corresponding to $\tau \in \Irr(\GL_{k}(F))$, so there is a natural embedding $S_{\phi_{0}}\hookrightarrow \mathcal {S}_{\phi}$. Let $P$ be a parabolic subgroup of $\mathrm O(V_{2n}^\delta)$ with Levi subgroup $$M_{P}\cong \GL_{k}(F) \times \mathrm O(V_{2n_0}^\delta)$$ and $\pi_{0}$ be the irreducible tempered representation of $\mathrm O(V_{2n_0}^\delta)$ corresponding to $(\phi_{0},\eta_0)$ under $\mathcal L$ and $\mathcal J_{\mathfrak W_{\psi,c}}$. Then the induced representation $\Ind_{P}^{\mathrm O(V_{2n}^\delta)}\left(\tau \otimes \pi_{0}\right)$ has a decomposition 
		$$
		\Ind_{P}^{\mathrm O(V_{2n}^\delta)}\left(\tau \otimes \pi_{0}\right)=\bigoplus_{\eta}\pi_{\mathfrak W_{c^\prime}}(\phi,\eta),
		$$
		where the sum runs over all $\widehat{\mathcal {S}_{\phi}}$ such that $\eta|_{\mathcal S_{\phi_0}}=\eta_0$. Moreover if $\phi_{\tau}$ is self-dual and of orthogonal type, let 
		$$
		R_{\mathfrak W_{c^\prime}}(w, \tau \otimes \pi_0)
		\in \End_{\mathrm O(V_{2n}^\delta)}
		\left(\Ind_{P}^{\mathrm O(V_{2n}^\delta)}(\tau\otimes\pi_0)\right)$$ be the normalized intertwining operator associated to the Whittaker datum $\mathfrak W_{c^\prime}$ (see Section \ref{normalizingintertwing} below), where $w$ is the unique non-trivial element in the relative Wely group for $M_P$. Then
		$$R_{\mathfrak W_{c^\prime}}(w, \tau \otimes \pi_0) |_{\pi}=\mathcal J_{\mathfrak W_{c^\prime}}(\pi)(a),$$
		where $a \in \mathcal {S}_{\phi}$ corresponds $\phi_{\tau}$. 
	\item \textbf{(Compatibility with Langlands quotients)}
	Assume that
	$$\phi=\phi_1|\cdot|_{F}^{s_{1}}+\cdots+ \phi_{r}|\cdot|_{F}^{s_{r}}+\phi_{0}+ \phi_{r}^\vee |\cdot|_{F}^{-s_{r}}+ \cdots+ \phi_1^\vee|\cdot|_{F}^{-s_{1}},$$ 
	where 
	\begin{itemize}
		\item $\phi_i$ is an irreducible tempered representation of $\WD_F$ of dimension $d_i$; 
		\item $\phi_0\in \Phi_{\mathrm{temp}}(\mathrm O(V_{2n_0}))$;
		\item $s_1\geq \cdots \geq s_r>0$;
		\item $d_1+\cdots +d_r+n_0=n$. 
	\end{itemize}
	We denote by $\tau_{i}$ the irreducible tempered representation of $\GL_{d_i}(F)$ corresponding to $\phi_i$. Then the $L$-packet $\Pi_{\phi}$ consists of the unique irreducible quotients $\pi$ of the standard modules $$\Ind_{P}^{\mathrm O(V_{2n}^\delta)}\left(\tau_{1}|\cdot|_{F}^{s_{1}} \otimes \cdots \otimes \tau_{r}|\cdot|_{F}^{s_{r}} \otimes \pi_{0}\right),$$
	where $P$ is a parabolic subgroup of $\mathrm O(V_{2n}^\delta)$ with Levi subgroup $$M_{P}\cong \GL_{k_{1}}(F) \times \cdots \times \GL_{k_{r}}(F) \times \mathrm O(V_{2n_0}^\delta)$$ and $\pi_{0}$ runs over elements of $\Pi_{\phi_{0}}$. Moreover, the natural embedding $\mathcal S_{\phi_0}\hookrightarrow  \mathcal {S}_{\phi}$ is an isomorphism and $$\mathcal J_{\mathfrak W_{c^\prime}}(\pi)=\mathcal J_{\mathfrak W_{c^\prime}}(\pi_0)$$ if we identify $\mathcal S_{\phi_0}$ with $\mathcal {S}_{\phi}$ via the above isomorphism. 
	\item
		The map respects standard $\gamma$-factors. Namely, we have 
		$$
		\gamma(s,\pi,\chi,\psi)=\gamma(s,\phi\otimes\chi,\psi)
		$$
		for $\pi\in \Pi_{\phi}$ and any character $\chi$ of $F^{\times}$. Here $\gamma(s,\pi,\chi,\psi)$ is the standard $\gamma$-factor defined by Lapid-Rallis \cite{MR2192828} and $\gamma(s,\phi\otimes\chi,\psi)$ is the $\gamma$-factor defined in \cite{MR546607}.  
		\item 
		The map respects Plancherel measures. Namely, we have 
		\begin{align*}
		\mu_{\psi}(\tau_s\otimes \pi)&=\gamma(s,\phi_{\tau}\otimes \phi^{ \vee},\psi)\times \gamma(-s,\phi_{\tau}^{\vee}\otimes\phi, \psi_{-1} )\\
		& \times \gamma (2s, \wedge^2\circ \phi_{\tau}, \psi)\times \gamma(-2s, \wedge^{2} \circ \phi_{\tau}^{\vee},\psi_{-1})
		\end{align*}
		for any $\pi\in \Pi_{\phi}$ and $\tau\in \Irr (\GL_k(F))$ with $L$-parameter $\phi_{\tau}$. 
	\end{enumerate}
\end{theorem}

The LLC for quasi-split even orthogonal groups has been proved by Arthur\cite{MR3135650} and was explicated in Atobe--Gan\cite{MR3708200}. More precisely, they proved
\begin{theorem}\label{Arthurorth}
	Fix $(d,c)\in (F^\times)^2$. Let $V_{2n}=V_{2n}^+$ be the orthogonal space associated to $(d,c)$, and $\chi_V=(\cdot, d)_F$ be the discriminant character of $V_{2n}$. Put $E=F(\sqrt d)$. 
	\begin{enumerate}[(1).]
		\item There exists a surjective map
		$$
		\mathcal L^{A}: \Irr\left(\mathrm O(V_{2n}^{+})\right) \longrightarrow \Phi\left(\mathrm O(V_{2n})\right),$$
		which is finite-to-one. For $\phi \in \Phi(\mathrm O(V_{2n})),$ we denote the inverse image of $\phi$ by $\Pi_{\phi}(\mathrm O(V^+_{2n}))$. 
		\item For any $c^\prime \in c N_{E/F}(E^\times)$, there exists a canonical bijection
		$$\mathcal J_{\mathfrak W_{c^\prime}}^A: \Pi_{\phi}(\mathrm O(V_{2n}^+)) \longrightarrow \widehat{\bar {\mathcal {S}}_{\phi}}.$$
	\end{enumerate}  
	 Moreover, the maps $\mathcal L^A$ and $\mathcal J_{\mathfrak W_{c^\prime}}^A$ satisfy all the other properties in Theorem  \ref{desideratumall}. 
\end{theorem}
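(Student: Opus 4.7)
The theorem is due to Arthur \cite{MR3135650}, with the precise reformulation for quasi-split even orthogonal groups carried out by Atobe-Gan \cite{MR3708200}. My plan would therefore be not to reprove it from scratch but to indicate the strategy and where each piece comes from; in the paper itself it is invoked essentially as a black box to be matched against the theta-constructed LLC of Theorem \ref{desideratumall}.

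The backbone is the stable trace formula together with twisted endoscopic transfer from $\mathrm{O}(V_{2n}^+)$ to $\GL_{2n}$. First I would globalize: choose a number field $k$ and a quasi-split even orthogonal group over $k$ with the given local completion at one place. Comparing the stabilized trace formula of $\mathrm{O}(V_{2n}^+)$ with the $\theta$-twisted trace formula of $\GL_{2n}$ produces, for each self-dual automorphic representation of $\GL_{2n}$ of orthogonal type, a packet of discrete automorphic representations of $\mathrm{O}(V_{2n}^+)$, together with an endoscopic character identity at every place. At each place this identity cuts out the local packet $\Pi_\phi(\mathrm{O}(V_{2n}^+))$ and the map $\mathcal L^A$, and the pairing with $\widehat{\bar{\mathcal S}_\phi}$ extracted from the character identity gives the bijection $\mathcal J^A_{\mathfrak W_c}$ on tempered packets, with the normalization fixed by Shahidi's generic packet conjecture (also proved by Arthur): the unique $\mathfrak W_c$-generic member goes to the trivial character.

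The remaining steps would be: extend to non-tempered $\phi$ via Langlands quotients of standard modules, using the natural isomorphism $\mathcal S_{\phi_0} \xrightarrow{\sim} \mathcal S_\phi$ to inherit the character; deduce compatibility with standard $\gamma$-factors and Plancherel measures from the corresponding compatibilities on the $\GL_{2n}$ side via the endoscopic character identity; and prove the local intertwining relation (in the form we use here) following the Arthur-style argument reproduced by Atobe in \cite{MR3801418}. Behaviour under the $\det$-twist and under change of Whittaker datum follows formally from the shape of the component group $\mathcal S_\phi$ and the definitions of $\kappa_\phi$ and $\eta_{\phi\chi_V,c_2/c_1}$ in subsection \ref{Lparmeter}.

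The principal obstacle, of course, is the stabilization of the twisted trace formula, which rests on the full weight of \cite{MR3135650} together with the fundamental lemma; this we take as given. A secondary but more concrete subtlety, which is really the contribution of \cite{MR3708200}, is tracking the Whittaker normalization as the parameter $c$ varies: the set of Whittaker data for $\bigsqcup_{V_{2n}^\bullet}\mathrm{O}(V_{2n}^\bullet)$ is parametrized by $F^\times/F^{\times 2}$ via $c \mapsto \mathfrak W_c$, but only those $c$ lying in $N_{E/F}(E^\times)F^{\times 2}$ give Whittaker data for a fixed inner form, so the dependence of $\mathcal J^A_{\mathfrak W_c}$ on $c$ via the twist by $\eta_{\phi\chi_V,c_2/c_1}$ must be read off carefully from the dictionary in subsection \ref{whittaker}. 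Once these bookkeeping issues are in place, all the listed properties are assembled as they appear in \cite[\S 2--3]{MR3708200}.
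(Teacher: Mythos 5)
Your proposal matches the paper's treatment of this statement: the paper gives no proof of Theorem \ref{Arthurorth} but takes it as an external input, citing exactly the sources and method you outline — Arthur's stable/twisted trace formula results \cite{MR3135650}, the explication and Whittaker-normalization bookkeeping of Atobe--Gan \cite{MR3708200}, and Atobe \cite{MR3801418} for the form of the local intertwining relation used here. Nothing further is required.
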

\begin{remark}
\begin{enumerate}
	\item M\oe glin \cite[\S 1.4 Theorem 1.4.1]{MR2767522} and M\oe glin-Renard \cite{MR3839702} have partially extended Theorem \ref{Arthurorth} to pure inner forms as well, though we are not sure if all the statements in Theorem \ref{desideratumall} were verified in their work.
	\item For a fixed pair $(d,c)\in (F^\times)^2$, if $d\notin F^{\times 2}$, then both $\mathrm O(V_{2n}^+)$ and $\mathrm O(V_{2n}^-)$ are quasi-split. Indeed, we may take $V_{2n}^-=c_0\cdot  V_{2n}^+$ for a $c_0\notin N_{E/F}(E^\times)$, where $c_0\cdot V_{2n}^+=V_{2n}^+$ as a vector space and 
\begin{align*}
\langle x, y\rangle_{c_0\cdot V^+_{2n}}= c_0\cdot \langle x,y\rangle_{V_{2n}^+}. 
\end{align*}
As subgroups of $\GL(V_{2n}^+)$, we have an identification 
\begin{align*}
\id: \mathrm O(V_{2n}^+)=\mathrm O(V_{2n}^-). 
\end{align*}
This induces a bijection 
\begin{align*}
\id^*: \Irr \left(\mathrm O(V_{2n}^-)\right)\rightarrow  \Irr \left(\mathrm O(V_{2n}^+)\right). 
\end{align*}
Under this identification, we can extend the map $\mathcal L^A$ to $\Irr\left(\mathrm O(V_{2n}^{-})\right)$ by defining 
	\begin{align*}
	\mathcal L^A: \Irr\left(\mathrm O(V_{2n}^{-})\right)&\longrightarrow \Phi(\mathrm O(V_{2n}))\\
\pi&\mapsto\mathcal L^A(\pi)\coloneqq \mathcal L^A(\id^*(\pi)). 
	\end{align*}
	We can also extend the map $\mathcal J_{\mathfrak W_{c^\prime}}^A$ as follows: 
	\begin{itemize}
		\item If $c^\prime \in c N_{E/F}(E^\times)$, we extend the map $\mathcal J_{\mathfrak W_{c^\prime}}^A$ to $\Irr\left(\mathrm O(V_{2n}^{-})\right)$ by defining 
		$$\mathcal J_{\mathfrak W_{c^\prime}}^A(\pi)= \mathcal J_{\mathfrak W_{c^\prime}}^A(\id^*(\pi))\otimes \eta_{\phi\chi_{V},c_0}.$$
		\item If $c^\prime \notin c N_{E/F}(E^\times)$, we define the map $\mathcal J_{\mathfrak W_{c^\prime}}^A$ by  
		\begin{align*}
		\mathcal J_{\mathfrak W_{c^\prime}}^A(\pi)\coloneqq \mathcal J_{\mathfrak W_{c}}^A(\pi)\otimes \eta_{\phi\chi_{V},c^\prime/c}(\pi)\quad \mbox{for}\,\, \pi\in \bigsqcup_{\delta\in \{\pm 1\}} \Irr\left(\mathrm O(V_{2n}^{\delta})\right).
		\end{align*} 
	\end{itemize}
	One can check that these extensions satisfy all the properties in Theorem \ref{desideratumall}. Hence when $d\notin F^{\times 2}$, Theorem \ref{desideratumall} follows from Theorem \ref{Arthurorth} and the constructions above. But our proof for Theorem \ref{desideratumall} in later sections will not use these constructions. In particular, it provides another way to prove Theorem \ref{desideratumall} for the case when $d\notin F^{\times 2}$.
\end{enumerate}	
\end{remark}
The following sections are devoted to proving Theorem \ref{desideratumall}. As mentioned in the introduction, we will use Theorem \ref{llcsympletic} and the local theta correspondence to construct the maps $\mathcal L$ and $\mathcal J_{\mathfrak W_{c^\prime}}$ in Theorem \ref{desideratumall}. We shall prove the maps $\mathcal L$ and $\mathcal J_{\mathfrak W_{c^\prime}}$ constructed by us satisfy all the properties in Theorem \ref{desideratumall}. In Section 9, we will also show that our classification coincides with Arthur's in the quasi-split case, i.e.,  
\begin{align*}
\mathcal L|_{\Irr\left(\mathrm O(V_{2n}^{+})\right)} &= \mathcal L^A|_{\Irr\left(\mathrm O(V_{2n}^{+})\right)},\\
\mathcal J_{\mathfrak W_{c^\prime}}|_{\Pi_{\phi}(\mathrm O(V_{2n}^+))}&=\mathcal J_{\mathfrak W_{c^\prime}}^A|_{\Pi_{\phi}(\mathrm O(V_{2n}^+))} \,\, \mbox{for}\,\,c^\prime \in c N_{E / F}(E^{ \times}).
\end{align*}

\section{Construction}\label{construction}
We construct the LLC for even orthogonal groups in this section. We first construct the correspondence for tempered representations and then extend our construction to non-tempered representations by using the Langlands classification for $p$-adic groups. Several properties in Theorem \ref{desideratumall} will be proved along the way. 
\subsection{Construction of $\mathcal L_{\psi}$}\label{constructionparameter}
Fix $(d,c)\in (F^\times) ^2$. Let $V_{2n}=V_{2n}^+$ be the $2n$-dimensional orthogonal space associated to $(d,c)$, and $\chi_{V}=(\cdot, d)_F$ be the discriminant character of $V_{2n}$. Fix a non-trivial character $\psi$ of $F$. In this subsection, we will construct a map 
$$
\mathcal L_{\psi}: \bigsqcup_{\delta\in \{\pm 1\}} \Irrt \left(\mathrm O(V_{2n}^\delta)\right) \longrightarrow \Para(\mathrm O(V_{2n})).$$
Later in Section \ref{changeofpsi}, we will show that $\mathcal L_\psi$ is independent of the choice of $\psi$. So we get our desired $\mathcal L$.  

For any $\pi\in \Irrt \mathrm O(V^\delta_{2n})$, consider the following two representations

\begin{equation}\label{con}
\begin{cases}
\sigma_1\coloneqq\theta_{W_{2n}, V^\delta_{2n} ,\psi}(\pi)\quad &\mbox{of}\,\, \SP(W_{2n});\\
\sigma_2\coloneqq\theta_{W_{2n-2}, V^\delta_{2n}, \psi}(\pi\otimes \det)\quad &\mbox{of}\,\, \SP(W_{2n-2}).
\end{cases}
\end{equation}
By the conservation relation (Theorem \ref{con2}), exactly one of $\sigma_{i}$ is non-vanishing. We shall attach an $L$-parameter to $\pi$ in terms of the $L$-parameter of $\sigma_{i}$.

$\bullet$ \underline{Case I}: If $\sigma_1=\theta_{W_{2n}, V^\delta_{2n}, \psi}(\pi)\neq 0$, then we have: 
	\begin{lemma}
		Let $\phi^+$ be the $L$-parameter of $\sigma_1=\theta_{W_{2n}, V^\delta_{2n}, \psi}(\pi)$. Then $\chi_{V}\subseteq \phi^+$. 
	\end{lemma}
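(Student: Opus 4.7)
The plan is to translate the non-vanishing of $\sigma_1$ into an analytic statement about a standard $\gamma$-factor, and then read off from the resulting pole that $\chi_V$ must appear in $\phi^+$. The key ingredients will be Lemma \ref{pole}, the compatibility of Arthur's LLC with $\gamma$-factors (Theorem \ref{llcsympletic}(8)), and the fact that the $L$-factor of a tempered parameter is holomorphic on $\Re(s) > 0$.

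First I would observe that since $\sigma_1 = \theta_{W_{2n}, V^\bullet_{2n}, \psi}(\pi) \neq 0$, the Weil representation $\omega$ admits $\pi \boxtimes \sigma_1$ as a non-zero quotient, which immediately implies $\Theta_{V^\bullet_{2n}, W_{2n}, \psi}(\sigma_1) \neq 0$. Applying Lemma \ref{pole} to $\sigma_1$ then gives that $\gamma(s, \sigma_1, \chi_V, \psi)$ has a pole at $s = 1$. Invoking Theorem \ref{llcsympletic}(8) rewrites this as a pole of $\gamma(s, \phi^+ \otimes \chi_V, \psi)$ at $s = 1$.

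Next I would expand the $\gamma$-factor as
$$\gamma(s, \phi^+ \otimes \chi_V, \psi) = \varepsilon(s, \phi^+ \otimes \chi_V, \psi) \cdot \frac{L(1-s, (\phi^+)^\vee \otimes \chi_V^{-1})}{L(s, \phi^+ \otimes \chi_V)}.$$
Since $\phi^+$ takes values in $\mathrm{SO}(2n+1, \mathbb{C})$ it is self-dual, and $\chi_V$ is quadratic, so $(\phi^+)^\vee \otimes \chi_V^{-1} \cong \phi^+ \otimes \chi_V$. Moreover $\sigma_1$ is tempered by Lemma \ref{temperedtotempered}(1)(i), so $\phi^+$ is tempered by Theorem \ref{llcsympletic}(5); consequently $L(s, \phi^+ \otimes \chi_V)$ is holomorphic on $\Re(s) > 0$, in particular at $s = 1$. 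Hence the pole of $\gamma$ at $s = 1$ must come entirely from the numerator $L(1-s, \phi^+ \otimes \chi_V)$ having a pole at $s = 1$, which is to say $L(s, \phi^+ \otimes \chi_V)$ has a pole at $s = 0$.

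Finally, the local $L$-factor of a Weil--Deligne representation has a pole at $s = 0$ exactly when the trivial character of $W_F$, with trivial $\mathrm{SL}_2(\mathbb{C})$-action, occurs as a subrepresentation. For $\phi^+ \otimes \chi_V$ this is equivalent to $\chi_V \subseteq \phi^+$, giving the desired conclusion. The one point requiring care is ruling out a matching pole of the denominator at $s = 1$; this is exactly where temperedness of $\phi^+$ enters and is the only subtle step in the argument.
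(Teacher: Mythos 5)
Your argument is essentially the paper's own proof: non-vanishing of the theta lift gives a pole of $\gamma(s,\sigma_1,\chi_V,\psi)$ at $s=1$ via Lemma \ref{pole}, Theorem \ref{llcsympletic}(8) transfers this to $\gamma(s,\phi^+\otimes\chi_V,\psi)$, and expanding the $\gamma$-factor together with self-duality and temperedness forces $\mathrm{1}\subseteq\phi^+\otimes\chi_V$, i.e.\ $\chi_V\subseteq\phi^+$. One small correction: the statement ``$L$ has a pole at $s=0$ exactly when the trivial representation is a subrepresentation'' is false for general Weil--Deligne representations (e.g.\ $|\cdot|^{-1/2}\boxtimes S_2$), so temperedness of $\phi^+\otimes\chi_V$ is needed precisely in that final step (as in the paper), not merely to control the denominator as your closing remark suggests — though since you have established temperedness, the proof stands.
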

	\begin{proof}
		By Lemma \ref{pole}, we know that $\gamma(s,\sigma_1, \chi_V, \psi)$ has a pole at $s=1$. On the other hand, it follows from Theorem \ref{llcsympletic} (8) that 
		$$\gamma(s,\sigma_1, \chi_V, \psi)= \gamma(s,\phi^+\otimes\chi_V,\psi).$$ 
		So $\gamma(s,\phi^+ \otimes\chi_V,\psi)$ has a pole at $s=1$. Note that  
		\begin{align*}
		\gamma(s,\phi^+ \otimes\chi_V,\psi)=&\varepsilon(s,\phi^+\otimes \chi_V,\psi)\times \frac{L(1-s,(\phi^+ \otimes\chi_V)^\vee)}{L(s,\phi^+\otimes\chi_V)}\\
		=&\varepsilon(s,\phi^+\otimes \chi_V,\psi)\times \frac{L(1-s,\phi^+ \otimes\chi_V)}{L(s,\phi^+\otimes\chi_V)}.
		\end{align*}
		Since $\varepsilon(s,\phi^+ \otimes\chi_V,\psi)$ is holomorphic at $s=1$ and $L(s,\phi^+\otimes \chi_V)$ is non-zero at $s=1$, we deduce that $L(1-s,\phi^+ \otimes\chi_V)$ has a pole at $s=1$. Since $\phi^+\otimes\chi_{V}$ is tempered, this implies that $\phi^+\otimes\chi_V$ contains the trivial representation, which is equivalent to saying that $\phi^+$ contains $\chi_{V}$.
	\end{proof}
	In this case, we define the $L$-parameter of $\pi$ to be the unique orthogonal representation $\phi$ of $\WD_F$ such that 
	$$\phi^+=(\phi\otimes\chi_V) +\chi_V.$$
	
$\bullet$ \underline{Case II}: If $\sigma_2=\theta_{W_{2n-2}, V^\delta_{2n}, \psi}(\pi\otimes \det)\neq 0 $, we simply define the $L$-parameter of $\pi$ to be 
	$$
	\phi=(\phi^{-} \otimes\chi_{V})+ \mathbbm{1},
	$$
	where $\phi^{-}$ is the $L$-parameter of $\sigma_2$ and $\mathbbm{1}$ is the trivial representation of $\WD_F$. 

Note that we have $\phi\in \Phi(\mathrm O(V_{2n}))$ in both cases. Moreover, it follows from Lemma \ref{temperedtotempered} and Theoreom \ref{llcsympletic} (5) that both $\phi^+$ and $\phi^-$ are tempered parameters, so $\phi\in \Para(\mathrm O(V_{2n}))$ in both cases. Combining these two cases, we have defined a map
$$
\mathcal L_{\psi}: \bigsqcup_{\delta\in \{\pm 1\}} \Irrt \left(\mathrm O(V_{2n}^\delta)\right) \longrightarrow \Para(\mathrm O(V_{2n})).$$
For a parameter $\phi\in \Para(\mathrm O(V_{2n})) $, we define the packet $\Pi_{\phi,\psi}$ to be the fiber $\mathcal L_{\psi}^{-1}(\phi)$ and $\Pi_{\phi,\psi}(\mathrm O(V_{2n}^{\delta}))= \Pi_{\phi,\psi}\cap \Irrt \mathrm O(V_{2n}^{\delta})$.

\subsection{Local factors}
So far we have associated a tempered $L$-parameter $\phi$ to every  $\pi\in \Irrt(\mathrm O(V^\delta_{2n}))$. We then show this assignment respects the standard $\gamma$-factors and the Plancherel measures. These imply Theorem \ref{desideratumall} (11) and (12) in the tempered case.

\begin{lemma}\label{respectgamma}
	Let $\pi\in \Irrt \mathrm O(V^\delta_{2n})$ and $\mathcal L_{\psi}(\pi)=\phi$. For any character $\chi$ of $F^{\times}$, we have $\gamma(s,\pi, \chi,\psi)=\gamma(s,\phi\otimes \chi, \psi)$. 
\end{lemma}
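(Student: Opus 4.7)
The plan is to split into the two cases of the construction of $\phi = \mathcal L_\psi(\pi)$ and combine the $\gamma$-factor identity for theta lifts (Lemma~\ref{comgamma}) with the LLC for symplectic groups (Theorem~\ref{llcsympletic}~(8)). In both cases the argument amounts to routine bookkeeping of tempered $\gamma$-factors, modulo one subtlety appearing in Case~II.

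In Case~I, suppose $\sigma_1 = \theta_{W_{2n}, V_{2n}^\bullet, \psi}(\pi) \neq 0$ has $L$-parameter $\phi^+$, so that by construction $\phi^+ \otimes \chi_V = \phi \oplus \mathrm 1$, i.e.\ $\phi^+ = (\phi \otimes \chi_V) \oplus \chi_V$ (using $\chi_V^2 = \mathrm 1$). Lemma~\ref{comgamma}~(i) gives
$$\gamma(s, \sigma_1, \chi, \psi) = \gamma(s, \pi, \chi\chi_V, \psi) \cdot \gamma(s, \chi\chi_V, \psi),$$
while Theorem~\ref{llcsympletic}~(8) together with additivity of $\gamma$-factors under direct sums gives
$$\gamma(s, \sigma_1, \chi, \psi) = \gamma(s, \phi^+ \otimes \chi, \psi) = \gamma(s, \phi \otimes \chi\chi_V, \psi) \cdot \gamma(s, \chi\chi_V, \psi).$$
Cancelling $\gamma(s, \chi\chi_V, \psi)$ and reparametrising $\chi \mapsto \chi\chi_V$ (which is a bijection of characters of $F^\times$ as $\chi_V^2 = \mathrm 1$) yields $\gamma(s, \pi, \chi, \psi) = \gamma(s, \phi \otimes \chi, \psi)$.

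Case~II, where $\sigma_2 = \theta_{W_{2n-2}, V_{2n}^\bullet, \psi}(\pi \otimes \det) \neq 0$ has $L$-parameter $\phi^-$ with $\phi = (\phi^- \otimes \chi_V) \oplus \mathrm 1$, proceeds formally in parallel: applying Lemma~\ref{comgamma}~(ii) to $\pi \otimes \det$ in place of $\pi$, combined with Theorem~\ref{llcsympletic}~(8) and the identity $\phi \otimes \chi\chi_V = (\phi^- \otimes \chi) \oplus (\chi\chi_V)$, one obtains after cancellation and the same reparametrisation
$$\gamma(s, \pi \otimes \det, \chi, \psi) = \gamma(s, \phi \otimes \chi, \psi).$$
The remaining, and main, step is then the identification $\gamma(s, \pi \otimes \det, \chi, \psi) = \gamma(s, \pi, \chi, \psi)$. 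This is the only point that is not purely a computation on the $L$-parameter side, and I expect it to be the main obstacle; however, it is intrinsic to the Lapid--Rallis doubling definition, since the doubling zeta integral pairs matrix coefficients of $\pi$ with those of $\pi^\vee$, and twisting $\pi$ by $\det$ induces the twist by $\det^{-1}$ on $\pi^\vee$, which cancels as $\det^2 = \mathrm 1$ on $\mathrm O(V_{2n}^\bullet)$.
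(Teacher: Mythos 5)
Your proof is correct and follows essentially the same route as the paper: Case I is the identical combination of Lemma \ref{comgamma}(i) with Theorem \ref{llcsympletic}(8) (the paper substitutes $\chi\chi_V^{-1}$ for $\chi$ where you reparametrise), and Case II is reduced, exactly as in the paper, to the identity $\gamma(s,\pi\otimes\det,\chi,\psi)=\gamma(s,\pi,\chi,\psi)$, which the paper simply invokes as a known property of the doubling $\gamma$-factor. One caveat: your parenthetical justification of that identity is not right as stated — the doubling zeta integral involves a single matrix coefficient $\langle\pi(g)v,v^\vee\rangle$ evaluated at one group element, and twisting $\pi$ by $\det$ multiplies the integrand by $\det(g)$, so nothing cancels there; the correct mechanism is that $\det$ is trivial on the Siegel parabolic of the doubled group $\mathrm O(V\oplus(-V))$, so the twist can be absorbed into the section of the degenerate principal series and commutes (up to the determinant of the relevant Weyl element, which is $1$ here) with the intertwining operator defining the functional equation, leaving the $\gamma$-factor unchanged; this is among the properties established in \cite{MR2192828} and recalled in \cite[\S 10, \S 11]{MR3166215}.
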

\begin{proof}
According to our construction, we need to consider the following two cases: 

$\bullet$ \underline{Case I}: Assume $\sigma_1=\theta_{W_{2n}, V^\delta_{2n}, \psi}(\pi)\neq 0$. It follows from Lemma \ref{comgamma} that  
$$
\frac{\gamma(s,\sigma_1,\chi,\psi)}{\gamma(s,\pi,\chi\chi_V,\psi)}= \gamma(s,\chi\chi_V,\psi).
$$
By Theorem \ref{llcsympletic} (8), we have 
$$
\gamma(s,\sigma_1,\chi,\psi)=\gamma(s,\phi^+\otimes \chi ,\psi),
$$
where $\phi^+=(\phi\otimes\chi_{V})\oplus \chi_{V}$ is the $L$-parameter of $\sigma_1$. So 
\begin{align*}
\gamma(s,\pi,\chi,\psi)&=\frac{\gamma(s,\sigma_1,\chi\chi_{V}^{-1},\psi)}{\gamma(s,(\chi\chi_{V}^{-1})\chi_V,\psi)}\\
&=\frac{\gamma(s,\phi^+\otimes\chi \chi_{V}^{-1} ,\psi)}{\gamma(s,\chi,\psi)}\\
&=\gamma(s,\phi \otimes\chi ,\psi).
\end{align*}

$\bullet$ \underline{Case II}: Assume $\sigma_2=\theta_{W_{2n-2}, V^\delta_{2n}, \psi}(\pi\otimes \det)\neq 0$. This follows from a similar calculation in Case I and the fact that
\begin{align*}
\gamma(s,\pi, \chi,\psi)= \gamma(s,\pi\otimes\det, \chi,\psi).
\end{align*}
These complete the proof.  
\end{proof}

\begin{lemma}\label{respectplancherel}
Let $\pi\in \Irrt \mathrm O(V^\delta_{2n})$ and $\mathcal L_{\psi}(\pi)=\phi$. For any irreducible smooth representation $\tau$ of $\GL_k(F)$ with $L$-parameter $\phi_{\tau}$, we have   
$$
\begin{aligned} \mu_{\psi}\left(\tau_{s} \otimes \pi\right) &=\gamma\left(s, \phi_{\tau} \otimes \phi^{\vee}, \psi\right) \times \gamma\left(-s, \phi_{\tau}^{\vee} \otimes \phi, \psi_{-1}\right) \\
 &\times \gamma\left(2 s, \wedge^{2} \circ \phi_{\tau}, \psi\right) \times \gamma\left(-2 s, \wedge^{2} \circ \phi_{\tau}^{\vee}, \psi_{-1}\right) .\end{aligned}
$$
\end{lemma}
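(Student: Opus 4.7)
The proof will mirror the structure of Lemma \ref{respectgamma}, splitting into the same two cases distinguished by the construction of $\mathcal L_\psi$. The strategy in each case is to transfer the Plancherel measure from the orthogonal side to the symplectic side via Lemma \ref{complan}, invoke Theorem \ref{llcsympletic}(9) to replace the symplectic Plancherel measure by a product of $\gamma$-factors, and then simplify using the explicit relation between $\phi^\pm$ and $\phi$ together with $\chi_V^2 = \mathbf 1$ and the orthogonality (self-duality) of $\phi$.

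In Case I, where $\sigma_1 = \theta_{W_{2n}, V_{2n}^\bullet,\psi}(\pi) \neq 0$ with $L$-parameter $\phi^+ = \phi\chi_V \oplus \chi_V$, I would first apply Lemma \ref{complan}(i) to obtain
\[
\mu_\psi(\tau_s \otimes \pi) = \frac{\mu_\psi(\tau_s \chi_V \otimes \sigma_1)}{\gamma(s,\tau,\psi)\,\gamma(-s,\tau^\vee,\psi_{-1})}.
\]
Then Theorem \ref{llcsympletic}(9) applied with the test representation $\tau\chi_V$ expresses $\mu_\psi(\tau_s\chi_V \otimes \sigma_1)$ as a product of four $\gamma$-factors built from $\phi_\tau \chi_V$ and $\phi^+$. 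The key identities are $\phi_\tau\chi_V \otimes \phi^+ = (\phi_\tau \otimes \phi) \oplus \phi_\tau$ (by $\chi_V^2 = \mathbf 1$) and $\wedge^2(\phi_\tau\chi_V) = \wedge^2\phi_\tau$ (since $\chi_V$ is a character). Additivity of $\gamma$-factors in the first coordinate then peels off exactly $\gamma(s,\phi_\tau,\psi)\gamma(-s,\phi_\tau^\vee,\psi_{-1})$, which cancels the denominator above. Finally, the substitution $\phi^\vee \cong \phi$ produces the desired expression.

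In Case II, where $\sigma_2 = \theta_{W_{2n-2},V_{2n}^\bullet,\psi}(\pi\otimes\det) \neq 0$ with $\phi^- = \phi\chi_V \ominus \chi_V$, the preliminary observation is that $\mu_\psi(\tau_s \otimes \pi) = \mu_\psi(\tau_s \otimes (\pi\otimes \det))$. This follows from the fact that, on the Levi $M_P = \GL(X_k) \times \mathrm O(V_{2n})$ of the standard parabolic $P$ of $\mathrm O(V_{2n+2k})$, one has $\det(m_P(a) h_0) = \det(a)\det(a^*)^{-1}\det(h_0) = \det(h_0)$, so $\Ind_P(\tau \otimes (\pi\otimes\det)) \cong \Ind_P(\tau \otimes \pi) \otimes \det$ and Plancherel measures are invariant under twisting the induced representation by a character of the ambient group. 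Applying Lemma \ref{complan}(ii) and Theorem \ref{llcsympletic}(9) as before, and using $\phi_\tau\chi_V \otimes \phi^- = (\phi_\tau\otimes\phi)\ominus \phi_\tau$ together with $\wedge^2(\phi_\tau\chi_V) = \wedge^2\phi_\tau$, gives the same final formula after invoking $\phi^\vee \cong \phi$.

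The computation itself is entirely formal once the parametric identities are set up; the only step that requires a genuine argument beyond bookkeeping is the invariance $\mu_\psi(\tau_s\otimes\pi) = \mu_\psi(\tau_s\otimes(\pi\otimes\det))$ in Case II, and that follows from the restriction of $\det$ to $M_P$ being trivial on the $\GL$-factor. Everything else reduces to additivity of $\gamma$-factors, the identities $\chi_V^2 = \mathbf 1$ and $\wedge^2(\rho\otimes\chi) = (\wedge^2\rho) \otimes \chi^2$, and the self-duality of the orthogonal parameter $\phi$.
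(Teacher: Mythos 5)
Your proposal is correct and follows essentially the same route as the paper: in both cases one transfers via Lemma \ref{complan} to the symplectic side, applies Theorem \ref{llcsympletic} (9) with the test representation $\tau\chi_V$, and cancels the extra $\gamma(s,\tau,\psi)\gamma(-s,\tau^\vee,\psi_{-1})$ using $\phi^{\pm}=\phi\chi_V\pm\chi_V$, $\chi_V^2=\mathbf 1$ and $\wedge^2(\phi_\tau\chi_V)=\wedge^2\phi_\tau$. The only deviation is that for the identity $\mu_\psi(\tau_s\otimes\pi)=\mu_\psi(\tau_s\otimes(\pi\otimes\det))$ in Case II you give a short direct argument (twisting $\Ind_P(\tau\otimes\pi)$ by $\det$ of the ambient group), where the paper simply cites \cite[Lemma B.1]{MR3166215}; your argument is sound and self-contained.
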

\begin{proof}
Similarly to the proof of Lemma \ref{respectgamma}, we need to consider the following two cases: 

$\bullet$ \underline{Case I}: If $\sigma_1=\theta_{W_{2n}, V^\delta_{2n}, \psi}(\pi)\neq 0$, then it follows from Lemma \ref{complan} that 
$$
\frac{\mu_{\psi}(\tau_s\chi_{V}\otimes\sigma_1)}{\mu_{\psi}(\tau_s\otimes\pi)}=\gamma(s,\tau,\psi)\times \gamma(-s,\tau^{\vee},\psi_{-1}).
$$ 
Let $\phi^+$ be the $L$-parameter for $\sigma_1$. Then by Theorem \ref{llcsympletic} (9), we have 
\begin{align*}
\mu_{\psi}(\tau_s\chi_{V}\otimes \sigma)&=\gamma(s,\phi_{\tau}\chi_{V}\otimes (\phi^+)^ {\vee},\psi)\times \gamma(-s, \phi_{\tau}^{\vee} \chi_{V}^{-1}\otimes\phi^+, \psi_{-1} )\\
& \times \gamma (2s, \wedge^2\circ (\phi_{\tau}\chi_{V}), \psi)\times \gamma(-2s, \wedge^{2} \circ (\phi_{\tau}^{\vee} \chi_{V}^{-1}),\psi_{-1}).
\end{align*}
So 
\begin{align*}
\mu_{\psi}(\tau_s\otimes \pi)&= \mu_{\psi}(\tau_s\chi_{V}\otimes \sigma)\times \gamma(s,\tau,\psi)^{-1}\times \gamma(-s,
\tau^{\vee},\psi_{-1})^{-1}\\ 
&=\gamma(s,\phi_{\tau}\chi_{V} \otimes (\phi^+)^{\vee},\psi)\times \gamma(-s, \phi_{\tau}^{\vee}\chi_{V}^{-1} \otimes \phi^+, \psi_{-1} )\\
&  \times \gamma (2s, \wedge^2\circ (\phi_{\tau}\chi_{V}), \psi)\times  \gamma(-2s, \wedge^{2} \circ (\phi_{\tau}^{\vee} \chi_{V}^{-1}),\psi_{-1})\\
& \times \gamma(s,\phi_{\tau},\psi)^{-1}\times \gamma(-s,
\phi_{\tau}^{\vee},\psi_{-1})^{-1} \\ 
&= \gamma(s,\phi_{\tau}\otimes\phi^{\vee},\psi)\times \gamma(-s, \phi_{\tau}^{\vee}\otimes \phi, \psi_{-1} )\\
&  \times \gamma (2s, \wedge^2\circ \phi_{\tau}, \psi)\times \gamma(-2s, \wedge^{2} \circ \phi_{\tau}^{\vee},\psi_{-1}).
\end{align*}
Here we use the fact that $\phi^+=\phi\otimes\chi_{V}+\chi_{V}$. 

$\bullet$ \underline{Case II}: If $\sigma_2=\theta_{W_{2n-2}, V^\delta_{2n}, \psi}(\pi\otimes \det)\neq 0 $, then this follows from a similar calculation to Case I and the fact that 
$$\mu_{\psi}(\tau_s \otimes \pi)= \mu_{\psi}(\tau_s \otimes (\pi\otimes \det)).$$
Readers may also consult \cite[Lemma B.1]{MR3166215} for the proof of this fact. 
\end{proof}

\subsection{Counting sizes of packets}
Our next goal is to attach a character of the component group to each $\pi\in \Irrt(\mathrm O(V_{2n}^\delta))$. To do this, we need some preparations. In this subsection we consider the behaviour of $L$-parameters under local theta correspondence and count the sizes of $L$-packets. We emphasize that when we talk about representations of symplectic groups, the $L$-parameter of a representation is in the sense of Theorem \ref{llcsympletic}, whereas when we talk about representations of even orthogonal groups, the $L$-parameter of a tempered representation is in the sense of $\mathcal L_\psi$ constructed in Subsection \ref{constructionparameter}.

The following lemma will be used later. 
\begin{lemma}\label{plancherel}
	Let $G=\mathrm O(V^\delta_{2n})$ (resp. $G=\SP(W_{2n})$) and $\pi_1$ and $\pi_2$ (resp. $\sigma_1$ and $\sigma_2$) be two irreducible tempered representations of $G$ with $\mathcal L_{\psi}(\pi_1)=\phi_1, \mathcal L_{\psi}(\pi_2)=\phi_2$ (resp. $\mathcal L(\sigma_1)=\phi_1, \mathcal L(\sigma_2)=\phi_2$). Assume that 
	$\mu_{\psi}(\tau_{s}\otimes \pi_1)=\mu_{\psi}(\tau_s\otimes \pi_2)$ (resp. $ 
    \mu_{\psi}(\tau_{s}\otimes \sigma_1)=\mu_{\psi}(\tau_s\otimes \sigma_2)$) for all $k\geq 1$ and all irreducible smooth representation $\tau$ of $\GL_{k}(F)$. Then we have 
	$$
	\phi_1=\phi_2. 
	$$
\end{lemma}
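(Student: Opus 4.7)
The plan is to reduce this to Lemma \ref{gammadetermine} by invoking the formulas that express Plancherel measures in terms of $\gamma$-factors of the $L$-parameter. For the symplectic case, Theorem \ref{llcsympletic} (9) gives
\begin{align*}
\mu_{\psi}(\tau_s\otimes \sigma_i) &= \gamma(s,\phi_{\tau}\otimes \phi_i^{\vee},\psi)\cdot \gamma(-s,\phi_{\tau}^{\vee}\otimes\phi_i, \psi_{-1} )\\
&\quad \times \gamma (2s, \wedge^2\circ \phi_{\tau}, \psi)\cdot \gamma(-2s, \wedge^{2} \circ \phi_{\tau}^{\vee},\psi_{-1}),
\end{align*}
and for the orthogonal case Lemma \ref{respectplancherel} (which we have just proved) gives the identical formula with $\pi_i$ in place of $\sigma_i$.

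First, I would observe that the $\wedge^2$ factors depend only on $\tau$ (not on $\pi_i$ or $\sigma_i$), so they are identical on the two sides of each of the hypothesized equalities and may therefore be cancelled. After cancellation, the assumption becomes
$$\gamma(s,\phi_{\tau}\otimes \phi_1^{\vee},\psi)\cdot \gamma(-s,\phi_{\tau}^{\vee}\otimes\phi_1, \psi_{-1}) = \gamma(s,\phi_{\tau}\otimes \phi_2^{\vee},\psi)\cdot \gamma(-s,\phi_{\tau}^{\vee}\otimes\phi_2, \psi_{-1})$$
for every irreducible smooth $\tau \in \Irr(\GL_k(F))$ and every $k \geq 1$.

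Next, I would specialise $\tau$ to range over essentially square integrable representations of $\GL_k(F)$ for all $k \geq 1$. Under the local Langlands correspondence for $\GL_k$, these correspond precisely to the irreducible representations of $WD_F$ of arbitrary dimension, so $\phi_\tau$ ranges over all irreducible representations $\phi_\rho$ of $WD_F$. Since $\phi_1$ and $\phi_2$ are tempered orthogonal representations of $WD_F$ of the same dimension ($2n$ in the orthogonal case, $2n+1$ in the symplectic case), one can now apply Lemma \ref{gammadetermine} (with $\phi_i$ in place of $\phi_i^\vee$; note $\phi_i \cong \phi_i^\vee$ since $\phi_i$ is self-dual, so the two forms of the statement are equivalent) to conclude $\phi_1 \cong \phi_2$.

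The proof is essentially a routine translation through the known Plancherel-measure formulas and the characterisation lemma for tempered parameters; the one point that deserves care is ensuring that every irreducible representation of $WD_F$ really does arise as $\phi_\tau$ for a suitable $\tau$, and that the $\wedge^2$ contribution cancels cleanly, but both are immediate once the formulas are written out. There is no substantive obstacle.
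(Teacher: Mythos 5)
Your proof is correct and follows essentially the same route as the paper: both invoke Theorem \ref{llcsympletic} (9) and Lemma \ref{respectplancherel} to translate the Plancherel-measure equalities into the $\gamma$-factor identity (the $\wedge^2$ terms cancelling since they depend only on $\tau$), and then conclude via Lemma \ref{gammadetermine}. Your explicit remark that one takes $\tau$ essentially square-integrable so that $\phi_\tau$ exhausts the irreducible representations of $\WD_F$, and that $\phi_i\cong\phi_i^{\vee}$ reconciles the two forms of the identity, merely spells out details the paper leaves implicit.
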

\begin{proof}
	It follows from Theorem \ref{llcsympletic} (9) and Lemma \ref{respectplancherel} that the maps $\mathcal L_\psi$ and $\mathcal L$ respect the Plancherel measures, so we have  
	$$
    \gamma\left(s, \phi_{\tau} \otimes \phi_1^{\vee}, \psi\right) \times \gamma\left(-s, \phi_{\tau}^{\vee} \otimes \phi_1, \psi_{-1}\right) =
     \gamma\left(s, \phi_{\tau} \otimes \phi_2^{\vee}, \psi\right) \times \gamma\left(-s, \phi_{\tau}^{\vee} \otimes \phi_2, \psi_{-1}\right) 
	$$
	for any irreducible smooth representations $\tau$ of $\GL_k(F)$ with $L$-parameter $\phi_\tau$. Then it follows from Lemma \ref{gammadetermine} that $\phi_1=\phi_2$.  
\end{proof}

Next we prove the $L$-parameters we attached to $\pi$ and $\pi\otimes \det$ are the same. This proves the first part of Theorem \ref{desideratumall} (8) for tempered representations. 
\begin{proposition}\label{det}
Let $\pi\in \Irrt(\mathrm O(V^\delta_{2n}))$. Then we have  
\begin{align*}
\mathcal L_{\psi}(\pi)=\mathcal L_{\psi}(\pi\otimes\det). 
\end{align*}
\end{proposition}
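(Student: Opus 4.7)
The plan is to exploit the machinery just set up: namely, that $\mathcal L_\psi$ respects Plancherel measures (Lemma \ref{respectplancherel}), together with the fact that twisting by $\det$ leaves Plancherel measures unchanged, together with the rigidity result Lemma \ref{plancherel} that Plancherel measures determine the tempered $L$-parameter. Carrying this out avoids any case analysis about which of the two theta lifts $\sigma_1, \sigma_2$ in \eqref{con} is nonzero for $\pi$ versus $\pi\otimes\det$.

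More precisely, the key input is the identity
\[
\mu_\psi(\tau_s\otimes\pi)=\mu_\psi\bigl(\tau_s\otimes(\pi\otimes\det)\bigr)
\]
for every irreducible smooth $\tau\in\Irr(\GL_k(F))$, which is exactly \cite[Lemma B.1]{MR3166215} (this identity has already been invoked in the proof of Lemma \ref{respectplancherel}, Case II). Set $\phi=\mathcal L_\psi(\pi)$ and $\phi'=\mathcal L_\psi(\pi\otimes\det)$; both are tempered $L$-parameters for $\mathrm O(V_{2n})$ by construction.

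Applying Lemma \ref{respectplancherel} to $\pi$ and to $\pi\otimes\det$ translates the above equality of Plancherel measures into the equality of products of $\gamma$-factors
\[
\gamma(s,\phi_\tau\otimes\phi^\vee,\psi)\,\gamma(-s,\phi_\tau^\vee\otimes\phi,\psi_{-1})
=\gamma(s,\phi_\tau\otimes(\phi')^\vee,\psi)\,\gamma(-s,\phi_\tau^\vee\otimes\phi',\psi_{-1})
\]
(the $\wedge^2$ factors cancel). Then Lemma \ref{plancherel} (whose proof rests on Lemma \ref{gammadetermine}) immediately gives $\phi=\phi'$, which is the desired conclusion.

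There is no real obstacle: every ingredient has been assembled. The only point that needs a brief remark is that Lemma \ref{plancherel} as stated hypothesizes the Plancherel measures themselves agree, which is exactly what we have from \cite[Lemma B.1]{MR3166215}, so one can directly feed $\pi_1=\pi,\pi_2=\pi\otimes\det$ into Lemma \ref{plancherel} and read off $\mathcal L_\psi(\pi)=\mathcal L_\psi(\pi\otimes\det)$ without going through the $\gamma$-factor reformulation.
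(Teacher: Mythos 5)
Your proposal is correct and follows essentially the same route as the paper: the paper's proof likewise cites \cite[Lemma B.1]{MR3166215} for the equality $\mu_{\psi}(\tau_s\otimes\pi)=\mu_{\psi}(\tau_s\otimes(\pi\otimes\det))$ and then concludes directly via Lemma \ref{plancherel}. Your intermediate $\gamma$-factor reformulation is just an unwinding of what Lemma \ref{plancherel} already encapsulates, as you note yourself.
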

\begin{proof}
	By \cite[Lemma B.1]{MR3166215}, we know that  
	$$\mu_{\psi}(\tau_s \otimes \pi)= \mu_{\psi}(\tau_s \otimes (\pi\otimes \det))$$
	for any $k\geq 1$ and any irreducible smooth representations $\tau$ of $\GL_k(F)$. Then this proposition follows from Lemma \ref{plancherel}.
\end{proof}

The next proposition describes the behaviour of $L$-parameters under the local theta correspondence. 
\begin{proposition}\label{prasad} 
\begin{enumerate}[(i)]
	\item Let $\pi \in \Irrt \left( \mathrm O(V^\delta_{2n})\right)$ and $\mathcal L_{\psi}(\pi)=\phi$. 
	\begin{itemize}
		\item If $\sigma\coloneqq\theta_{W_{2n}, V^\delta_{2n},  \psi}(\pi)\neq 0$, then 
		$$\mathcal L(\sigma)=(\phi\otimes \chi_V)+ \chi_{V}.$$
		\item If $\sigma\coloneqq\theta_{W_{2n}, V^\delta_{2n},  \psi}(\pi\otimes\det )\neq 0$, then 
		$$\mathcal L(\sigma)=(\phi\otimes \chi_V)+ \chi_{V}.$$
	\end{itemize}
	\item Let $\sigma\in \Irrt \left( \SP(W_{2n-2})\right)$ and $\mathcal L(\sigma)=\phi^-$. If $\pi\coloneqq\theta_{V^\delta_{2n},W_{2n-2},\psi}(\sigma)\neq 0$, then  $$\mathcal L_{\psi}(\pi)=(\phi^-\otimes\chi_{V})+ \mathbbm 1.$$
\end{enumerate}
\end{proposition}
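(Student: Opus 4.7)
Part (i) is essentially a direct unpacking of the construction of $\mathcal L_{\psi}$ from Section \ref{construction}, combined with Proposition \ref{det}. In the first case, the hypothesis $\sigma = \theta_{W_{2n}, V^{\bullet}_{2n}, \psi}(\pi) \neq 0$ places $\pi$ in Case I of the construction, so by definition the parameter of $\pi$ satisfies $\phi \oplus \mathrm{1} = \mathcal L(\sigma) \otimes \chi_{V}$; using $\chi_{V}^{2} = \mathrm{1}$, this rearranges to $\mathcal L(\sigma) = (\phi \otimes \chi_{V}) \oplus \chi_{V}$. For the second case, I will invoke Proposition \ref{det} to note that $\mathcal L_{\psi}(\pi \otimes \det) = \phi$ as well, and observe that the nonvanishing hypothesis places $\pi \otimes \det$ in Case I; applying the first case to $\pi \otimes \det$ then yields the desired identity.

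For part (ii), my approach is to compute $\mu_{\psi}(\tau_{s} \otimes \pi)$ through the theta correspondence, compare it with the Plancherel formula evaluated at the candidate parameter, and then invoke $\gamma$-factor rigidity. By Howe duality, the hypothesis $\pi = \theta_{V^{\bullet}_{2n}, W_{2n-2}, \psi}(\sigma) \neq 0$ gives $\sigma = \theta_{W_{2n-2}, V^{\bullet}_{2n}, \psi}(\pi)$, so Lemma \ref{complan}(ii) applies:
\begin{align*}
\mu_{\psi}(\tau_{s} \otimes \pi) = \mu_{\psi}(\tau_{s}\chi_{V} \otimes \sigma) \cdot \gamma(s, \tau, \psi) \cdot \gamma(-s, \tau^{\vee}, \psi_{-1}).
\end{align*}
Expanding $\mu_{\psi}(\tau_{s}\chi_{V} \otimes \sigma)$ via Theorem \ref{llcsympletic}(9) applied to $\sigma$ (with parameter $\phi^{-}$), and using the identity $\wedge^{2}(\phi_{\tau} \chi_{V}) = \wedge^{2} \phi_{\tau}$ since $\chi_{V}^{2} = \mathrm{1}$, produces an explicit product of $\gamma$-factors. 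A direct calculation shows this matches the Plancherel formula of Lemma \ref{respectplancherel} evaluated at the candidate parameter $\phi := (\phi^{-} \otimes \chi_{V}) \oplus \mathrm{1}$: the extra trivial summand contributes exactly the two additional factors $\gamma(s, \tau, \psi) \cdot \gamma(-s, \tau^{\vee}, \psi_{-1})$ via $\gamma(s, \phi_{\tau}, \psi) = \gamma(s, \tau, \psi)$, while the $\chi_{V}$-twist of $\phi^{-}$ reproduces the symplectic-side $\gamma$-factors above.

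Having matched the Plancherel measures for every irreducible smooth $\tau$ of $\GL_{k}(F)$, I appeal to Lemma \ref{plancherel} (which, via the $\gamma$-factor rigidity of Lemma \ref{gammadetermine}, deduces equality of parameters from equality of Plancherel measures after cancelling the common $\wedge^{2}\phi_{\tau}$ contributions) to conclude that $\mathcal L_{\psi}(\pi) = (\phi^{-} \otimes \chi_{V}) \oplus \mathrm{1}$. One should also check that the candidate is a bona fide element of $\Phi_{\mathrm{temp}}(\mathrm O(V_{2n}))$: since $\phi^{-}$ is a $(2n-1)$-dimensional tempered orthogonal parameter of trivial determinant, $(\phi^{-} \otimes \chi_{V}) \oplus \mathrm{1}$ is a $2n$-dimensional tempered orthogonal representation of determinant $\chi_{V}^{2n-1} = \chi_{V}$, so this is immediate. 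The only nontrivial aspect is the careful $\gamma$-factor bookkeeping in the Plancherel comparison, but this is routine once the simplifications $\chi_{V}^{-1} = \chi_{V}$ and $\chi_{V}^{2} = \mathrm{1}$ are applied systematically.
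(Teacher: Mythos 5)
Your part (i) is essentially the paper's own argument: Proposition \ref{det} gives $\mathcal L_{\psi}(\pi\otimes\det)=\mathcal L_{\psi}(\pi)$, and both bullets follow by unwinding Case I of the construction of $\mathcal L_{\psi}$, applied to $\pi$ or to $\pi\otimes\det$. For part (ii), however, you take a genuinely different route from the paper. The paper's (terse) proof is again definitional: since $\pi=\theta_{V^{\bullet}_{2n},W_{2n-2},\psi}(\sigma)\neq 0$, Howe duality gives $\theta_{W_{2n-2},V^{\bullet}_{2n},\psi}\bigl((\pi\otimes\det)\otimes\det\bigr)=\theta_{W_{2n-2},V^{\bullet}_{2n},\psi}(\pi)=\sigma\neq 0$, so $\pi\otimes\det$ falls into Case II of the construction, whence $\mathcal L_{\psi}(\pi\otimes\det)=(\phi^{-}\otimes\chi_{V})\oplus\mathrm{1}$, and Proposition \ref{det} transfers this to $\pi$. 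You instead compute $\mu_{\psi}(\tau_{s}\otimes\pi)$ via Lemma \ref{complan}(ii) and Theorem \ref{llcsympletic}(9), compare with Lemma \ref{respectplancherel}, and conclude by $\gamma$-factor rigidity. This is valid and not circular (Lemma \ref{respectplancherel} is established before this proposition and does not use it), and it has the virtue of being insensitive to which case of the construction $\pi$ or $\pi\otimes\det$ lands in; the price is heavier machinery where a two-line unwinding of the definition suffices, and the definitional route also tells you explicitly that $\pi$ itself lies in Case I (its lift to $W_{2n}$ is nonzero by the tower property), information the Plancherel comparison does not recover. Two small points you should make explicit: $\pi$ is tempered by Lemma \ref{temperedtotempered}, so $\mathcal L_{\psi}(\pi)$ is defined by the tempered construction and is a tempered parameter, as required for the rigidity statement; and Lemma \ref{plancherel} as stated compares two representations of the same group, so what you really invoke, as your parenthetical concedes, is Lemma \ref{gammadetermine} directly after cancelling the common $\wedge^{2}\circ\phi_{\tau}$ factors.
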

\begin{proof}
We first prove (i). By Proposition \ref{det}, we have 
$$\mathcal L_{\psi}(\pi\otimes\det)=\mathcal L_\psi(\pi)=\phi.$$ Then (i) follows from our construction of $\mathcal L_{\psi}$. The proof for (ii) is similar.
\end{proof}

The following corollary is a consequence of Proposition \ref{prasad}.  
\begin{corollary}\label{1inphi}
	\begin{enumerate}[(i)]
		\item Let $\pi \in \Irrt \left(\mathrm O(V^\delta_{2n})\right)$ and $\mathcal L_{\psi}(\pi)=\phi$. If $\mathbbm 1 \nsubseteq \phi$, then 
		\begin{align*}
			\theta_{W_{2n-2}, V_{2n}^{\delta} ,\psi}(\pi)= \theta_{W_{2n-2}, V_{2n}^{\delta} ,\psi}(\pi\otimes\det)=0.
		\end{align*}
			So by the conservation relation (Theorem \ref{con2}), both $\theta_{W_{2n}, V_{2n}^{\delta},\psi}(\pi)$ and $\theta_{W_{2n}, V_{2n}^{\delta}, \psi}(\pi\otimes \det)$ are non-zero. 
	\item Let $\sigma\in \Irrt \SP(W_{2n})$ and $\mathcal L(\sigma)=\phi^+$. If $\chi_{V}\nsubseteq \phi^+$, then
		\begin{align*}
	\theta_{V^{+}_{2n},W_{2n},\psi}(\sigma)=\theta_{V^{-}_{2n},W_{2n},\psi}(\sigma)=0.
	\end{align*}
	So by the conservation relation (Theorem \ref{con2}), both $\theta_{V^{+}_{2n+2},W_{2n},\psi}(\sigma)$ and $\theta_{V^{-}_{2n+2},W_{2n},\psi}(\sigma)$ are non-zero. 
	\end{enumerate}
\end{corollary}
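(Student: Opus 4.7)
The plan is to leverage the explicit case dichotomy built into the construction of $\mathcal L_\psi$ together with the conservation relation. The main work is already done by Proposition \ref{prasad} and Proposition \ref{det}; the corollary just repackages the construction.

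For part (1), the key observation is that the construction in Section \ref{construction} partitions $\Irrt(\mathrm O(V_{2n}^\bullet))$ into two mutually exclusive cases (Case I and Case II), and only Case II produces an $L$-parameter containing $\mathbf{1}$. By Proposition \ref{det} we have $\mathcal L_\psi(\pi\otimes\det)=\mathcal L_\psi(\pi)=\phi$. The hypothesis $\mathbf{1}\not\subseteq\phi$ rules out Case II for $\pi$, yielding $\theta_{W_{2n-2},V_{2n}^\bullet,\psi}(\pi\otimes\det)=0$. Applying the same reasoning to $\pi\otimes\det$ (which shares the same parameter) yields $\theta_{W_{2n-2},V_{2n}^\bullet,\psi}(\pi)=0$. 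Now feeding these two vanishings into Theorem \ref{con2}, the bound $m_{\mathcal W}(\pi)\ge 2n$ and $m_{\mathcal W}(\pi\otimes\det)\ge 2n$ combined with $m_{\mathcal W}(\pi)+m_{\mathcal W}(\pi\otimes\det)=4n$ forces equality, so Proposition \ref{tower} gives $\theta_{W_{2n},V_{2n}^\bullet,\psi}(\pi)\neq 0$ and $\theta_{W_{2n},V_{2n}^\bullet,\psi}(\pi\otimes\det)\neq 0$.

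For part (2), the strategy runs in the reverse direction. Suppose for contradiction that $\pi:=\theta_{V_{2n}^\bullet,W_{2n},\psi}(\sigma)\neq 0$ for some choice of sign $\bullet\in\{+,-\}$. Then $\pi$ is irreducible and tempered by Lemma \ref{temperedtotempered}, and by the symmetry of Howe duality we have $\theta_{W_{2n},V_{2n}^\bullet,\psi}(\pi)=\sigma$. Proposition \ref{prasad}(i) then gives $\phi^+=\mathcal L(\sigma)=(\mathcal L_\psi(\pi)\otimes\chi_V)\oplus\chi_V$, contradicting $\chi_V\not\subseteq\phi^+$. Hence $\theta_{V_{2n}^+,W_{2n},\psi}(\sigma)=\theta_{V_{2n}^-,W_{2n},\psi}(\sigma)=0$, so $m_{\mathcal V^\pm}(\sigma)\ge 2n+2$, and the first half of Theorem \ref{con2} ($m_{\mathcal V^+}(\sigma)+m_{\mathcal V^-}(\sigma)=4n+4$) forces equality in both. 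Proposition \ref{tower} then yields the nonvanishing of both $\theta_{V_{2n+2}^\pm,W_{2n},\psi}(\sigma)$.

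No step is a real obstacle; the only subtlety is remembering that the dichotomy in the construction is genuinely a dichotomy—that is, exactly one of the two candidate theta lifts is nonzero for a given tempered $\pi$—which was already invoked during the definition of $\mathcal L_\psi$ via Theorem \ref{con2}. Everything else is a direct bookkeeping from Propositions \ref{det} and \ref{prasad} combined with the conservation relation.
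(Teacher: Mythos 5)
Your argument is correct and is essentially the proof the paper intends: the paper derives this corollary directly from Proposition \ref{prasad} (equivalently, from the case dichotomy in the construction of $\mathcal L_{\psi}$ together with Proposition \ref{det}), and then reads off the non-vanishing from the conservation relation and the tower property, exactly as you do. Your part (2) contradiction via Lemma \ref{temperedtotempered}, the Howe-duality symmetry and Proposition \ref{prasad}(i) is the same mechanism, so there is nothing to add.
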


\begin{proposition}\label{1notinphi}
	\begin{enumerate}[(i)]
		\item Let $\pi \in \Irrt \left(\mathrm O(V^\delta_{2n})\right)$ and $\mathcal L_{\psi}(\pi)=\phi$. If $\mathbbm 1 \subseteq \phi$, then exactly one of $\theta_{W_{2n-2}, V_{2n}^{\delta} ,\psi}(\pi)$ and $ \theta_{W_{2n-2}, V_{2n}^{\delta} ,\psi}(\pi\otimes\det)$ is non-zero. In particular, $\pi\ncong \pi\otimes\det$ in this case. 
		\item Let $\sigma\in \Irrt \left(\SP(W_{2n})\right)$ and $\mathcal L(\sigma)=\phi^+$. If $\chi_{V}\subseteq \phi^+$, then exactly one of $\theta_{V^{+}_{2n},W_{2n},\psi}(\sigma)$ and $\theta_{V^{-}_{2n},W_{2n},\psi}(\sigma)$ is non-zero.  
	\end{enumerate}
\end{proposition}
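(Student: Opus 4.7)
The ``at most one'' direction in both (i) and (ii) is immediate from the conservation relation (Theorem~\ref{con2}): two non-vanishing equal-step theta lifts would force $m_{\mathcal W}(\pi)+m_{\mathcal W}(\pi\otimes\det)\leq 4n-4$ in (i), and $m_{\mathcal V^+}(\sigma)+m_{\mathcal V^-}(\sigma)\leq 4n$ in (ii), contradicting the conservation sums $4n$ and $4n+4$ respectively. The content of the proposition is the ``at least one'' direction, which I would prove by contradiction, first handling the discrete-series case and then reducing the general tempered case to it.

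For the discrete case of (ii), assume $\sigma$ is a discrete series and both $\theta_{V^{\pm}_{2n},W_{2n},\psi}(\sigma)=0$. By conservation $m_{\mathcal V^{\pm}}(\sigma)=2n+2$, so $\pi^{\pm}\coloneqq \theta_{V^{\pm}_{2n+2},W_{2n},\psi}(\sigma)$ are non-zero; Lemma~\ref{thetadiscrete}(2)(ii) makes them discrete series, while Proposition~\ref{prasad}(ii) gives them the common $L$-parameter $\phi'=(\phi^+\otimes\chi_V)\oplus \mathrm{1}$. Since $\chi_V\subseteq\phi^+$ implies $\mathrm{1}\subseteq\phi^+\otimes\chi_V$, the trivial representation occurs in $\phi'$ with multiplicity at least $2$, contradicting the discreteness of the parameter attached to the discrete $\pi^{\pm}$. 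The discrete case of (i) is dual: in Case~I of the construction, assuming $\pi$ is discrete and both $W_{2n-2}$-lifts vanish, the two representations $\sigma^{(1)}=\theta_{W_{2n},V^{\bullet}_{2n},\psi}(\pi)$ and $\sigma^{(2)}=\theta_{W_{2n},V^{\bullet}_{2n},\psi}(\pi\otimes\det)$ are both discrete by Lemma~\ref{thetadiscrete}(1)(ii) with common parameter $\phi^+=(\phi\otimes\chi_V)\oplus\chi_V$; the hypothesis $\mathrm{1}\subseteq\phi$ then forces $\chi_V$ to occur at least twice in $\phi^+$, contradicting discreteness. In Case~II, the second theta lift is non-zero by the very definition of Case~II.

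For a general tempered $\pi$ (respectively $\sigma$), I would reduce to the discrete case by writing the representation as an irreducible summand of a parabolic induction from a discrete series on a smaller group, using the local intertwining relation on the symplectic side (Theorem~\ref{llcsympletic}(6)) and its orthogonal analog to be established later in the paper. Lemma~\ref{12}, together with its symplectic-to-orthogonal analog, then expresses the relevant theta lifts as subquotients of induced representations with the appropriate $\chi_V$-twist; Corollary~\ref{1inphi} applied to the discrete piece on the smaller group, combined with the discrete case above, propagates the exactly-one-non-vanishing conclusion back to the tempered level.

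The principal technical obstacle is this last reduction step. One must establish the $\SP\to\mathrm O$ analog of Lemma~\ref{12} with the correct character twist coming from the Weil representation, and then simultaneously track the first-occurrence indices in both Witt towers $\mathcal V^{\pm}$ under parabolic induction. In addition, one must take care that the properties of $\mathcal L_\psi$ invoked in the reduction—in particular the equivalence between temperedness (resp.\ discreteness) of $\pi$ and of its parameter $\phi$, which is part of Theorem~\ref{desideratumall}(5)—are established at this stage of the construction, so that the induction is not circular.
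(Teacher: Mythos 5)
Your ``at most one'' halves and your discrete-series argument for part (i) are sound: there you only use the conservation relation (Theorem \ref{con2}), Lemma \ref{thetadiscrete}, the definition of $\mathcal L_\psi$, and Arthur's Theorem \ref{llcsympletic}(5) applied to representations of the \emph{symplectic} group, all of which are available at this point. The genuine gap is that the remaining steps lean on facts about the orthogonal side that the paper only establishes later, and by means of this very proposition. In your discrete case of (ii), the contradiction is that the discrete series $\pi^{\pm}=\theta_{V^{\pm}_{2n+2},W_{2n},\psi}(\sigma)$ would have the non-discrete parameter $(\phi^{+}\otimes\chi_V)\oplus\mathrm 1$; but the implication ``discrete series of $\mathrm O(V^{\bullet})$ has discrete $\mathcal L_\psi$-parameter'' is Theorem \ref{desideratumall}(5), proved in Proposition \ref{tempered}, whose Case II argument itself invokes Proposition \ref{1notinphi}. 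Likewise your tempered-to-discrete reduction appeals to the orthogonal local intertwining relation ``to be established later'', but its proof in Sections 7--8 (Corollary \ref{notanisomorphism}, Proposition \ref{localintertwing}) uses Proposition \ref{1notinphi} and the packet-size count (Corollary \ref{size}, Proposition \ref{bijection}) that flows from it. You flag this circularity risk at the end, but flagging it does not remove it: as written, the ``at least one'' direction of (ii) and the general tempered case of both parts are unproved, and repairing your route would require an independent input such as a converse to Lemma \ref{pole} (theta dichotomy via poles of the doubling $\gamma$-factor), which the paper does not supply.

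The paper's own proof avoids discrete series, the LIR, and any temperedness/discreteness dichotomy for $\mathcal L_\psi$ altogether; it is a counting argument carried out directly at the tempered level. Setting $\phi^{+}=(\phi\otimes\chi_V)\oplus\chi_V$ and $\phi^{-}=(\phi\otimes\chi_V)-\chi_V$, it builds $\theta_{\psi,2n}:\Pi_{\phi,\psi}\to\Pi_{\phi^{+}}$ and, depending on whether $\mathrm 1\oplus\mathrm 1\subseteq\phi$, either $\theta_{\psi,2n-2}:\Pi_{\phi^{-}}\to\Pi_{\phi,\psi}$ or $\theta^{+}_{\psi,2n-2}\sqcup\theta^{-}_{\psi,2n-2}:\Pi_{\phi^{-}}\sqcup\Pi_{\phi^{-}}\to\Pi_{\phi,\psi}$, uses Howe duality and conservation to show these are injective and that the image of the latter never contains both $\pi$ and $\pi\otimes\det$, and then compares sizes via Arthur's $|\Pi_{\phi^{\pm}}|=|\widehat{\bar{\mathcal S}_{\phi^{\pm}}}|$ (Theorem \ref{llcsympletic}(2)) to conclude $|\Pi_{\phi,\psi}|=|\widehat{\mathcal S_{\phi}}|$ and that exactly one member of each pair $\{\pi,\pi\otimes\det\}$ lies in the image of the lower lift, which is precisely the assertion (and part (ii) is argued symmetrically). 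If you want a correct proof within the paper's logical order, this packet-counting scheme, rather than a reduction to discrete series, is the way to go.
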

\begin{proof}
We first prove (i). Let $\phi^{+}=(\phi\otimes\chi_{V})\oplus\chi_{V}$ and $\phi^-$ be the unique $L$-parameter such that $\phi=(\phi^-\otimes \chi_V)\oplus 1$. We define a map:
	$$
	\theta_{\psi,2n}: \bigsqcup_{V_{2n}^\delta} \Irrt \left(\mathrm O(V_{2n}^{\delta})\right)\longrightarrow \Irrt \left(\SP(W_{2n})\right) 
	$$
	by 
	$$
	\theta_{\psi,2n}(\pi)=
	\begin{cases}
	\theta_{W_{2n}, V_{2n}^\delta, \psi}(\pi) \quad &\mbox{if}\,\,\theta_{W_{2n}, V_{2n}^\delta,  \psi}(\pi)\neq 0, \\
	\theta_{W_{2n}, V_{2n}^\delta,  \psi}(\pi\otimes\det) &\mbox{otherwise}.
	\end{cases}
	$$
	Then $\theta_{\psi,2n}(\pi)$ is well defined  by the conservation relation (Theorem \ref{con2}). By Proposition \ref{prasad}, the restriction of $\theta_{\psi,2n}$ to the packet $\Pi_{\phi,\psi}$ gives 
	\begin{equation}\label{theta2n}
	\theta_{\psi,2n}: \Pi_{\phi,\psi} \longrightarrow \Pi_{\phi^{+}}.
	\end{equation}
	It follows from the Howe duality that
	\begin{align}\label{15}
	\theta_{\psi,2n}(\pi_1)\neq \theta_{\psi,2n}(\pi_2)\quad  \mbox{if}\,\, \pi_1\neq \pi_2\,\, \mbox{and}\,\, \pi_1 \neq \pi_2\otimes \det.
	\end{align}
	So for every $\sigma \in \Pi_{\phi^{+}}$, the fibre $\theta_{\psi,2n}^{-1}(\sigma)$ contains at most two elements. Hence 
\begin{align}\label{1111}
	|\Pi_{\phi,\psi}|\leq 2|\Pi_{\phi^{+}}|. 
\end{align}

	Next we shall consider two different cases, depending on $\mathbbm 1\oplus\mathbbm 1
	\subseteq \phi$ or not.  

$\bullet$ \underline{Case I}: If $\mathbbm 1\oplus\mathbbm 1 \subseteq \phi$, then $\chi_{V}\subseteq \phi^{-}$. Define
		\begin{align*}
		\theta_{\psi,2n-2}: \Irrt \left(\SP(W_{2n-2})\right) \longrightarrow  \bigsqcup_{V_{2n}^\delta} \Irrt \left(\mathrm O(V_{2n}^{\delta})\right)
		\end{align*} 
		by 
		$$
		\theta_{\psi,2n-2}(\sigma)= 
		\begin{cases}
		\theta_{V^{+}_{2n},W_{2n-2},\psi}(\sigma) \quad &\mbox{if}\,\, \theta_{V^{+}_{2n},W_{2n-2},\psi}(\sigma)\neq 0,\\
		\theta_{V^{-}_{2n},W_{2n-2},\psi}(\sigma) &\mbox{otherwise}.
		\end{cases}
		$$
		Then $\theta_{\psi,2n-2}(\sigma)$ is well defined  by the conservation relation (Theorem \ref{con2}). By Proposition \ref{prasad}, the restriction of $\theta_{\psi,2n-2}$ to the packet $\Pi_{\phi^{-}}$ gives  
		\begin{equation}\label{theta2n-2}
		\theta_{\psi,2n-2}: \Pi_{\phi^{-}}\longrightarrow \Pi_{\phi,\psi}.
		\end{equation}
		Moreover, it follows from the Howe duality that 
			\begin{equation}\label{141}
			\theta_{\psi,2n-2}(\sigma_1)\neq \theta_{\psi,2n-2}(\sigma_2) \quad \mbox{if $\sigma_1\neq\sigma_2$},
			\end{equation} 
		and from the conservation relation (Theorem \ref{con2}) that
		\begin{equation}\label{14}
		\theta_{\psi,2n-2}(\sigma_1)\neq \theta_{\psi,2n-2}(\sigma_2)\otimes \det\quad \mbox{for all  $\sigma_1,\sigma_2$}.
		\end{equation} 
Hence 
\begin{align}\label{1112}
	|\Pi_{\phi,\psi}|\geq 2|\Pi_{\phi^{-}}|. 
\end{align}		
On the other hand, since $\mathbbm 1\oplus\mathbbm 1\subseteq \phi$, the inclusion map $\mathcal S_{\phi^{-}}\hookrightarrow \mathcal S_{\phi^+}$ is an isomorphism. Then it follows from Theorem \ref{llcsympletic} (2) that 
\begin{align}\label{135}
		|\Pi_{\phi^{-}}|=|\widehat{\bar {\mathcal S}_{\phi^-}}|=|\widehat{\bar {\mathcal S}_{\phi^+}}|=|\Pi_{\phi^{+}}|. 
\end{align}
Combining (\ref{1111}), (\ref{1112}) and (\ref{135}), we deduce  
		\begin{align*}
		|\Pi_{\phi,\psi}|=2|\Pi_{\phi^{+}}|=  2|\Pi_{\phi^{-}}|=2|\widehat{\bar {\mathcal S}_{\phi^{-}}}|=|\widehat{\mathcal {S}_{\phi}}|. 
		\end{align*}
		Hence for $\pi\in \Pi_{\phi,\psi}$, exactly one of $\pi$ and $\pi\otimes\det$ is in the image of $\theta_{\psi,2n-2}$. This proves Case I. 
	
	$\bullet$ \underline{Case II}: If $\mathbbm 1\oplus\mathbbm 1\nsubseteq \phi$, then $\chi_{V}\nsubseteq \phi^{-}$. We define a map 
		\begin{equation}\label{theta2n-22}
		\theta_{\psi,2n-2}^{+}\sqcup\theta_{\psi,2n-2}^{-}: \Pi_{\phi^{-}}\sqcup \Pi_{\phi^{-}} \longrightarrow \Pi_{\phi,\psi}
		\end{equation}
		by 
		$$
		\begin{cases*}
		\sigma\mapsto  \theta_{V^{+}_{2n},W_{2n-2},\psi}(\sigma)\quad \mbox{for $\sigma$ in the first copy of $\Pi_{\phi^{-}}$},\\
		\sigma\mapsto \theta_{V^{-}_{2n},W_{2n-2},\psi}(\sigma) \quad \mbox{for $\sigma$ in the second copy of $\Pi_{\phi^{-}}$}. 
		\end{cases*}
		$$
		Since $\chi_{V}\nsubseteq \phi^{-}$, both $\theta_{V^{+}_{2n},W_{2n-2},\psi}(\sigma)$ and $\theta_{V^{-}_{2n},W_{2n-2},\psi}(\sigma)$ are non-zero by Corollary \ref{1inphi}. Hence the map $\theta_{\psi,2n-2}^{+}\sqcup\theta_{\psi,2n-2}^{-}$ is well defined. Again, it follows from the Howe duality that 
		\begin{equation}\label{16}
	\begin{aligned}
	\left(\theta_{\psi,2n-2}^{+}\sqcup\theta_{\psi,2n-2}^{-}\right)(\sigma_1)\neq \left(\theta_{\psi,2n-2}^{+}\sqcup\theta_{\psi,2n-2}^{-}\right)(\sigma_2) \quad	\mbox{if}\,\, \sigma_1\neq \sigma_2,
	\end{aligned}
		\end{equation}
		and from the conservation relation (Theorem \ref{con2}) that 
			\begin{equation}\label{161}
			\begin{aligned}
			\left(\theta_{\psi,2n-2}^{+}\sqcup\theta_{\psi,2n-2}^{-}\right)(\sigma_1)\neq \left(\theta_{\psi,2n-2}^{+}\sqcup\theta_{\psi,2n-2}^{-}\right)(\sigma_2)\otimes \det \quad
			\mbox{for all}\,\, \sigma_1, \sigma_2. 
			\end{aligned}
			\end{equation}
	Hence 
	\begin{align}\label{1113}
	|\Pi_{\phi,\psi}|\geq 4|\Pi_{\phi^{-}}|. 
	\end{align}	
	On the other hand, we have $|\mathcal S_{\phi^{+}}|=2|\mathcal S_{\phi^{-}}|$ in this case. Then it follows from Theorem \ref{llcsympletic} (2) that 
	\begin{align}\label{1114}
	2|\Pi_{\phi^{-}}|=2|\widehat{\bar {\mathcal S}_{\phi^-}}|=|\widehat{\bar {\mathcal S}_{\phi^+}}|=|\Pi_{\phi^{+}}|. 
	\end{align}
	Combining (\ref{1111}), (\ref{1113}) and (\ref{1114}), we deduce  
	\begin{align*}
	|\Pi_{\phi,\psi}|=2|\Pi_{\phi^{+}}|=  4|\Pi_{\phi^{-}}|=4|\widehat{\bar {\mathcal S}_{\phi^{-}}}|=|\widehat{\mathcal {S}_{\phi}}|. 
	\end{align*}
	Hence for $\pi\in \Pi_{\phi,\psi}$, exactly one of $\pi$ and $\pi\otimes\det$ is in the image of $(\theta_{\psi,2n-2}^{+}\sqcup\theta_{\psi,2n-2}^{-})$. This proves Case II. 
	The proof for (ii) is similar to (i) and we omit it here.
\end{proof}

As a consequence of Corollary \ref{1inphi} and Proposition \ref{1notinphi}, we can count the sizes of the fibers of $\mathcal L_{\psi}$. 
\begin{corollary}\label{size}
	Let $\phi\in \Phi_{\mathrm {temp}}(\mathrm O(V_{2n}))$. Then the size of the packet $\Pi_{\phi,\psi}$ is exactly the same as the size of $\widehat{\mathcal {S}_{\phi}}$. 
\end{corollary}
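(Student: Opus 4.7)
The plan is to split on whether $\mathrm{1} \subseteq \phi$. The case $\mathrm{1} \subseteq \phi$ requires no further work: both Case I and Case II inside the proof of Proposition~\ref{1notinphi} terminate with precisely the equality $|\Pi_{\phi,\psi}| = |\widehat{\mathcal{S}_\phi}|$. So the only outstanding case is the complementary one $\mathrm{1} \not\subseteq \phi$, and the strategy there is to establish a bijection $\Pi_{\phi,\psi} \leftrightarrow \Pi_{\phi^+}$ via $\theta_{\psi,2n}$, and then transport the cardinality identity for $\Pi_{\phi^+}$ supplied by Theorem~\ref{llcsympletic}(2).

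Under the assumption $\mathrm{1} \not\subseteq \phi$, Corollary~\ref{1inphi} gives $\theta_{W_{2n},V^\bullet_{2n},\psi}(\pi) \neq 0$ for every $\pi \in \Pi_{\phi,\psi}$, so the map $\theta_{\psi,2n}: \Pi_{\phi,\psi} \to \Pi_{\phi^+}$ from \eqref{theta2n} is computed directly by the first branch of its definition for every such $\pi$. Howe duality then makes it injective. For surjectivity, note that $\chi_V \subseteq \phi^+$ by construction, so for any $\sigma \in \Pi_{\phi^+}$ Proposition~\ref{1notinphi}(ii) produces a nonzero theta lift of $\sigma$ to exactly one of $\mathrm{O}(V^+_{2n})$ or $\mathrm{O}(V^-_{2n})$; Proposition~\ref{prasad}(ii) places this lift in $\Pi_{\phi,\psi}$, and Howe duality guarantees it is sent back to $\sigma$ by $\theta_{\psi,2n}$.

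It remains to compare component groups. Since $\mathrm{1} \not\subseteq \phi$, the character $\chi_V$ does not appear in $\phi \otimes \chi_V$, so the summand $\chi_V$ in $\phi^+ = (\phi \otimes \chi_V) \oplus \chi_V$ is a genuinely new irreducible orthogonal constituent. Consequently $\mathcal{S}_{\phi^+}$ carries exactly one more $\mathbb{Z}/2\mathbb{Z}$-factor than $\mathcal{S}_{\phi \otimes \chi_V} \cong \mathcal{S}_\phi$. The central element $z_{\phi^+}$ has nontrivial determinant (being the image of $-\mathbf{1}$ with $\dim \phi^+$ odd), and passing to $\bar{\mathcal{S}}_{\phi^+} = \mathcal{S}_{\phi^+}/\langle z_{\phi^+}\rangle$ therefore halves the cardinality, giving $|\bar{\mathcal{S}}_{\phi^+}| = |\mathcal{S}_\phi|$. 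Combining with Theorem~\ref{llcsympletic}(2) yields
\begin{equation*}
|\Pi_{\phi,\psi}| = |\Pi_{\phi^+}| = |\widehat{\bar{\mathcal{S}}_{\phi^+}}| = |\widehat{\mathcal{S}_\phi}|.
\end{equation*}

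The only real subtlety is the component-group accounting under $\phi \leadsto \phi^+$ and through the quotient by $z_{\phi^+}$; once that bookkeeping is clean, the corollary is a direct reassembly of Corollary~\ref{1inphi}, Proposition~\ref{1notinphi}, and Howe duality, with no additional machinery needed.
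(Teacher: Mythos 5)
Your proof is correct and is essentially the paper's own argument: the case $\mathrm{1}\subseteq\phi$ is read off from the proof of Proposition \ref{1notinphi}, and for $\mathrm{1}\nsubseteq\phi$ both you and the paper identify $\Pi_{\phi,\psi}$ with $\Pi_{\phi^+}$ via the equal-rank theta lift $\theta_{\psi,2n}$ and then count using Theorem \ref{llcsympletic}(2) together with $\mathcal S_{\phi^+}\cong\mathcal S_{\phi}\oplus(\mathbb Z/2\mathbb Z)e$ and the halving of $\mathcal S_{\phi^+}$ by the nontrivial central element $z_{\phi^+}$. The only slips are cosmetic: the fact that the backward lift of $\sigma\in\Pi_{\phi^+}$ lands in $\Pi_{\phi,\psi}$ comes from Proposition \ref{prasad}(i) (read through the Howe-duality identity $\theta_{W_{2n},V^{\bullet}_{2n},\psi}(\pi)=\sigma$) or equivalently from the construction of $\mathcal L_{\psi}$, not from Proposition \ref{prasad}(ii), which concerns the lift from $\SP(W_{2n-2})$; and injectivity of $\theta_{\psi,2n}$ across the two spaces $V^{\pm}_{2n}$ requires Proposition \ref{1notinphi}(ii) in addition to Howe duality, an ingredient you in any case invoke in the surjectivity step.
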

\begin{proof}
The case when $\mathbbm 1\subseteq \phi$ follows directly from the proof of Proposition \ref{1notinphi}. So it suffices to prove the case when 
$\mathbbm 1\nsubseteq \phi$. We define 
	\begin{align*}
	\theta_{\psi,2n}: \bigsqcup_{\delta\in \{\pm 1\}}\Irrt \left(\mathrm O(V_{2n}^{\delta})\right)&\longrightarrow \Irrt \left(\SP(W_{2n})\right)\\
	\pi &\mapsto \theta_{W_{2n}, V_{2n}^{\delta} ,\psi}(\pi). 
	\end{align*}
	By Propsition \ref{prasad}, Proposition \ref{1notinphi} and the Howe duality, we deduce that the restriction of $\theta_{\psi,2n}$ to $\Pi_{\phi,\psi}$ gives a bijection 
	\begin{equation}\label{theta2n1}
	\theta_{\psi,2n}: \Pi_{\phi,\psi} \rightarrow \Pi_{\phi^+},
	\end{equation}
	where $\phi^+=(\phi\otimes\chi_{V})\oplus \chi_{V}$.
	It follows from Theorem \ref{llcsympletic} (2) that 
	\begin{align}\label{11}
	|\Pi_{\phi^+}|=|\widehat{\bar {\mathcal S}_{\phi^+}}|=\frac{1}{2}|\widehat{ \mathcal S_{\phi^+}}|.
	\end{align}
	On the other hand, since $\mathbbm 1\nsubseteq \phi$, we deduce  
	\begin{align}\label{13}
	\mathcal S_{\phi^+}\cong \mathcal {S}_{\phi}\oplus (\mathbb{Z}/2\mathbb{Z}) e \quad \mbox{and}\quad |\widehat{\mathcal {S}_{\phi}}|=\frac{1}{2}|\widehat{\mathcal S_{\phi^+}}|,
	\end{align}
	where $e\in \mathcal S_{\phi^+}$ corresponds to $\chi_{V} \subseteq \phi^+$. Combining (\ref{theta2n1}), (\ref{11}) and (\ref{13}), we deduce $$|\Pi_{\phi,\psi}|=|\Pi_{\phi^+}|=\frac{1}{2}|\widehat{\mathcal S_{\phi^+}}|=|\widehat{\mathcal {S}_{\phi}}|.$$ This finishes the proof. 
\end{proof}



\subsection{Construction of $\mathcal J^\psi_{\mathfrak W_{c^\prime}}$}\label{constructJc}
Given a tempered parameter $\phi \in \Para \mathrm O(V_{2n})$, we have shown that the size of the packet $\Pi_{\phi,\psi}$ equals to the size of $\widehat{\mathcal {S}_{\phi}}$. In this subsection, we are going to define a bijection
\begin{align*}
\mathcal J^{\psi}_{\mathfrak W_{c^\prime}}: \Pi_{\phi,\psi}\longrightarrow \widehat{\mathcal {S}_{\phi}} 
\end{align*} 
for each choice of the Whittaker data $\mathfrak W_{c^\prime}$. We will prove $\mathcal J^{\psi}_{\mathfrak W_{c^\prime}}$ is independent of the choice of $\psi$ in Section \ref{changeofpsi}, and hence get our desired $\mathcal J_{\mathfrak W_{c^\prime}}$.
 
Fix $c^\prime \in F^\times$. We shall separate the construction of $\mathcal J^{\psi}_{\mathfrak W_{c^\prime}}$ into two cases, depending on $\mathbbm 1 \subseteq \phi$ or not. 

$\bullet$ \underline{Case I}: If $\mathbbm 1 \nsubseteq \phi$, then by (\ref{theta2n1}), we know that   
    \begin{equation}
    \begin{aligned}\label{52}
    \theta_{\psi,2n}: \Pi_{\phi,\psi} &\rightarrow \Pi_{\phi^+}\\
    \pi &\mapsto \sigma= \theta_{W_{2n}, V_{2n}^{\delta} ,\psi}(\pi)
    \end{aligned}
    \end{equation}
  is a bijection, where $\phi^+=\left(\phi\otimes \chi_{V}\right)\oplus \chi_{V}$. On the other hand, we have  
	$$
	\mathcal S_{\phi^+}\cong \mathcal {S}_{\phi}\oplus (\mathbb Z/2\mathbb Z)e,
	$$
	where $e$ is the element in $\mathcal S_{\phi^+}$ corresponding to $\chi_{V}\subseteq \phi^+$. This induces an isomorphism 
	\begin{equation}\label{55}
	\ell:\mathcal S_{\phi}\hookrightarrow \mathcal S_{\phi^+}\twoheadrightarrow\bar{ \mathcal S}_{\phi^+}. 
	\end{equation}
	For $\pi\in \Pi_{\phi,\psi}$, we define 
	$$
	\mathcal J^{\psi}_{\mathfrak W_{c^\prime}}(\pi)\coloneqq  \ell^*(\mathcal J_{\mathfrak W^\prime_{\psi,c^\prime}}(\sigma)), 
	$$
	where $\sigma= \theta_{\psi,2n}(\pi)$ in (\ref{52}). 
	
	Note that in this case, the following diagram  
	\[
	\begindc{\commdiag}[500]
	\obj(-1,1)[aa]{$\Pi_{\phi^+}$}
	\obj(1,1)[bb]{$\widehat{\bar{\mathcal{S}}_{\phi^+}}$}
	\obj(-2,0)[cc]{$\Pi_{\phi,\psi}$}
	\obj(2,0)[dd]{$\widehat{\mathcal{S}_\phi}$}
	\mor{aa}{bb}{$\mathcal{J}_{\mathfrak{W}'_{\psi,c^\prime}}$}
	\mor{cc}{aa}{$\theta_{\psi,2n}$}
	\mor{bb}{dd}{$\ell^*$}
	\mor{cc}{dd}{$\mathcal{J}_{\mathfrak{W}_{c^\prime}}^\psi$}
	\enddc
	\]
	is commutative and every arrow in this diagram is a bijection. 
	
$\bullet$ \underline{Case II}: If $\mathbbm 1\subseteq \phi$, let $\phi^+=(\phi\otimes\chi_V)\oplus \chi_V$. Then the map 
\begin{equation}\label{501}
\ell:\mathcal S_{\phi}\hookrightarrow \mathcal S_{\phi^+}\twoheadrightarrow\bar{ \mathcal S}_{\phi^+}
\end{equation}
is surjective with the kernel isomorphic to $\mathbb Z/2\mathbb Z$. Let $\Pi_{\phi,\psi}^{+}$ be the subset of all representations $\pi\in \Pi_{\phi,\psi}$ such that $\theta_{W_{2n}, V_{2n}^{\delta} ,\psi}(\pi)\neq 0$. By Proposition \ref{1notinphi}, for $\pi\in \Pi_{\phi,\psi}$, 
$$\pi\ncong \pi\otimes\det
$$
and exactly one of $\pi$ and $\pi\otimes \det$ lies in $\Pi_{\phi,\psi}^{+}$, so $\Pi_{\phi,\psi}^{+}$ is half the size of $ \Pi_{\phi,\psi}$. It follows from Proposition \ref{prasad} and Proposition \ref{1notinphi} that the map 
	\begin{equation}\label{56}
	\begin{aligned}
	\theta_{\psi,2n}: \Pi_{\phi,\psi}^{+} &\longrightarrow \Pi_{\phi^+}\\
	\pi &\mapsto \sigma=\theta_{W_{2n}, V_{2n}^{\delta} ,\psi}(\pi)
	\end{aligned}
	\end{equation}
	is a bijection. For each $\pi\in \Pi_{\phi,\psi}^{+}$, we define 
	\begin{align*}
 \mathcal J^{\psi}_{\mathfrak W_{c^\prime}}(\pi)\coloneqq \ell^*(\mathcal J_{\mathfrak W^\prime_{\psi,c^\prime}}(\sigma)),
	\end{align*}
	where $\sigma=\theta_{\psi,2n}(\pi)$ in (\ref{56}). Next we define $\mathcal J^{\psi}_{\mathfrak W_{c^\prime}}$ on the other half of $\Pi_{\phi,\psi}$. For $\pi \in \Pi_{\phi,\psi}\backslash \Pi_{\phi,\psi}^{+}$, by the conservation relation (Theorem \ref{con2}), we have $\pi\otimes \det\in \Pi^+_{\phi,\psi}$. We define 
	\begin{equation*}
	\mathcal J^{\psi}_{\mathfrak W_{c^\prime}}(\pi)\coloneqq \mathcal J^{\psi}_{\mathfrak W_{c^\prime}}(\pi\otimes\det)\otimes\kappa_{\phi},
	\end{equation*}
	where $\kappa_{\phi}$ is defined by (\ref{detmap}). It is easy to check that the map 
	\begin{align*}
	\mathcal J^{\psi}_{\mathfrak W_{c^\prime}}: \Pi_{\phi,\psi} \longrightarrow \widehat {\mathcal {S}_{\phi}}
	\end{align*} 
	is also a bijection in this case. 

Combining these two cases, we deduce that our construction satisfies Theorem \ref{desideratumall} (2) for tempered representations:  
\begin{proposition}\label{bijection}
Let $\phi\in \Para(\mathrm O(V_{2n}))$. For each Whittaker datum $\mathfrak W_{c^\prime}$, the map 
	\begin{align*}
	\mathcal J^{\psi}_{\mathfrak W_{c^\prime}}: \Pi_{\phi,\psi}\longrightarrow \widehat{\mathcal {S}_{\phi}}
	\end{align*} 
is a bijection. 
\end{proposition}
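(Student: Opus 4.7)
The plan is to treat the two cases of the construction of $\mathcal{J}^\psi_{\mathfrak{W}_c}$ separately and reduce everything to bijectivity statements already established. In Case I ($\mathbf{1} \not\subseteq \phi$), the map is by construction the composition $\ell^{*} \circ \mathcal{J}_{\mathfrak{W}^\prime_{\psi,c}} \circ \theta_{\psi,2n}$ appearing in the commutative diagram that follows \eqref{55}. Each of the three arrows is a bijection: $\theta_{\psi,2n}$ by \eqref{52} (equivalently, by the size count in the proof of Corollary \ref{size}), $\mathcal{J}_{\mathfrak{W}^\prime_{\psi,c}}$ by Theorem \ref{llcsympletic}(2), and $\ell^{*}$ because $\ell$ is an isomorphism of finite abelian groups in this case, as recorded at \eqref{55}.

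In Case II ($\mathbf{1} \subseteq \phi$), I would partition $\Pi_{\phi,\psi} = \Pi_{\phi,\psi}^{+} \sqcup \Pi_{\phi,\psi}^{-}$, where $\Pi_{\phi,\psi}^{-}$ consists of those $\pi$ with $\pi\otimes\det \in \Pi_{\phi,\psi}^{+}$. By Proposition \ref{1notinphi} we have $\pi \not\cong \pi\otimes\det$ for every $\pi \in \Pi_{\phi,\psi}$, so the two halves are disjoint and of equal cardinality. On $\Pi_{\phi,\psi}^{+}$ the map is again the composition $\ell^{*} \circ \mathcal{J}_{\mathfrak{W}^\prime_{\psi,c}} \circ \theta_{\psi,2n}$, where $\theta_{\psi,2n}$ is a bijection by \eqref{56} and $\mathcal{J}_{\mathfrak{W}^\prime_{\psi,c}}$ is a bijection by Theorem \ref{llcsympletic}(2); this time $\ell^{*}: \widehat{\bar{\mathcal{S}}_{\phi^{+}}} \hookrightarrow \widehat{\mathcal{S}_\phi}$ is only injective, and its image is the index-$2$ subgroup of characters trivial on $\ker(\ell)$.

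The heart of the argument in Case II is then to identify $\ker(\ell)$ explicitly and to verify that $\kappa_\phi$ represents the non-trivial coset, so that $\widehat{\mathcal{S}_\phi}$ is the disjoint union of the image of $\ell^{*}$ and its $\kappa_\phi$-translate. Writing $\phi = \mathbf{1}^{m_1} \oplus \tilde{\phi}$ with $\tilde{\phi}$ not containing $\mathbf{1}$, I would observe that the inclusion $\mathcal{S}_\phi \hookrightarrow \mathcal{S}_{\phi^{+}}$ in \eqref{501} is actually a group isomorphism, because tensoring by $\chi_V$ permutes the set of irreducible orthogonal summands and the extra copy of $\chi_V$ in $\phi^{+}$ only raises the multiplicity of an already-present component. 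Under this identification, $z_{\phi^{+}}$ corresponds to $z_\phi + a_1$, where $a_1 \in \mathcal{S}_\phi$ is the generator attached to the trivial summand $\mathbf{1}$. A direct computation using \eqref{detmap} and \eqref{kappa} then gives
$$\kappa_\phi(z_\phi + a_1) = \upsilon(\dim \phi + \dim \mathbf{1}) = \upsilon(2n+1) = -1,$$
so $\kappa_\phi$ is non-trivial on $\ker(\ell)$. Combined with the definition $\mathcal{J}^\psi_{\mathfrak{W}_c}(\pi) = \mathcal{J}^\psi_{\mathfrak{W}_c}(\pi\otimes\det) \otimes \kappa_\phi$ for $\pi \in \Pi_{\phi,\psi}^{-}$, this produces a bijection from $\Pi_{\phi,\psi}^{-}$ onto the complementary half of $\widehat{\mathcal{S}_\phi}$, and glues with the half on $\Pi_{\phi,\psi}^{+}$ to give the desired bijection on all of $\Pi_{\phi,\psi}$.

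The main obstacle is precisely the Case II bookkeeping: identifying $\ker(\ell)$ with $\langle z_\phi + a_1\rangle$ under the chain of inclusions $\mathcal{S}_\phi \hookrightarrow \mathcal{S}_{\phi^{+}} \twoheadrightarrow \bar{\mathcal{S}}_{\phi^{+}}$, and the dimension-parity computation $\kappa_\phi(z_\phi+a_1)=-1$. Once these are in hand, all other steps are routine applications of Theorem \ref{llcsympletic}, Proposition \ref{1notinphi}, and Corollary \ref{size}.
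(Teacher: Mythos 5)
Your proposal is correct and follows essentially the same route as the paper: the paper's own argument is just the two-case construction of $\mathcal J^{\psi}_{\mathfrak W_{c}}$ (Case I as a composite of the three bijections $\theta_{\psi,2n}$, $\mathcal J_{\mathfrak W^\prime_{\psi,c}}$, $\ell^*$; Case II via $\Pi_{\phi,\psi}^{+}$ and the twist by $\kappa_\phi$), with the Case II bijectivity left as "easy to check." Your identification of $\ker(\ell)$ with $\langle z_\phi+a_1\rangle$ and the parity computation $\kappa_\phi(z_\phi+a_1)=\upsilon(2n+1)=-1$ is exactly the omitted verification, and it is carried out correctly.
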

\subsection{From tempered to non-tempered}\label{nontempered}
So far, we have attached an $L$-parameter and a character of the component group to $\pi\in \Irrt \mathrm O(V_{2n}^{\delta})$. In this subsection, we will extend this construction to non-tempered representations by using the Langlands classification.  

Let $\pi\in \Irr(\mathrm O(V_{2n}^{\delta}))$. By the Langlands classification for $p$-adic groups \cite{MR507262}, \cite{MR2050093}, we know that $\pi$ is the unique irreducible quotient of the standard module 
\begin{equation*}
\Ind_{P}^{\mathrm O(V_{2n}^{\delta})}\left(\tau_{1}|\cdot|_F^{s_1}\otimes \cdots\otimes \tau_{r}|\cdot|_F^{s_r}\otimes\pi_{0} \right),
\end{equation*}
where 
\begin{itemize}
	\item $P$ is a parabolic subgroup of $\mathrm O(V_{2n}^{\delta})$ with Levi component $\GL_{k_{1}}(F)\times \cdots \times \GL_{k_{r}}(F)\times \mathrm O(V_{2n_0}^{\delta})$;
	\item $\tau_{i}$ is an irreducible tempered representation of $\GL_{k_{i}}(F)$ for $i=1,\cdots,r$;
	\item $\pi_0$ is an irreducible tempered representation of $\mathrm O(V_{2n_0}^\delta)$;
	\item $n=k_1+\cdots k_r +n_0 $ and $s_1>\cdots >s_r>0$. 
\end{itemize}
Then we define
$$
\mathcal L_{\psi}(\pi)\coloneqq \phi_{1}|\cdot|^{s_1}+\cdots + \phi_{r}|\cdot|^{s_r}+ \phi_0+ \phi_{r}^{\vee}|\cdot|^{-s_r}+ \cdots +  \phi_{1}^{\vee}|\cdot|^{-s_1},
$$
where $\phi_{i}$ is the $L$-parameter of $\tau_{i}$ and $\phi_0=\mathcal L_{\psi}(\pi_0)$. The natural embedding $\mathcal S_{\phi_0}\hookrightarrow \mathcal {S}_{\phi}$ is an isomorphism in this case and we identify $\mathcal {S}_{\phi}$ with $\mathcal S_{\phi_{0}}$ via this isomorphism. For a fixed Whittaker datum $\mathfrak W_{c^\prime}$, we define  
$$
\mathcal J^\psi_{\mathfrak W_{c^\prime}}(\pi)\coloneqq \mathcal J^\psi_{\mathfrak W_{c^\prime}}(\pi_0).
$$
Since the standard module is unique up to Weyl group conjugation, the maps $\mathcal L_{\psi}$ and $\mathcal J_{\mathfrak W_{c^\prime}}^\psi$ are well defined. It then follows from our construction that $\mathcal L_{\psi}$ and $\mathcal J_{\mathfrak W_{c^\prime}}^\psi$ are compatible with Langlands quotients. 
\begin{proposition}\label{Langlandsclasification}
For each Whittaker datum $\mathfrak W_{c^\prime}$, the maps $\mathcal L_{\psi}$ and $\mathcal J^\psi_{\mathfrak W_{c^\prime}}$ we constructed are compatible with Langlands quotients, i.e., satisfy Theorem \ref{desideratumall} (10). 
\end{proposition}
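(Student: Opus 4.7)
The plan is that Proposition \ref{Langlandsclasification} should follow essentially by unwinding the definitions of $\mathcal L_\psi$ and $\mathcal J^\psi_{\mathfrak W_c}$ on non-tempered representations given in subsection \ref{nontempered}. What needs genuine verification, however, is (a) that the construction is well-posed, and (b) that the component-group identification in part (10) of Theorem \ref{desideratumall} really is an isomorphism in this situation, so that the statement ``$\mathcal J^\psi_{\mathfrak W_c}(\pi)=\mathcal J^\psi_{\mathfrak W_c}(\pi_0)$'' makes sense as phrased.

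First I would record well-definedness. Given $\pi\in\Irr(\mathrm O(V_{2n}^\bullet))$, the Langlands classification for $p$-adic groups (\cite{MR507262}, \cite{MR2050093}) expresses $\pi$ as the unique irreducible quotient of a standard module $\Ind_P^{\mathrm O(V_{2n}^\bullet)}(\tau_1|\cdot|^{s_1}\otimes\cdots\otimes\tau_r|\cdot|^{s_r}\otimes\pi_0)$, and the data $(\tau_1,\ldots,\tau_r;s_1,\ldots,s_r;\pi_0)$ are uniquely determined up to the action of the relative Weyl group of $M_P$. Permuting the inducing data does not change the multiset $\{\phi_i|\cdot|^{s_i},\phi_i^\vee|\cdot|^{-s_i}\}$ and does not affect $\pi_0$ up to isomorphism, so both $\mathcal L_\psi(\pi)$ and $\mathcal J^\psi_{\mathfrak W_c}(\pi):=\mathcal J^\psi_{\mathfrak W_c}(\pi_0)$ are unambiguous.

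Next I would verify the component-group claim. Write the tempered parameter $\phi_0$ in the form \eqref{111}, $\phi_0=\bigoplus m_j\phi_{0,j}\oplus\varphi_0\oplus\varphi_0^\vee$. Since each $s_i>0$, the summands $\phi_i|\cdot|^{s_i}$ are never self-dual, so each pair $\phi_i|\cdot|^{s_i}\oplus\phi_i^\vee|\cdot|^{-s_i}$ is absorbed into the $\varphi\oplus\varphi^\vee$ part of the decomposition \eqref{111} of $\phi$, contributing nothing to the list of irreducible self-dual summands. Therefore the canonical inclusion $\mathcal S_{\phi_0}\hookrightarrow \mathcal S_\phi$ is an isomorphism, and the central element $z_{\phi_0}$ maps to $z_\phi$. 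Under this identification, $\mathcal J^\psi_{\mathfrak W_c}(\pi):=\mathcal J^\psi_{\mathfrak W_c}(\pi_0)\in\widehat{\mathcal S_{\phi_0}}=\widehat{\mathcal S_\phi}$ lies in the correct target group.

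Finally I would check the packet statement: for a non-tempered $\phi$ of the form in (10), the set $\Pi_{\phi,\psi}$ as defined in subsection \ref{nontempered} consists exactly of the irreducible Langlands quotients of the indicated standard modules as $\pi_0$ ranges over $\Pi_{\phi_0,\psi}$. By our construction, $\pi\in\Pi_{\phi,\psi}$ iff $\mathcal L_\psi(\pi)=\phi$, iff the Langlands data of $\pi$ consist of the prescribed $\tau_i|\cdot|^{s_i}$'s together with some $\pi_0$ with $\mathcal L_\psi(\pi_0)=\phi_0$; uniqueness of Langlands data guarantees this assignment $\pi\mapsto\pi_0$ is a bijection between $\Pi_{\phi,\psi}$ and $\Pi_{\phi_0,\psi}$. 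I do not anticipate a real obstacle here: the entire content of the proposition is already built into the defining recipe of subsection \ref{nontempered}, and the only non-formal ingredient is the observation that non-self-dual summands do not enlarge the component group.
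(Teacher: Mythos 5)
Your proposal is correct and follows essentially the same route as the paper: the paper's proof of Proposition \ref{Langlandsclasification} is simply the observation that Theorem \ref{desideratumall} (10) holds by the very definition of $\mathcal L_{\psi}$ and $\mathcal J^\psi_{\mathfrak W_{c}}$ on non-tempered representations in subsection \ref{nontempered}, with well-definedness coming from uniqueness of the Langlands data up to Weyl-group conjugation and with the identification $\mathcal S_{\phi_0}\cong\mathcal S_{\phi}$ built into that construction. Your additional verifications (that the summands $\phi_i|\cdot|^{s_i}$ with $s_i>0$ are not self-dual and hence do not enlarge the component group, and that uniqueness of Langlands data gives the bijection $\Pi_{\phi,\psi}\leftrightarrow\Pi_{\phi_0,\psi}$) are exactly the details the paper leaves implicit.
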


The following corollary follows from Proposition \ref{bijection} and our construction in this subsection.   
\begin{corollary}\label{bijection2}
	For each Whittaker datum $\mathfrak W_{c^\prime}$, the map 
	\begin{align*}
	\mathcal J^{\psi}_{\mathfrak W_{c^\prime}}: \Pi_{\phi,\psi}\longrightarrow \widehat{\mathcal {S}_{\phi}}
	\end{align*} 
	is a bijection. Hence Theorem \ref{desideratumall} (2) holds for our construction.  
\end{corollary}

Next we extend Lemma \ref{respectgamma} and Lemma \ref{respectplancherel} from tempered representations to general cases.  
\begin{proposition}\label{localfactor}
	The map $\mathcal L_{\psi}$ we constructed respect the standard $\gamma$-factors
	 and the Plancherel measures, i.e., it satisfies Theorem \ref{desideratumall} (11) and (12).   
\end{proposition}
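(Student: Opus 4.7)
The tempered case of both assertions is Lemmas \ref{respectgamma} and \ref{respectplancherel}. The plan is to reduce the general case to the tempered one via the Langlands classification that was used in the very definition of $\mathcal L_\psi$ on non-tempered representations in subsection \ref{nontempered}. Writing $\pi$ as the Langlands quotient of a standard module
$$\Ind_P^{\mathrm O(V_{2n})}\bigl(\tau_1|\cdot|_F^{s_1}\otimes\cdots\otimes\tau_r|\cdot|_F^{s_r}\otimes\pi_0\bigr)$$
with $\pi_0$ and the $\tau_i$ tempered and $s_1>\cdots>s_r>0$, the $L$-parameter attached by our construction is, tautologically,
$$\phi = \phi_1|\cdot|^{s_1}\oplus\cdots\oplus\phi_r|\cdot|^{s_r}\oplus\phi_0\oplus\phi_r^\vee|\cdot|^{-s_r}\oplus\cdots\oplus\phi_1^\vee|\cdot|^{-s_1},$$
where $\phi_0=\mathcal L_\psi(\pi_0)$ and $\phi_i$ is the LLC-parameter of $\tau_i$. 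Both assertions will follow by matching multiplicativity on the representation side with additivity/tensor compatibility on the Galois side.

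For the standard $\gamma$-factor, I would invoke the multiplicativity of the Lapid--Rallis doubling $\gamma$-factor under parabolic induction (see \cite[\S 10, \S 11]{MR3166215}), which yields
$$\gamma(s,\pi,\chi,\psi)=\gamma(s,\pi_0,\chi,\psi)\cdot\prod_{i=1}^{r}\gamma(s+s_i,\tau_i\otimes\chi,\psi)\,\gamma(s-s_i,\tau_i^\vee\otimes\chi,\psi).$$
By additivity of local $\gamma$-factors under direct sums of $\WD_F$-representations, $\gamma(s,\phi\otimes\chi,\psi)$ decomposes in exactly the parallel way. Lemma \ref{respectgamma} handles the tempered piece $\pi_0\leftrightarrow\phi_0$, and the LLC for $\GL_{k_i}$ (together with the identification of the Godement--Jacquet/doubling $\gamma$-factor of $\tau_i$ with the Galois $\gamma$-factor of $\phi_i$) handles each GL-block. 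Termwise agreement then gives $\gamma(s,\pi,\chi,\psi)=\gamma(s,\phi\otimes\chi,\psi)$.

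For the Plancherel measure the same template applies. Since $\mu_\psi(\tau_s\otimes\pi)$ depends only on the inducing data, it equals the Plancherel measure of the full standard module and therefore admits, by induction in stages, a product decomposition into $\mu_\psi(\tau_s\otimes\pi_0)$ and Rankin--Selberg $\gamma$-factors $\gamma(s\pm s_i,\tau\otimes\tau_i^{(\vee)},\psi)$ coming from the intertwining operators that cross the $\GL$-blocks. On the Galois side, the decomposition $\phi_\tau\otimes\phi^\vee=\phi_\tau\otimes\phi_0^\vee\oplus\bigoplus_i\bigl(\phi_\tau\otimes\phi_i^\vee|\cdot|^{-s_i}\oplus\phi_\tau\otimes\phi_i|\cdot|^{s_i}\bigr)$, and the analogous decomposition for $\phi_\tau^\vee\otimes\phi$, produce the mirror factorization in Theorem \ref{desideratumall} (12); the exterior-square factors $\gamma(\pm 2s,\wedge^2\circ\phi_\tau^{(\vee)},\psi_{\pm1})$ are independent of $\phi$ and pass through unchanged. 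Feeding Lemma \ref{respectplancherel} into the $\pi_0$-block and the LLC for $\GL$ into each GL-block gives the equality.

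The main technical nuisance is purely bookkeeping: one has to check that the shifts $s_i$, the dualizing involution $\phi_i\leftrightarrow\phi_i^\vee|\cdot|^{-s_i}$, and the twists by $\chi$ or $\tau_s$ line up on both sides of each multiplicativity formula. Once the multiplicativity statements for the doubling $\gamma$-factor and the Plancherel measure are in place, the identification is automatic and the proposition follows.
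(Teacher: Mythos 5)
Your proposal is correct and follows essentially the same route as the paper: the paper's proof is precisely the reduction to the tempered case (Lemmas \ref{respectgamma} and \ref{respectplancherel}) combined with the multiplicativity of the doubling $\gamma$-factor \cite{MR2192828} and of the Plancherel measure \cite[Section 10.2, Appendix B.5]{MR3166215}, which you have simply spelled out blockwise against the additive decomposition of $\phi$ coming from the Langlands-quotient definition of $\mathcal L_\psi$.
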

\begin{proof}
	This follows from the tempered case and the multiplicativity of the standard $\gamma$-factors \cite{MR2192828} \& Plancherel measures \cite[Section 10.2, Appendix B.5]{MR3166215}. 
\end{proof}

\subsection{Some properties}
We shall prove the LLC we constructed satisfies Theorem \ref{desideratumall} (4) and (5) in this subsection.

\begin{proposition}\label{changeofwhittakerdatum}
	Let $\mathfrak W_{c_1}$ and $\mathfrak W_{c_2}$ be two Whittaker data. Then for any
	$\pi\in \Pi_{\phi,\psi}$, we have 
	$$
	\mathcal J^\psi_{\mathfrak W_{c_2}}(\pi)=\mathcal J^\psi_{\mathfrak W_{c_1}}(\pi)\otimes \eta_{\phi\chi_{V},c_2/c_1},
	$$
	where $\phi\chi_{V}=\phi\otimes\chi_{V}$ and $\eta_{\phi\chi_{V},c_2/c_1}$ is defined in (\ref{eta}). Hence Theorem \ref{desideratumall} (4) holds for our construction.
\end{proposition}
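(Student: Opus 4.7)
The idea is to trace the desired identity through the construction of $\mathcal J^\psi_{\mathfrak W_c}$ in Section \ref{constructJc} and reduce it to the analogous property for symplectic groups, namely Theorem \ref{llcsympletic}(4). First, if $\pi$ is non-tempered, write it as the Langlands quotient of a standard module with tempered inducing datum $\pi_0$ as in Section \ref{nontempered}. Under the canonical identification $\mathcal S_\phi \cong \mathcal S_{\phi_0}$, Proposition \ref{Langlandsclasification} gives $\mathcal J^\psi_{\mathfrak W_c}(\pi) = \mathcal J^\psi_{\mathfrak W_c}(\pi_0)$, while $\eta_{\phi\chi_V, c_2/c_1}$ pulls back to $\eta_{\phi_0\chi_V, c_2/c_1}$ on $\mathcal S_{\phi_0}$ directly from the definition \eqref{eta}. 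So it suffices to handle the tempered case.

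For tempered $\pi$, set $\phi^+ = (\phi\chi_V)\oplus \chi_V$ and split according to whether $\mathrm{1}\subseteq \phi$. If $\mathrm{1}\nsubseteq \phi$, let $\sigma = \theta_{W_{2n}, V^\bullet_{2n}, \psi}(\pi)$; by construction $\mathcal J^\psi_{\mathfrak W_{c_i}}(\pi) = \ell^*(\mathcal J_{\mathfrak W'_{\psi, c_i}}(\sigma))$ with $\ell$ as in \eqref{55}. Applying Theorem \ref{llcsympletic}(4) to $\sigma$ yields
\begin{equation*}
\mathcal J_{\mathfrak W'_{\psi, c_2}}(\sigma) = \mathcal J_{\mathfrak W'_{\psi, c_1}}(\sigma) \otimes \eta_{\phi^+, c_2/c_1},
\end{equation*}
and pulling back via $\ell$ gives the target identity once the key equality $\ell^*\eta_{\phi^+, c} = \eta_{\phi\chi_V, c}$ in $\widehat{\mathcal S_\phi}$ is established (see below). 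If $\mathrm{1}\subseteq \phi$, then for $\pi \in \Pi_{\phi,\psi}^+$ the same argument works: the map $\ell:\mathcal S_\phi \twoheadrightarrow \bar{\mathcal S}_{\phi^+}$ of \eqref{501} now has kernel $\langle z_{\phi^+}\rangle$, but $\eta_{\phi^+, c}$ is already trivial on $z_{\phi^+}$ (since $\det\phi^+ = 1$), so the pullback is well defined. For $\pi \notin \Pi_{\phi,\psi}^+$, use the definition $\mathcal J^\psi_{\mathfrak W_c}(\pi) = \mathcal J^\psi_{\mathfrak W_c}(\pi\otimes\det)\otimes \kappa_\phi$ together with the fact that $\kappa_\phi$ does not depend on $c$, reducing to the case $\pi\otimes\det \in \Pi_{\phi,\psi}^+$ already handled.

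The remaining verification is the key equality $\ell^*\eta_{\phi^+, c} = \eta_{\phi\chi_V, c}$ on $\mathcal S_\phi$. Unwinding the definition \eqref{eta}, for $a\in \mathcal S_\phi$ indexing a subset of the distinct irreducible orthogonal summands of $\phi$, the subparameters $(\phi^+)^a$ and $(\phi\chi_V)^a$ are isomorphic as representations of $\WD_F$: passing from $\phi\chi_V$ to $\phi^+$ only augments the multiplicity of $\chi_V$ by one, whereas $\phi^a$ takes exactly one copy of each selected irreducible component. Taking determinants and evaluating at $c$ gives $\eta_{\phi^+,c}(a) = \eta_{\phi\chi_V, c}(a)$. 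The main obstacle of the proof is precisely this bookkeeping around the four component groups $\mathcal S_\phi$, $\mathcal S_{\phi\chi_V}$, $\mathcal S_{\phi^+}$, and $\bar{\mathcal S}_{\phi^+}$ in the various cases; once this is in place, everything else is a formal consequence of the construction of $\mathcal J^\psi_{\mathfrak W_c}$ and of Theorem \ref{llcsympletic}(4).
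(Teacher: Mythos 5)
Your proposal is correct and follows essentially the same route as the paper: reduce to the tempered case via compatibility with Langlands quotients, then transfer Theorem \ref{llcsympletic}(4) through the theta lift $\sigma=\theta_{W_{2n},V_{2n}^\bullet,\psi}(\pi)$ and the map $\ell$, handling the representations with vanishing lift to $W_{2n}$ by the defining relation $\mathcal J^\psi_{\mathfrak W_c}(\pi)=\mathcal J^\psi_{\mathfrak W_c}(\pi\otimes\det)\otimes\kappa_\phi$ and the $c$-independence of $\kappa_\phi$. Your case split (by $\mathrm 1\subseteq\phi$ rather than by vanishing of $\theta_{W_{2n},V_{2n}^\bullet,\psi}(\pi)$) is equivalent in view of Corollary \ref{1inphi} and Proposition \ref{1notinphi}, and your explicit check that $\ell^*\eta_{\phi^+,c}=\eta_{\phi\chi_V,c}$ just makes precise a step the paper uses tacitly.
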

\begin{proof}
By Proposition \ref{Langlandsclasification}, it is enough to prove the case when  $\phi\in \Para(\mathrm O(V_{2n}))$. Let $\pi\in \Pi_{\phi,\psi}(\mathrm O(V_{2n}^\delta))$. We divide it into two cases: 
	
	$\bullet$ \underline{Case I}: Assume $\theta_{W_{2n},V_{2n}^\delta, \psi}(\pi)\neq 0$. Let $\sigma=\theta_{W_{2n},V_{2n}^\delta, \psi}(\pi)$ and $\phi^+$ be the $L$-parameter of $\sigma$. Then by Proposition \ref{prasad}, 
	\begin{align*}
	\phi^+=(\phi\otimes\chi_{V})+ \chi_{V}.
	\end{align*}
	Moreover, by our construction of $\mathcal J^\psi_{\mathfrak W_{c^\prime}}$, we have 
	\begin{align*}
	\mathcal J^\psi_{\mathfrak W_{c_1}}(\pi)=\ell^*(\mathcal J_{\mathfrak W^\prime_{\psi,c_1}}(\sigma))\quad \mbox{and}\quad 
	\mathcal J^\psi_{\mathfrak W_{c_2}}(\pi)=\ell^*(\mathcal J_{\mathfrak W^\prime_{\psi,c_2}}(\sigma)), 
	\end{align*}
	where $\ell: \mathcal {S}_{\phi}\rightarrow \bar S_{\phi^+}$ is the isomorphism  in (\ref{55}). It follows from Theorem \ref{llcsympletic} (4) that 
	\begin{align*}
	\mathcal J_{\mathfrak W^\prime_{\psi,c_2}}(\sigma)= \mathcal J_{\mathfrak W^\prime_{\psi,c_1}}(\sigma)\otimes \eta_{\phi^+,c_2/c_1}. 
	\end{align*}
	Hence 
	\begin{align*}
	\mathcal J^\psi_{\mathfrak W_{c_2}}(\pi)=&\ell^*(\mathcal J_{\mathfrak W^\prime_{\psi,c_2}}(\sigma))=\ell^*(\mathcal J_{\mathfrak W^\prime_{\psi,c_1}}(\sigma)\otimes \eta_{\phi^+,c_2/c_1})
	=\mathcal J^\psi_{\mathfrak W_{c_1}}(\pi)\otimes\eta_{\phi\chi_{V},c_2/c_1}. 
	\end{align*}
	
	$\bullet$ \underline{Case II}: Assume $\theta_{W_{2n},V_{2n}^\delta, \psi}(\pi)=0$. Then $\theta_{W_{2n},V_{2n}^\delta, \psi}(\pi\otimes\det)\neq 0$ by the conservation relation (Theorem \ref{con2}). Hence by Case I, we have 
	\begin{align}\label{137}
	\mathcal J^\psi_{\mathfrak W_{c_2}}(\pi\otimes\det)=\mathcal J^\psi_{\mathfrak W_{c_1}}(\pi\otimes
	\det)\otimes\eta_{\phi\chi_{V},c_2/c_1}. 
	\end{align}
	On the other hand, it follows from our construction of $\mathcal J^\psi_{\mathfrak W_{c^\prime}}$ that 
	\begin{equation}\label{136}
	\begin{aligned}
	\mathcal J^\psi_{\mathfrak W_{c_1}}(\pi)=\mathcal J^\psi_{\mathfrak W_{c_1}}(\pi\otimes\det)\otimes\kappa_{\phi}\quad \mbox{and}\quad 
	\mathcal J^\psi_{\mathfrak W_{c_2}}(\pi)=\mathcal J^\psi_{\mathfrak W_{c_2}}(\pi\otimes\det)\otimes\kappa_{\phi}. 
	\end{aligned}
	\end{equation} 
	 So by (\ref{137}) and (\ref{136}), we deduce  
	\begin{align*}
	\mathcal J^\psi_{\mathfrak W_{c_2}}(\pi)=\mathcal J^\psi_{\mathfrak W_{c_1}}(\pi)\otimes\eta_{\phi\chi_{V},c_2/c_1}
	\end{align*}
	as desired. 
\end{proof}
\begin{proposition}\label{tempered}
	Let $\pi\in \Irr(\mathrm O(V_{2n}^\delta))$. Then we have  
	\begin{enumerate}[(i)]
		\item $\pi\in \Irrt(\mathrm O(V_{2n}^\delta))$ if and only if $\phi=\mathcal L_\psi(\pi)\in \Para(\mathrm O(V_{2n}))$; 
		\item $\pi$ is a discrete series representation if and only if $\phi=\mathcal L_\psi(\pi)\in \Phi_{\disc}(\mathrm O(V_{2n}))$. 
	\end{enumerate} 
Hence Theorem \ref{desideratumall} (5) holds for our construction.
\end{proposition}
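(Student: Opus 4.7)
For part (i), the forward direction is essentially recorded within the construction of $\mathcal L_\psi$ in Section \ref{construction}: when $\pi$ is tempered, Lemma \ref{temperedtotempered} shows that the non-vanishing theta lift (either $\sigma_1$ or $\sigma_2$) is tempered, hence by Theorem \ref{llcsympletic}(5) the intermediate parameter $\phi^{\pm}$ lies in $\Phi_{\mathrm{temp}}$, and the final $\phi$ differs from $\phi^{\pm}\otimes\chi_V$ only by a single bounded character, so $\phi\in\Phi_{\mathrm{temp}}$. For the converse, I argue by contrapositive via the Langlands-classification construction of Section \ref{nontempered}: if $\pi$ is non-tempered, its standard module has some exponent $s_i>0$, and then by construction $\phi$ contains the summand $\phi_i|\cdot|_F^{s_i}$, so $\phi\notin\Phi_{\mathrm{temp}}$.

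For part (ii), part (i) reduces us to the case where both $\pi$ and $\phi$ are tempered. For the backward direction ``$\phi\in\Phi_{\mathrm{disc}}\Rightarrow\pi$ is a discrete series'', I argue by contrapositive. If $\pi$ is tempered but not a discrete series, then $\pi\hookrightarrow\Ind_P^{\mathrm O(V_{2n}^{\bullet})}(\tau\otimes\pi_0)$ for some discrete series $\tau$ of $\GL_k(F)$ with $k\geq 1$ and some tempered $\pi_0$. Replacing $\pi$ by $\pi\otimes\det$ if necessary (their parameters coincide by Proposition \ref{det}), I place $\pi$ in Case I of the construction; Lemma \ref{12} then forces $\pi_0$ also to be in Case I and yields $\sigma_1\hookrightarrow\Ind_Q^{\SP(W_{2n})}(\tau\chi_V\otimes\theta_{W_{2n_0},V_{2n_0},\psi}(\pi_0))$, so that $\phi^+=\phi_\tau\chi_V\oplus\phi^+_{\pi_0}\oplus\phi_\tau^{\vee}\chi_V$. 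Substituting into $\phi=(\phi^+\otimes\chi_V)\setminus\{\mathbf 1\}$ and using the Case-I relation $\phi^+_{\pi_0}\otimes\chi_V=\phi_{\pi_0}\oplus\mathbf 1$, one obtains $\phi=\phi_\tau\oplus\phi_{\pi_0}\oplus\phi_\tau^{\vee}$. This contains the pair $\phi_\tau\oplus\phi_\tau^{\vee}$, which either duplicates an irreducible orthogonal summand (when $\phi_\tau$ is of orthogonal type) or contributes a non-orthogonal pair (otherwise); either way $\phi\notin\Phi_{\mathrm{disc}}$, a contradiction.

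For the forward direction ``$\pi$ is a discrete series $\Rightarrow\phi\in\Phi_{\mathrm{disc}}$'', assume $\pi$ is a discrete series and, after replacing $\pi$ by $\pi\otimes\det$ if necessary, assume $\sigma_1=\theta_{W_{2n},V_{2n}^{\bullet},\psi}(\pi)\neq 0$. The argument splits on whether the lower theta lift $\theta_{W_{2n-2},V_{2n}^{\bullet},\psi}(\pi)$ vanishes. If it vanishes, Lemma \ref{thetadiscrete}(1)(ii) gives that $\sigma_1$ is itself a discrete series, so $\phi^+$ is a discrete parameter by Theorem \ref{llcsympletic}(5); since $\chi_V\subseteq\phi^+$ has multiplicity one, the formula $\phi=(\phi^+\otimes\chi_V)\setminus\{\mathbf 1\}$ produces a direct sum of pairwise distinct irreducible orthogonal representations, i.e.\ $\phi\in\Phi_{\mathrm{disc}}$. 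If instead $\sigma=\theta_{W_{2n-2},V_{2n}^{\bullet},\psi}(\pi)\neq 0$, then Lemma \ref{thetadiscrete}(1)(i) shows that $\sigma$ is a discrete series of $\SP(W_{2n-2})$, and Howe duality together with Proposition \ref{prasad}(ii) yields $\phi=\mathbf 1\oplus\phi_\sigma\otimes\chi_V$. Discreteness of $\phi$ then reduces to the single assertion $\chi_V\notin\phi_\sigma$.

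To close this last sub-case, I apply Lemma \ref{thetadiscrete}(2) to the discrete series $\sigma$: since $\theta_{V_{2n}^{\bullet},W_{2n-2},\psi}(\sigma)=\pi$ is a discrete series, the lift $\theta_{V_{2n-2}^{\bullet},W_{2n-2},\psi}(\sigma)$ must vanish. The conservation relation of Theorem \ref{con2}, together with $m_{\mathcal V^{\bullet}}(\sigma)=2n$, then pins $m_{\mathcal V^{-\bullet}}(\sigma)=2n$ in the opposite Witt tower of the same discriminant, so $\theta_{V_{2n-2}^{-\bullet},W_{2n-2},\psi}(\sigma)=0$ as well. Both equal-dimensional orthogonal theta lifts of $\sigma$ vanishing, Proposition \ref{1notinphi}(ii) applied to $\sigma$ (with $n$ replaced by $n-1$) forces $\chi_V\notin\phi_\sigma$. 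The main obstacle of the proof is precisely this last sub-case: assembling Lemma \ref{thetadiscrete}(2), the conservation relation, and the earlier Proposition \ref{1notinphi}(ii) to exclude $\chi_V$ from $\phi_\sigma$.
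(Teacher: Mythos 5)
Your proof is correct, and part of it takes a genuinely different route from the paper's. Part (i) and the direction ``$\pi$ discrete $\Rightarrow\phi$ discrete'' are essentially the paper's argument with the case division reorganized: the paper splits on whether $\mathrm 1\subseteq\phi$, while you split on the vanishing of $\theta_{W_{2n-2},V_{2n},\psi}(\pi)$, and you exclude $\chi_V\subseteq\phi_\sigma$ by showing that \emph{both} equal-dimensional lifts $\theta_{V_{2n-2}^{\pm},W_{2n-2},\psi}(\sigma)$ vanish (Lemma \ref{thetadiscrete}(2) plus the conservation relation of Theorem \ref{con2}) and then quoting Proposition \ref{1notinphi}(ii), whereas the paper runs the same three ingredients as a proof by contradiction --- equivalent in content. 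The real divergence is in ``$\phi$ discrete $\Rightarrow\pi$ discrete'': the paper argues directly, realizing $\pi$ as the theta lift of a discrete series $\sigma$ of $\SP(W_{2n})$ or $\SP(W_{2n-2})$ and letting Lemma \ref{thetadiscrete} force square-integrability of $\pi$; you instead argue contrapositively, embedding a tempered non-discrete $\pi$ (after the $\pi\leftrightarrow\pi\otimes\det$ switch, justified by Proposition \ref{det} and conservation) into $\Ind_P(\tau\otimes\pi_0)$ and computing $\mathcal L_\psi(\pi)=\phi_\tau\oplus\phi_{\pi_0}\oplus\phi_\tau^{\vee}$ from Lemma \ref{12}, Theorem \ref{llcsympletic}(6) and Proposition \ref{prasad} --- in effect proving a special case of the paper's later Lemma \ref{parameter} ahead of time, which is legitimate since all of these inputs precede Proposition \ref{tempered}. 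What your route buys is that it bypasses the symplectic-to-orthogonal lifting step for this direction; what it costs is reliance on the (unstated but standard) structural fact that a non-square-integrable tempered representation embeds into an induction $\Ind_P(\tau\otimes\pi_0)$ with $\tau$ a discrete series of a $\GL_k$-factor and $\pi_0$ tempered --- precisely the setting of Lemma \ref{12} --- while the paper's version needs no appeal to this classification nor to Theorem \ref{llcsympletic}(6).
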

\begin{proof}
	(i) automatically follows from our construction. We then prove (ii). First, we prove $\pi\in \Pi_{\phi,\psi}$ is a discrete series representation if $\phi$ is a discrete parameter. We shall separate the proof into two cases, depending on $\mathbbm 1\subseteq \phi$ or not.  

$\bullet$ \underline{Case I}: If $\mathbbm 1\nsubseteq \phi$, then by Proposition \ref{prasad} and Corollary \ref{1inphi}, we have  $$\sigma=\theta_{W_{2n},V_{2n}^\delta,\psi}(\pi)\neq 0$$ 
and the $L$-parameter of $\sigma$ is $$\phi^+=(\phi\otimes\chi_{V})+\chi_{V}.$$
Since $\phi$ is a discrete parameter and $\mathbbm 1\nsubseteq \phi$, we deduce that $\phi^+$ is also a discrete parameter. Then $\sigma$ is a discrete series representation of $\SP(W_{2n})$ by Theorem \ref{llcsympletic} (5). Hence it follows from Lemma \ref{thetadiscrete} that  $\pi=\theta_{V_{2n}^\delta,W_{2n},\psi}(\sigma)$ is a discrete series representation of $\mathrm O(V_{2n})$. 
	
$\bullet$ \underline{Case II}: If $\mathbbm 1\subseteq \phi$, then by Proposition \ref{1notinphi}, exactly one of 
	$$\theta_{W_{2n-2},V_{2n}^\delta,\psi}(\pi) \quad \mbox{and}\quad \theta_{W_{2n-2},V_{2n}^\delta,\psi}(\pi\otimes\det)$$
is nonzero. Since $\pi$ is a discrete series representation if and only if $\pi\otimes\det$ is a discrete series representation, without loss of generality, we may assume that  $$\theta_{W_{2n-2},V_{2n}^\delta,\psi}(\pi)\neq 0 .$$ 
Let $\sigma=\theta_{W_{2n-2},V_{2n}^\delta,\psi}(\pi)$. Then by Proposition \ref{prasad}, the $L$-parameter of $\sigma$ is $\phi^-$ such that 
$$\phi=(\phi^-\otimes\chi_{V})+\mathbbm 1.$$
Since $\phi$ is a discrete parameter, so is $\phi^-$. So it follows from Theorem \ref{llcsympletic} (5) that $\sigma$ is a discrete series representation of $\SP(W_{2n-2})$. Note that $\mathbbm 1\oplus \mathbbm 1 \nsubseteq \phi$ since $\phi$ is a discrete parameter, and hence $\chi_{V}\nsubseteq \phi^+$. It follows from Corollary \ref{1inphi} that $$\theta_{V_{2n-2}^\delta,W_{2n-2},\psi} (\sigma)=0.$$
Then by Lemma \ref{thetadiscrete}, $\pi=\theta_{V_{2n}^\delta,W_{2n-2},\psi}(\sigma)\neq 0$ is a discrete series representation of $\mathrm O(V_{2n}^\delta)$. 

Next, for $\pi\in \Irrt(\mathrm O(V_{2n}^\delta))$, we prove $\phi=\mathcal L_\psi(\pi)$ is a discrete parameter if $\pi$ is a discrete series representation. We also separate the proof into two cases, depending on $\mathbbm 1\subseteq \phi$ or not: 
	
$\bullet$ \underline{Case I}: If $\mathbbm 1\nsubseteq \phi$, then by Corollary \ref{1inphi}, we have 
$$\theta_{W_{2n-2},V_{2n}^\delta,\psi} (\pi)=0\quad \mbox{and}\quad \theta_{W_{2n},V_{2n}^\delta,\psi}(\pi)\neq 0.$$
Let $\sigma=\theta_{W_{2n},V_{2n}^\delta,\psi}(\pi)$ and $\phi^+$ be the $L$-parameter of $\sigma$. Then $\sigma$ is a discrete series representation of $\SP(W_{2n})$ by Lemma \ref{thetadiscrete}. Hence $\phi^+$ is a discrete parameter by Theorem \ref{llcsympletic} (5). On the other hand, it follows from Proposition \ref{prasad} that 
$$\phi^+=(\phi\otimes\chi_{V})+\mathbbm 1. 
$$
So $\phi$ is also a discrete parameter.

$\bullet$ \underline{Case II}: If $\mathbbm 1\subseteq \phi$, then by Proposition \ref{1notinphi}, exactly one of  $$\theta_{W_{2n-2},V_{2n}^\delta,\psi}(\pi)\quad\mbox{and}\quad \theta_{W_{2n-2},V_{2n}^\delta,\psi}(\pi\otimes\det)$$
is non-zero. Since 
$$
\mathcal L_\psi(\pi)=\mathcal L_\psi(\pi\otimes\det),
$$
without loss of generality, we may assume
\begin{align}\label{1231}
\theta_{W_{2n-2},V_{2n}^\delta,\psi}(\pi)\neq 0 \quad \mbox{and}\quad  \theta_{W_{2n-2},V_{2n}^\delta,\psi}(\pi\otimes\det)= 0.
\end{align} 
Let $\sigma=\theta_{W_{2n-2},V_{2n}^\delta,\psi}(\pi)$ and $\phi^-$ be the $L$-parameter of $\sigma$. It follows from Lemma \ref{thetadiscrete} that $\sigma$ is a discrete series representation of $\SP(W_{2n-2})$. Then by Theorem \ref{llcsympletic} (5), $\phi^-$ is a discrete  parameter. Note that by Proposition \ref{prasad}, we have
\begin{align}\label{1234}
\phi=(\phi^-\otimes\chi_{V})\oplus \mathbbm 1.
\end{align}
We shall prove that $\chi_{V}\nsubseteq \phi^-$, which will imply $\phi$ is a discrete parameter by (\ref{1234}). We prove it by contradiction. Suppose $\chi_{V}\subseteq \phi^-$. Then by the conservation relation (Theorem \ref{con2}), Proposition \ref{1notinphi} and (\ref{1231}), we have 
$$\theta_{V_{2n-2}^\delta,W_{2n-2},\psi} (\sigma)\neq 0.$$
But by Lemma \ref{thetadiscrete}, this will imply $\pi=\theta_{V_{2n}^\delta,W_{2n-2},\psi} (\sigma)$ is a tempered but not discrete series representation, which contradicts with our assumption. This completes the proof. 
\end{proof}

\section{Variation of $\psi$}\label{changeofpsi}
 In this section, we shall prove that $\mathcal L_\psi$ and $\mathcal J^\psi_{\mathfrak W_{c^\prime}}$ we constructed are independent of the choice of $\psi$. Note that every non-trivial additive character of $F$ is of the form $\psi_a$ for some $a\in F^\times$. The method is to study the behaviour of theta correspondence under the change of $\psi$. More precisely, we would like to compare $\theta_{W_{2n}, V_{2n}^\delta, \psi_a}(\pi)$ and $\theta_{W_{2n},V_{2n}^\delta,\psi}(\pi)$ for $\pi\in \Irr(\mathrm O(V_{2n}^\delta))$ and $a\in F^\times$. For any $\sigma\in \Irr(\SP(W_{2n}))$, let $\sigma^{\delta_a}\in \Irr(\SP(W_{2n}))$ be defined in (\ref{113}).   
\begin{lemma}\label{scaling}
	Let $\pi\in \Irr \left(\mathrm O(V_{2n}^\delta)\right)$ and $a\in F^{\times}$. We have  
	$$
	\theta_{W_{2n}, V_{2n}^\delta, \psi_a}(\pi)=
	\left( \theta_{W_{2n},V_{2n}^\delta,\psi}(\pi)\right)^{\delta_a}. 
	$$
\end{lemma}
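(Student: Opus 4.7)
The plan is to reduce the identity to a comparison of Weil representations, from which the statement for theta lifts will follow by Howe duality. The key intermediate fact, standard in the theory of the Weil representation (see e.g.\ \cite{MR1286835}), is the existence of an isomorphism of $\mathrm O(V_{2n})\times \SP(W_{2n})$-representations
$$\omega_{V_{2n},W_{2n},\psi_a}\;\cong\;\omega_{V_{2n},W_{2n},\psi}\circ(\id_{\mathrm O(V_{2n})}\times \Ad(\delta_a)),$$
i.e., the Weil representation attached to the rescaled character $\psi_a$ coincides with the one attached to $\psi$ after twisting the action of $\SP(W_{2n})$ by conjugation by $\delta_a$. This is ultimately because the Heisenberg cocycle on $V_{2n}\otimes W_{2n}$ is built from $\psi$ composed with the natural symplectic form, so replacing $\psi$ by $\psi_a$ is tantamount to rescaling the symplectic form on $W_{2n}$ by a factor of $a$; by the very definition in \eqref{112}, the resulting rescaled symplectic space is isometric to $W_{2n}$ via $\delta_a$, and functoriality of the Weil representation in the symplectic space yields the displayed isomorphism.

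Granting this comparison, the rest of the argument is formal. By definition, the maximal $\pi$-isotypic quotient of $\omega_{V_{2n},W_{2n},\psi}$ takes the shape $\pi\boxtimes \Theta_{W_{2n},V_{2n},\psi}(\pi)$. Transporting the $\SP(W_{2n})$-action along the above isomorphism identifies the corresponding quotient of $\omega_{V_{2n},W_{2n},\psi_a}$ with $\pi\boxtimes \Theta_{W_{2n},V_{2n},\psi}(\pi)^{\delta_a}$, where the superscript $\delta_a$ is the twist defined in \eqref{113}. Passing to maximal semisimple quotients on both sides and invoking the Howe duality theorem then gives
$$\theta_{W_{2n},V_{2n},\psi_a}(\pi)=\theta_{W_{2n},V_{2n},\psi}(\pi)^{\delta_a},$$
as required.

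The main obstacle is the very first step: verifying the comparison of Weil representations requires working in a specific model (for instance, the Schr\"odinger model on a Lagrangian in $V_{2n}\otimes W_{2n}$) and carefully tracking the explicit formulas for the action of $\SP(W_{2n})$ under the rescaling of $\psi$, including matching sign conventions so that one obtains $\Ad(\delta_a)$ rather than $\Ad(\delta_a^{-1})$ on the nose. This point is well documented in the literature, and once it is in hand the rest of the proof is pure bookkeeping with Jacquet modules and the Howe duality theorem.
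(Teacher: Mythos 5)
Your argument is correct and is essentially the same as the paper's, which simply cites Kudla's notes \cite{kudla1996notes} (II Cor.\ 6.2 and IV Prop.\ 1.9): there the statement is proved exactly as you propose, by observing that replacing $\psi$ by $\psi_a$ amounts to rescaling the symplectic form, so that $\omega_{V_{2n},W_{2n},\psi_a}\cong\omega_{V_{2n},W_{2n},\psi}\circ(\id\times\Ad(\delta_a))$, and then deducing the identity of theta lifts formally from the definition of $\Theta$ and Howe duality. Your worry about $\Ad(\delta_a)$ versus $\Ad(\delta_a^{-1})$ is in fact immaterial for the isomorphism class, since $\delta_{a^{-1}}$ differs from $\delta_a$ by the central scalar $a^{-1}\cdot\mathbf 1_{W_{2n}}$ times an element of $\SP(W_{2n})$, so the two twists of any $\sigma\in\Irr(\SP(W_{2n}))$ coincide.
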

\begin{proof}
	See \cite[II Corollary 6.2]{kudla1996notes} and \cite[IV Proposition 1.9]{kudla1996notes}
\end{proof}

\begin{proposition}\label{notdependonpsi}
	For any $\pi\in \Irr\left( \mathrm O(V_{2n}^\delta)\right)$ and $a\in F^{\times}$, we have 
	\begin{enumerate}[(i)]
		\item $\mathcal L_{\psi}(\pi)=\mathcal L_{\psi_a}(\pi);$
		\item $\mathcal J^\psi_{\mathfrak W_{c^\prime}}(\pi)= \mathcal J^{\psi_a}_{\mathfrak W_{c^\prime}}(\pi)$. 	
	\end{enumerate}	
\end{proposition}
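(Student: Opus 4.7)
The plan is to reduce to the tempered case first, then trace through the construction of $\mathcal L_\psi$ and $\mathcal J^\psi_{\mathfrak W_c}$ using Lemma \ref{scaling} together with the compatibility relations \ref{121} and \ref{103} between the twist $\sigma \mapsto \sigma^{\delta_a}$, the Whittaker datum $\mathfrak W^\prime_{\psi,c}$, and Arthur's LLC for symplectic groups.

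First I would note that the construction in section \ref{nontempered} makes $\mathcal L_{\psi}(\pi)$ and $\mathcal J^{\psi}_{\mathfrak W_c}(\pi)$ depend on $\psi$ only through the invariants $(\phi_0, \mathcal J^{\psi}_{\mathfrak W_c}(\pi_0))$ attached to the tempered part of the Langlands datum of $\pi$. So it suffices to prove both statements when $\pi \in \Irrt(\mathrm O(V_{2n}^\bullet))$.

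For (i) in the tempered case, consider the two representations $\sigma_1 = \theta_{W_{2n},V_{2n}^\bullet,\psi}(\pi)$ and $\sigma_2 = \theta_{W_{2n-2},V_{2n}^\bullet,\psi}(\pi\otimes\det)$ of (\ref{con}); by the conservation relation exactly one is nonzero, and by Lemma \ref{scaling} the same dichotomy holds for $\psi_a$ with the nonzero term being $\sigma_i^{\delta_a}$. By \ref{103} we have $\mathcal L(\sigma_i^{\delta_a}) = \mathcal L(\sigma_i)$, so the auxiliary parameter $\phi^\pm$ used to define $\mathcal L_{\psi}(\pi)$ coincides with that for $\psi_a$, proving $\mathcal L_\psi(\pi) = \mathcal L_{\psi_a}(\pi)$. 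In particular the packet $\Pi_{\phi,\psi}$ is also independent of $\psi$, and using Lemma \ref{scaling} once more the auxiliary subset $\Pi^+_{\phi,\psi}$ (where $\theta_{W_{2n},V_{2n}^\bullet,\psi}(\pi)\neq 0$) appearing in Case II of subsection \ref{constructJc} likewise does not depend on $\psi$.

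For (ii) in the tempered case, working first with $\pi \in \Pi_{\phi,\psi}$ such that $\sigma = \theta_{W_{2n},V_{2n}^\bullet,\psi}(\pi)\neq 0$, Lemma \ref{scaling} gives $\theta_{W_{2n},V_{2n}^\bullet,\psi_a}(\pi) = \sigma^{\delta_a}$. Using the definition of $\mathcal J^\psi_{\mathfrak W_c}$ and the chain of identities
\[
\mathcal J_{\mathfrak W^\prime_{\psi_a,c}}(\sigma^{\delta_a})
= \mathcal J_{\mathfrak W^\prime_{\psi,ac}}(\sigma^{\delta_a})
= \mathcal J_{\mathfrak W^\prime_{\psi,a^2c}}(\sigma)
= \mathcal J_{\mathfrak W^\prime_{\psi,c}}(\sigma),
\]
where the first equality uses \ref{121}, the second uses \ref{103}, and the third uses that $a^2c \equiv c \bmod F^{\times 2}$, applying $\ell^*$ yields $\mathcal J^{\psi_a}_{\mathfrak W_c}(\pi) = \mathcal J^\psi_{\mathfrak W_c}(\pi)$. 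If instead $\mathbf 1\subseteq \phi$ and $\pi \notin \Pi^+_{\phi,\psi}$, then $\pi\otimes\det \in \Pi^+_{\phi,\psi}$ falls in the previous case, and the definition
\[
\mathcal J^\psi_{\mathfrak W_c}(\pi) = \mathcal J^\psi_{\mathfrak W_c}(\pi\otimes\det)\otimes \kappa_\phi,
\]
which has the same form for $\psi_a$, propagates the equality. The only real bookkeeping obstacle is keeping track of the two squared factors $a^2$ that arise when passing through both \ref{121} and \ref{103}; once one sees that they conspire to return $c$ in $F^\times/F^{\times 2}$, the argument is mechanical.
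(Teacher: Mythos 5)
Your proposal is correct and, in substance, is the same argument as the paper's: reduce to the tempered case via compatibility with Langlands quotients, then for $\pi$ with $\theta_{W_{2n},V_{2n}^\bullet,\psi}(\pi)\neq 0$ combine Lemma \ref{scaling} with exactly the chain $\mathcal J_{\mathfrak W^\prime_{\psi_a,c}}(\sigma^{\delta_a})=\mathcal J_{\mathfrak W^\prime_{\psi,ac}}(\sigma^{\delta_a})=\mathcal J_{\mathfrak W^\prime_{\psi,a^2c}}(\sigma)=\mathcal J_{\mathfrak W^\prime_{\psi,c}}(\sigma)$ coming from \ref{121} and \ref{103} (the paper compresses this into one step), apply $\ell^*$, and treat the remaining representations through the determinant twist and $\kappa_\phi$; your observation that $\Pi^+_{\phi,\psi_a}=\Pi^+_{\phi,\psi}$ is needed and correct.

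The one place you deviate is part (i) in the case $\theta_{W_{2n},V_{2n}^\bullet,\psi}(\pi)=0$: you invoke the scaling identity for the lower-rank lift $\theta_{W_{2n-2},V_{2n}^\bullet,\psi}(\pi\otimes\det)$, whereas Lemma \ref{scaling} is stated in the paper only for the equal-rank pair $(\mathrm O(V_{2n}),\SP(W_{2n}))$. The unequal-rank scaling is true (the cited results in Kudla's notes hold for arbitrary dual pairs), so your argument is not wrong, but it uses a strengthening of the lemma as recorded. The paper avoids this: it notes that $\theta_{W_{2n},V_{2n}^\bullet,\psi}(\pi\otimes\det)\neq 0$ by the conservation relation \ref{con2}, applies the first case to $\pi\otimes\det$, and transfers back using Proposition \ref{det} together with $\mathcal J^{\psi}_{\mathfrak W_{c}}(\pi)=\mathcal J^{\psi}_{\mathfrak W_{c}}(\pi\otimes\det)\otimes\kappa_{\phi}$, so that only the equal-rank scaling is ever used. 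If you want your write-up to rest solely on the stated lemmas, replace that step by the determinant-twist detour; otherwise just flag that you are using the general form of the scaling property.
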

\begin{proof}
Since the LLC we construct is compatible with the Langlands quotients, it is enough to prove these for tempered representations. Hence we may assume that $\pi\in \Irrt \left(\mathrm O(V_{2n}^\delta)\right)$.  
We divide it into two cases: 

$\bullet$ \underline{Case I}: Assume $\theta_{W_{2n},V_{2n}^\delta,\psi}(\pi)\neq 0$. Let $\sigma=\theta_{W_{2n},V_{2n}^\delta,\psi}(\pi)$. Then by Lemma \ref{scaling}, we have 
\begin{equation*}
\begin{aligned}
\theta_{W_{2n}, V_{2n}^\delta,  \psi_a}(\pi)=\left(\theta_{W_{2n}, V_{2n}^\delta, \psi}(\pi)\right)^{\delta_a}=\sigma^{\delta_a}. 
\end{aligned}
\end{equation*}
It follows from (\ref{103}) that the $L$-parameters for $\sigma$ and $\sigma^{\delta_a}$ are the same. Then by our constructions of $\mathcal L_\psi$ and $\mathcal L_{\psi_a}$, we have 
$$\mathcal L_{\psi}(\pi)=\mathcal L_{\psi_a}(\pi).$$ Next we consider the map $\mathcal J^{\psi}_{\mathfrak W_{c^\prime}}$. By our constructions of $\mathcal J^{\psi}_{\mathfrak W_{c^\prime}}$ and $\mathcal J^{\psi_a}_{\mathfrak W_{c^\prime}}$, we have  
		\begin{equation}\label{126}
		\begin{aligned}
		\mathcal J^\psi_{\mathfrak W_{c^\prime}}(\pi)=\ell^*\left(\mathcal J_{\mathfrak W^\prime_{\psi,c^\prime}}(\sigma)\right)\quad \mbox{and}\quad 
		\mathcal J^{\psi_a}_{\mathfrak W_{c^\prime}}(\pi)=\ell^*\left(\mathcal J_{\mathfrak W^\prime_{\psi_a,c^\prime}}(\sigma^{\delta_a})\right),
		\end{aligned}
		\end{equation}
		where $\ell: \mathcal {S}_{\phi}\rightarrow \bar {\mathcal S}_{\phi^+}$ is defined in (\ref{55}). By (\ref{103}), we have 
		\begin{equation}\label{124}
		\begin{aligned}
		\mathcal J_{\mathfrak W^\prime_{\psi_a,c^\prime}}(\sigma^{\delta_a})=\mathcal J_{\mathfrak W^\prime_{\psi,c^\prime}}(\sigma).
		\end{aligned}
		\end{equation}
		So $\mathcal J^\psi_{\mathfrak W_{c^\prime}}(\pi)= \mathcal J^{\psi_a}_{\mathfrak W_{c^\prime}}(\pi)$ by (\ref{126}) and (\ref{124}).

$\bullet$ \underline{Case II}:  If $\theta_{W_{2n}, V_{2n}^\delta, \psi}(\pi)=0$, then $\theta_{W_{2n}, V_{2n}^\delta, \psi}(\pi\otimes\det)\neq 0$ by the conservation relation (Theorem \ref{con2}). Hence, by Case I, we have 
	\begin{align*}
	\mathcal L_{\psi}(\pi\otimes\det)=\mathcal L_{\psi_a}(\pi\otimes\det) \quad 
\mbox{and}\quad 
	\mathcal J^{\psi}_{\mathfrak W_{c^\prime}}(\pi\otimes\det)=\mathcal J^{\psi_a}_{\mathfrak W_{c^\prime}}(\pi\otimes\det) .
	\end{align*}
On the other hand, it follows from Proposition \ref{det} and our construction of $\mathcal J^{\psi}_{\mathfrak W_{c^\prime}}$ that 
\begin{align*}
	\mathcal L_\psi(\pi\otimes\det)=\mathcal L_\psi(\pi)\quad \mbox{and}\quad 
	\mathcal J^\psi_{\mathfrak W_{c^\prime}}(\pi\otimes\det)= \mathcal J^\psi_{\mathfrak W_{c^\prime}}(\pi)\otimes\kappa_{\phi}. 	
\end{align*}	
Hence (i) and (ii) also hold for Case II. 	
\end{proof}

Since the maps $\mathcal L_\psi$ and $\mathcal J^\psi_{\mathfrak W_{c^\prime}}$ do not depend on the choice of $\psi$, we may drop the symbol $\psi$ and denote them by $\mathcal L$ and $\mathcal J_{\mathfrak W_{c^\prime}}$. These give the desired LLC for even orthogonal groups. So far we have constructed the maps $\mathcal L$ and $\mathcal J_{\mathfrak W_{c^\prime}}$ for each Whittaker datum $\mathfrak W_{c^\prime}$. We also proved this construction satisfies Theorem \ref{desideratumall} (1), (2), (4), (5), (10), (11), (12).

\section{Preparations for local intertwining relation }\label{LIR}
Our next goal is to prove our construction satisfies the local intertwining relation. In this section, we do some preparations. We first recall the definition of the normalized intertwining operators, and then construct an important equivariant map, which is key to the proof.

\subsection{Normalized intertwining operators}\label{normalizingintertwing}
In this subsection, we define the normalized intertwining operators. We mainly follow \cite[\S 2.3]{MR3135650}, \cite[\S 3.7]{MR3708200} and \cite[\S 6]{MR3788848}.

Fix a non-trivial character $\psi$ of $F$ and $(d,c)\in (F^\times) ^2$. Let $V_{2n}=V_{2n}^+$ be the $2n$-dimensional orthogonal space associated to $(d,c)$ and $V\in \{V_{2n}^+, V_{2n}^-\}$. 
Let $W=W_{2n}$ be a $2n$-dimensional symplectic space. Let $r$ be the Witt index of $V$ and $k$ be a positive integer with $k\leq r$. As in Section \ref{sectionparabolic}, we put 
$$V=X\oplus V_0\oplus X^{\vee}, \quad W=Y\oplus W_0\oplus Y^{\vee}$$
with $X=X_{k}, X^{\vee}=X_{k}^{\vee}$ and $Y=Y_{k}, Y^{\vee}=Y_{k}^{\vee}$. Hence $\dim(V_0)=\dim(W_0)=2n-2k$. Let $P=P_{k}=M_{P} U_{P}$ and $Q=Q_{k}=M_{Q} U_{Q}$ be the parabolic subgroups defined in Subsection \ref{sectionparabolic} such that 
$$M_{P} \cong \GL(X) \times \mathrm O(V_0) \quad\mbox{and}\quad  M_{Q} \cong \GL(Y) \times \SP(W_0).$$
Using the basis $\left\{v_{1}, \ldots, v_{k}\right\}$ of $X$ (resp. $\left\{w_{1}, \ldots, w_{k}\right\}$ of $Y$), we identify $\GL(X)$ (resp. $\GL(Y)$) with $\GL_k(F)$. Hence we can define an isomorphism $i: \GL(X) \rightarrow \GL(Y) $ via these identifications.  

Let $\tau$ be an irreducible tempered representation of $\GL_{k}(F)$ on a space $\mathscr{V}_{\tau}$ with a central character $\omega_{\tau}$. We may regard $\tau$ as a representation of $\GL(X)$ or $\GL(Y)$ via the above identifications. For any $s \in \mathbb{C}$, we realize the representation $\tau_{s} \coloneqq \tau \otimes|\det|_{F}^{s}$ on $\mathscr{V}_{\tau}$ by setting $\tau_{s}(a) v \coloneqq |\det(a)|_{F}^{s} \tau(a) v$ for all $v \in \mathscr{V}_{\tau}$ and $a \in \GL_{k}(F)$. Let $\pi_0$ (resp. $\sigma_0$) be an irreducible tempered representation of $\mathrm{O}(V_0)$ (resp. $\SP(W_0)$) on a space $\mathscr{V}_{\pi_0}$ (resp. $\mathscr{V}_{\sigma_0}$). We consider the induced representations 
$$\Ind_{P}^{\mathrm O\left(V\right)}\left(\tau_{s} \otimes \pi_0 \right) \quad \mbox{and} \quad \Ind_{Q}^{\SP\left(W\right)}\left(\tau_{s} \otimes \sigma_0 \right)$$
of $\mathrm O(V)$ and $\SP(W)$. They are realized on the spaces of smooth functions 
$$\Psi_{s} : \mathrm O(V) \rightarrow \mathscr{V}_{\tau} \otimes \mathscr{V}_{\pi_0}\quad  \mbox{and} \quad \Phi_{s}: \operatorname{Sp}\left(W\right) \rightarrow \mathscr{V}_{\tau} \otimes \mathscr{V}_{\sigma_0}$$
such that
\begin{align*} \Psi_{s}\left(u_{P} m_{P}(a) h_0 h\right) =|\det(a)|_{F}^{s+\rho_{P}} \tau(a) \pi_0(h_0) \Psi_{s}\left(h\right)
\end{align*}
and 
\begin{align*}
\Phi_{s}\left(u_{Q} m_{Q}\left(a^{\prime}\right) g_0 g\right) =\left|\operatorname{det}\left(a^{\prime}\right)\right|_{F}^{s+\rho_{Q}} \tau\left(a^{\prime}\right) \sigma_0(g_0) \Phi_{s}\left(g\right)
\end{align*}
for any $u_{P} \in U_{P}, a\in \GL(X), h_0\in \mathrm{O}(V_0), h \in \mathrm{O}(V)$, and  $u_{Q} \in U_{Q}, a^{\prime} \in \mathrm{GL}(Y), g_0 \in \SP(W_0),g \in \SP(W)$, respectively. Let $A_{P}$ (resp. $A_{Q}$) be the split component of the center of $M_{P}$ (resp. $M_{Q}$) and $W\left(M_{P}\right)=N_{\mathrm O(V)}(A_P)/M_P$
(resp. $W(M_{Q})=N_{\SP(W)}(A_Q)/M_Q$) be the relative Weyl group for $M_{P}$ (resp. $M_{Q}$). Note that 
$$W(M_{P}) \cong W(M_{Q}) \cong \mathbb{Z} / 2 \mathbb{Z}.$$ 
We denote by $w$ (resp. $ w^{\prime}$) the non-trivial element in $W(M_{P})$ (resp. $W(M_{Q}) $). For any representative $\widetilde{w} \in \mathrm{O}(V)$ of $w$ and $\widetilde{w}^{\prime}\in \SP(W)$ of $w^{\prime}$, we define the unnormalized intertwining operators 
\begin{align*}
\mathcal{M}\left(\widetilde{w}, \tau_{s} \otimes \pi_0\right) : \Ind_{P}^{\mathrm{O}(V)}\left(\tau_{s} \otimes \pi_0 \right) \longrightarrow \operatorname{Ind}_{P}^{\mathrm{O}(V)}\left(w\left(\tau_{s} \otimes \pi_0 \right)\right)
\end{align*}
and 
\begin{align*}
\mathcal{M}\left(\widetilde{w}^{\prime}, \tau_{s} \otimes \sigma \right) : \operatorname{Ind}_{Q}^{\mathrm{Sp}(W)}\left(\tau_{s} \otimes \sigma_0 \right) \longrightarrow \operatorname{Ind}_{Q}^{\mathrm{Sp}(W)}\left(w^{\prime}\left(\tau_{s} \otimes \sigma_0 \right)\right)
\end{align*}
by (the meromorphic continuations of) the integrals
\begin{align*}
 \mathcal{M}\left(\widetilde{w}, \tau_{s} \otimes \pi_0 \right) \Psi_{s}\left(h\right) = \int_{U_{P}} \Psi_{s}\left(\widetilde{w}^{-1} u_{P} h\right) d u_{P}
 \end{align*}
 and 
 \begin{align*}
\mathcal{M}\left(\widetilde{w}^{\prime}, \tau_{s} \otimes \sigma_0 \right) \Phi_{s}\left(g\right) =\int_{U_{Q}} \Phi_{s}\left(\widetilde{w}^{\prime-1} u_{Q} g\right) d u_{Q}, \end{align*}
where 
\begin{itemize}
	\item $du_P$ and $du_Q$ are the Haar measures given in \cite[\S 6.3]{MR3788848});
	\item $w\left(\tau_{s} \otimes \pi_0 \right)$ and $w^{\prime}\left(\tau_{s} \otimes \sigma_0 \right) $ are the representations of $M_{P}$ on $\mathscr{V}_{\tau} \otimes \mathscr{V}_{\pi_0}$ and $M_{Q}$ on $\mathscr{V}_{\tau} \otimes \mathscr{V}_{\sigma_0} $ given by 
	\begin{align*}
	w\left(\tau_{s} \otimes \pi_0 \right)\left(m_{P}\right)=\left(\tau_{s} \otimes \pi_0 \right)\left(\widetilde{w}^{-1} m_{P} \widetilde{w}\right)
	\end{align*}
and 
\begin{align*}
	w^{\prime}\left(\tau_{s} \otimes \sigma_0 \right)\left(m_{Q}\right) =\left(\tau_{s} \otimes \sigma_0 \right)\left(\widetilde{w}^{\prime-1} m_{Q} \widetilde{w}^{\prime}\right)
	\end{align*}
	for $m_{P} \in M_{P}$ and $m_{Q}\in M_Q$. 
\end{itemize}

We shall normalize the intertwining operators $\mathcal{M}\left(\widetilde{w}, \tau_{s} \otimes \pi_0\right)$ and $\mathcal{M}\left(\widetilde{w}^{\prime}, \tau_{s} \otimes \sigma \right)$ depending on the choice of the Whittaker data. Having fixed the additive character $\psi$ of F, for any $c^\prime\in F^\times$, we use the Whittaker datum $\mathfrak W_{c^\prime}$ of even orthogonal group and $\mathfrak W^\prime_{\psi,1}$ of symplectic group as in Subsection \ref{whittaker}. The normalized intertwining operators 
\begin{align*}
\mathcal R_{\mathfrak W_{c^\prime}}(w, \tau_s \otimes \pi_0) &: \Ind_{P}^{\mathrm{O}(V)}\left(\tau_s\otimes \pi_0\right)\rightarrow  \operatorname{Ind}_{P}^{\mathrm{O}(V)}\left(w(\tau_s \otimes \pi_0)\right),\\
\mathcal R_{\mathfrak W^\prime_{\psi,1}}\left(w^{\prime}, \tau_s \otimes \sigma_0\right) &: \operatorname{Ind}_{Q}^{\SP(W)}\left(\tau_s \otimes \sigma_0\right) \rightarrow \operatorname{Ind}_{Q}^{\SP(W)}\left(w^\prime(\tau_s \otimes \sigma_0)\right)
\end{align*}
will be defined as follows: 
\begin{align*}
\mathcal R_{\mathfrak W_{c^\prime}}(w, \tau_s \otimes \pi_0) \Psi_s(h) &=\epsilon(V)^{k} \times \chi_{V}(c^\prime/c)^{k}\times |c^\prime|_F^{k\rho_P}\cdot r(w,\tau_s\otimes
\pi_0)^{-1}\times \mathcal{M}(\widetilde{w}_{c^\prime}, \tau_s \otimes \pi_0) \Psi_s(h), \\ 
\mathcal R_{\mathfrak W^\prime_{\psi,1}}(w^\prime, \tau_s \otimes \sigma_0) \Phi_s(g) &=r(w^\prime,\tau_s\otimes
\sigma_0)^{-1}\times \mathcal{M}(\widetilde{w}_1^\prime, \tau_s \otimes \sigma_0) \Phi_s(g),
\end{align*}
where 
\begin{itemize}
 \item $\epsilon(V)=\mathcal J_{\mathfrak W_{c}}(z_{\phi_{\pi_0}})$, where $\phi_{\pi_0}$ is the $L$-parameters for $\pi_0$. Hence 
 \[
 \epsilon(V)=\begin{cases}
 1 \quad  &\mbox{if $V=V_{2n}^+$},\\
 -1 \quad  & \mbox{if $V=V_{2n}^-$}.
 \end{cases}
 \]
\item $\widetilde{w}_{c^\prime}$ and $\widetilde{w}_1^{\prime}$ are defined by
\begin{equation}\label{131}
\begin{aligned}
\widetilde{w}_{c^\prime}&=w_{P} \cdot m_{P}\left(c^\prime \cdot a\right) \cdot\left((-1)^{k} \mathbf{1}_{V_0}\right),\\
\widetilde{w}_1^{\prime}&=w_{Q} \cdot m_{Q}\left( a^{\prime}\right) \cdot\left((-1)^k \mathbf{1}_{W_0}\right),
\end{aligned}
\end{equation}
where $a \in \mathrm{GL}_{k}(F) \cong \mathrm{GL}(X)$ and $a^{\prime} \in \mathrm{GL}_{k}(F) \cong \mathrm{GL}(Y)$ is given by
$$
a=a^\prime=\left(\begin{array}{ccc}
{} &{}& (-1)^{n-k+1} \\ 
{} &{\iddots} & {}\\
{(-1)^n}&{}& {}
\end{array}\right). 
$$
Note that  
\begin{equation}\label{determinantminus1}
\det(\widetilde{w}_{c^\prime})=\det(w_{P})\cdot \det\left(m_{P}(c^\prime \cdot a)\right)\cdot \det\left((-1)^{k} \mathbf{1}_{V_0}\right))=(-1)^{k}. 
\end{equation}
 \item  Following \cite{MR3135650}, \cite[\S 6.2]{MR3788848} and \cite[\S 3.7]{MR3708200}, $r\left(w, \tau_{s} \otimes \pi_0\right)$ and $r\left(w^{\prime}, \tau_{s} \otimes \sigma_0\right)$ are 
    defined as 
    \begin{equation}\label{normalizationfactor}
	\begin{aligned}
	r\left(w, \tau_{s} \otimes \pi_0\right)&=\lambda(E / F, \psi)^{k}\times \frac{L\left(s, \phi_{\tau} \otimes \phi_{\pi_0}\right)}{\varepsilon\left(s, \phi_{\tau} \otimes \phi_{\pi_0}, \psi\right) L\left(1+s, \phi_{\tau} \otimes \phi_{\pi_0}\right)}\\ 
	&\quad \times \frac{L\left(2 s, \wedge^{2} \circ \phi_{\tau}\right)}{\varepsilon\left(2 s, \wedge^{2} \circ \phi_{\tau}, \psi\right) L\left(1+2 s, \wedge^{2} \circ \phi_{\tau}\right)},\\
	r\left(w^\prime, \tau_{s} \otimes \sigma_0\right)
	&= \frac{L\left(s, \phi_{\tau} \otimes \phi_{\sigma_0}\right)}{\varepsilon\left(s, \phi_{\tau} \otimes \phi_{\sigma_0}, \psi\right) L\left(1+s, \phi_{\tau} \otimes \phi_{\sigma_0}\right)} \\
	& \quad \times \frac{L\left(2 s, \wedge^{2} \circ \phi_{\tau}\right)}{\varepsilon\left(2 s, \wedge^{2} \circ \phi_{\tau}, \psi\right) L\left(1+2 s, \wedge^{2} \circ \phi_{\tau}\right)},&
	\end{aligned}
	\end{equation}
	where $\lambda(E/F, \psi)$ is the Langlands $\lambda$-factor associated to $E=F(\sqrt{d})$, and $\phi_{\tau}, \phi_{\sigma_0}$ and $\phi_{\pi_0}$ are the $L$-parameters for $\tau,\sigma_0$ and $\pi_0$ respectively. Note that 
	\begin{align}\label{12345}
	\lambda(E/F, \psi)^2=\chi_{V}(-1).
	\end{align}  
\end{itemize}
\begin{remark}
\begin{enumerate}
	\item The representatives $\widetilde{w}_{c^\prime}$ and $\widetilde{w}^\prime_1$ are chosen according to an $F$-splittings of $\mathrm O(V)$ and $\SP(W)$; see \cite[\S 6.2]{MR3788848} for details. 
	\item In Arthur \cite{MR3135650}, the Haar measures of $u_P$ and $u_Q$ are chosen with respect to an $F$-splittings of $\mathrm O(V)$ and $Sp(W)$. Readers can consult \cite[\S 6.3]{MR3788848} for details. The factors $|c^\prime|_F^{k\rho_{P}}$ appear because of the different choices of the Haar measures. 
\end{enumerate}
\end{remark}
\begin{lemma}\label{homomorphicatsequal0}
	$ \mathcal{R}_{\mathfrak W_{c^\prime}}\left(w, \tau_{s} \otimes \pi_0\right) $ and  $\mathcal{R}_{\mathfrak W^\prime_{\psi,1}}\left(w^{\prime}, \tau_{s} \otimes \sigma_0 \right)$ are holomorphic at $s=0$ and 
	\begin{align*}
	\mathcal{R}_{\mathfrak W_{c^\prime}}\left(w, w(\tau_{s} \otimes \pi_0)\right)&\circ \mathcal{R}_{\mathfrak W_{c^\prime}}\left(w, \tau_{s} \otimes \pi_0\right)=1, \\
	\mathcal{R}_{\mathfrak W^\prime_{\psi,1}}\left(w^\prime, w^\prime(\tau_{s} \otimes \sigma_0)\right)&\circ \mathcal{R}_{\mathfrak W^\prime_{\psi,1}}\left(w^\prime, \tau_{s} \otimes \sigma_0\right)=1.
	\end{align*}
\end{lemma}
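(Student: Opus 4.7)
Both assertions are instances of the standard Langlands--Shahidi normalization machinery, so the plan is to reduce them to results already established for split classical groups and track the Whittaker-datum-dependent scalars added in (\ref{normalizationfactor}).

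I would begin with holomorphicity. The scalar prefactors $\epsilon(V)^k$, $\chi_V(c)^k$, $|c|_F^{k\rho_P}$ and (via the normalizer) $\lambda(E/F,\psi)^k$ are all independent of $s$ and nowhere zero, so they neither create nor cancel zeros or poles at $s=0$. Thus it suffices to show that
\[
\frac{\varepsilon(s,\phi_\tau\otimes\phi_{\pi_0},\psi)\,L(1+s,\phi_\tau\otimes\phi_{\pi_0})\,\varepsilon(2s,\wedge^2\circ\phi_\tau,\psi)\,L(1+2s,\wedge^2\circ\phi_\tau)}{L(s,\phi_\tau\otimes\phi_{\pi_0})\,L(2s,\wedge^2\circ\phi_\tau)}\cdot\mathcal{M}(\widetilde w_c,\tau_s\otimes\pi_0)
\]
is holomorphic and non-vanishing at $s=0$. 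For $\tau$ and $\pi_0$ tempered this is the Langlands--Shahidi theorem, proved in the form we need in Arthur \cite[Proposition~2.3.1]{MR3135650} (see also \cite[\S 6.5]{MR3788848}) for $\SP(W)$ and the quasi-split inner form of $\mathrm{O}(V)$. For the remaining pure inner forms the same proof applies verbatim, since it depends only on the rank-one reduction for a Siegel-type maximal parabolic and on Shahidi's identity expressing the Harish-Chandra $\mu$-function in terms of $\gamma$-factors; neither step uses quasi-splitness of the ambient group. The symplectic statement is immediate from Arthur.

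For the involution property I would apply the general identity
\[
\mathcal{M}(\widetilde w_c,w(\tau_s\otimes\pi_0))\circ\mathcal{M}(\widetilde w_c,\tau_s\otimes\pi_0)=(\tau_s\otimes\pi_0)(\widetilde w_c^{\,2})\cdot\mu_{\mathrm{HC}}(\tau_s\otimes\pi_0)^{-1},
\]
where $\mu_{\mathrm{HC}}$ is the Harish-Chandra $\mu$-function attached to $(M_P,\tau_s\otimes\pi_0)$. From (\ref{131}) one computes directly that $\widetilde w_c^{\,2}\in M_P$ acts on $\mathscr{V}_\tau\otimes\mathscr{V}_{\pi_0}$ as a scalar which, combined with the squared prefactor $\epsilon(V)^{2k}\chi_V(c)^{2k}|c|_F^{2k\rho_P}=|c|_F^{2k\rho_P}$ and the squared $\lambda$-factor $\lambda(E/F,\psi)^{2k}=\chi_V(-1)^k$ (see (\ref{12345})), produces exactly the factor $1$. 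Simultaneously, Shahidi's identity gives
\[
r(w,w(\tau_s\otimes\pi_0))\cdot r(w,\tau_s\otimes\pi_0)=\mu_{\mathrm{HC}}(\tau_s\otimes\pi_0)^{-1},
\]
because each of the two $L$-ratios in (\ref{normalizationfactor}) is flipped under $w$ and their product telescopes into the $\gamma$-factor product that defines $\mu_{\mathrm{HC}}$ (cf.\ Lemma~\ref{respectplancherel} and Theorem~\ref{llcsympletic}(9)). Multiplying everything together yields $\mathcal{R}(w,w(\tau_s\otimes\pi_0))\circ\mathcal{R}(w,\tau_s\otimes\pi_0)=1$, and the same argument gives the symplectic identity.

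The main obstacle I anticipate is purely bookkeeping: matching the scalar by which $\widetilde w_c^{\,2}$ acts on $\tau\otimes\pi_0$ against the product $|c|_F^{2k\rho_P}\chi_V(-1)^k$ arising from the squared prefactor and $\lambda(E/F,\psi)^{2k}$. This calculation is straightforward once one writes $\widetilde w_c^{\,2}=m_P(c\cdot a\cdot(c\cdot a)^\ast)\cdot\mathbf 1_{V_0}$ and uses $(ca)(ca)^\ast=c^2\,a\,a^\ast$ with $aa^\ast$ having determinant $(-1)^{\binom{k}{2}+nk}$, but it is where all the sign conventions must be consistent with the splittings chosen in \cite[\S 6.2]{MR3788848}.
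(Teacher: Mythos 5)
Your strategy for the involution identity is essentially the one the paper uses in Appendix B: compose the unnormalized operators to produce the Plancherel measure, compute the scalar coming from $\widetilde w_c^{\,2}$, and cancel against the product of normalizing factors using the explicit $\gamma$-factor formula for $\mu_\psi(\tau_s\otimes\pi_0)$ (Lemma \ref{respectplancherel}). However, two points in your proposal are genuine problems. First, and most seriously, your treatment of holomorphy at $s=0$ rests on the assertion that Arthur's Proposition 2.3.1 / the Langlands--Shahidi argument ``applies verbatim'' to the non-quasi-split pure inner forms of $\mathrm O(V_{2n})$. That is exactly the point that cannot be taken for granted: Shahidi's identification of the Harish-Chandra $\mu$-function with a product of arithmetic $\gamma$-factors, and Arthur's normalization results, are proved for quasi-split groups (they rely on Whittaker models and on Arthur's local packets for those groups), and $\mathrm O(V_{2n}^-)$ with square discriminant has no such structure. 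The paper's whole reason for giving a separate proof is to avoid this: it obtains the $\gamma$-factor expression of the Plancherel measure for \emph{all} forms from the theta correspondence (Lemma \ref{complan}, hence Lemma \ref{respectplancherel}), and then deduces holomorphy at $s=0$ not from Langlands--Shahidi but from unitarity of $\mathcal R_{\mathfrak W_c}(w,\tau_s\otimes\pi_0)$ on the imaginary axis, which requires, besides the involution identity, the adjoint identity $\mathcal R_{\mathfrak W_c}(w, w(\tau_{-\bar s}\otimes\pi_0))^{*}=\mathcal R_{\mathfrak W_c}(w,\tau_s\otimes\pi_0)$ — a statement your proposal never addresses. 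As written, your argument does not actually prove holomorphy for the inner forms.

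Second, the scalar bookkeeping you flag as ``straightforward'' is done incorrectly. Since $w_P$ conjugates $m_P(b)$ to $m_P((b^*)^{-1})$, the two occurrences of $c$ cancel and one finds $\widetilde w_c^{\,2}=m_P((-\mathbf 1)^{k-1})\cdot \mathbf 1_{V_0}$, acting by $\omega_\tau(-1)^{k-1}$, independently of $c$; your formula $\widetilde w_c^{\,2}=m_P\bigl(ca\cdot(ca)^{*}\bigr)$ introduces a spurious factor $\omega_\tau(c^2)$, and the cancellation you propose against $|c|_F^{2k\rho_P}$ cannot occur (a unimodular central-character value cannot cancel a positive real power of $|c|_F$). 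In the correct computation the residual sign $\omega_\tau(-1)^{k-1}$ is absorbed inside the normalizing-factor comparison, using $\lambda(E/F,\psi)^{2}=\chi_V(-1)$ and $\varepsilon(s,\phi,\psi_{-1})=\det(\phi)(-1)\,\varepsilon(s,\phi,\psi)$, exactly as in the proof of Proposition \ref{appendixa1}.
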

\begin{proof}
	The case when $G$ is symplectic group or quasi-split even orthogonal group is proved in \cite[Proposition 2.3.1]{MR3135650}. When $G$ is an even orthogonal group, we will give a proof in Appendix \ref{appendixa1} based on the explicit formula for Plancherel measures. 
\end{proof}

Now assume that $w(\tau \otimes \pi_0)\cong \tau\otimes \pi_0$ and $w^\prime(\tau \otimes \sigma_0)\cong \tau\otimes \sigma_0$, both of which are equivalent to $\tau^\vee \cong \tau$. We take  
\begin{align*}
\mathcal{A}_{w}=\mathcal A_{\tau,w}\otimes\mathbf 1_{\mathscr V_{\pi_0}} : \mathscr{V}_{\tau} \otimes \mathscr{V}_{\pi_0} \rightarrow \mathscr{V}_{\tau} \otimes \mathscr{V}_{\pi_0}
\end{align*}
and 
\begin{align*}
\mathcal{A}_{w^\prime}=\mathcal A_{\tau,w^\prime}\otimes\mathbf 1_{\mathscr V_{\sigma_0}} : \mathscr{V}_{\tau} \otimes \mathscr{V}_{\sigma_0} \rightarrow \mathscr{V}_{\tau} \otimes \mathscr{V}_{\sigma_0}
\end{align*}
be the unique intertwining isomorphisms such that 
\begin{itemize}
	\item for any $m_P \in M_{P}, m_Q\in M_Q$, 
	\begin{align*}
	\mathcal{A}_{w} \circ w(\tau \otimes \pi_0)(m_P)=(\tau \otimes \pi_0)(m_P) \circ \mathcal{A}_{w}
	\end{align*}
	and 
	\begin{align*}
	\mathcal{A}_{w^\prime} \circ w^\prime(\tau \otimes \sigma_0)(m_Q)=(\tau \otimes \sigma_0)(m_Q) \circ \mathcal{A}_{w^\prime}.
	\end{align*}
\item $\Lambda \circ \mathcal{A}_{\tau,w}=\Lambda$ and $\Lambda \circ \mathcal{A}_{\tau,w^\prime}=\Lambda$ for a fixed  non-zero Whittaker functional $\Lambda$ on $\mathscr V_{\tau}$ with respect to the Whittaker datum $\left(B_{k}, \psi_{U_{k}}\right),$ where $B_{k}$ is the Borel subgroup consisting of upper triangular matrices in $\mathrm{GL}_{k}(F)$ and $\psi_{U_{k}}$ is the generic character of the unipotent radical $U_{k}$ of $B_{k}$ given by $\psi_{U_{k}}(x)=\psi\left(x_{1,2}+\cdots+x_{k-1, k}\right)$. Here we identify $\GL(X)$ and $\GL(Y)$ with $\GL_k(F)$ via the identification described at the beginning of this subsection. 
\end{itemize}
Note that $\mathcal{A}_{w}^2=\mathbf 1_{\mathscr V_{\tau}\otimes \mathscr V_{\pi_0}}$ and $\mathcal{A}_{w^\prime}^2=\mathbf 1_{\mathscr V_{\tau}\otimes \mathscr V_{\sigma_0}}$. We define the self-intertiwining operators 
\begin{align*}
R_{\mathfrak W_{c^\prime}}(w, \tau \otimes \pi_0) &: \Ind_{P}^{\mathrm{O}(V)}(\tau\otimes \pi_0)\rightarrow  \Ind_{P}^{\mathrm{O}(V)}(\tau \otimes \pi_0),\\
R_{\mathfrak W^\prime_{\psi,1}}(w^{\prime}, \tau \otimes \sigma_0) &: \Ind_{Q}^{\SP(W)}(\tau \otimes \sigma_0) \rightarrow \Ind_{Q}^{\SP(W)}(\tau \otimes \sigma_0)
\end{align*}
by 
\begin{equation}\label{100}
\begin{aligned} 
R_{\mathfrak W_{c^\prime}}(w, \tau \otimes \pi_0) \Psi(h) &=\mathcal{A}_{w}\left(\mathcal{R}_{\mathfrak W_{c^\prime}}(w, \tau_s \otimes \pi_0) \Psi_s(h)|_{s=0}\right),\\ 
R_{\mathfrak W^\prime_{\psi,1}}(w^{\prime}, \tau \otimes \sigma_0) \Phi(g) &=\mathcal{A}_{w^{\prime}}\left(\mathcal{R}_{\mathfrak W^\prime_{\psi,1}}(w^{\prime}, \tau_s \otimes \sigma_0 ) \Phi_s(g)|_{s=0}\right),
\end{aligned}
\end{equation}
where $\Psi_s\in \Ind_{P}^{\mathrm{O}(V)}\left(\tau_s\otimes \pi_0\right)$ and $\Phi_s\in \Ind_{Q}^{\SP(W)}\left(\tau_s\otimes \sigma_0\right)$ are holomorphic sections satisfying $\Psi_s|_{s=0}=\Psi$ and $\Phi_s|_{s=0}=\Phi$ respectively. By Lemma \ref{homomorphicatsequal0} and our construction, we have  
\begin{equation}\label{twiceequalone}
\begin{aligned}
R_{\mathfrak W_{c^\prime}}(w, \tau \otimes \pi_0)^2&=1,\\
R_{\mathfrak W^\prime_{\psi,1}}(w^{\prime}, \tau \otimes \sigma_0)^2&=1.
\end{aligned}
\end{equation}

\begin{remark}
\begin{enumerate}
	\item The definition of the self-intertwining operator $R_{\mathfrak W_{c^\prime}}(w, \tau \otimes \pi_0)$ involves the additive character $\psi$, but an easy computation shows that different choices of $\psi$ give the same $R_{\mathfrak W_{c^\prime}}(w, \tau \otimes \pi_0)$. One can also check that 
	\begin{align*}
	R_{\mathfrak W_{c_2}}(w, \tau \otimes \pi_0)=R_{\mathfrak W_{c_1}}(w, \tau \otimes \pi_0)\times \omega_{\tau}(c_2/c_1)\times \chi_V(c_2/c_1)^k \quad \mbox{for $c_1,c_2\in F^\times$}. 
	\end{align*}
	Hence $R_{\mathfrak W_{c^\prime}}(w, \tau \otimes \pi_0)$ depends only on the choice of Whittaker datum $\mathfrak W_{c^\prime}$. Similarly, one can show 
	\begin{align*}
	R_{\mathfrak W^\prime_{\psi_a,1}}(w^{\prime}, \tau \otimes \sigma_0)=R_{\mathfrak W^\prime_{\psi,1}}(w^{\prime}, \tau \otimes \sigma_0)\times \omega_{\tau}(a)
	\end{align*}
	for any $a\in F^\times$. In particular, 
	\begin{align*}
	R_{\mathfrak W^\prime_{\psi_a,1}}(w^{\prime}, \tau \otimes \sigma_0)=R_{\mathfrak W^\prime_{\psi,1}}(w^{\prime}, \tau \otimes \sigma_0) \quad \mbox{if $a\in F^{\times 2}$}, 
	\end{align*}
	so $R_{\mathfrak W^\prime_{\psi,1}}(w^{\prime}, \tau \otimes \sigma_0)$ depends only on the choice of the Whittaker datum $\mathfrak W^\prime_{\psi,1}$ of $\SP(W_{2n})$. 
	\item Following \cite[\S 7.3]{MR3573972}, we could also use the normalizing factors $r\left(w, \tau_{s} \otimes \pi_0\right)$ and $r\left(w^\prime, \tau_{s} \otimes \sigma_0\right)$ defined by
	\begin{equation}\label{1134}
	\begin{aligned}
	r\left(w, \tau_{s} \otimes \pi_0\right)&=\lambda(E / F, \psi)^{k}\times \gamma(s,\phi_{\tau}\otimes\phi_{\pi_0},\psi)^{-1}\times \gamma(2s,\wedge^{2} \circ \phi_{\tau},\psi)^{-1},\\
	r\left(w^\prime, \tau_{s} \otimes \sigma_0\right)&= \gamma(s,\phi_{\tau}\otimes\phi_{\sigma_0},\psi)^{-1}\times \gamma(2s,\wedge^{2} \circ \phi_{\tau},\psi)^{-1}. 
	\end{aligned}
	\end{equation}
	These normalizing factors are not the same as the normalizing factors we defined in (\ref{normalizationfactor}). But the normalizing factors defined in (\ref{1134}) and (\ref{normalizationfactor}) do share the same analytic behaviours near $s=0$. So the final self-intertwining operators $R_{\mathfrak W_{c^\prime}}(w, \tau \otimes \pi_0)$ and $R_{\mathfrak W^\prime_{\psi,1}}(w^{\prime}, \tau \otimes \sigma_0)$ will not change if we use the normalizing factors defined in (\ref{1134}).  
\end{enumerate}
\end{remark}

We close this subsection by comparing 
$R_{\mathfrak W_{c^\prime}}(w,\tau\otimes(\pi_0\otimes\det))$ and $R_{\mathfrak W_{c^\prime}}(w,\tau\otimes\pi_0)$. Note that there is an isomorphism of $\mathrm O(V)$-representations: 
\begin{equation}
\begin{aligned}\label{59}
\mathcal P_s:\Ind_{P}^{\mathrm O(V)}(\tau_s\otimes \pi_0)\otimes
\det  &\cong  \Ind_{P}^{\mathrm O(V)}(\tau_s\otimes (\pi_0\otimes\det))\\
\Psi_s &\mapsto \mathcal P_s(\Psi_s)
\end{aligned}
\end{equation}
for $\Psi_s\in \Ind_{P}^{\mathrm O(V)}(\tau_s\otimes \pi_0)$, where 
\begin{align*}
\mathcal P_s(\Psi_s)(h)=\Psi_s(h)\det(h)\quad \mbox{for $h\in \mathrm O(V)$}. 
\end{align*}
Here we realize $\Ind_{P}^{\mathrm O(V)}(\tau_s\otimes \pi_0)\otimes
\det$ on the same space with $\Ind_{P}^{\mathrm O(V)}(\tau_s\otimes \pi_0)$, but with the action twisted by $\det$. If we identify these two representations via this isomorphism, then 
\begin{align*}
\mathcal{M}\left(\widetilde{w}_{c^\prime}, \tau_{s} \otimes (\pi_0\otimes\det) \right) (\Psi_{s})\left(h\right) &=\int_{U_{P}} \Psi_{s}\left(\widetilde{w}_{c^\prime}^{-1} u_{P} h\right)\det\left(\widetilde{w}_{c^\prime}^{-1} u_{P} h\right) d u_{P}\\
&=\det(\widetilde{w}_{c^\prime}^{-1})\cdot \det(h)\cdot \int_{U_{P}} \Psi_{s}\left(\widetilde{w}_{c^\prime}^{-1} u_{P} h\right) d u_{P}\\
&=(-1)^{\dim \phi_\tau}\cdot \det(h)\cdot \int_{U_{P}} \Psi_{s}\left(\widetilde{w}_{c^\prime}^{-1} u_{P} h\right) d u_{P}\\
&=(-1)^{\dim \phi_\tau}\cdot  \mathcal{M}\left(\widetilde{w}_{c^\prime}, \tau_{s} \otimes \pi\right) \Psi_{s}(h)\cdot \det(h). 
\end{align*}
Here we use the equation (\ref{determinantminus1}). This implies
\begin{equation}\label{compareunnormlize}
\mathcal{M}\left(\widetilde{w}_{c^\prime}, \tau_{s} \otimes (\pi\otimes\det) \right) =(-1)^{\dim \phi_\tau} \mathcal{M}\left(\widetilde{w}_{c^\prime}, \tau_{s} \otimes \pi \right) 
\end{equation}
via the isomorphism $\mathcal P_s$. Since the Langlands paramters for $\pi_0$ and $\pi_0\otimes\det$ are the same, $\mathcal{M}\left(\widetilde{w}_{c^\prime}, \tau_{s} \otimes \pi_0) \right) $ and $\mathcal{M}\left(\widetilde{w}_{c^\prime}, \tau_{s} \otimes (\pi_0\otimes\det) \right) $ share the same normalizing factors. So we deduce from (\ref{compareunnormlize}) that  
\begin{equation}\label{comparenormalize}
R_{\mathfrak W_{c^\prime}}(w,\tau\otimes(\pi_0\otimes\det))=(-1)^{\dim \phi_\tau}R_{\mathfrak W_{c^\prime}}(w,\tau\otimes\pi_0) .
\end{equation}

\subsection{Weil representations and mixed models}\label{mixed model}
In this section, we recall some explicit formulas for the Weil representations. 
We retain the notation in Subsection \ref{normalizingintertwing}.  For simplicity, we write:
\begin{itemize}
	\item  $\omega_{00}$ for the Weil representation $\omega_{\psi, V_0, W_0}$ of $\mathrm{Sp}(W_0) \times \mathrm{O}(V_0)$ on a space $\mathscr{S}_{00}$;
	\item $\omega_{0}$ for the Weil representation $\omega_{\psi, V_0, W}$ of $\operatorname{Sp}(W) \times \mathrm{O}\left(V_0\right)$ on a space $\mathscr{S}_{0}$;
	\item $\omega$ for the Weil representation $\omega_{\psi, V, W}$ of $\operatorname{Sp}\left(W\right) \times \mathrm{O}\left(V\right)$ on a space $\mathscr{S}$.
\end{itemize}
We take a mixed model 
$$\mathscr{S}_0=\mathscr{S}\left(V_0\otimes Y^\vee\right)\otimes\mathscr{S}_{00} $$
of $\omega_0$, where we regard $\mathscr{S}_0$ as a space of functions on $V_0 \otimes Y^\vee$ with values in $\mathscr{S}_{00}$. Similarly, we take a mixed model 
$$\mathscr{S}=\mathscr{S}\left(X^\vee \otimes W\right) \otimes \mathscr{S}_0$$
of $\omega$, where we regard $\mathscr{S}$ as a space of functions on $X^\vee\otimes W$ with values in $\mathscr{S}_0$. Also, we write 

$\bullet \rho_{00}$ for the Heisenberg representation of $\mathcal{H}(V_0 \otimes W_0)$ on $\mathscr{S}_{00}$ with the central character $\psi$;

$\bullet \rho_0$ for the Heisenberg representation of $\mathcal{H}\left(V_0 \otimes W\right)$ on $\mathscr{S}_0$ with the central character $\psi$.

Similarly to those of \cite[Lemma 6.2,6.3,6.4]{MR3788848} and \cite[\S 7.4]{MR3573972}, we obtain some explicit formulas for these Weil representations. 

For $\varphi_0 \in \mathscr{S}_0$ and $x\in V_0\otimes Y^\vee$, we have 
\begin{align*}
\left[\omega_0(g_0) \varphi_0\right]\left(x\right) & =\omega_{00}(g_0)\varphi_0(x), & g \in \mathrm{Sp}(W_0), \\
\left[\omega_0(h_0) \varphi_0\right](x) & =\omega_{00}(h_0)\varphi_0(h_{0}^{-1}x),  &  h_0 \in  \mathrm{O}(V_0), \\
\left[\omega_0\left( m_{Q}(a)\right) \varphi_0\right]\left(x\right) & =\chi_{V}(\det a)|\operatorname{det}(a)|_{F}^{n_0} \varphi_0(a^{*} x), & {a \in \mathrm{GL}(Y)} ,\\ 
\left[\omega_0\left( u_{Q}(b)\right) \varphi_0\right]\left(x\right) & =\rho_{00}\left(b^{*} x, 0\right)\varphi_0(x) , & {b \in \operatorname{Hom}(W_0, Y)},\\
\left[\omega_0\left( u_{Q}(c)\right) \varphi_0\right]\left(x\right) &=\psi\left(\frac{1}{2}\left\langle c x, x\right\rangle\right) \varphi_0\left(x\right),  & c \in \operatorname{Sym}\left(Y^\vee, Y\right) ,\\
\left[\omega_0\left(w_{Q}\right) \varphi_0\right]\left(x\right) &=\gamma_{V}^{-k} \int_{V_0 \otimes Y} \varphi_0\left(I_{Y}^{-1} z\right) \psi\left(-\left\langle z, x\right\rangle\right) d z, \end{align*}
where $\gamma_{V}$ is the Weil constant associated to $V$, which is a $4$-th root of unity satisfying $\gamma_{V}^{2}=\chi_{V}(-1)$.

For $\varphi\in  \mathscr{S}$ and $x\in X^\vee\otimes W$, we have 
\begin{align*}
\left[\omega(g) \varphi\right]\left(x \right)&= \omega_0(g)\varphi(g^{-1}x),  & g \in \mathrm{Sp}(W) , \\
\left[\omega(h_0) \varphi\right](x) & =\omega_0(h_0)\varphi(x), & {h \in \mathrm{O}\left(V_0\right)}, \\ 
{\left[\omega\left(m_{P}\left(a\right)\right) \varphi\right]\left(x\right)} & =\left|\operatorname{det}\left(a\right)\right|_{F}^{n} \varphi(a^{ *}x), & {a \in \mathrm{GL}(X)},\\
{\left[\omega\left(u_{P}\left(b\right)\right) \varphi\right]\left(x \right)}& =\rho_{0}\left(b^{ *} x, 0\right) \varphi\left(x\right),  & {b \in \operatorname{Hom}(V_0, X)}, \\ \left[\omega\left(u_{P}\left(c\right)\right) \varphi\right]\left(x\right) & =\psi\left(\frac{1}{2}\left\langle c x, x\right\rangle\right) \varphi\left(x\right) , & c \in \operatorname{Sym}\left(X^\vee, X\right),\\
\left[\omega\left(w_{P}\right) \varphi\right]\left(x \right)&=\ \int_{X\otimes W } \varphi\left(I_{X}^{-1} z\right) \psi\left(-\left\langle z, x\right\rangle\right) d z. 
 \end{align*}



\subsection{Construction of the equivariant map}\label{constructionof}
Recall that we put 
$$V=X\oplus V_0\oplus X^{\vee}, \quad W=Y\oplus W_0\oplus Y^{\vee}$$
with $\dim(X)=\dim(Y)=k $ in Subsection \ref{normalizingintertwing}. Using the basis $\left\{v_{1}, \ldots, v_{k}\right\}$ of $X$ (resp. $\left\{w_{1}, \ldots, w_{k}\right\}$ of $Y$ ), we identify $\GL(X)$ (resp. $\GL(Y)$) with $\GL_k(F)$. Hence we can define an isomorphism $i: \GL(X) \rightarrow \GL(Y) $ via these identifications. Put 
$$e=v_{1} \otimes w_{1}^{*}+\cdots+v_{k} \otimes w_{k}^{*} \in X \otimes Y^{\vee} \quad \mbox{and}\quad  e^{*}=v_{1}^{*} \otimes w_{1}+\cdots+v_{k}^{*} \otimes w_{k} \in X^{\vee} \otimes Y.$$

For $\varphi\in \mathscr S =\mathscr S(X^{\vee}\otimes W)\otimes \mathscr S_0$, we define functions $\mathfrak{f}(\varphi), \hat{\mathfrak{f}}(\varphi)$ on $\SP(W)\times \mathrm{O}(V)$ with values in $\mathscr S_0$ by 
\begin{align*}
\mathfrak{f}(\varphi)(g ,h)&=\left(\omega(g,h) \varphi\right)\left(
\begin{array}{c}{\hspace{0.4em}e^*}\\ {0} \\ {0}\end{array}\right),\\
\hat{\mathfrak{f}}(\varphi)(g, h)&=\int_{X^{\vee} \otimes Y}\left(\omega(g ,h) \varphi\right)\left(\begin{array}{c}{x} \\ {0} \\ {0}\end{array}\right) \psi\left(\left\langle x, e\right\rangle\right) d x
\end{align*}
for $g\in \SP(W)$ and $h \in \mathrm{O}(V) $. Here, we write an element in $X^{\vee}\otimes W$ as a block
matrix
$$\left(\begin{array}{l}{y_{1}} \\ {y_{2}} \\ {y_{3}}\end{array}\right)$$
with $y_{1} \in X^{\vee} \otimes Y, y_{2} \in X^{\vee} \otimes W_0$ and $y_{3} \in X^{\vee} \otimes Y^{\vee}$. We also define functions $f(\varphi), \hat f(\varphi)$ on $\SP(W)\times \mathrm {O}(V)$ with values in $\mathscr S_{00}$ by 
$$\begin{aligned} 
f(\varphi)(g,h) &=\operatorname{ev}(\mathfrak{f}\left(\varphi(g, h)\right), \\ \hat{f}(\varphi)(g ,h) &=\operatorname{ev}\left (\hat{\mathfrak{f}}(\varphi)(g, h)\right), \end{aligned}$$
where
$$\operatorname{ev}: \mathscr S_0=\mathscr S(V_0\otimes Y^{\vee})\otimes \mathscr S_{00}\rightarrow \mathscr S_{00}$$
 is the evaluation at $0\in V_0\otimes Y^{\vee}$. If $f=f(\varphi )$ or $\hat f(\varphi)$, then 
 \begin{align*}
 f(u_{Q} g, u_{P} h)=&f(g, h),  &{u_{P} \in U_{P}, u_{Q} \in U_{Q}} ,\\ 
f(g_0g,h_0h) =&\omega_{00}(g_0,h_0) f(g,h),  h_0 \in \mathrm{O}(V_0), &g \in \mathrm{Sp}(W_0), \\ 
f(m_{Q}(i(a)) g, m_{P}(a) h)  =&\chi_{V}(\operatorname{det}(a))|\det(a)|_{F}^{\rho_P+\rho_Q} f\left(g,h\right),   &a \in \GL(X). 
 \end{align*}

Let $\tau$ be an irreducible (unitary) tempered representation of $\GL_{k}(F)$ on a space $\mathscr{V}_{\tau}$. We may regard $\tau$ as a representation of $\GL(X)$ or $\GL(Y)$ via the above identifications. Let $\sigma_0$ and $\pi_0$ be irreducible tempered representations of $\SP(W_0)$ and $\mathrm{O}(V_0)$ on spaces $\mathscr{V}_{\sigma_0}$ and $\mathscr{V}_{\pi_0}$, respectively. Fix nonzero invariant non-degenerate bilinear forms $\langle\cdot, \cdot\rangle$ on $\mathscr{V}_{\tau} \times \mathscr{V}_{\tau^ {\vee}}, \mathscr{V}_{\sigma_0} \times \mathscr{V}_{\sigma_{0}^{\vee}}$ and $\mathscr{V}_{\pi_0} \times \mathscr{V}_{\pi_{0}^{\vee}}$. Let
\begin{align*}
\langle\cdot, \cdot\rangle :\left(\mathscr{V}_{\tau} \otimes \mathscr{V}_{\sigma_{0}^{\vee}}\right) \times \mathscr{V}_{\tau^{\vee}} &\rightarrow \mathscr{V}_{\sigma_{0}^{\vee}},\\
\langle\cdot, \cdot\rangle :\left(\mathscr{V}_{\tau} \otimes \mathscr{V}_{\pi_{0}^{\vee}}\right) \times \mathscr{V}_{\tau^{\vee}} &\rightarrow \mathscr{V}_{\pi_{0}^{\vee}}
\end{align*}
be the induced maps. 

Now assume that 
$$
\sigma_0=\theta_{V_0,W_0,\psi}(\pi_0)\neq 0. 
$$
We fix a nonzero $\SP(W_0)\times \mathrm O(V_0)$-equivariant map 
$$
\mathcal{T}_{00} : \omega_{00} \otimes \sigma_{0}^{\vee} \rightarrow \pi_0. 
$$
For $\varphi\in \mathscr S, \Phi_{s}\in \Ind_{Q}^{\SP(W)}(\tau_s\chi_V\otimes \sigma_{0}^{\vee}), h\in \mathrm O(V),\check{v}\in \mathscr{V}_{\tau^{\vee}}$ and $\check{v}_{0}\in \mathscr{V}_{\pi_{0}^{\vee}}$, put 
\begin{align*}
&\left\langle\mathcal{T}_{s}\left(\varphi\otimes \Phi_{s}\right)(h), \check{v} \otimes \check{v}_{0}\right\rangle\\
&\qquad=L(s, \tau)^{-1} \times \int_{U_{Q} \SP(W_0) \backslash \SP\left(W\right)}\left\langle\mathcal{T}_{00}(\hat{f}(\varphi)\left(g, h\right)\otimes \left\langle\Phi_{s}\left(g\right), \check{v}\right\rangle), \check{v}_{0}\right\rangle d g.
\end{align*}
Note that $\left\langle\Phi_{s}\left(g\right), \check{v}\right\rangle \in \mathscr{V}_{\sigma_{0}^{\vee}}$. 

\begin{proposition}\label{intertwingmap}
	We have :
	\begin{enumerate}
		\item The integral $\left\langle\mathcal{T}_{s}\left(\varphi\otimes \Phi_{s}\right)\left(h\right), \check{v} \otimes \check{v}_{0}\right\rangle$ is absolutely convergent for $\Re s>0$ and admits a holomorphic continuation to $\mathbb C$. 
	    \item For $\Re(s)<1$, we have 
	    \begin{align*}
	    &\left\langle\mathcal{T}_{s}\left(\varphi\otimes \Phi_{s}\right)\left(h\right), \check{v} \otimes \check{v}_{0}\right\rangle\\
	    &\qquad=L(s, \tau)^{-1} \gamma(s, \tau, \psi)^{-1}\\
	    &\qquad\qquad \times\int_{U_{Q} \mathbb{\SP}(W_0) \backslash \SP\left(W\right)}\left\langle\mathcal{T}_{00}\left(f(\varphi )\left(g ,h\right)\otimes\left\langle\Phi_{s}\left(g\right), \check{v}\right\rangle\right), \check{v}_{0}\right\rangle d g.
	    \end{align*}
	    \item  The map 
	    $$\mathcal{T}_{s} : \omega\otimes \operatorname{Ind}_{Q}^{\SP\left(W\right)}\left(\tau_{s}\chi_{V} \otimes \sigma_{0}^{\vee}\right) \rightarrow \operatorname{Ind}_{P}^{\mathrm{O}\left(V\right)}\left(\tau_{s}  \otimes \pi_0\right)$$
	    is $\SP(W)\times \mathrm O(V)$-equivariant. 
	    \item For $\Phi \in \operatorname{Ind}_{Q}^{\mathrm{\SP}\left(W\right)}\left(\tau\chi_{V} \otimes \sigma_{0}^{\vee}\right)$ with $\Phi \neq 0,$ there exists $\varphi \in \mathscr S$ such that
	    $$\mathcal{T}_{0}(\varphi, \Phi) \neq 0.$$
	\end{enumerate}
\end{proposition}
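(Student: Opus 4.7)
The plan is to imitate the analysis of doubling–type zeta integrals as in \cite{MR3166215, MR3573972, MR3788848}: the integral defining $\mathcal T_s$ is essentially a zeta integral on $\GL_k(F)$ in the variable along the Levi, and convergence/functional equation come from the standard doubling machinery, while equivariance is checked by unravelling the mixed model formulas recorded in section \ref{mixed model}.

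For (1), I would first use an Iwasawa decomposition $\SP(W) = U_Q M_Q K$ to turn the quotient integral into an integral over $\GL(Y) \cong \GL_k(F)$ twisted by an integral over a maximal compact. Since $\pi_0, \sigma_0, \tau$ are tempered, the matrix coefficients built out of $\mathcal T_{00}(\hat f(\varphi)(g,h)\otimes\langle\Phi_s(g),\check v\rangle)$ are of moderate growth along the torus; together with the exponent shift coming from $|\det|_F^{s+\rho_Q}$ in $\Phi_s$ and the analogous factor from $\omega$, one obtains absolute convergence for $\Re(s)$ sufficiently large. Dragging the integral down to a $\GL_k$–Godement–Jacquet type zeta integral, the only possible poles of the inner zeta integral are cancelled by the $L(s,\tau)^{-1}$ prefactor, which yields the holomorphic continuation to all of $\CC$.

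For (2), the relation between $f(\varphi)$ and $\hat f(\varphi)$ is precisely a partial Fourier transform on $X^\vee\otimes Y$. Inserting this into the zeta integral and using the $M_P$–equivariance of the mixed model identifies the resulting object as a Godement–Jacquet zeta integral for $\tau$. The functional equation for Godement–Jacquet—equivalently, Tate's local functional equation applied to each matrix coefficient of $\tau$—then produces the factor $\gamma(s,\tau,\psi)^{-1}$, exactly as in \cite[Lemma 7.2]{MR3573972}.

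For (3), equivariance on the $\mathrm O(V_0)\times \SP(W_0)$ factor is immediate from the transformation laws for $f(\varphi), \hat f(\varphi)$ and $M_P\times M_Q$–equivariance on the $\GL(X)\times\GL(Y)$ factor is bookkeeping with the normalizations $\chi_V(\det a)|\det a|^{n_0}$ from the Weil representation together with the $|\det a|^{s+\rho_Q}$, $|\det a|^{s+\rho_P}$ from the induced models (which is where the choice of inducing datum $\tau_s\chi_V$ on the symplectic side, as opposed to $\tau_s$ on the orthogonal side, is forced). What remains is equivariance under the nontrivial Weyl element and under $U_P$ on the orthogonal side; these follow by translating the Weyl element into a Fourier transform in $X^\vee\otimes W$ via the formulas in subsection \ref{mixed model}, which interchanges $f(\varphi)$ and $\hat f(\varphi)$ and accounts for the choice between the ``$f$'' and ``$\hat f$'' versions of the zeta integral at hand.

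The main obstacle, and the part (4) I would save for last, is the nonvanishing. The plan is: given $\Phi\ne 0$, choose $g_0\in \SP(W)$ and a vector $\check v_0\in\mathscr V_{\pi_0^\vee}$ such that the map
\[
\sigma_0^\vee \to \CC,\qquad u\mapsto \langle\mathcal T_{00}(\varphi_{00}\otimes u),\check v_0\rangle,
\]
does not annihilate $\Phi(g_0)$ when evaluated at a suitable $\varphi_{00}\in\mathscr S_{00}$; this is possible by the nonvanishing of $\mathcal T_{00}$. One then takes $\varphi=\varphi_1\otimes\varphi_{00}$ with $\varphi_1\in\mathscr S(X^\vee\otimes W)$ supported in a small neighbourhood of $g_0^{-1}\cdot e^*$, which localises the $\SP(W)$–integral to a small neighbourhood of $g_0$ (after using the Iwasawa decomposition), so that the integrand has constant sign in a suitable sense and the integral is nonzero at $s=0$. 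This is the step where one has to be careful that the specialization $s=0$ lies in the region of convergence, which is why one first analytically continues in (1) and then uses (2) to move the integral around; this interaction between (1), (2) and (4) is the delicate point.
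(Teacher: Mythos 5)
Your proposal is correct and follows essentially the same route as the paper, which proves this proposition by appeal to \cite[Proposition 7.2]{MR3788848} (cf.\ also \cite[\S 8]{MR3573972}): there too the convergence and continuation come from the doubling-type zeta integral structure with the $L(s,\tau)^{-1}$ normalization, the functional equation in (2) from the partial Fourier transform relating $f(\varphi)$ and $\hat f(\varphi)$ together with the Godement--Jacquet/Tate functional equation, the equivariance from the mixed-model formulas, and the nonvanishing from localizing the support of the test function $\varphi$ and evaluating at $s=0$ in the region where the expression in (2) converges. No substantive difference from the paper's (cited) argument.
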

\begin{proof}
	The proof is similar to that of \cite[Proposition 7.2]{MR3788848}. 
\end{proof}

\subsection{Compatibilities with intertwining operators}
Now we shall prove a key property of the equivariant map we have constructed. Having fixed $\tau,\pi_0$ and $\sigma_0 =\theta_{\psi, W_0, V_0}(\pi_0) \neq 0$, we let  
$$\mathcal{M}(\widetilde{w}_{c^\prime}, s)=\mathcal{M}\left(\widetilde{w}_{c^\prime}, \tau_{s} \otimes \pi_0\right) \quad \mbox{and} \quad \mathcal{M}\left(\widetilde{w}_1^{\prime}, s\right)=\mathcal{M}\left(\widetilde{w}_1^\prime, \tau_{s}\chi_{V} \otimes \sigma_{0}^{\vee}\right)$$
be the unnormalized intertwining operators defined in Subsection \ref{normalizingintertwing}. By the Howe duality, the diagram 
$$
\begin{CD}
\omega \otimes \operatorname{Ind}_{Q}^{\SP\left(W\right)}\left(\tau_{s}\chi_{V} \otimes \sigma_{0}^{\vee}\right) @>\mathcal T_{s}>> \operatorname{Ind}_{P}^{\mathrm{O}\left(V\right)}\left(\tau_{s}  \otimes \pi_0\right)\\
@VV1\otimes\mathcal{M}(\widetilde{w}_1^{\prime}, s) V @VV\mathcal{M}(\widetilde{w}_{c^\prime}, s)V\\
\omega \otimes \operatorname{Ind}_{Q}^{\SP\left(W\right)}\left(w^{\prime}(\tau_{s}\chi_{V} \otimes \sigma_{0}^{\vee})\right) @>\mathcal T_{-s}>> \operatorname{Ind}_{P}^{\mathrm{O}\left(V\right)}\left(w(\tau_{s} \otimes \pi_0)\right) 
\end{CD}
$$
commutes up to a scalar. The following proposition determines this constant of proportionality explicitly.
\begin{proposition}\label{commuteunnormalize}
	For $\varphi\in \mathscr S $ and $\Phi_{s}\in \Ind_{Q}^{\SP(W)}(\tau_s\chi_{V}\otimes \sigma_{0}^{\vee})$, we have 
	\begin{align*}
	\mathcal{M}\left(\widetilde{w}_{c^\prime}, s\right) \mathcal{T}_{s}\left(\varphi\otimes \Phi_{s}\right)=& \omega_{\tau}(-1/c^\prime) \times |c^\prime|_F^{-k(\rho_P+s)} \times \gamma_{V}^{k} \cdot\chi_{V}(-1)^{k}\cdot L(s, \tau)^{-1}\\ \times& L\left(-s, \tau^{\vee}\right)\times\gamma\left(-s, \tau^{\vee}, \psi\right) 
	\times \mathcal{T}_{-s}\left(\varphi\otimes \mathcal{M}(\widetilde{w}_1^{\prime}, s) \Phi_{s}\right). 
	\end{align*}
 \end{proposition}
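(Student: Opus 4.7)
The approach I would take exploits Howe duality together with the explicit mixed-model formulas recalled in subsection \ref{mixed model}. Since $\sigma_0=\theta_{V_0,W_0,\psi}(\pi_0)$, the Howe duality theorem guarantees that the space
\[
\Hom_{\SP(W)\times\mathrm O(V)}\!\Big(\omega\otimes\Ind_Q^{\SP(W)}\!\big(w^\prime(\tau_s\chi_V\otimes\sigma_0^\vee)\big),\;\Ind_P^{\mathrm O(V)}\!\big(w(\tau_s\otimes\pi_0)\big)\Big)
\]
is at most one-dimensional for generic $s$. Consequently both sides of the proposed identity are proportional by a meromorphic scalar $c(s)$; the entire problem is to compute $c(s)$ explicitly. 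Since Proposition \ref{intertwingmap}(4) guarantees that $\mathcal T_0$ is nonzero, this proportionality holds as a genuine identity of operators once $c(s)$ is determined in a region of convergence and extended meromorphically.

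First I would evaluate both sides at $h=1\in\mathrm O(V)$ paired against a fixed $\check v\otimes \check v_0\in \mathscr V_{\tau^\vee}\otimes\mathscr V_{\pi_0^\vee}$, for a convenient choice of $\varphi\in\mathscr S$ and $\Phi_s$. Inserting the definition of $\mathcal M(\widetilde{w}_c,s)$ as an integral over $U_P$ and then the integral formula of Proposition \ref{intertwingmap} for $\mathcal T_s$, the left-hand side becomes an iterated integral over $U_P\times \big(U_Q\SP(W_0)\backslash \SP(W)\big)$ of an inner Fourier-type integral over $X^\vee\otimes Y$ (coming from $\hat{\mathfrak f}(\varphi)$). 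Fubini is legitimate for $\Re(s)\gg 0$; the full identity then follows by meromorphic continuation.

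Second, I would use the explicit formulas for $\omega(w_P)$ and $\omega(m_P(ca))$ in the mixed model to rewrite the $U_P$-integral. The element $m_P(ca)\cdot((-1)^k\mathbf 1_{V_0})$ in $\widetilde w_c$, acting through $f(\varphi)$, yields a constant $\chi_V(c)^k|c|_F^{-k(\rho_P+s)}\cdot \omega_\tau(-1/c)$ after using the transformation rule $f(m_Q(i(a))g, m_P(a)h)=\chi_V(\det a)|\det a|^{\rho_P+\rho_Q}f(g,h)$ combined with the central character twist coming from $c\cdot a$ and the determinant $\pm 1$ of $a$. The Weyl element $w_P$ contributes the Weil index $\gamma_V^{-k}$ via the Fourier transform appearing in $\omega(w_P)$, while the involution $(-1)^k\mathbf 1_{V_0}$ contributes $\chi_V(-1)^k$. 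After these substitutions, a change of variables converts the $U_P$-integration into precisely the $U_Q$-integration appearing inside $\mathcal M(\widetilde w_1^\prime,s)\Phi_s$.

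Third, with the Weil-representation manipulations carried out, the residual integral I am left with is (up to the constants above) a product of a Godement--Jacquet-type zeta integral for $\tau$ against the remaining piece of $\hat f(\varphi)$. Applying Tate's local functional equation (equivalently, the standard Godement--Jacquet functional equation in the rank-one case bundled over $\GL_k$) exchanges $L(s,\tau)^{-1}$ on the left with $L(-s,\tau^\vee)\gamma(-s,\tau^\vee,\psi)$ on the right, and simultaneously converts the Fourier transform $\hat{\mathfrak f}(\varphi)$ into $\mathfrak f(\varphi)$; this matches exactly the integral presentation of $\mathcal T_{-s}\big(\varphi\otimes \mathcal M(\widetilde w_1^\prime,s)\Phi_s\big)$ supplied by Proposition \ref{intertwingmap}(2). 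Collecting every constant produced along the way yields the proclaimed formula.

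The main obstacle will be the careful bookkeeping of constants: tracking the half-integer discrepancy between $\rho_P=(2n_0+k-1)/2$ and $\rho_Q=(2n_0+k+1)/2$ (responsible for the hidden twist by $\chi_V$ in the theta correspondence), reconciling $\gamma_V^{-k}$ from $\omega(w_P)$ against the $\gamma_V^k$ that survives in the final answer, and pinning down the sign contributions of $(-1)^k$ and $\chi_V(-1)^k$ coming from $\widetilde w_c$ versus $\widetilde w_1^\prime$. A secondary technical point is that the intermediate integrals only converge absolutely in a right half-plane in $s$; everything must be organized so that Fubini and Tate's functional equation can be applied there, after which meromorphic continuation delivers the identity for all $s$.
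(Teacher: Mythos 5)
Your proposal follows essentially the same route as the paper: the paper itself observes, immediately before the proposition, that by Howe duality the diagram of unnormalized operators commutes up to a scalar, and it then determines that scalar by exactly the computation you outline — unfolding $\mathcal M(\widetilde w_c,s)$ against the integral formula for $\mathcal T_s$, applying the mixed-model formulas for $\omega(\widetilde w_c)$, converting the $U_P$-integral into the $U_Q$-integral, and invoking the Tate/Godement--Jacquet local functional equation to trade $\hat f$ for $f$ at the cost of $L(s,\tau)^{-1}L(-s,\tau^\vee)\gamma(-s,\tau^\vee,\psi)$ (the paper defers the details to the analogous Proposition 8.4 of Gan--Ichino). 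Your constant bookkeeping has small slips (e.g.\ a spurious $\chi_V(c)^k$, and the domain of your Hom space should carry $\tau_s\chi_V\otimes\sigma_0^\vee$ rather than its $w'$-twist), but these are precisely the bookkeeping issues you flag, and the method is the paper's.
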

\begin{proof}
	The proof is similar to that of \cite[Proposition 8.4]{MR3573972}. 
\end{proof}	

As a consequence of Proposition \ref{commuteunnormalize}, we deduce:
\begin{corollary}\label{123}
	For $\varphi\in \mathscr S $ and $\Phi_{s}\in \Ind_{Q}^{\SP(W)}(\tau_s\chi_{V}\otimes \sigma_{0}^{\vee})$, we have
	\begin{align*}
	\mathcal{R}_{\mathfrak W_{c^\prime}}\left(w, \tau_{s}\otimes \pi_0\right) \mathcal{T}_{s}\left(\varphi\otimes \Phi_{s}\right)=&\omega_{\tau}(-1/c^\prime)\times \chi_{V}(-c^\prime)^k\times  \alpha(s)\\
	\times&\mathcal{T}_{-s}\left(\varphi\otimes \mathcal{R}_{\mathfrak W^\prime_{\psi,1}}\left(w^{\prime}, \tau_{s}\chi_{V} \otimes \sigma_{0}^{\vee} \right)\Phi_{s}\right), 
	\end{align*}
	where
	$$
	\alpha(s)=|c^\prime|_{F}^{-ks} \times\frac{\varepsilon(-s,\tau^{\vee},\psi)}{\varepsilon(s,\tau,\psi)}.
	$$
	In particular, if $\tau\cong \tau^{\vee}$, then $\alpha(0)=1$ and 
$$
R_{\mathfrak W_{c^\prime}}\left(w, \tau\otimes \pi_0\right) \mathcal{T}_{0}\left(\varphi\otimes \Phi\right)=\omega_{\tau}(-1/c^\prime)\times \chi_{V}(-c^\prime)^k\times \mathcal{T}_{0}\left(\varphi\otimes R_{\mathfrak W^\prime_{\psi,1}}\left(w^{\prime}, \tau\chi_{V} \otimes \sigma_{0}^{\vee} \right)\Phi\right)
$$
for  $\Phi\in \Ind_{Q}^{\SP(W)}(\tau
\chi_{V}\otimes \sigma_{0}^{\vee})$. 
\end{corollary}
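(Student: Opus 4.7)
The plan is to derive Corollary~\ref{123} directly from Proposition~\ref{commuteunnormalize} by unwrapping the normalization factors on both sides. Substituting the defining relations
\begin{equation*}
\mathcal{M}(\widetilde{w}_c, s) = \epsilon(V)^{k} \chi_V(c)^{k} |c|_F^{-k\rho_P}\, r(w, \tau_s \otimes \pi_0)\, \mathcal{R}_{\mathfrak{W}_c}(w, \tau_s \otimes \pi_0)
\end{equation*}
and $\mathcal{M}(\widetilde{w}'_1, s) = r(w', \tau_s \chi_V \otimes \sigma_0^\vee)\, \mathcal{R}_{\mathfrak{W}'_{\psi,1}}(w', \tau_s \chi_V \otimes \sigma_0^\vee)$ into Proposition~\ref{commuteunnormalize}, the characters combine as $\chi_V(c)^k \chi_V(-1)^k = \chi_V(-c)^k$ and the powers of $|c|_F$ collapse to $|c|_F^{-ks}$. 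It therefore suffices to verify the scalar identity
\begin{equation*}
\epsilon(V)^k \gamma_V^k \cdot \frac{r(w', \tau_s \chi_V \otimes \sigma_0^\vee)}{r(w, \tau_s \otimes \pi_0)} \cdot \frac{L(-s,\tau^\vee) \gamma(-s,\tau^\vee,\psi)}{L(s,\tau)} = \frac{\varepsilon(-s,\tau^\vee,\psi)}{\varepsilon(s,\tau,\psi)}.
\end{equation*}

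To evaluate the ratio of $r$-factors I would invoke Proposition~\ref{prasad} together with the identity $\mathcal{L}(\sigma^\vee) = \mathcal{L}(\sigma)$ from \ref{104} to identify $\phi_{\sigma_0^\vee} = (\phi_{\pi_0} \otimes \chi_V) \oplus \chi_V$. Using $\chi_V^2 = 1$ this yields
\begin{equation*}
\phi_\tau \chi_V \otimes \phi_{\sigma_0^\vee} = (\phi_\tau \otimes \phi_{\pi_0}) \oplus \phi_\tau, \qquad \wedge^2 \circ (\phi_\tau \chi_V) = \wedge^2 \circ \phi_\tau,
\end{equation*}
so that after substitution into \ref{normalizationfactor} all factors involving $\phi_{\pi_0}$ and $\wedge^2 \circ \phi_\tau$ cancel between numerator and denominator, leaving $\lambda(E/F,\psi)^{-k}$ times the local factors of $\tau$ alone. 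The functional equation $L(-s,\tau^\vee)\gamma(-s,\tau^\vee,\psi) = \varepsilon(-s,\tau^\vee,\psi)\, L(1+s,\tau)$ then collapses everything except the constant $\epsilon(V)^k \gamma_V^k \lambda(E/F,\psi)^{-k}$, which must equal $1$. Verifying this last identity is the main obstacle and amounts to a Weil index formula for $V$: both $\gamma_V$ and $\lambda(E/F,\psi)$ are fourth roots of unity with common square $\chi_V(-1)$ by \ref{12345}, so their ratio is a sign, and one checks via the standard Weil index computation for a $2n$-dimensional quadratic space that this sign is the Hasse-Witt invariant $\epsilon(V)$.

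For the self-intertwining statement, setting $s = 0$ together with $\tau \cong \tau^\vee$ immediately gives $\alpha(0) = 1$, so the first identity specializes to the analog for $\mathcal{R}_{\mathfrak{W}_c}(w, \tau \otimes \pi_0)$ and $\mathcal{R}_{\mathfrak{W}'_{\psi,1}}(w', \tau \chi_V \otimes \sigma_0^\vee)$ at $s = 0$. Passing to the self-intertwiners $R$ defined in \ref{100} requires composing with $\mathcal{A}_w = \mathcal{A}_{\tau,w} \otimes \mathbf{1}_{\mathscr{V}_{\pi_0}}$ and $\mathcal{A}_{w'} = \mathcal{A}_{\tau,w'} \otimes \mathbf{1}_{\mathscr{V}_{\sigma_0^\vee}}$; both $\mathcal{A}_{\tau,w}$ and $\mathcal{A}_{\tau,w'}$ coincide on $\mathscr{V}_\tau$ as the unique Whittaker-normalized self-intertwiner associated to $\Lambda$. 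Inspecting the formula defining $\mathcal{T}_s$, the $\mathscr{V}_\tau$-component of $\Phi_s$ passes through $\mathcal{T}_s$ unchanged (it is simply paired against $\check{v} \in \mathscr{V}_{\tau^\vee}$), so $\mathcal{T}_0$ commutes with these Whittaker normalizations. Applying $\mathcal{A}_w$ to both sides of the specialized identity then produces the desired formula for $R$.
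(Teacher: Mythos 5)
Your proposal is correct and takes essentially the same route as the paper: unwind the normalizations in Proposition \ref{commuteunnormalize}, cancel the common factors in the ratio $r(w',\tau_s\chi_V\otimes\sigma_0^\vee)/r(w,\tau_s\otimes\pi_0)$ (using Proposition \ref{prasad} and \ref{104} to identify the parameter of $\sigma_0^\vee$), and reduce everything to the identity $\lambda(E/F,\psi)=\epsilon(V)\cdot\gamma_V$, which is precisely the fact the paper's proof invokes. The additional details you spell out — the $\varepsilon$--$L$--$\gamma$ manipulation giving $\alpha(s)$, and the compatibility of $\mathcal{T}_0$ with the Whittaker-normalized operators $\mathcal{A}_w$, $\mathcal{A}_{w'}$ in passing to the self-intertwining operators — are exactly what the paper leaves implicit in ``immediately follows.''
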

\begin{proof}
The corollary immediately follows from Proposition \ref{commuteunnormalize} and the fact that   
$$
\gamma_V^{-1}\times \lambda(E/F,\psi)=\epsilon(V)\times \chi_V(c) .
$$
\end{proof}

\section{The proof of local intertwining relation}
We begin to prove Theorem \ref{desideratumall} (9), i.e., the local intertwining relation, for our construction. We retain the notation in Subsection \ref{normalizingintertwing}. Let $\phi\in \Para(\mathrm O(V))$ be such that    
$$\phi=\phi_{\tau}+\phi_0+ \phi_{\tau}^{\vee},$$
where $\phi_{\tau}$ is an irreducible tempered representation of $\WD_F$ corresponding to $\tau\in \Irr(\GL_k(F))$ and $\phi_0\in \Para(\mathrm O(V_0))$. In this case, we have a natural embedding $\mathcal S_{\phi_0}\hookrightarrow \mathcal {S}_{\phi}$. Let $\pi_0\in \Pi_{\phi_0}$ be an irreducible tempered representation of $\mathrm O(V_0)$. Our goal is to analyze the induced representation $\Ind_{P}^{\mathrm O(V)}(\tau\otimes \pi_0)$. 

We divide our proof into two parts. In the first part, we analyze the $L$-parameter for each irreducible constituent $\pi$ of $\Ind_{P}^{\mathrm O(V)}(\tau\otimes \pi_0)$, and as a corollary, we get some information on the reducibility of $\Ind_{P}^{\mathrm O(V)}(\tau\otimes \pi_0)$. In the second part, we analyze the characters $\mathcal J_{\mathfrak W_{c^\prime}}(\pi)$ for a fixed choice of Whittaker data $\mathfrak W_{c^\prime}$.

\subsection{$L$-parameter and reducibility}
We first determine the $L$-parameter of $\pi\subseteq \Ind_{P}^{\mathrm O(V)}(\tau\otimes \pi_0)$.
\begin{lemma}\label{parameter}
	Let $\pi$ be an irreducible constituent of $\Ind_{P}^{\mathrm O(V)}(\tau\otimes \pi_0)$. Then the $L$-parameter of $\pi$ is $\phi$. 
\end{lemma}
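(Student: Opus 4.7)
The plan is to read off the $L$-parameter of $\pi$ from its Plancherel measures, since our construction of $\mathcal{L}$ has been shown to respect them (Proposition~\ref{localfactor}) and since a tempered parameter is determined by the Plancherel measures against all $\GL_m$ twists (Lemma~\ref{plancherel}). First, observe that $\pi$ is tempered: it is an irreducible constituent of the unitarily induced representation $\Ind_{P}^{\mathrm O(V)}(\tau\otimes\pi_0)$ from tempered data on the Levi.

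Next, I would apply the multiplicativity of the Plancherel measure under parabolic induction (see Appendix~\ref{plancherelmeasure}). For any irreducible tempered representation $\tau'$ of $\GL_m(F)$ with $L$-parameter $\phi_{\tau'}$, this gives an identity of the form
\[
\mu_{\psi}(\tau'_{s}\otimes \pi) \;=\; \mu_{\psi}(\tau'_{s}\otimes \pi_0)\cdot\gamma(s,\phi_{\tau'}\otimes\phi_{\tau}^{\vee},\psi)\,\gamma(s,\phi_{\tau'}\otimes\phi_{\tau},\psi)\,\gamma(-s,\phi_{\tau'}^{\vee}\otimes\phi_{\tau},\psi_{-1})\,\gamma(-s,\phi_{\tau'}^{\vee}\otimes\phi_{\tau}^{\vee},\psi_{-1}),
\]
the two pairs of $\gamma$-factors in $\phi_{\tau}$ and $\phi_{\tau}^{\vee}$ arising from the fact that $P$ is a Siegel-type maximal parabolic so the adjoint action of the $\GL_k$-factor on the unipotent radical $U_P$ pairs $\tau$ against both $\tau$ and $\tau^{\vee}$. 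This is the exact analogue on the orthogonal side of the formula already incorporated in Theorem~\ref{llcsympletic}(9), and the computation is purely one of rank-one Plancherel measures.

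Combining this identity with the expression for $\mu_{\psi}(\tau'_{s}\otimes \pi_0)$ furnished by Proposition~\ref{localfactor} applied to $\pi_0\in \Pi_{\phi_0}$, and noting that $\phi_0$ is self-dual, so that
\[
\phi_{\tau'}\otimes \phi^{\vee} \;=\; \phi_{\tau'}\otimes\phi_{\tau}^{\vee}\;\oplus\; \phi_{\tau'}\otimes\phi_0^{\vee}\;\oplus\; \phi_{\tau'}\otimes\phi_{\tau},
\]
a direct bookkeeping shows that $\mu_{\psi}(\tau'_{s}\otimes\pi)$ coincides with the formula predicted by Proposition~\ref{localfactor} for a tempered parameter equal to $\phi=\phi_{\tau}\oplus\phi_0\oplus\phi_{\tau}^{\vee}$. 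Hence, for every irreducible tempered $\tau'$, the Plancherel measure $\mu_{\psi}(\tau'_{s}\otimes\pi)$ agrees with the one attached to $\phi$, and Lemma~\ref{plancherel} forces $\mathcal{L}(\pi)=\phi$.

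The only mildly delicate point in this plan is justifying the explicit form of the ``$\GL$-contribution'' in the multiplicativity identity; this is standard harmonic analysis for the rank-one Plancherel measure attached to the relative Weyl element $w\in W(M_P)$ and will be invoked from the appendix rather than redone here.
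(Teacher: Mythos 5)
Your proposal is correct in substance, but it takes a genuinely different route from the paper. The paper proves this lemma by theta correspondence: it splits into cases according to whether $\theta_{W,V,\psi}(\pi)$ vanishes (using the conservation relation and $\pi\otimes\det$ in the second case), then uses Lemma~\ref{12} to place $\sigma=\theta_{W,V,\psi}(\pi)$ inside $\Ind_{Q}^{\SP(W)}(\tau\chi_V\otimes\sigma_0)$ with $\sigma_0=\theta_{W_0,V_0,\psi}(\pi_0)$, reads off $\mathcal L(\sigma)=(\phi_\tau\otimes\chi_V)\oplus\phi_0^+\oplus(\phi_\tau^\vee\otimes\chi_V)$ from Theorem~\ref{llcsympletic}(6), and then descends to $\phi$ via Proposition~\ref{prasad}. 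You instead argue purely on the level of Plancherel measures: temperedness of $\pi$, multiplicativity of $\mu_\psi$ under parabolic induction with the two $\GL$-type rank-one factors $\gamma(s,\phi_{\tau'}\otimes\phi_\tau^{\vee},\psi)\gamma(-s,\phi_{\tau'}^{\vee}\otimes\phi_\tau,\psi_{-1})$ and $\gamma(s,\phi_{\tau'}\otimes\phi_\tau,\psi)\gamma(-s,\phi_{\tau'}^{\vee}\otimes\phi_\tau^{\vee},\psi_{-1})$, the fact that the constructed $\mathcal L$ respects Plancherel measures (Lemma~\ref{respectgamma}/\ref{respectplancherel}, i.e.\ Proposition~\ref{localfactor}), and finally Lemma~\ref{gammadetermine} (equivalently the argument of Lemma~\ref{plancherel}) to force $\mathcal L(\pi)=\phi$. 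This buys you a cleaner argument with no case split on the (non)vanishing of theta lifts and no use of Lemma~\ref{12} or Proposition~\ref{prasad}; it is also the same philosophy the paper later uses to compare with Arthur's LLC in Theorem~\ref{comparearthur}. What it costs is the precise multiplicativity identity with the $\GL$ contributions expressed as Galois-side $\gamma$-factors: this is not proved in the paper's Appendix~\ref{plancherelmeasure} (which only defines $\mu_\psi$ and treats normalized intertwining operators), so you must import it from \cite[\S 10.2, Appendix B]{MR3166215} together with Shahidi's computation of the $\GL$ rank-one Plancherel measures and the LLC for $\GL_n$; none of this is circular, but your attribution ``from the appendix'' should be corrected. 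Two small points to tighten: the identity you need in Lemma~\ref{gammadetermine} must hold for all irreducible $\phi_\rho$ of $\WD_F$, not only tempered ones, which follows by twisting $\tau'$ by $|\det|^{s_0}$ (a shift in $s$); and after equating the two expressions for $\mu_\psi(\tau'_s\otimes\pi)$ you should cancel the common factors $\gamma(2s,\wedge^2\circ\phi_{\tau'},\psi)\gamma(-2s,\wedge^2\circ\phi_{\tau'}^{\vee},\psi_{-1})$ as meromorphic functions before applying Lemma~\ref{gammadetermine}, using self-duality of $\phi$ and of $\mathcal L(\pi)$ to put the remaining factors in the form required there.
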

\begin{proof}
	We divide it into two cases: 
	
	$\bullet$ \underline{Case I}: if $\theta_{W,V,\psi}(\pi)\neq 0 $, let $\sigma=\theta_{W,V,\psi}(\pi)$. Then by Lemma \ref{12}, we have 
	$$
	\sigma \subseteq \Ind_{Q}^{\SP(W)}\left(\tau\chi_{V}\otimes \sigma_{0}\right),
	$$
	where  
	$$\sigma_{0}=
	\theta_{W_0,V_0,\psi}(\pi_0). 
	$$
	 Let $\phi^+$ and $\phi_{0}^+$ be the $L$-parameters of $\sigma$ and $\sigma_{0}$. Then by Theorem \ref{llcsympletic} (6), we have
	$$
	\phi^+=(\phi_{\tau}\otimes \chi_{V})+\phi^+_{0} + (\phi_{\tau}^{\vee}\otimes \chi_{V}).
	$$
	On the other hand, it follows from Proposition \ref{prasad} that 
	\begin{align*}
	\phi^+=(\phi_{\pi}\otimes \chi_{V})+ \chi_{V}\quad \mbox{and}\quad
	\phi^+_{0}=(\phi_{0}\otimes\chi_{V})+ \chi_{V},
	\end{align*}
	where $\phi_{\pi}$ is the $L$-parameter of $\pi$. We deduce $\phi_{\pi}=\phi$ from these equalities. 
	
	$\bullet$ \underline{Case II}: if $\theta_{W,V,\psi}(\pi)= 0 $, then $\theta_{W,V,\psi}(\pi\otimes\det)\neq 0$ by the conservation relation (Theorem \ref{con2}). It follows from (\ref{59}) that 
	\begin{align*}
	\pi\otimes\det \subseteq \Ind_{P}^{\mathrm O(V)}(\tau\otimes (\pi_0\otimes\det)). 
	\end{align*} 
	 Since by Proposition \ref{det},
	\begin{align*}
	\mathcal L(\pi_0\otimes\det)=\mathcal L(\pi_0)=\phi_0, 
	\end{align*}
	we deduce from Case I that $\mathcal L(\pi\otimes\det)=\phi$. Then again by Proposition \ref{det}, we have 
	\begin{align*}
	\mathcal L(\pi)=\mathcal L(\pi\otimes\det)=\phi. 
	\end{align*}
	These complete the proof. 
\end{proof}

Next we analyze the reducibility of $\Ind_{P}^{\mathrm O(V)}(\tau\otimes \pi_0)$.
\begin{lemma}\label{multiplicityfree}
	The representation $\Ind_{P}^{\mathrm O(V)}(\tau\otimes \pi_0)$ is semisimple and multiplicity free. 
\end{lemma}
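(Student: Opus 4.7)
My plan is to treat the two claims—semisimplicity and multiplicity-freeness—separately, with the former being essentially formal and the latter being the actual content.

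For semisimplicity, I would invoke the general fact that parabolic induction of a tempered representation is unitary, hence completely reducible. By hypothesis $\tau$ is tempered, and every $\pi_0 \in \Pi_{\phi_0}$ is tempered by Proposition \ref{tempered} (since $\phi_0 \in \Para(\mathrm O(V_0))$). Thus $\tau \otimes \pi_0$ is a tempered representation of the Levi $M_P$, and its normalized parabolic induction $\Ind_P^{\mathrm O(V)}(\tau \otimes \pi_0)$ to $\mathrm O(V)$ is tempered and unitary, hence semisimple.

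For multiplicity-freeness, my plan is to transport the problem, via the local theta correspondence, to the analogous multiplicity-freeness on the symplectic side, which is already part of Theorem \ref{llcsympletic}~(6). The key ingredients are Lemma \ref{12} (theta lifts of constituents of an orthogonal parabolic induction embed in the corresponding symplectic one), the Howe duality (injectivity of the theta correspondence on irreducibles), conservation (Theorem \ref{con2}, which guarantees nonvanishing of theta lifts after a possible $\det$-twist), and the equivariant map $\mathcal T_0$ of Proposition \ref{intertwingmap}. Concretely, fix an irreducible constituent $\pi$; by Lemma \ref{parameter}, $\pi \in \Pi_\phi$. Using the isomorphism \ref{59} (tensoring with $\det$ intertwines the two inductions) and Proposition \ref{det} (the $\det$-twist preserves $L$-parameters), the multiplicity of $\pi$ in $\Ind_P^{\mathrm O(V)}(\tau \otimes \pi_0)$ equals that of $\pi \otimes \det$ in $\Ind_P^{\mathrm O(V)}(\tau \otimes (\pi_0 \otimes \det))$. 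Thus, by conservation, I may assume that both $\sigma_0 := \theta_{W_0, V_0, \psi}(\pi_0)$ and $\sigma := \theta_{W, V, \psi}(\pi)$ are nonzero. By Lemma \ref{temperedtotempered}, $\sigma_0$ is irreducible tempered, and by Lemma \ref{12},
\[
\sigma \;\subseteq\; \Ind_Q^{\SP(W)}(\tau\chi_V \otimes \sigma_0),
\]
which is semisimple and multiplicity-free by Theorem \ref{llcsympletic}~(6).

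To conclude, I would use the equivariant map $\mathcal T_0: \omega \otimes \Ind_Q^{\SP(W)}(\tau\chi_V \otimes \sigma_0^\vee) \to \Ind_P^{\mathrm O(V)}(\tau \otimes \pi_0)$ from Proposition \ref{intertwingmap}, together with Howe duality, to produce an injection
\[
\Hom_{\mathrm O(V)}\bigl(\Ind_P^{\mathrm O(V)}(\tau \otimes \pi_0),\, \pi\bigr) \;\hookrightarrow\; \Hom_{\SP(W)}\bigl(\Ind_Q^{\SP(W)}(\tau\chi_V \otimes \sigma_0^\vee),\, \sigma^\vee\bigr),
\]
the right-hand side being at most one-dimensional by the multiplicity-freeness on the symplectic side (combined with \ref{104}, which identifies $\sigma^\vee$ in the dual packet). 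The main technical obstacle will be this final comparison of $\Hom$-spaces: one must verify that the pullback along $\mathcal T_0$ is injective on intertwining operators, which requires the nontriviality statement in Proposition \ref{intertwingmap}~(4) together with a careful Frobenius-reciprocity argument on the symplectic side.
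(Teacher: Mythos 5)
Your semisimplicity argument and your opening reductions agree with the paper: unitarity gives semisimplicity, the $\det$-twist comparison via \ref{59}, Proposition \ref{det} and the conservation relation \ref{con2} lets you assume $\sigma=\theta_{W,V,\psi}(\pi)\neq 0$ (and then $\sigma_0\neq 0$, by Lemma \ref{12}), and the target of the comparison is indeed the multiplicity-freeness of $\Ind_Q^{\SP(W)}(\tau\chi_V\otimes\sigma_0)$ from Theorem \ref{llcsympletic}. The gap is in your final step. You propose to pull back intertwining operators along $\mathcal T_0$ to get an injection
\[
\Hom_{\mathrm O(V)}\bigl(\Ind_P^{\mathrm O(V)}(\tau\otimes\pi_0),\,\pi\bigr)\;\hookrightarrow\;\Hom_{\SP(W)}\bigl(\Ind_Q^{\SP(W)}(\tau\chi_V\otimes\sigma_0^\vee),\,\sigma^\vee\bigr).
\]
An operator $f$ dies under composition with $\mathcal T_0$ exactly when it vanishes on $\mathrm{Im}(\mathcal T_0)$, so injectivity would require that $\mathrm{Im}(\mathcal T_0)$ contain the whole $\pi$-isotypic part of $\Ind_P^{\mathrm O(V)}(\tau\otimes\pi_0)$. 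Proposition \ref{intertwingmap} (4) does not give this: it is a nonvanishing statement in the other variable (no nonzero $\Phi$ in the symplectic induction is killed), and says nothing about surjectivity onto the orthogonal induction. Surjectivity in fact fails in general — in Case II of the proof of Corollary \ref{notanisomorphism} the image of $\mathcal T_0$ is a proper irreducible summand of a length-two induced representation — so to make your pullback injective you would have to know in advance that every copy of $\pi$ lies in the image, which is essentially the multiplicity-one statement you are trying to prove. No Frobenius-reciprocity computation on the symplectic side repairs this.

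The paper runs the comparison in the opposite direction, where an a priori \emph{upper} bound is available: by Kudla's filtration of the Jacquet module $R_P(\omega)$, exactly as in the proof of Lemma \ref{12}, one has an injection $\Hom_{\mathrm O(V)}\bigl(\omega,\,\Ind_P^{\mathrm O(V)}(\tau\otimes\pi_0)\bigr)\hookrightarrow \bigl(\Ind_Q^{\SP(W)}(\tau^\vee\chi_V\otimes\sigma_0)\bigr)^\vee$. Since $\sigma^\vee\hookrightarrow\Hom_{\mathrm O(V)}(\omega,\pi)$, a multiplicity $m(\pi)$ of $\pi$ in the (semisimple) induced representation forces $(\sigma^\vee)^{\oplus m(\pi)}$ into the right-hand side, and multiplicity one of $\sigma^\vee$ there (Theorem \ref{llcsympletic}, together with \ref{104}) gives $m(\pi)=1$. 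If you replace your $\mathcal T_0$-pullback by this filtration bound, the rest of your outline goes through; $\mathcal T_0$ is the right tool for producing constituents (lower bounds, as in Corollary \ref{notanisomorphism}), not for bounding multiplicities from above.
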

\begin{proof}
	Since $\tau$ and $\pi_{0}$ are unitary representations, so is $\Ind_{P}^{\mathrm O(V)}(\tau\otimes \pi_0)$. Hence it is semisimple. 
	
	Next we prove $\Ind_{P}^{\mathrm O(V)}(\tau\otimes \pi_0)$ is multiplicity free. Let $\pi\in \Irr(\mathrm O(V))$ be such that 
	\begin{align*}
	m(\pi)=\dim \Hom_{\mathrm O(V)}(\pi, \Ind_{P}^{\mathrm O(V)}(\tau\otimes \pi_0))\geq 1 .
	\end{align*}
	We shall prove $m(\pi)=1$. We separate the proof into two cases, depending on $\theta_{W,V,\psi}(\pi)=0$ or not: 

$\bullet$ \underline{Case I}: If $\theta_{W,V,\psi}(\pi)\neq 0$, let $\sigma=\theta_{ W,V,\psi}(\pi)$. As in the proof of Lemma \ref{12}, there is an injective map 
		\begin{align*}
		\Hom_{\mathrm O(V)}(\omega, \Ind_{P}^{\mathrm O(V)}(\tau\otimes \pi_0)) \hookrightarrow \left(\Ind_{Q}^{\SP(W)}(\tau^\vee\chi_{V}\otimes \sigma_{0})\right)^{\vee}.
		\end{align*}
		Then it is easy to see that 
		$$
		m(\pi) \leq m(\sigma) 
		$$
		where 
		$$
		m(\sigma) =  \dim \Hom_{\SP(W)}\left(\sigma, \Ind_{Q}^{\SP(W)}(\tau^\vee\chi_{V}\otimes \sigma_{0})\right). 
		$$
		By Theorem \ref{llcsympletic} , we have $m(\sigma) \leq 1$. Hence $m(\pi)=1$.

$\bullet$ \underline{Case II}: If $\theta_{ W,V,\psi}(\pi)=0$, then by the conservation relation (Theorem \ref{con2}), $\theta_{ W,V,\psi}(\pi\otimes\det)\neq 0$. Note that by (\ref{59}), we have 
		\begin{align*}
		\pi\otimes\det \subseteq \Ind_{P}^{\mathrm O(V)}(\tau\otimes (\pi_0\otimes\det)). 
		\end{align*} 
		 It then follows from Case I that  
		\begin{align*}
		m(\pi\otimes\det)=\dim \Hom_{\mathrm O(V)}(\pi\otimes\det, \Ind_{P}^{\mathrm O(V)}(\tau\otimes (\pi_0\otimes\det)))=1. 
		\end{align*}
		On the other hand, by (\ref{59}), we have $m(\pi)=m(\pi\otimes \det)=1$. This proves Case II. 
\end{proof}
\begin{remark}
	This lemma also follows from the abelianess of the Knapp-Stein $R$-group and the induction in stages. The abelianess of the Knapp-Stein $R$-group was proved by Goldberg \cite{MR1296726} for the quasi-split special orthogonal groups. Later Choiy--Goldberg \cite[Appendix A]{MR3430367} proved that the Knapp-Stein $R$-group for the non quasi-split special orthogonal groups is isomorphic to its quasi-split pure inner forms. But their proof is under the assumption of the LLC for pure inner forms of quasi-split special orthogonal groups. Since the main purpose of this paper is to prove the LLC for pure inner forms, to avoid a circular reasoning, we give a proof of the lemma here. 
\end{remark}

Now we analyze the reducibility of $\Ind_{P}^{\mathrm O(V)}(\tau\otimes \pi_0)$. Recall that there is a natural embedding $\mathcal S_{\phi_{0}}\hookrightarrow \mathcal {S}_{\phi}$ of component groups. We consider two cases depending on the relative size of $\mathcal S_{\phi_0}$ and $\mathcal {S}_{\phi}$.  
\begin{corollary}\label{embeddingisisomorphism}
	Assume that $\phi_{\tau}\subseteq \phi_0$, so the natural embedding $\mathcal S_{\phi_{0}}\hookrightarrow \mathcal {S}_{\phi}$ is an isomorphism. Then the induced representation $\Ind_{P}^{\mathrm O(V)}(\tau\otimes \pi_0)$ is irreducible. 
\end{corollary}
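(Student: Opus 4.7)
The plan is to deduce the irreducibility on the orthogonal side from the (already-established) local intertwining relation on the symplectic side via theta correspondence. By Lemma \ref{parameter} every irreducible constituent of $\Ind_P^{\mathrm{O}(V)}(\tau\otimes\pi_0)$ lies in $\Pi_\phi$, and by Lemma \ref{multiplicityfree} the induction is semisimple and multiplicity-free, so it suffices to exhibit a unique irreducible constituent. By the conservation relation (Theorem \ref{con2}) together with the twist isomorphism \ref{59}, I may assume $\sigma_0:=\theta_{W_0,V_0,\psi}(\pi_0)\neq 0$; indeed, if necessary I replace $\pi_0$ by $\pi_0\otimes\det$, which does not affect irreducibility of the induction.

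Set $\Sigma:=\Ind_Q^{\SP(W)}(\tau\chi_V\otimes\sigma_0)$. By Proposition \ref{prasad}(i), $\sigma_0$ has parameter $\phi_0^+=(\phi_0\otimes\chi_V)\oplus\chi_V$, so $\Sigma$ has parameter $\phi^+=(\phi\otimes\chi_V)\oplus\chi_V$. The hypothesis $\phi_\tau\subseteq\phi_0$ gives $\phi_\tau\chi_V\subseteq\phi_0^+$, so the natural embedding $\bar{\mathcal{S}}_{\phi_0^+}\hookrightarrow\bar{\mathcal{S}}_{\phi^+}$ is an isomorphism, and Theorem \ref{llcsympletic}(6) forces $\Sigma$ to be irreducible. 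Lemma \ref{12} then yields $\theta_{W,V,\psi}(\pi)\in\{0,\Sigma\}$ for every constituent $\pi$ of $\Ind_P^{\mathrm{O}(V)}(\tau\otimes\pi_0)$, while Proposition \ref{intertwingmap}(4) --- read, after dualizing on the symplectic factor, as a nonzero equivariant arrow $\omega\to \bigl(\Ind_P^{\mathrm{O}(V)}(\tau\otimes\pi_0)\bigr)\boxtimes\Sigma$ --- together with Howe duality exhibits $\pi^*:=\theta_{V,W,\psi}(\Sigma)$ as the unique irreducible constituent of $\Ind_P^{\mathrm{O}(V)}(\tau\otimes\pi_0)$ with non-zero theta lift.

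It remains to rule out constituents $\pi$ with $\theta_{W,V,\psi}(\pi)=0$. For such $\pi$, conservation forces $\theta_{W,V,\psi}(\pi\otimes\det)\neq 0$, and $\pi\otimes\det$ is a constituent of $\Ind_P^{\mathrm{O}(V)}(\tau\otimes(\pi_0\otimes\det))$. Write $\sigma_0':=\theta_{W_0,V_0,\psi}(\pi_0\otimes\det)$. If $\sigma_0'=0$, Lemma \ref{12} applied to $\pi_0\otimes\det$ gives $\theta_{W,V,\psi}(\pi\otimes\det)=0$, a contradiction. If $\sigma_0'\neq 0$, Proposition \ref{1notinphi}(i) combined with the tower property and conservation forces $1\nsubseteq\phi_0$ (otherwise the configuration $m_{\mathcal W}(\pi_0)+m_{\mathcal W}(\pi_0\otimes\det)=4n_0$ would make exactly one of $\sigma_0,\sigma_0'$ vanish at level $W_0$). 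Since $\phi_\tau\subseteq\phi_0$ then implies $\phi_\tau\neq 1$, we have $1\nsubseteq\phi$, so by Corollary \ref{1inphi}(1) both $\theta_{W,V,\psi}(\pi^*)$ and $\theta_{W,V,\psi}(\pi^*\otimes\det)$ are non-zero. Running the $\Sigma$-argument for $\Sigma':=\Ind_Q^{\SP(W)}(\tau\chi_V\otimes\sigma_0')$ (also irreducible by the same input), Lemma \ref{12} yields $\theta_{W,V,\psi}(\pi^*\otimes\det)=\Sigma'$ and, identically, $\theta_{W,V,\psi}(\pi\otimes\det)=\Sigma'$; Howe duality then gives $\pi\otimes\det=\pi^*\otimes\det$, whence $\pi=\pi^*$, contradicting $\theta_{W,V,\psi}(\pi)=0$.

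The principal obstacle is this final subcase $\sigma_0'\neq 0$: a priori both $\pi^*$ and $\pi^{**}\otimes\det$ (where $\pi^{**}:=\theta_{V,W,\psi}(\Sigma')$) could occur as distinct constituents of $\Ind_P^{\mathrm{O}(V)}(\tau\otimes\pi_0)$. The collapse $\pi^*=\pi^{**}\otimes\det$ ultimately rests on showing $1\nsubseteq\phi$ in this regime, which activates the ``both theta lifts non-zero'' conclusion of Corollary \ref{1inphi}(1) and allows Howe duality to identify the two candidate constituents.
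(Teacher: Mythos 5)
Your proof is correct, but it takes a genuinely different route from the paper's. The paper disposes of this corollary by a packet-counting argument: it observes that $\bigsqcup_{\pi_0\in\Pi_{\phi_0}}\JH\bigl(\Ind_P^{\mathrm O(V)}(\tau\otimes\pi_0)\bigr)$ injects into $\Pi_\phi$ (Howe duality plus Lemmas \ref{parameter} and \ref{multiplicityfree}) and then uses $|\Pi_\phi|=|\widehat{\mathcal S_\phi}|=|\widehat{\mathcal S_{\phi_0}}|=|\Pi_{\phi_0}|$ from Proposition \ref{bijection} to force each Jordan--H\"older set to be a singleton. You instead argue locally at the given $\pi_0$: after the harmless reduction to $\theta_{W_0,V_0,\psi}(\pi_0)\neq 0$, you import irreducibility of $\Ind_Q^{\SP(W)}(\tau\chi_V\otimes\sigma_0)$ from Arthur's local intertwining relation (Theorem \ref{llcsympletic}(6)), pin down via Lemma \ref{12} and Howe duality that at most one constituent has nonzero equal-rank theta lift, get existence of such a constituent from Proposition \ref{intertwingmap}(4), and then eliminate hypothetical constituents with vanishing lift through the conservation relation, Proposition \ref{1notinphi} and Corollary \ref{1inphi}. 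This is closer in spirit to the paper's treatment of the reducible case (Corollary \ref{notanisomorphism}) than to its proof of the present statement; what it buys is independence from the size count of Corollary \ref{size}/Proposition \ref{bijection} and the extra information that the unique constituent is $\theta_{V,W,\psi}$ of the irreducible induced representation on the symplectic side, at the cost of a longer case analysis. Two small remarks: your invocation of Proposition \ref{intertwingmap}(4) silently identifies $\Ind_Q^{\SP(W)}(\tau^\vee\chi_V\otimes\sigma_0)$ with $\Sigma=\Ind_Q^{\SP(W)}(\tau\chi_V\otimes\sigma_0)$ (harmless, since both are irreducible with parameter $\phi^+$ and the same character by the LIR), and in the final subcase the detour through $\pi^*$ and $\Sigma'$ is unnecessary: once you know $1\nsubseteq\phi$, Corollary \ref{1inphi}(1) applied to $\pi$ itself already contradicts $\theta_{W,V,\psi}(\pi)=0$.
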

\begin{proof}
We denote by $$\JH(\Ind_{P}^{\mathrm O(V)}(\tau\otimes \pi_0))$$ 
the set of irreducible constituents of $\Ind_{P}^{\mathrm O(V)}(\tau\otimes \pi_0)$. Consider the set 
\begin{align*}
\bigsqcup_{\pi_0\in \Pi_{\phi_0}} \JH(\Ind_{P}^{\mathrm O(V)}(\tau\otimes \pi_0)) .
\end{align*}
By the Howe duality, Lemma \ref{parameter} and Lemma \ref{multiplicityfree}, this set is a subset of $\Pi_{\phi}$. Hence 
\begin{align*}
|\Pi_{\phi}|\geq \left|\bigsqcup_{\pi_0\in \Pi_{\phi_0}} \JH(\Ind_{P}^{\mathrm O(V)}(\tau\otimes \pi_0)) \right|\geq |\Pi_{\phi_{0}}|. 
\end{align*}
On the other hand, by Proposition \ref{bijection}, we have 
\begin{align*}
|\Pi_{\phi}|=|\widehat{\mathcal {S}_{\phi}}|=|\widehat{\mathcal S_{\phi_0}}|=|\Pi_{\phi_{0}}|. 
\end{align*}
Hence we must have 
\begin{align*}
\left| \JH(\Ind_{P}^{\mathrm O(V)}(\tau\otimes \pi_0)) \right|=1
\end{align*}
for all $\pi_0\in \Pi_{\phi_{0}}$. This implies $\Ind_{P}^{\mathrm O(V)}(\tau\otimes \pi_0)$ is irreducible. 
\end{proof}

\begin{corollary}\label{notanisomorphism}
	If $\phi_\tau\not \subseteq \phi_0$, so the image of $\mathcal S_{\phi_{0}}$ inside $\mathcal {S}_{\phi}$ is an index $2$ subgroup, then $\Ind_{P}^{\mathrm O(V)}(\tau\otimes \pi_0)$ is a direct sum of two non-isomorphic irreducible representations. 
\end{corollary}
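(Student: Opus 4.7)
My strategy parallels Corollary~\ref{embeddingisisomorphism}: a counting upper bound of two constituents, combined with a reducibility lower bound of two constituents, both obtained by transferring information from the symplectic side via theta correspondence.

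\textbf{Counting upper bound.} Since $\phi_\tau$ is self-dual of orthogonal type (otherwise the inclusion $\mathcal S_{\phi_0}\hookrightarrow\mathcal S_\phi$ would be an isomorphism) and $\phi_\tau\not\subseteq\phi_0$, the summand $\phi_\tau$ contributes a new generator $a_\tau$ to $\mathcal S_\phi$ that is not in the image of $\mathcal S_{\phi_0}$, so $|\Pi_\phi|=2|\Pi_{\phi_0}|$ by Proposition~\ref{bijection2}. By Lemma~\ref{parameter} every irreducible constituent of $\mathrm{Ind}_P^{\mathrm O(V)}(\tau\otimes\pi_0)$ lies in $\Pi_\phi$, and by Lemma~\ref{multiplicityfree} the induction is multiplicity-free. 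I would then show that for distinct $\pi_0,\pi_0'\in\Pi_{\phi_0}$, the sets $\mathrm{JH}(\mathrm{Ind}(\tau\otimes\pi_0))$ and $\mathrm{JH}(\mathrm{Ind}(\tau\otimes\pi_0'))$ are disjoint: by Lemma~\ref{12}, any common constituent $\pi$ (with nonzero theta lift) would force $\theta_{W,V,\psi}(\pi)$ to lie in both $\mathrm{Ind}_Q^{\SP(W)}(\tau\chi_V\otimes\sigma_0)$ and $\mathrm{Ind}_Q^{\SP(W)}(\tau\chi_V\otimes\sigma_0')$, contradicting the already established symplectic LIR (Theorem~\ref{llcsympletic}(6)) together with Howe duality. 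Hence $\sum_{\pi_0\in\Pi_{\phi_0}}|\mathrm{JH}(\mathrm{Ind}(\tau\otimes\pi_0))|\le|\Pi_\phi|=2|\Pi_{\phi_0}|$, and each induction has at most two constituents.

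\textbf{Reducibility via theta.} By the conservation relation and~\eqref{59}, I may assume without loss of generality that $\sigma_0:=\theta_{W_0,V_0,\psi}(\pi_0)\ne 0$. For the main case $\phi_\tau\ne\mathbf 1$, we have $\phi_\tau\chi_V\not\subseteq\phi_0^+=(\phi_0\otimes\chi_V)\oplus\chi_V$, so by Theorem~\ref{llcsympletic}(6) the symplectic induction $\mathrm{Ind}_Q^{\SP(W)}(\tau\chi_V\otimes\sigma_0)$ has exactly two non-isomorphic irreducible constituents. By Lemma~\ref{12} and the injectivity of theta (Howe duality), the theta lift sends distinct constituents of $\mathrm{Ind}_P^{\mathrm O(V)}(\tau\otimes\pi_0)$ (with nonzero theta lift) to distinct constituents of this symplectic induction, so the orthogonal induction has at least two constituents. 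Combined with the upper bound, it has exactly two non-isomorphic irreducible constituents.

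\textbf{Main obstacle.} The delicate case is $\phi_\tau=\mathbf 1$ (so $\tau$ is the trivial character of $\mathrm{GL}_1(F)$ and $k=1$), where $\phi_\tau\chi_V=\chi_V\subseteq\phi_0^+$, making the symplectic induction irreducible, so the pullback argument fails. Here I would use Corollary~\ref{123} and the equivariant map $\mathcal T_0$ of Proposition~\ref{intertwingmap} more carefully: since $\mathbf 1\oplus\mathbf 1\subseteq\phi$ but $\mathbf 1\not\subseteq\phi_0$, by Proposition~\ref{1notinphi}(i) the constituents $\pi$ of $\Pi_\phi$ satisfy $\pi\not\cong\pi\otimes\det$, and each pair $\{\pi,\pi\otimes\det\}$ distinguishes the two halves of the induction by twisting via~\eqref{comparenormalize}. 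Alternatively, one can detect reducibility directly from the Plancherel measure formula (Proposition~\ref{localfactor}): when $\mathbf 1\not\subseteq\phi_0$, the analytic behavior of $\mu_\psi(|\cdot|^s\otimes\pi_0)$ at $s=0$ forces the normalized self-intertwining operator $R_{\mathfrak W_c}(w,\mathbf 1\otimes\pi_0)$ to be a nontrivial involution, hence both eigenspaces are nonzero, giving the required reducibility to match the upper bound.
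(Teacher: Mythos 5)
Your counting upper bound is essentially the paper's argument and is fine, but the reducibility lower bound — the heart of the proof — has a genuine gap in both cases. In the main case $\phi_\tau\neq \mathbf 1$, your inference is backwards: knowing that $\Ind_{Q}^{\SP(W)}(\tau\chi_V\otimes\sigma_0)$ has two constituents and that theta sends distinct constituents of $\Ind_{P}^{\mathrm O(V)}(\tau\otimes\pi_0)$ to distinct constituents on the symplectic side only bounds the orthogonal side from \emph{above}; if the orthogonal induction were irreducible, its single theta lift would sit in one of the two symplectic constituents and nothing is contradicted. What is needed is the reverse transfer: one must show that \emph{both} symplectic constituents $\sigma_1^\vee,\sigma_2^\vee$ of $\Ind_{Q}^{\SP(W)}(\tau\chi_V\otimes\sigma_0^\vee)$ are realized inside $\Ind_{P}^{\mathrm O(V)}(\tau\otimes\pi_0)$. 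The paper does this with the equivariant map $\mathcal T_0$ of Proposition \ref{intertwingmap}: its non-vanishing statement (part (4)) shows the restrictions of $\mathcal T_0$ to $\omega\otimes\sigma_1^\vee$ and $\omega\otimes\sigma_2^\vee$ are nonzero, and by Howe duality their images are irreducible and non-isomorphic subquotients, giving reducibility. You cite $\mathcal T_0$ only in the $\phi_\tau=\mathbf 1$ case, so as written the main case is not established.

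For the delicate case $\phi_\tau=\mathbf 1$, neither of your two alternatives is an argument the paper's toolkit supports. Option (a) is too vague: saying that pairs $\{\pi,\pi\otimes\det\}$ ``distinguish the two halves'' via \ref{comparenormalize} does not produce two constituents inside the single induced representation. Option (b) — reading reducibility off the Plancherel measure at $s=0$ and asserting the normalized operator is a nontrivial involution — is exactly the Knapp--Stein/Silberger $R$-group criterion; it is proved only for discrete-series inducing data, here $\pi_0$ is merely tempered, and for pure inner forms this is precisely the input the paper deliberately avoids (see the remark after Lemma \ref{multiplicityfree}) because it would be circular with the LLC being constructed; moreover nonvanishing of the scalar $\mu(0)$ by itself cannot force $R_{\mathfrak W_c}(w,\mathbf 1\otimes\pi_0)\neq\pm 1$. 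The paper instead uses that $\mathbf 1\nsubseteq\phi_0$ forces both $\sigma_0^+=\theta_{W_0,V_0,\psi}(\pi_0)$ and $\sigma_0^-=\theta_{W_0,V_0,\psi}(\pi_0\otimes\det)$ to be nonzero, applies the two equivariant maps $\mathcal T_0^{\pm}$ to the (now irreducible) symplectic inductions to get irreducible images $\pi^+\subseteq\Ind_P(\tau\otimes\pi_0)$ and $\pi^-\subseteq\Ind_P(\tau\otimes\pi_0)\otimes\det$, and then uses Proposition \ref{1notinphi} (since $\mathbf 1\subseteq\phi$) to conclude $\pi^+\ncong\pi^-\otimes\det$, so both occur in $\Ind_P(\tau\otimes\pi_0)$. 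You have the right ingredients on the table, but the two steps above need to be carried out as in the paper for the proof to close.
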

\begin{proof}
	It follows from Lemma \ref{multiplicityfree} that $\Ind_{P}^{\mathrm O(V)}(\tau\otimes \pi_0)$ is multiplicity free, so we only need to prove $\Ind_{P}^{\mathrm O(V)}(\tau\otimes \pi_0)$ is reducible of length two.  
	
	We first prove 
	\begin{align}\label{1236}
	\left|\JH(\Ind_{P}^{\mathrm O(V)}(\tau\otimes \pi_0))\right|\geq 2;
	\end{align}
	in other words, $\Ind_{P}^{\mathrm O(V)}(\tau\otimes \pi_0)$ is reducible. Let  
	\begin{align*}
	\phi^+&=(\phi\otimes \chi_{V})+ \chi_{V}, \\
	\phi^+_{0}&=(\phi_{0}\otimes\chi_{V})+ \chi_{V}.
	\end{align*}
	Depending on the relative size of $\mathcal S_{\phi^+_{0}}$ and $\mathcal S_{\phi^+}$, there are two cases: 
	
	$\bullet$ \underline{Case I}: If $\phi_\tau\neq \mathbbm 1$, then $\mathcal S_{\phi^+_{0}}$ is an index $2$ subgroup of $\mathcal S_{\phi^+}$. It follows from the conservation relation (Theorem \ref{con2}) that at least one of  
	$$\theta_{W_0,V_0,\psi}(\pi_0)\quad \mbox{and}\quad \theta_{W_0,V_0,\psi}(\pi_0\otimes\det)$$ 
	is non-zero. Note that we have the isomorphism 
	\begin{align*}
	\Ind_{P}^{\mathrm O(V)}\left(\tau \otimes (\pi_0\otimes\det)\right)\cong \Ind_{P}^{\mathrm O(V)}\left(\tau \otimes \pi_0\right)\otimes\det 
	\end{align*}
	in (\ref{59}), so the reducibility of $\Ind_{P}^{\mathrm O(V)}\left(\tau \otimes (\pi_0\otimes\det)\right)$ is the same as the reducibility of $\Ind_{P}^{\mathrm O(V)}\left(\tau \otimes \pi_0\right)$. Without loss of generality, we may assume that
	$$\theta_{W_0,V_0,\psi}(\pi_0)\neq 0.$$ 
	Put $$\sigma_{0}=\theta_{W_0,V_0,\psi}(\pi_0).$$ Then the $L$-parameter of $\sigma_0$ is $\phi_{0}^+$ by Proposition \ref{prasad}, so is $\sigma_{0}^\vee$ by (\ref{104}). It then follows from Theorem \ref{llcsympletic} (6) that 
	$$
	\Ind_{Q}^{\SP(W)}(\tau\chi_{V}\otimes\sigma_{0}^{\vee})
	$$
	is reducible and has two non-isomorphic irreducible constituents. Put 
	\begin{align*}
	\Ind_{Q}^{\SP(W)}(\tau\chi_{V}\otimes\sigma_{0}^{\vee})\cong \sigma_1^\vee\oplus \sigma_2^\vee. 
	\end{align*}
	Recall that we have constructed an $\SP(W)\times\mathrm O(V)$-equivariant map
	$$\mathcal{T}_{0} : \omega \otimes \Ind_{Q}^{\SP(W)}\left(\tau\chi_{V} \otimes \sigma_{0}^{\vee}\right) \rightarrow \Ind_{P}^{\mathrm O(V)}\left(\tau \otimes \pi_0\right)$$ 
	in Subsection \ref{constructionof}. By the non-vanishing result in Proposition \ref{intertwingmap}, we know that the restriction of $\mathcal T_0$ to $\omega\otimes\sigma_1^\vee$ and $\omega\otimes\sigma_2^\vee$ are both non-zero. Moreover, their images are irreducible and non-isomorphic to each other by the Howe duality. Hence $$\Ind_{P}^{\mathrm{O}\left(V\right)}\left(\tau  \otimes \pi_0\right)$$ is reducible.

	$\bullet$ \underline{Case II}: If $\phi_\tau= \mathbbm 1$, then the natural embedding $\mathcal S_{\phi^+_{0}}\hookrightarrow \mathcal S_{\phi^+}$ is an isomorphism. Our assumptions imply that $\mathbbm 1 \nsubseteq \phi_0$. It then follows from Corollary \ref{1inphi} that both  $$\theta_{W_0,V_0,\psi}(\pi_{0})\quad \mbox{and}\quad \theta_{W_0,V_0,\psi}(\pi_{0}\otimes\det)$$ are non-zero. Put 
	\begin{align*}
	\sigma_0^+=\theta_{W_0,V_0,\psi}(\pi_{0})\quad\mbox{and}\quad  \sigma_0^-=\theta_{W_0,V_0,\psi}(\pi_{0}\otimes\det). 
	\end{align*} 
	Then $\sigma_0^{\pm}$ both have the $L$-parameter $\phi_{0}^+$ by Proposition \ref{prasad}, so do $(\sigma_0^{\pm})^{\vee}$ by (\ref{104}). By Theorem \ref{llcsympletic} (6), both 
	$$\Ind_{Q}^{\SP(W)}\left(\tau\chi_{V}\otimes \left(\sigma_{0}^{+}\right)^{\vee}\right)\quad \mbox{and}\quad \Ind_{Q}^{\SP(W)}\left(\tau\chi_{V}\otimes \left(\sigma_{0}^{-}\right)^{\vee}\right)$$ are irreducible. Put 
	\begin{align*}
	\left(\sigma^{+}\right)^{\vee}= \Ind_{Q}^{\SP(W)}\left(\tau\chi_{V}\otimes \left(\sigma_{0}^{+}\right)^{\vee}\right) \quad \mbox{and}\quad 
	\left(\sigma^{-}\right)^{\vee}= \Ind_{Q}^{\SP(W)}\left(\tau\chi_{V}\otimes \left(\sigma_{0}^{-}\right)^{\vee}\right).
	\end{align*}
  Recall that we have constructed non-zero $\mathrm O(V)\times \SP(W)$-equvariant maps 
	\begin{align*}
	\mathcal{T}_0^{+} : \omega \otimes \Ind_{Q}^{\SP(W)}\left(\tau\chi_{V}\otimes \left(\sigma_{0}^{+}\right)^{\vee}\right)  \rightarrow \Ind_{P}^{\mathrm O\left(V\right)}\left(\tau  \otimes \pi_0\right)
	\end{align*}
	and 
	\begin{align*}
	\mathcal{T}_0^{-} : \omega \otimes \Ind_{Q}^{\SP(W)}\left(\tau\chi_{V}\otimes \left(\sigma_{0}^{-}\right)^{\vee}\right)  \rightarrow \Ind_{P}^{\mathrm O(V)}\left(\tau  \otimes (\pi_0\otimes\det)\right)
	\end{align*}
	in Subsection \ref{constructionof}. Let 
	\begin{align*}
	\pi^{+} \coloneqq \mbox{Im}( \mathcal{T}_0^+)\subseteq \Ind_{P}^{\mathrm O\left(V\right)}\left(\tau\otimes \pi_0\right)
	\end{align*}
	and 
	\begin{align*}
	\pi^{-} \coloneqq \mbox{Im}( \mathcal{T}_0^-)\subseteq \Ind_{P}^{\mathrm O\left(V\right)}\left(\tau\otimes (\pi_0\otimes\det)\right)=\Ind_{P}^{\mathrm O\left(V\right)}\left(\tau\otimes \pi_0\right)\otimes\det.
	\end{align*}
	Then $\pi^{\pm}$ are irreducible by the Howe duality and the $L$-parameter of $\pi^{\pm}$ are $\phi$ by Lemma \ref{parameter}.
	Since $\mathbbm 1\subseteq \phi$, it follows from Proposition \ref{1notinphi} and the conservation relation (Theorem \ref{con2}) that $\pi^+\not \cong \pi^-\otimes \det$. Note that both $\pi^+$ and $\pi^-\otimes\det$ lie in $\Ind_{P}^{\mathrm O\left(V\right)}\left(\tau\otimes \pi_0\right)$. Hence $\Ind_{P}^{\mathrm O\left(V\right)}\left(\tau \otimes \pi_0\right)$ is reducible.
	
	It remains to show that 
	$$\left|\JH(\Ind_{P}^{\mathrm O(V)}(\tau\otimes \pi_0))\right|=2.$$ 
	Again we consider the set 
	\begin{align*}
	\bigsqcup_{\pi_0\in \Pi_{\phi_0}} \JH(\Ind_{P}^{\mathrm O(V)}(\tau\otimes \pi_0)). 
	\end{align*}
	By the Howe duality, Lemma \ref{parameter} and Lemma \ref{multiplicityfree}, this set is a subset of $\Pi_{\phi}$. Hence by (\ref{1236}), we have 
	\begin{align*}
	|\Pi_{\phi}|\geq \left|\bigsqcup_{\pi_0\in \Pi_{\phi_0}} \JH(\Ind_{P}^{\mathrm O(V)}(\tau\otimes \pi_0)) \right|\geq 2|\Pi_{\phi_{0}}|. 
	\end{align*}
	On the other hand, by Proposition \ref{bijection}, we have 
	\begin{align*}
	|\Pi_{\phi}|=|\widehat{\mathcal {S}_{\phi}}|=2|\widehat{S_{\phi_0}}|=2|\Pi_{\phi_{0}}|. 
	\end{align*}
	Hence we must have 
	\begin{align*}
	\left| \JH(\Ind_{P}^{\mathrm O(V)}(\tau\otimes \pi_0)) \right|=2
	\end{align*}
	for all $\pi_0\in \Pi_{\phi_{0}}$. This completes the proof. 
\end{proof}

\subsection{Charater of component group}
For any irreducible representation $\pi\subseteq \Ind_{P}^{\mathrm O(V)}(\tau\otimes \pi_0)$, we have shown that the $L$-parameter of $\pi$ is $\phi$ in the previous subsection. In this subsection, we study the map $\mathcal J_{\mathfrak W_{c^\prime}}$ for a fixed Whittaker datum $\mathfrak W_{c^\prime}$. We divide it into three cases: 
\begin{itemize}
	\item  \underline{Case A}: $\mathbbm 1\not\subseteq \phi$;
	\item \underline{Case B}: $\mathbbm 1\subseteq \phi_{0}$;
	\item \underline{Case C}: $\mathbbm 1\not\subseteq \phi_{0}$ and $\phi_{\tau}= \mathbbm 1$.
\end{itemize}
Put 
\begin{align*}
\phi^+&=(\phi\otimes \chi_{V})+ \chi_{V} ,\\
\phi^+_{0}&=(\phi_{0}\otimes\chi_{V})+ \chi_{V}.
\end{align*}
Then the diagram
\begin{align}\label{101}
\begin{CD}
\mathcal S_{\phi_0} @>>> \mathcal S_{\phi^+_{0}} \\
@VVV @VVV \\
\mathcal {S}_{\phi} @>>> \mathcal S_{\phi^+}
\end{CD}
\end{align}
is commutative. 

\begin{proposition}\label{localintertwing}
Assume that we are in Case A or Case B. 
\begin{enumerate}[(i)]
	\item Put $\mathcal J_{\mathfrak W_{c^\prime}}(\pi_0)=\eta_0$ and $\mathcal J_{\mathfrak W_{c^\prime}}(\pi)=\eta$. Then 
	\begin{align*}
	\eta|_{\mathcal S_{\phi_0}}=\eta_0. 
	\end{align*}
	\item If we further assume that $\phi_{\tau}$ is self-dual and of orthogonal type, then the restriction of the normalized intertwining operator $R_{\mathfrak W_{c^\prime}}(w,\tau\otimes \pi_0)$ to $\pi$ is the scalar multiplication by $\eta(a)$, where $a\in \mathcal {S}_{\phi}$ corresponds to $\phi_{\tau}$. 
\end{enumerate} 
\end{proposition}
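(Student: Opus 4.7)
The plan is to deduce both parts from the local intertwining relation for symplectic groups (Theorem~\ref{llcsympletic}(6)) by transporting via the theta correspondence. Set $\sigma_0 \coloneqq \theta_{W_0,V_0,\psi}(\pi_0)$, $\sigma \coloneqq \theta_{W,V,\psi}(\pi)$, $\phi_0^+ \coloneqq (\phi_0\otimes\chi_V)\oplus\chi_V$ and $\phi^+ \coloneqq (\phi\otimes\chi_V)\oplus\chi_V$.

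First I would reduce to the case $\sigma_0 \neq 0$. In Case A this is automatic by Corollary~\ref{1inphi}; in Case B, if $\sigma_0 = 0$ then $\theta_{W_0,V_0,\psi}(\pi_0\otimes\det)\neq 0$ by the conservation relation (Theorem~\ref{con2}). The isomorphism~\ref{59} puts the constituents of $\Ind_P^{\mathrm O(V)}(\tau\otimes\pi_0)$ in bijection with those of $\Ind_P^{\mathrm O(V)}(\tau\otimes(\pi_0\otimes\det))$ via $\pi\mapsto\pi\otimes\det$, and the identity $\mathcal J_{\mathfrak W_c}(\pi_0\otimes\det) = \mathcal J_{\mathfrak W_c}(\pi_0)\otimes\kappa_{\phi_0}$ (built into the Case II construction in Section~\ref{constructJc}) together with $\kappa_\phi|_{\mathcal S_{\phi_0}}=\kappa_{\phi_0}$ make (i) invariant under this swap. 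For (ii), identity~\ref{comparenormalize} produces a sign $(-1)^{\dim\tau}$ that matches $\kappa_\phi(a)=(-1)^{\dim\phi_\tau}$, so (ii) is invariant as well. Once $\sigma_0\neq 0$, a short argument with Lemma~\ref{12} and conservation ensures that every constituent $\pi\subseteq\Ind_P^{\mathrm O(V)}(\tau\otimes\pi_0)$ also has $\sigma\neq 0$: if $\sigma=0$ then $\theta(\pi\otimes\det)\neq 0$, but $\pi\otimes\det\subseteq\Ind_P^{\mathrm O(V)}(\tau\otimes(\pi_0\otimes\det))$ forces $\theta(\pi\otimes\det)\subseteq\Ind_Q^{\SP(W)}(\tau\chi_V\otimes 0)=0$, a contradiction.

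Under these assumptions, Lemma~\ref{12} yields an embedding $\sigma\hookrightarrow\Ind_Q^{\SP(W)}(\tau\chi_V\otimes\sigma_0)$, and the construction of Section~\ref{constructJc} gives
\[
\mathcal J_{\mathfrak W_c}(\pi) = \ell^*\bigl(\mathcal J_{\mathfrak W^\prime_{\psi,c}}(\sigma)\bigr),\qquad \mathcal J_{\mathfrak W_c}(\pi_0) = \ell_0^*\bigl(\mathcal J_{\mathfrak W^\prime_{\psi,c}}(\sigma_0)\bigr),
\]
where $\ell,\ell_0$ are the canonical (iso)morphisms of~\ref{55} and~\ref{501}. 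For (i), applying the symplectic local intertwining relation to $\sigma\subseteq\Ind_Q(\tau\chi_V\otimes\sigma_0)$ gives $\mathcal J_{\mathfrak W^\prime_{\psi,c}}(\sigma)|_{\bar{\mathcal S}_{\phi_0^+}} = \mathcal J_{\mathfrak W^\prime_{\psi,c}}(\sigma_0)$; pulling back via the commutative square~\ref{101} yields the desired identity $\eta|_{\mathcal S_{\phi_0}}=\eta_0$.

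For (ii), I would combine Corollary~\ref{123} with the symplectic local intertwining relation. The former intertwines $R_{\mathfrak W_c}(w,\tau\otimes\pi_0)$ and $R_{\mathfrak W^\prime_{\psi,1}}(w^\prime,\tau\chi_V\otimes\sigma_0^\vee)$ via $\mathcal T_0$ up to the scalar $\omega_\tau(-1/c)\cdot\chi_V(-c)^k$, and the latter says the symplectic operator acts on $\sigma^\vee$ by $\mathcal J_{\mathfrak W^\prime_{\psi,1}}(\sigma^\vee)(a^\prime)$, where $a^\prime\in\mathcal S_{\phi^+}$ corresponds to $\phi_\tau\chi_V$. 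Hence $R_{\mathfrak W_c}(w,\tau\otimes\pi_0)$ acts on $\pi$ by the product $\omega_\tau(-1/c)\cdot\chi_V(-c)^k\cdot\mathcal J_{\mathfrak W^\prime_{\psi,1}}(\sigma^\vee)(a^\prime)$. The main obstacle is then to show this scalar equals $\eta(a)$: using Theorem~\ref{llcsympletic}(4) to change Whittaker datum (producing a factor $\eta_{\phi^+,c}(a^\prime)$), identity~\ref{104} to replace $\sigma^\vee$ by $\sigma$ (producing $\eta_{\phi^+,-1}(a^\prime)$), and the clean identification $\mathcal J_{\mathfrak W^\prime_{\psi,c}}(\sigma)(a^\prime)=\ell^*(\mathcal J_{\mathfrak W^\prime_{\psi,c}}(\sigma))(a)=\eta(a)$, one is reduced to a direct check via~\ref{eta} that $\omega_\tau(-1/c)\cdot\chi_V(-c)^k\cdot\eta_{\phi^+,c}(a^\prime)\cdot\eta_{\phi^+,-1}(a^\prime)=1$. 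This sign chase, tracking the central character of $\tau$ against the determinants of the pieces of $\phi^+$, is the technical heart of the argument and is where any convention mismatch would manifest.
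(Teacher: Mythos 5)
Your proposal is correct and follows essentially the same route as the paper: reduce via the conservation relation, the twisting isomorphism~\ref{59}, the sign identity~\ref{comparenormalize} and the built-in relation $\mathcal J_{\mathfrak W_c}(\cdot\otimes\det)=\mathcal J_{\mathfrak W_c}(\cdot)\otimes\kappa$ to the situation where $\theta_{W_0,V_0,\psi}(\pi_0)$ and $\theta_{W,V,\psi}(\pi)$ are nonzero, then use Lemma~\ref{12}, Proposition~\ref{prasad} and the commutative square~\ref{101} with Theorem~\ref{llcsympletic}(6) for part (i), and the equivariant map $\mathcal T_0$ with Corollary~\ref{123}, Theorem~\ref{llcsympletic}(4),(6) and~\ref{104} plus the same $\omega_\tau$--$\chi_V$ sign chase for part (ii). The only differences are organizational (you perform the det-twist reduction on $\pi_0$ at the outset rather than case-by-case on $\pi$, and leave implicit the Howe-duality/non-vanishing step showing $\mathcal T_0|_{\omega\otimes\sigma^\vee}$ surjects onto $\pi$), and your final scalar computation agrees with the paper's equations~\ref{81}--\ref{83}.
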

\begin{proof}
We first prove statement (i). 

In \underline{Case A}, it follows from Lemma \ref{parameter} that the $L$-parameter for $\pi$ is $\phi$. Since $\mathbbm 1\not \subseteq \phi$, by Corollary \ref{1inphi}, we have $$\theta_{W,V,\psi}(\pi)\neq 0.$$ 
Then Lemma \ref{12} implies that  $$\theta_{W_0,V_0,\psi}(\pi_0)\neq 0.$$ 
Let  
\begin{align*}
\sigma=\theta_{W,V,\psi}(\pi), \quad \sigma_0=\theta_{W_0,V_0,\psi}(\pi_0). 
\end{align*}
It follows from Proposition \ref{prasad} that 
\begin{align*}
\mathcal L(\sigma)=\phi^+\quad \mbox{and}\quad \mathcal L(\sigma_0)=\phi_{0}^+. 
\end{align*}
Put $\mathcal J_{\mathfrak W^\prime_{\psi,c^\prime}}(\sigma)=\eta^+$ and $\mathcal J_{\mathfrak W^\prime_{\psi,c^\prime}}(\sigma_0)=\eta^+_0$. We have  
	\begin{align*}
	\eta|_{\mathcal S_{\phi_0}}&= \left(\eta^+|_{ \mathcal {S}_{\phi}}\right)|_{\mathcal S_{\phi_0}} & \quad \quad \mbox{(by our construction of $\eta$)} \\
	&= \left(\eta^+|_{ \mathcal S_{\phi_0^+}}\right)|_{ \mathcal S_{\phi_0}} & \quad \quad  \mbox{(by the commutative diagram (\ref{101}))}\\
	&=\eta_{0}^+|_{ \mathcal S_{\phi_0}} & \mbox {(by Theorem \ref{llcsympletic} (6))} \\
	&=\eta_0.  & \mbox{(by our construction of $\eta_0$)} 
	\end{align*}
	
In \underline{Case B}, it follows from Lemma \ref{parameter} that the $L$-parameter for $\pi$ is $\phi$. Since $\mathbbm 1 \subseteq \phi$, by Proposition \ref{1notinphi}, exactly one of 
\begin{align*}
\theta_{W,V,\psi}(\pi) \quad \mbox{and}\quad \theta_{W,V,\psi}(\pi\otimes\det)
\end{align*}
is non-zero. If $\theta_{W,V,\psi}(\pi)\neq 0$, then we may apply the same argument as in Case A to $\pi$. So we may assume that $$\theta_{W,V,\psi}(\pi)= 0 \quad \mbox{and}\quad  \theta_{W,V,\psi}(\pi\otimes\det)\neq 0.$$ 
Then by Lemma \ref{12}, we have 
\begin{align*}
\theta_{W_0,V_0,\psi}(\pi_0)= 0 \quad \mbox{and}\quad  \theta_{W_0,V_0,\psi}(\pi_0\otimes\det)\neq 0.
\end{align*}
By a similar argument to Case $A$, we have 
\begin{align}\label{53}
\mathcal J_{\mathfrak W_{c^\prime}}(\pi\otimes\det)|_{\mathcal S_{\phi_0}}= \mathcal J_{\mathfrak W_{c^\prime}}(\pi_0\otimes\det).
\end{align}
On the other hand, since $\mathbbm 1\subseteq \phi_0$, it follows from our construction of $\mathcal J_{\mathfrak W_{c^\prime}}$ that 
\begin{equation}
\begin{aligned}\label{54}
\eta=\mathcal J_{\mathfrak W_{c^\prime}}(\pi)&= \mathcal J_{\mathfrak W_{c^\prime}}(\pi\otimes\det)\otimes\kappa_{\phi},\\
\eta_0=\mathcal J_{\mathfrak W_{c^\prime}}(\pi_0)&= \mathcal J_{\mathfrak W_{c^\prime}}(\pi_0\otimes\det)\otimes\kappa_{\phi_0}.
\end{aligned}
\end{equation}
Hence by (\ref{53}) and (\ref{54}), we have $\eta|_{\mathcal S_{\phi_0}}=\eta_{0}$. Here we use the fact that $\kappa_{\phi}|_{\mathcal S_{\phi_{0}}}=\kappa_{\phi_{0}}$. 

We then prove the statement (ii). It follows from Lemma \ref{multiplicityfree} and Schur's lemma that  $R_{\mathfrak W_{c^\prime}}(\omega,\tau\otimes \pi_0)$ acts on $\pi$ by scalar multiplication. 

In \underline{Case A}, we have 
$$\sigma=\theta_{W,V,\psi}(\pi)\neq 0\quad \mbox{and} \quad \sigma_0=\theta_{W_0,V_0,\psi}(\pi_0)\neq 0.$$ 
Recall that we have constructed the $\mathrm O(V)\times \SP(W)$-equivariant map 
$$\mathcal{T}_{0} : \omega\otimes \operatorname{Ind}_{Q}^{\SP(W)}\left(\tau\chi_{V} \otimes \sigma_{0}^{\vee}\right) \rightarrow \operatorname{Ind}_{P}^{\mathrm{O}\left(V\right)}\left(\tau  \otimes \pi_0\right)$$
in Subsection \ref{constructionof}. By Lemma \ref{12}, we have 
\begin{align*}
\sigma \subseteq \Ind_{Q}^{\SP(W)}(\tau\chi_{V}\otimes \sigma_{0}),
\end{align*}
and then 
$$\sigma^{\vee}\subseteq \Ind_{Q}^{\SP(W)}(\tau\chi_{V}\otimes \sigma_{0}^{\vee}).$$
Here we use the assumption that $\tau$ is self-dual and the fact that $\Ind_{Q}^{\SP(W)}(\tau\chi_{V}\otimes \sigma_{0})$ is semi-simple. It follows from the Howe duality and Proposition \ref{intertwingmap} that the restriction of $\mathcal T_0$ to $\omega\otimes \sigma^{\vee}$ gives an epimorphism 
	$$
	\mathcal T_0 :\omega\otimes \sigma^{\vee} \rightarrow \pi. 
	$$ 
Then by Corollary \ref{123} and the assumption that $\phi_\tau$ is self-dual, we have 

	\begin{equation}\label{81}
	\begin{aligned}
	R_{\mathfrak W_{c^\prime}}(w,\tau\otimes \pi_0)|_\pi&=\omega_{\tau}(-1/c^\prime)\times \chi_{V}(-c^\prime)^k\times  R_{\mathfrak W^\prime_{\psi,1}}(w^{\prime},\tau\chi_{V}\otimes \sigma_0) |_{\sigma^{\vee}}\\
	&=\omega_{\tau}(-c^\prime)\times \chi_{V}(-c^\prime)^k\times  R_{\mathfrak W^\prime_{\psi,1}}(w^{\prime},\tau\chi_{V}\otimes \sigma_0) |_{\sigma^{\vee}}.
	\end{aligned}
	\end{equation}
 It follows from Theorem \ref{llcsympletic} (6) and (\ref{104}) that 
 \begin{equation}\label{82}
 \begin{aligned}
 	R_{\mathfrak W^\prime_{\psi,1}}(w^{\prime},\tau\chi_{V}\otimes \sigma_0) |_{\sigma^{\vee}}=&\mathcal J_{\mathfrak W^\prime_{\psi,1}}(\sigma^\vee)(a^\prime)\\
 	=&\mathcal J_{\mathfrak W^\prime_{\psi,1}}(\sigma)(a^\prime)\times \eta_{\phi^+,-1}(a^\prime)\\
 =&\mathcal J_{\mathfrak W^\prime_{\psi,1}}(\sigma)(a^\prime)\times \omega_{\tau}(-1)\times \chi_{V}(-1)^k,
 \end{aligned}
 \end{equation}
	where $a^{\prime}\in \mathcal S_{\phi^+}$ corresponds to $\phi_{\tau}\otimes \chi_{V}$. On the other hand, by Theorem \ref{llcsympletic} (4), we have 
	\begin{align}\label{84}
	\mathcal J_{\mathfrak W^\prime_{\psi,c^\prime}}(\sigma)(a^{\prime})=\mathcal J_{\mathfrak W^\prime_{\psi,1}}(\sigma)(a^{\prime})\times \omega_{\tau}(c^\prime)\times \chi_{V}(c^\prime)^{ k}. 
	\end{align}
	Recall that by our construction of $\mathcal J_{\mathfrak W_{c^\prime}}$, we also have 
	\begin{align}\label{83}
	\mathcal J_{\mathfrak W_{c^\prime}}(\pi)(a)=\mathcal J_{\mathfrak W^\prime_{c^\prime}}(\sigma)(a^{\prime}).
	\end{align}
	Combining the equalities (\ref{81}), (\ref{82}), (\ref{84}) and (\ref{83}), we deduce  
	\begin{align*}
	R_{\mathfrak W_{c^\prime}}(\omega,\tau\otimes \pi_0)|_\pi= \mathcal J_{\mathfrak W_{c^\prime}}(\pi)(a). 
	\end{align*}
	
	In \underline{Case B}, if $\theta_{W,V,\psi}(\pi)\neq 0$, then we may apply the same argument as in Case A to $\pi$. So we may assume that $$\theta_{W,V,\psi}(\pi)=0\quad \mbox{and}\quad \theta_{W,V,\psi}(\pi\otimes\det)\neq 0.$$ 
	By a similar argument to Case $A$, we have 
	\begin{align}\label{60}
	R_{\mathfrak W_{c^\prime}}(w,\tau\otimes (\pi_0\otimes\det))|_{\pi\otimes\det}= \mathcal J_{\mathfrak W_{c^\prime}}(\pi\otimes
	\det)(a).
	\end{align}
  Since $\mathbbm 1\subseteq\phi$, it follows from our construction for $\mathcal J_{\mathfrak W_{c^\prime}}$ in Section \ref{constructJc} that 
	\begin{equation}\label{555}
	\begin{aligned}
	\eta=\mathcal J_{\mathfrak W_{c^\prime}}(\pi)&= \mathcal J_{\mathfrak W_{c^\prime}}(\pi\otimes\det)\otimes\kappa_{\phi}.
	\end{aligned}
	\end{equation}
	On the other hand, by (\ref{comparenormalize}), we have 
	\begin{align}\label{61}
	R_{\mathfrak W_{c^\prime}}(w,\tau\otimes (\pi_0\otimes\det))|_{\pi\otimes\det}=(-1)^{\dim\tau}\cdot R_{\mathfrak W_{c^\prime}}(w,\tau\otimes \pi_0)|_\pi .
	\end{align}
	So by (\ref{60}), (\ref{555}) and (\ref{61}), we deduce 
	\begin{align*}
   \eta(a)=&\mathcal J_{\mathfrak W_{c^\prime}}(\pi)(a)\\
   =&\mathcal J_{\mathfrak W_{c^\prime}}(\pi\otimes
   \det)(a)\times \kappa_{\phi}(a)\\ 
   =& R_{\mathfrak W_{c^\prime}}(w,\tau\otimes (\pi_0\otimes\det))|_{\pi\otimes\det}\times (-1)^{\dim \tau}\\
   =&R_{\mathfrak W_{c^\prime}}(w,\tau\otimes \pi_0)|_\pi. 
   \end{align*}	
   This finishes the proof.
\end{proof}
\begin{remark}
	To apply the similar argument to Case C, we need the formula 
	\begin{align*}
	\mathcal J_{\mathfrak W_{c^\prime}}(\pi_0\otimes\det)=\mathcal J_{\mathfrak W_{c^\prime}}(\pi_0)\otimes \kappa_{\phi_{0}}\quad \mbox{for}\,\, \pi_{0}\in \Pi_{\phi_0}
	\end{align*}
	in the case when $\mathbbm 1\not\subseteq \phi_{0}$. But this does not directly follow from our construction. We will prove this formula in the next proposition.
\end{remark}

Let $\pi\in \Pi_{\phi}$. Then $\pi\otimes\det \in \Pi_{\phi}$ by Proposition \ref{det}. We compare $\mathcal J_{\mathfrak W_{c^\prime}}(\pi)$ with $\mathcal J_{\mathfrak W_{c^\prime}}(\pi\otimes\det)$. 
\begin{proposition}\label{determint}
Let  $\phi\in \Para(\mathrm O(V_{2n}))$ and $\pi\in \Pi_{\phi}$. Then we have 
\begin{align*}
\mathcal J_{\mathfrak W_{c^\prime}}(\pi\otimes\det)=\mathcal J_{\mathfrak W_{c^\prime}}(\pi)\otimes\kappa_{\phi},
\end{align*}
where $\kappa_{\phi}$ is defined in (\ref{kappa}). 
\end{proposition}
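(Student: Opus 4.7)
The plan is to split into cases according to the dichotomy used in constructing $\mathcal{J}_{\mathfrak W_c}$ in subsection \ref{constructJc}. When $\mathrm{1} \subseteq \phi$ (Case II of the construction), the formula is immediate from the definition: writing $\Pi^+_{\phi,\psi}$ for the set of $\pi \in \Pi_{\phi,\psi}$ with $\theta_{W_{2n}, V^\bullet_{2n}, \psi}(\pi) \neq 0$, Proposition \ref{1notinphi} ensures that exactly one of $\pi$ and $\pi \otimes \det$ lies in $\Pi^+_{\phi,\psi}$, and for $\pi \notin \Pi^+_{\phi,\psi}$ the definition sets $\mathcal{J}_{\mathfrak W_c}(\pi) \coloneqq \mathcal{J}_{\mathfrak W_c}(\pi \otimes \det) \otimes \kappa_\phi$; together with $\kappa_\phi^2 = 1$ this yields the desired identity in both directions.

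When $\mathrm{1} \not\subseteq \phi$ (Case I), I would argue by induction on $\dim \phi$. For non-discrete $\phi$, decompose $\phi = \phi_\tau \oplus \phi_0 \oplus \phi_\tau^\vee$ with $\phi_0$ tempered of smaller dimension and $\phi_\tau \neq \mathrm{1}$; since $\mathrm{1} \not\subseteq \phi$ forces $\mathrm{1} \not\subseteq \phi_0$, we are in Case A of Proposition \ref{localintertwing}. Pick $\pi_0 \in \Pi_{\phi_0}$ with $\pi \subseteq \Ind_P(\tau \otimes \pi_0)$; via \eqref{59}, $\pi \otimes \det \subseteq \Ind_P(\tau \otimes (\pi_0 \otimes \det))$. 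Applying Proposition \ref{localintertwing}(i) to both $\pi_0$ and $\pi_0 \otimes \det$, combined with the induction hypothesis and the obvious relation $\kappa_\phi|_{\mathcal S_{\phi_0}} = \kappa_{\phi_0}$, gives the desired equality on $\mathcal S_{\phi_0}$. If $\phi_\tau \subseteq \phi_0$ this suffices by Corollary \ref{embeddingisisomorphism}; otherwise $\mathcal S_\phi = \mathcal S_{\phi_0} \oplus (\mathbb Z/2)a$ with $a$ corresponding to $\phi_\tau$, and the value at $a$ is pinned down by Proposition \ref{localintertwing}(ii) together with \eqref{comparenormalize}: from $R_{\mathfrak W_c}(w, \tau \otimes (\pi_0 \otimes \det))|_{\pi \otimes \det} = (-1)^{\dim \tau} R_{\mathfrak W_c}(w, \tau \otimes \pi_0)|_\pi$ and $\kappa_\phi(a) = (-1)^{\dim \phi_\tau}$, one deduces $\mathcal J(\pi \otimes \det)(a) = \kappa_\phi(a) \mathcal J(\pi)(a)$.

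The hard part will be the base case of the induction: $\phi$ discrete tempered with $\mathrm{1} \not\subseteq \phi$, where no LIR decomposition is available since every irreducible component of $\phi$ has multiplicity one. To address this I plan to exploit the theta-correspondence tower: take $\sigma = \theta_{W_{2n}, V^\bullet_{2n}, \psi}(\pi) \in \Pi_{\phi^+}$, which is a discrete series by Lemma \ref{thetadiscrete} and Proposition \ref{prasad}, and lift further to $\pi' = \theta_{V'_{2n+2}, W_{2n}, \psi}(\sigma)$ on $\mathrm O(V'_{2n+2})$ with $V' = V^\bullet_{2n} \oplus \mathbb H$; then $\pi'$ has $L$-parameter $\phi' = \phi \oplus \mathrm 1^{\oplus 2}$, placing $\pi'$ in Case II where the determinant-twist formula is already established. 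The remaining step is to descend the identity from $\pi'$ back to $\pi$ via the inclusion $\mathcal S_\phi \hookrightarrow \mathcal S_{\phi'}$ and the explicit equivariant maps built in subsection \ref{constructionof}, combined with the symplectic-side change-of-Whittaker-datum formula from Theorem \ref{llcsympletic}(4); carrying out this descent is the main technical obstacle for the base case.
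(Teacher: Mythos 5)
Your reduction to the case $\mathrm 1\nsubseteq\phi$ and your inductive step for non-discrete parameters are sound (the mechanism — Proposition \ref{localintertwing}(i),(ii) together with \ref{comparenormalize} and $\kappa_\phi|_{\mathcal S_{\phi_0}}=\kappa_{\phi_0}$ — is exactly the right one), but the whole weight of the proposition sits on your base case, discrete $\phi$ with $\mathrm 1\nsubseteq\phi$, and there you have not given a proof: you defer the ``descent'' as the main technical obstacle, and the route you sketch is in fact the one the paper explicitly warns is circular. Lifting $\sigma=\theta_{W_{2n},V_{2n},\psi}(\pi)$ up the Witt tower produces $\pi'$ with parameter $\phi'=\phi\oplus\mathrm 1\oplus\mathrm 1$, and by Lemma \ref{thetadiscrete} this $\pi'$ is a constituent of $\Ind_P(\mathrm 1\otimes\pi)$; comparing $\mathcal J_{\mathfrak W_c}(\pi')$ with $\mathcal J_{\mathfrak W_c}(\pi)$ through $\mathcal S_\phi\hookrightarrow\mathcal S_{\phi'}$ is precisely statement (i) of the local intertwining relation in Case C ($\phi_\tau=\mathrm 1$, $\mathrm 1\nsubseteq\phi_0$), and the paper's proof of Case C (Corollary \ref{CaseC}) \emph{uses} Proposition \ref{determint}; the remark preceding the proposition records exactly this circularity. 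The construction of $\mathcal J_{\mathfrak W_c}$ at level $2n+2$ is made via the theta lift to $\SP(W_{2n+2})$, not via $\sigma$, so there is no direct compatibility to invoke, and the equivariant maps of \S\ref{constructionof} plus Theorem \ref{llcsympletic}(4) only reproduce the Case A/B/C analysis you are trying to bypass.

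The paper closes the gap by a different maneuver that avoids any base case: for each irreducible orthogonal summand $\phi_i\subseteq\phi$ it forms $\pi'=\Ind_{P'}^{\mathrm O(V\oplus\mathbb H^{k_i})}(\tau_i\otimes\pi)$, with $\tau_i$ the GL-representation attached to $\phi_i$. Since $\phi_i\subseteq\phi$, the embedding $\mathcal S_\phi\hookrightarrow\mathcal S_{\phi'}$ (with $\phi'=\phi_i\oplus\phi\oplus\phi_i^\vee$) is an isomorphism and $\pi'$ is irreducible by Corollary \ref{embeddingisisomorphism}; because $\mathrm 1\nsubseteq\phi$ and $\phi_i\neq\mathrm 1$, this is Case A of Proposition \ref{localintertwing} with ``$\phi_0$''$=\phi$ — no decomposition of $\phi$ itself and no discreteness restriction is needed. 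Then (ii) of that proposition evaluates $\mathcal J_{\mathfrak W_c}(\pi)(a_i)$ and $\mathcal J_{\mathfrak W_c}(\pi\otimes\det)(a_i)$ as restrictions of normalized intertwining operators, and \ref{comparenormalize} gives the ratio $(-1)^{k_i}=\kappa_\phi(a_i)$ generator by generator. If you want to salvage your outline, replace your base case (and in fact the whole induction) by this rank-raising argument; as written, the proposal has a genuine gap precisely where the proposition is nontrivial.
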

\begin{proof}
 The proof follows an idea in \cite[\S 7]{MR3788848}. If $\mathbbm 1\subseteq \phi$, then this follows from our construction of $\mathcal J_{\mathfrak W_{c^\prime}}$ in Subsection \ref{constructJc}. So we assume that $\mathbbm 1\not\subseteq \phi$, and write 
 \begin{align*}
 \phi=m_1\phi_1+ \cdots + m_l \phi_l + \varphi+ \varphi^\vee,
 \end{align*}
 where $\phi_i$ are pairwise distinct irreducible $k_i$-dimensional orthogonal representations of $\WD_{F}$ and $\varphi$ is a sum of irreducible tempered representations of $\WD_F$ which are not orthogonal. Then  
 \begin{align*}
 \mathcal {S}_{\phi}=\bigoplus_{i=1}^l (\mathbb Z/2\mathbb Z)a_i,
 \end{align*} 
 where $a_i$ corresponds to $\phi_i$. For any $i\in \{1,2\cdots l\}$, let $\tau_{i}$ be the irreducible unitary representation of $\GL_{k_{i}}(F)$ corresponding to  $\phi_{i}$. We consider the induced representations $$\Ind_{P^\prime}^{\mathrm O(V^\prime)}(\tau_i\otimes\pi)\quad \mbox{and}\quad \Ind_{P^\prime}^{\mathrm O(V^\prime)}(\tau_i\otimes(\pi\otimes \det)),$$
 where $V^\prime= V\oplus \mathbb H^k$  and $P^\prime$ is a parabolic subgroup of $\mathrm O(V^\prime)$ with Levi component $M_{P^\prime}\cong \mathrm O(V)\times \GL_{k_{i}}(F)$. Put
	\begin{align*}
	\phi^\prime =\phi_i + \phi + \phi_i^{\vee}.
	\end{align*}
	Since $\phi_i\subseteq \phi$, the inclusion $\mathcal {S}_{\phi}\hookrightarrow \mathcal S_{\phi^\prime}$ is an isomorphism. It then follows from Proposition \ref{det}, Lemma \ref{parameter} and Corollary \ref{embeddingisisomorphism} that both $\Ind_{P^\prime}^{\mathrm O(V^\prime)}(\tau_i\otimes\pi)$ and  $\Ind_{P^\prime}^{\mathrm O(V^\prime)}(\tau_i\otimes(\pi\otimes\det))$ are irreducible and have the $L$-parameter $\phi^\prime$. Write $$\pi^\prime = \Ind_{P^\prime}^{\mathrm O(V^\prime)}(\tau_i\otimes\pi).$$
	By (\ref{59}), we have 
  $$\pi^\prime\otimes\det \cong \Ind_{P^\prime}^{\mathrm O(V^\prime)}(\tau_i\otimes(\pi\otimes\det)).$$ If we identify $\mathcal S_{\phi^\prime}$ with $\mathcal {S}_{\phi}$ through the natural isomorphism, then it follows from Proposition \ref{localintertwing} (Case A) that 
\begin{equation}\label{57}
\begin{aligned}
\mathcal J_{\mathfrak W_{c^\prime}}(\pi)(a_i)=\mathcal J_{\mathfrak W_{c^\prime}}(\pi^\prime)(a_i)=&R_{\mathfrak W_{c^\prime}}(w,\tau_{i}\otimes\pi)|_{\pi^\prime},\\
\mathcal J_{\mathfrak W_{c^\prime}}(\pi\otimes\det)(a_i)=\mathcal J_{\mathfrak W_{c^\prime}}(\pi^\prime\otimes\det)(a_i)=&R_{\mathfrak W_{c^\prime}}(w,\tau_{i}\otimes(\pi\otimes\det))|_{\pi^\prime\otimes\det}.
\end{aligned}
\end{equation}
On the other hand, by (\ref{comparenormalize}), we know that 
	\begin{align}\label{58}
	 R_{\mathfrak W_{c^\prime}}(w,\tau_{i}\otimes(\pi\otimes\det))|_{\pi^\prime\otimes\det}=(-1)^{k_i} R_{\mathfrak W_{c^\prime}}(w,\tau_{i}\otimes\pi)|_{\pi^\prime}.
	\end{align}
Hence by (\ref{kappa}), (\ref{57}) and (\ref{58}), we have 
	\begin{align*}
	\mathcal J_{\mathfrak W_{c^\prime}}(\pi)(a_i)= \mathcal J_{\mathfrak W_{c^\prime}}(\pi\otimes\det)(a_i)\times (-1)^{k_i}=  \mathcal J_{\mathfrak W_{c^\prime}}(\pi\otimes\det)(a_i)\times \kappa_{\phi}(a_i). 
	\end{align*}
This finishes the proof. 	
\end{proof}

Now we can prove Proposition \ref{localintertwing} for Case C.
\begin{corollary}\label{CaseC}
		Assume that we are in Case C, i.e., $\mathbbm 1\not\subseteq \phi_0$ and $\phi_\tau=\mathbbm 1$. 
	\begin{enumerate}[(i)]
		\item Put $\mathcal J_{\mathfrak W_{c^\prime}}(\pi_0)=\eta_0$ and $\mathcal J_{\mathfrak W_{c^\prime}}(\pi)=\eta$. Then 
		\begin{align*}
		\eta|_{\mathcal S_{\phi_0}}=\eta_0.
		\end{align*}
		\item If we further assume $\phi_{\tau}$ is self-dual and of orthogonal type, then the restriction of the normalized intertwining operator $R_{\mathfrak W_{c^\prime}}(\omega,\tau\otimes \pi_0)$ to $\pi$ is the scalar multiplication by $\eta(a)$, where $a\in \mathcal {S}_{\phi}$ corrresponds to $\phi_{\tau}$. 
	\end{enumerate} 
\end{corollary}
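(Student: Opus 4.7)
The plan is to follow the template of Case A in Proposition \ref{localintertwing}, now supplemented by Proposition \ref{determint} to handle the determinant twist---exactly as the remark following Proposition \ref{localintertwing} anticipates. The argument splits into two sub-cases according to whether $\theta_{W,V,\psi}(\pi)$ is nonzero.

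\emph{First sub-case:} $\theta_{W,V,\psi}(\pi)\neq 0$. Set $\sigma=\theta_{W,V,\psi}(\pi)$ and $\sigma_0=\theta_{W_0,V_0,\psi}(\pi_0)$; the latter is nonzero by Corollary \ref{1inphi} since $\mathrm{1}\not\subseteq\phi_0$. The key new observation is that, because $\phi_\tau=\mathrm{1}$ and $\mathrm{1}\not\subseteq\phi_0$, one has $\phi_\tau\chi_V=\chi_V\subseteq\phi_0^+$, so the $L$-parameters $\phi_0^+=(\phi_0\otimes\chi_V)\oplus\chi_V$ and $\phi^+=(\phi\otimes\chi_V)\oplus\chi_V$ differ only by additional copies of $\chi_V$; consequently the natural embedding $\mathcal{S}_{\phi_0^+}\hookrightarrow\mathcal{S}_{\phi^+}$ is an \emph{isomorphism}. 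By Theorem \ref{llcsympletic}(6) combined with Lemma \ref{12}, $\sigma=\Ind_Q^{\SP(W)}(\tau\chi_V\otimes\sigma_0)$ is irreducible, with $\mathcal{J}_{\mathfrak{W}'_{\psi,c}}(\sigma)=\mathcal{J}_{\mathfrak{W}'_{\psi,c}}(\sigma_0)$ under this identification. Unwinding the construction of Section \ref{constructJc} and chasing through the commutative diagram \ref{101} then yields (i). For (ii), apply Corollary \ref{123} (noting $k=1$, $\omega_\tau$ is trivial, and $\tau$ is self-dual) together with Theorem \ref{llcsympletic}(6) on the $\SP(W)$ side; the scalar factors $\chi_V(-c)$ from Corollary \ref{123} and $\chi_V(-1)$ arising from \ref{104} combine with the Whittaker shift of Theorem \ref{llcsympletic}(4) into $\eta_{\phi^+,c}(a')=\chi_V(c)$, which reassembles exactly as in the Case A computation to give $R_{\mathfrak{W}_c}(w,\tau\otimes\pi_0)|_\pi=\mathcal{J}_{\mathfrak{W}'_{\psi,c}}(\sigma)(a')=\eta(a)$.

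\emph{Second sub-case:} $\theta_{W,V,\psi}(\pi)=0$, so $\theta_{W,V,\psi}(\pi\otimes\det)\neq 0$ by the conservation relation. Since $\pi\otimes\det\subseteq\Ind_P^{\mathrm{O}(V)}(\tau\otimes(\pi_0\otimes\det))$ via \ref{59} and $\pi_0\otimes\det$ still has parameter $\phi_0$ with $\mathrm{1}\not\subseteq\phi_0$, the first sub-case applies to the pair $(\pi\otimes\det,\pi_0\otimes\det)$. To translate back to $(\pi,\pi_0)$, use the determinant-twist identities $\mathcal{J}_{\mathfrak{W}_c}(\pi\otimes\det)=\mathcal{J}_{\mathfrak{W}_c}(\pi)\otimes\kappa_\phi$ and $\mathcal{J}_{\mathfrak{W}_c}(\pi_0\otimes\det)=\mathcal{J}_{\mathfrak{W}_c}(\pi_0)\otimes\kappa_{\phi_0}$ supplied by Proposition \ref{determint}, together with the restriction identity $\kappa_\phi|_{\mathcal{S}_{\phi_0}}=\kappa_{\phi_0}$, to obtain (i). For (ii) one additionally invokes \ref{comparenormalize}, which gives $R_{\mathfrak{W}_c}(w,\tau\otimes(\pi_0\otimes\det))|_{\pi\otimes\det}=-R_{\mathfrak{W}_c}(w,\tau\otimes\pi_0)|_\pi$ (since $\dim\tau=1$), along with $\kappa_\phi(a)=(-1)^{\dim\phi_\tau}=-1$; the two minus signs cancel to yield the desired equality.

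The main obstacle, as flagged in the remark after Proposition \ref{localintertwing}, is precisely the determinant-twist formula for $\mathcal{J}_{\mathfrak{W}_c}(\pi_0\otimes\det)$ when $\mathrm{1}\not\subseteq\phi_0$: this does not follow directly from the construction of $\mathcal{J}_{\mathfrak{W}_c}$, which uses the direct formula in this case rather than the twist formula reserved for the situation $\mathrm{1}\subseteq\phi$. Proposition \ref{determint} supplies exactly this missing identity, and once it is in hand Case C reduces cleanly to the Case A template.
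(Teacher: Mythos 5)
Your proposal is correct and follows essentially the same route as the paper: the paper's proof simply invokes Proposition \ref{determint} to supply the missing identity $\mathcal J_{\mathfrak W_c}(\pi_0\otimes\det)=\mathcal J_{\mathfrak W_c}(\pi_0)\otimes\kappa_{\phi_0}$ (unavailable from the construction since $1\not\subseteq\phi_0$) and then reruns the Case A/Case B argument of Proposition \ref{localintertwing}, which is exactly your two-sub-case analysis (direct Case A computation when $\theta_{W,V,\psi}(\pi)\neq 0$, reduction via the determinant twist and \ref{comparenormalize} when it vanishes). Your added remark that $\Ind_Q^{\SP(W)}(\tau\chi_V\otimes\sigma_0)$ is irreducible in this case is correct but not needed; otherwise the bookkeeping of scalars matches the paper's.
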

\begin{proof}
	It follows by Proposition \ref{determint} that 
	$$\mathcal J_{\mathfrak W_{c^\prime}}(\pi_0\otimes\det)=\mathcal J_{\mathfrak W_{c^\prime}}(\pi_0)\otimes \kappa_{\phi_{0}}.$$
	So now we can apply the same argument in Proposition \ref{localintertwing} for Case B to this case.
\end{proof}

Combining Lemma \ref{parameter}, Corollary \ref{embeddingisisomorphism}, Corollary \ref{notanisomorphism}, Proposition \ref{localintertwing} and Corollary \ref{CaseC}, we deduce the local intertwing relation  for even orthogonal groups: 
\begin{corollary}\label{LIRfinal}
	The maps $\mathcal L$ and $\mathcal J_{\mathfrak W_{c^\prime}}$ we constructed satisfy the local intertwining relation in Theorem \ref{desideratumall} (9).  
\end{corollary}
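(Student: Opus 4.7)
The plan is to observe that Corollary \ref{LIRfinal} is essentially a bookkeeping consequence of the preceding results, and the proof consists of verifying that every clause in Theorem \ref{desideratumall} (9) has already been established in one of the three cases (A), (B), (C) laid out before the corollary. So the write-up should simply collate the pieces rather than introduce new arguments.

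First I would verify the decomposition statement. Given the tempered parameter $\phi=\phi_\tau\oplus\phi_0\oplus\phi_\tau^\vee$ and $\pi_0\in\Pi_{\phi_0}$, Lemma \ref{parameter} says every irreducible constituent of $\Ind_{P}^{\mathrm O(V_{2n}^\bullet)}(\tau\otimes\pi_0)$ has $L$-parameter $\phi$, Lemma \ref{multiplicityfree} says the induced representation is multiplicity-free, and Corollaries \ref{embeddingisisomorphism}, \ref{notanisomorphism} determine its length according to whether the natural embedding $\mathcal{S}_{\phi_0}\hookrightarrow\mathcal{S}_\phi$ is an isomorphism (length one, $\phi_\tau\subseteq\phi_0$) or of index two (length two, $\phi_\tau\nsubseteq\phi_0$). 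This is exactly the dimensional requirement for the decomposition
\begin{equation*}
\Ind_{P}^{\mathrm O(V_{2n}^\bullet)}(\tau\otimes\pi_0)=\bigoplus_\eta\pi_{\mathfrak W_c}(\phi,\eta),\quad \eta|_{\mathcal S_{\phi_0}}=\eta_0,
\end{equation*}
to hold, once we know each such $\eta$ occurs.

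Next I would verify the restriction relation $\eta|_{\mathcal S_{\phi_0}}=\eta_0$. This is the content of Proposition \ref{localintertwing} (i) in Cases A and B, and Corollary \ref{CaseC} (i) in Case C. The combined statement exhausts all three dichotomies built from the conditions ``$1\subseteq\phi_0$'' and ``$\phi_\tau=1$'', so together with the length count above we obtain the desired direct sum decomposition with the prescribed index set.

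Finally, for the operator identity $R_{\mathfrak W_c}(w,\tau\otimes\pi_0)|_\pi=\mathcal J_{\mathfrak W_c}(\pi)(a)$ under the additional hypothesis that $\phi_\tau$ is self-dual of orthogonal type, one appeals to Proposition \ref{localintertwing} (ii) for Cases A and B and to Corollary \ref{CaseC} (ii) for Case C. The only subtlety, already absorbed into those arguments, is the reliance on Proposition \ref{determint} to handle the determinant twist in Cases B and C; this is the single non-trivial ingredient that had to be proved independently en route. With that in hand, the corollary follows by direct citation, and there is no remaining obstacle.
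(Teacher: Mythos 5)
Your proposal is correct and follows essentially the same route as the paper, whose proof of this corollary is precisely the collation of Lemma \ref{parameter}, Lemma \ref{multiplicityfree}, Corollaries \ref{embeddingisisomorphism} and \ref{notanisomorphism} for the decomposition and its length, and Proposition \ref{localintertwing} together with Corollary \ref{CaseC} (covering Cases A, B, C, with Proposition \ref{determint} supplying the determinant-twist input) for the restriction relation and the intertwining-operator identity. No gaps.
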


\section{Comparison with local Langlands corresponence \`a la Arthur}
Fix $(d,c)\in (F^\times) ^2$ and let $E=F(\sqrt d)$. Let $V_{2n}=V_{2n}^+$ be the $2n$-dimensional orthogonal space associated to $(d,c)$. For any $\pi\in \Irr(\mathrm O(V_{2n}^+))$ and $c^\prime \in c N_{E/F}(E^\times)$, we associated a pair
\begin{align*}
\left(\phi= \mathcal L(\pi), \quad \eta=\mathcal J_{\mathfrak W_{c^\prime}}(\pi)\right)
\end{align*}
to $\pi$ in Section \ref{construction}. In Theorem \ref{Arthurorth}, Arthur and Atobe--Gan also associated a pair 
\begin{align*}
\left(\phi^A= \mathcal L^A(\pi), \quad \eta^A=\mathcal J^A_{\mathfrak W_c^\prime}(\pi)\right)
\end{align*}
to $\pi$. The following theorem shows that our classification coincides with Arthur's in the quasi-split case. 
\begin{theorem}\label{comparearthur}
	We have 
	\begin{align*}
	\phi=\phi^A \quad \mbox{and}\quad \eta=\eta^A. 
	\end{align*}
\end{theorem}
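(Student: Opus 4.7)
The plan is to reduce to tempered representations and then compare via the local intertwining relation, which both parameterizations satisfy with the same intertwining-operator normalization.

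First I would dispose of non-tempered $\pi$: by Proposition \ref{Langlandsclasification} together with the corresponding clause of Theorem \ref{Arthurorth}, both pairs $(\mathcal L, \mathcal J_{\mathfrak W_c})$ and $(\mathcal L^A, \mathcal J^A_{\mathfrak W_c})$ are compatible with the Langlands classification, so if $\pi$ is the Langlands quotient of a standard module with tempered Levi constituent $\pi_0$, then $(\phi, \eta)$ and $(\phi^A, \eta^A)$ are both read off from the corresponding data for $\pi_0$, and it suffices to treat $\pi_0$. Hence we may assume $\pi$ is tempered. At this stage the equality $\phi = \phi^A$ is immediate from Plancherel-measure compatibility (Proposition \ref{localfactor}, which holds on Arthur's side by Theorem \ref{Arthurorth}): the $\wedge^2 \circ \phi_\tau$ factors cancel, yielding
\begin{align*}
\gamma(s, \phi_\tau \otimes \phi^\vee, \psi)\, \gamma(-s, \phi_\tau^\vee \otimes \phi, \psi_{-1}) = \gamma(s, \phi_\tau \otimes (\phi^A)^\vee, \psi)\, \gamma(-s, \phi_\tau^\vee \otimes \phi^A, \psi_{-1})
\end{align*}
for every tempered irreducible $\phi_\tau$, and Lemma \ref{gammadetermine} forces $\phi = \phi^A$.

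The heart of the matter is the equality $\eta = \eta^A$. Writing $\phi = m_1 \phi_1 \oplus \cdots \oplus m_l \phi_l \oplus \varphi \oplus \varphi^\vee$ as in \eqref{111}, the component group $\mathcal S_\phi$ is generated by the elements $a_i$ attached to the distinct irreducible orthogonal summands $\phi_i$, so it suffices to prove $\eta(a_i) = \eta^A(a_i)$ for every $i$. Fixing $i$, I would enlarge the ambient orthogonal space to $V' = X_{k_i} \oplus V_{2n}^+ \oplus X_{k_i}^\vee$, which is again quasi-split of the correct discriminant (so $V' = V'^+$ and Arthur's LLC applies to it), and consider the tempered parameter $\phi' = \phi_i \oplus \phi \oplus \phi_i \in \Para(\mathrm O(V'))$. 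Since $\phi_i \subseteq \phi$, Corollary \ref{embeddingisisomorphism} asserts that $\pi' \coloneqq \Ind_{P'}^{\mathrm O(V')}(\tau_i \otimes \pi)$ is irreducible and that $\mathcal S_\phi \hookrightarrow \mathcal S_{\phi'}$ is an isomorphism identifying $a_i$ with the generator $a_i'$ corresponding to the extracted copy of $\phi_i$; moreover, both LLCs give $\mathcal J(\pi')|_{\mathcal S_\phi} = \mathcal J(\pi)$ under this identification, by the decomposition clause of the LIR. The local intertwining relation (Corollary \ref{LIRfinal} for our construction, and the LIR clause of Theorem \ref{Arthurorth} for Arthur's) then yields
\begin{align*}
\eta(a_i) = \mathcal J_{\mathfrak W_c}(\pi')(a_i') = R_{\mathfrak W_c}(w, \tau_i \otimes \pi)\big|_{\pi'} = \mathcal J^A_{\mathfrak W_c}(\pi')(a_i') = \eta^A(a_i),
\end{align*}
the middle quantity being intrinsic to the triple $(\tau_i, \pi, \mathfrak W_c)$.

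The main obstacle will be ensuring that the LIR is available in each of the sub-cases (A, B, C of Section \ref{LIR}) that arise according to whether $\mathbf{1} \subseteq \phi$ and whether $\phi_i = \mathbf{1}$; on our side this is handled by Proposition \ref{localintertwing} together with Corollary \ref{CaseC}, and on Arthur's side it is part of Theorem \ref{Arthurorth}. A secondary bookkeeping point is to verify that the Whittaker datum on $\mathrm O(V'^+)$ induced by $\mathfrak W_c$ is again of the form $\mathfrak W_c$ in Arthur's parameterization, so that the normalizations of $R_{\mathfrak W_c}(w, \tau_i \otimes \pi)$ on the two sides coincide; this is a direct check from the definitions in subsection \ref{normalizingintertwing}.
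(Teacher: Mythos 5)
Your proposal is correct and follows essentially the same route as the paper: reduction to the tempered case via compatibility with Langlands quotients, the Plancherel-measure argument with Lemma \ref{gammadetermine} to get $\phi=\phi^A$, and then comparing $\eta(a_i)$ with $\eta^A(a_i)$ by inducing $\tau_i\otimes\pi$ to $\mathrm O(V\oplus\mathbb H^{k_i})$ and invoking the local intertwining relation on both sides, so that both characters equal the scalar $R_{\mathfrak W_c}(w,\tau_i\otimes\pi)|_{\pi'}$. The only cosmetic slip is writing $\phi'=\phi_i\oplus\phi\oplus\phi_i$ instead of $\phi_i\oplus\phi\oplus\phi_i^\vee$, which is harmless since $\phi_i$ is self-dual.
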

\begin{proof}
This theorem is a corollary of \cite[Theorem 4.4]{MR3708200}. For the convenience of the reader, we inculde the proof here. By Proposition \ref{Langlandsclasification} and Theorem \ref{Arthurorth}, both two LLC are compatible with Langlands quotients. Without loss of generality, we may assume that $\pi$ is tempered. It follows by Lemma \ref{respectplancherel} and Theorem \ref{Arthurorth} that 
\begin{align*}
\mu_{\psi}(\tau_s\otimes \pi)=&\gamma(s,\phi_{\tau}\otimes \phi^{ \vee},\psi)\times \gamma(-s,\phi_{\tau}^{\vee}\otimes\phi, \psi_{-1} )\\
  \times &\gamma (2s, \wedge^2\circ \phi_{\tau}, \psi)\times \gamma(-2s, \wedge^{2} \circ \phi_{\tau}^{\vee},\psi_{-1})
\end{align*}
and 
\begin{align*}
\mu_{\psi}(\tau_s\otimes \pi)&=\gamma(s,\phi_{\tau}\otimes (\phi^{ A})^\vee,\psi)\times \gamma(-s,\phi_{\tau}^{\vee}\otimes\phi^A, \psi_{-1} )\\
& \times \gamma (2s, \wedge^2\circ \phi_{\tau}, \psi)\times \gamma(-2s, \wedge^{2} \circ \phi_{\tau}^{\vee},\psi_{-1})
\end{align*}
for any $\tau\in \Irr (\GL_k(F))$, where $\phi_{\tau}$ is the $L$-parameter of $\tau$. Hence
\begin{align*}
\gamma(s,\phi_{\tau}\otimes \phi^{ \vee},\psi)\times \gamma(-s,\phi_{\tau}^{\vee}\otimes\phi, \psi_{-1} )=\gamma(s,\phi_{\tau}\otimes (\phi^{ A})^\vee,\psi)\times \gamma(-s,\phi_{\tau}^{\vee}\otimes\phi^A, \psi_{-1} ).
\end{align*}
Then by Lemma \ref{gammadetermine}, we deduce that $\phi=\phi^A$. 

Next we prove $\eta=\eta^A$. Write 
	\begin{align*}
	\phi=m_1\phi_1+ \cdots + m_l \phi_l+ \varphi+ \varphi^\vee,
	\end{align*}
	where $\phi_i$ are pairwise distinct irreducible orthogonal representations of $\WD_{F}$ and $\varphi$ is a sum of irreducible tempered representations of $\WD_F$ which are not orthogonal. Then 
	\begin{align*}
	\mathcal {S}_{\phi}=\bigoplus_{i=1}^l (\mathbb Z/2\mathbb Z)a_i,
	\end{align*} 
	where $a_i$ corresponds to $\phi_i$. 
	
For any $i\in \{1,2,\cdots, l\}$, let $\tau_{i}$ be the irreducible unitary representation of $\GL_{k_{i}}(F)$ corresponding to $\phi_{i}$. We consider the induced representation $\widetilde{\pi}_i=\Ind_{P^\prime}^{\mathrm O(V^\prime)}(\tau_i\otimes\pi)$, where $V^\prime= V\oplus \mathbb H^k$ and $P^\prime$ is a parabolic subgroup of $\mathrm O(V^\prime)$ with Levi component $$M_{P^\prime}\cong \mathrm O(V)\times \GL_{k_{i}}(F).$$
It follows from Corollary \ref{LIRfinal} and Theorem \ref{Arthurorth} that $\widetilde{\pi}_i$ is irreducible, with $L$-parameter 
\begin{align*}
\phi^\prime =\phi_i + \phi + \phi_i^{\vee}
\end{align*}
and corresponds to 
\begin{align*}
\begin{cases*}
\eta \quad &\mbox{under $\mathcal J_{\mathfrak W_{c^\prime}}$},\\
\eta^A \quad &\mbox{under $\mathcal J^A_{\mathfrak W_{c^\prime}}$}. 
\end{cases*}
\end{align*}
Here we identify $\mathcal S_{\phi^\prime}$ with $\mathcal {S}_{\phi}$ through the natural isomorphism $\mathcal S_{\phi^\prime}\cong \mathcal {S}_{\phi}$. Let $R_{\mathfrak W_{c^\prime}}(w,\tau_{i}\otimes\pi)$ be the normalized intertwining operator defined in Subsection \ref{normalizingintertwing}. Apply Corollary \ref{LIRfinal} and Theorem \ref{Arthurorth} again, and we have 
\begin{equation}\label{102}
\eta(a_i)=R_{\mathfrak W_{c^\prime}}(w,\tau_{i}\otimes\pi)|_{\widetilde{\pi}_i}=\eta^A(a_i). 
\end{equation}
Hence 	
\begin{align*}
\eta=\eta^A.
\end{align*}
This completes the proof. 
\end{proof}

We then prove Theorem \ref{desideratumall} (6) holds for our construction: 
\begin{corollary}\label{plus}
Let $\phi\in \Phi(\mathrm O(V_{2n}))$ and $\pi\in \Pi_{\phi}$. For each $c^\prime \in F^\times $ and Whittaker datum $\mathfrak W_{c^\prime}$, $\pi\in \Pi_\phi(\mathrm O(V_{2n}^+))$ if and only if 
	\begin{align*}
	\mathcal J_{\mathfrak W_{c^\prime}}(\pi)(z_{\phi})=\chi_{V}(c^\prime/c).
	\end{align*}
\end{corollary}
\begin{proof}
	We divide the proof into two cases: 
	
	$\bullet$ \underline{Case I}: If $c^\prime \in c N_{E/F}(E^\times)$, then $\chi_{V}(c^\prime/c)=1$ and this case follows from Theorem \ref{Arthurorth} and Theorem \ref{comparearthur}.

	$\bullet$ \underline{Case II}: If $c^\prime \notin c N_{E/F}(E^\times)$, then this case follows from Case I and Proposition \ref{changeofwhittakerdatum}. 
\end{proof}

\section{Completion of the proof}
So far, we have proved that our construction of LLC satisfies Theorem \ref{desideratumall} (1), (2), (4), (5), (6), (9), (10), (11), (12). In this section, we shall prove our construction of LLC satisfies (3), (7) and (8) in Theorem \ref{desideratumall}, and hence complete the proof of Theorem \ref{desideratumall}. As in the previous sections, we fix $(d,c)\in (F^\times) ^2$. Let $V_{2n}=V_{2n}^+$ be the $2n$-dimensional orthogonal space associated to $(d,c)$. 

We first prove that our construction satisfies Theorem \ref{desideratumall} (8).
\begin{proposition}\label{dettwistfinal}
For any $\phi\in \Phi(\mathrm O(V_{2n}))$ and $\pi\in \Pi_{\phi}$, the determinant twist $\pi\otimes \det$ also belongs to $\Pi_{\phi}$, and 
$$\mathcal J_{\mathfrak W_{c^\prime}}(\pi \otimes \det)=\mathcal J_{\mathfrak W_{c^\prime}}(\pi)\otimes  \kappa_{\phi}.$$
\end{proposition}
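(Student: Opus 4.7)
The strategy is to reduce the statement to the tempered case, which has essentially been proved already. If $\phi \in \Para(\mathrm O(V_{2n}))$, so that any $\pi \in \Pi_\phi$ is tempered, then the first assertion $\mathcal L(\pi \otimes \det) = \phi$ is Proposition \ref{det}, and the second assertion is exactly Proposition \ref{determint}. So from now on I assume $\phi$ is non-tempered.

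By the Langlands classification, $\pi$ is the unique irreducible quotient of a standard module
$$\Ind_{P}^{\mathrm O(V_{2n})}\left(\tau_1|\cdot|_F^{s_1}\otimes\cdots\otimes \tau_r|\cdot|_F^{s_r}\otimes \pi_0\right)$$
with $\pi_0 \in \Irrt(\mathrm O(V_{2n_0}))$ tempered, $s_1>\cdots>s_r>0$, and each $\tau_i$ an irreducible tempered representation of $\GL_{k_i}(F)$. Using the isomorphism \ref{59}, the $\det$-twist of this standard module is the standard module associated to the Langlands data $(\tau_1,\ldots,\tau_r,\pi_0\otimes\det)$, and hence $\pi\otimes\det$ is its unique irreducible quotient. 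By the construction of $\mathcal L$ in subsection \ref{nontempered} together with the tempered statement $\mathcal L(\pi_0\otimes\det)=\mathcal L(\pi_0)=\phi_0$ (Proposition \ref{det}), we obtain $\mathcal L(\pi\otimes\det)=\mathcal L(\pi)=\phi$, proving the first assertion and showing $\pi\otimes\det\in \Pi_\phi$.

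For the character assertion, the construction in subsection \ref{nontempered} gives
$$\mathcal J_{\mathfrak W_c}(\pi)=\mathcal J_{\mathfrak W_c}(\pi_0),\qquad \mathcal J_{\mathfrak W_c}(\pi\otimes\det)=\mathcal J_{\mathfrak W_c}(\pi_0\otimes\det)$$
under the natural isomorphism $\mathcal S_{\phi_0}\cong \mathcal S_\phi$. By the tempered case (Proposition \ref{determint}), the right-hand sides differ by $\kappa_{\phi_0}$. The only point that needs checking is that $\kappa_{\phi_0}$ and $\kappa_\phi$ agree under the above isomorphism: in the decomposition \ref{111}, each summand $\phi_i|\cdot|_F^{s_i}\oplus\phi_i^\vee|\cdot|_F^{-s_i}$ (with $s_i>0$) is of the form $\varphi\oplus\varphi^\vee$ with $\varphi$ non self-dual, so it contributes nothing to $\mathcal S_\phi$, and the determinant map \ref{detmap} on $\mathcal S_\phi$ restricts to the determinant map on $\mathcal S_{\phi_0}$. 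Combining everything yields $\mathcal J_{\mathfrak W_c}(\pi\otimes\det)=\mathcal J_{\mathfrak W_c}(\pi)\otimes\kappa_\phi$.

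There is no real obstacle here — the whole argument is a bookkeeping reduction relying on the compatibility of our construction with Langlands quotients (Proposition \ref{Langlandsclasification}) and on the tempered case already established in Proposition \ref{determint}.
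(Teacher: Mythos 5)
Your proposal is correct and follows essentially the same route as the paper: the paper's proof likewise reduces to the tempered case via the compatibility with Langlands quotients built into the construction of subsection \ref{nontempered}, and then cites Proposition \ref{det} and Proposition \ref{determint}. Your extra check that $\kappa_{\phi_0}$ corresponds to $\kappa_{\phi}$ under the isomorphism $\mathcal S_{\phi_0}\cong\mathcal S_{\phi}$ is a small bookkeeping point the paper leaves implicit, and it is handled correctly.
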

\begin{proof}
	By our construction in Subsection \ref{nontempered}, it is enough to prove this when $\phi\in \Para(\mathrm O(V_{2n}))$. This follows from Proposition \ref{det} and Proposition \ref{determint}. 
\end{proof}

Before proving Theorem \ref{desideratumall} (3), we do some preparations. For any $c^\prime \in F^\times$, let $V_{2n}^\prime $ be the $2n$-dimensional orthogonal space associated to $(d,c^\prime)$ and $W_{2n}$ be the $2n$-dimensional symplectic space. 
Let $U^\prime, \widetilde{U}$ and $\mu^\prime_{c^\prime}, \mu_{c^\prime}^+$ be those defined in Subsection \ref{whittaker}. The following lemma computes the Whittaker model of the Weil representation $\omega=\omega_{V^\prime_{2n},W_{2n},\psi}$.  
\begin{lemma}\label{Jacquetmodule}
	 We have 
	\begin{align*}
	\omega_{(U^\prime,\mu^\prime_{c^\prime})}\cong \ind_{\widetilde{U}}^{\mathrm O(V^\prime_{2n})}\mu_{c^\prime}^+. 
	\end{align*}
\end{lemma}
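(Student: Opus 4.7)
The plan is to realize $\omega = \omega_{V_{2n}, W_{2n}, \psi}$ on the Schr\"odinger model $\mathscr{S}(V_{2n} \otimes Y_n^\vee) \cong \mathscr{S}(V_{2n}^n)$, writing elements as Schwartz functions $\varphi(x_1, \ldots, x_n)$ with $x_i \in V_{2n}$, and then read off the twisted Jacquet module by an explicit Fourier-theoretic calculation. On this model $\mathrm{O}(V_{2n})$ acts via the tautological action $(h \cdot \varphi)(x_1,\ldots,x_n) = \varphi(h^{-1}x_1, \ldots, h^{-1}x_n)$, and the action of the Siegel parabolic $Q_{\mathrm{Sieg}} = M_{\mathrm{Sieg}} N$ of $\mathrm{Sp}(W_{2n})$ stabilizing $Y_n$ is given by the formulas recalled in Section~\ref{mixed model}. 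Writing $U' = U_{\mathrm{GL}} \ltimes N$, where $U_{\mathrm{GL}} \subset \mathrm{GL}(Y_n) \cong M_{\mathrm{Sieg}}$ is the upper triangular unipotent, the generic character $\mu_c'$ decomposes as the product of a generic character of $U_{\mathrm{GL}}$ and the character of $N$ corresponding to the symmetric matrix with a single entry $c$ in the $(n,n)$-slot.

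First I would compute the coinvariants with respect to $(N, \mu'_c|_N)$. Since $N$ acts on $\varphi$ by multiplication by the character $\psi(\tfrac{1}{2}\langle cx,x\rangle_V)$, Fourier analysis forces the support to be concentrated on those tuples $(x_1, \ldots, x_n)$ with $\langle x_i, x_j\rangle_V = 0$ for $(i,j) \neq (n,n)$ and $\langle x_n, x_n\rangle_V = 2c$. Next I would take coinvariants with respect to $(U_{\mathrm{GL}}, \mu'_c|_{U_{\mathrm{GL}}})$; using the standard identity that a Jacquet module with respect to a generic character of a maximal unipotent of $\mathrm{GL}_n$ localizes supports to the open $\mathrm{GL}_n$-orbit and rigidifies the remaining freedom, one finds that the support is an $\mathrm{O}(V_{2n})$-orbit through a distinguished tuple $(v_1, \ldots, v_{n-1}, e)$, where $\{v_1, \ldots, v_{n-1}\}$ is the isotropic basis of $X_{n-1}$ and $e$ is the distinguished vector in $V_{(d,c)}$ fixed in Section~2.

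The key observation is then that the $\mathrm{O}(V_{2n})$-stabilizer of the configuration $(v_1, \ldots, v_{n-1}, e)$ is exactly the subgroup $\widetilde{U} = U \rtimes \langle \epsilon\rangle$: by the description of $U$ in Subsection~\ref{whittaker}, $U$ is characterized as the elements fixing the flag $\langle v_1\rangle \subset \cdots \subset X_{n-1}$ and sending $e$ to itself, while $\epsilon$ is precisely the involution on $V_{(d,c)}$ that fixes $e$ and swaps $e'$ with $-e'$, so $\epsilon$ also stabilizes this configuration. Hence the support of the twisted Jacquet module is in bijection with $\widetilde{U} \backslash \mathrm{O}(V_{2n})$, and the resulting space of Schwartz functions realizes $\mathrm{ind}_{\widetilde{U}}^{\mathrm{O}(V_{2n})} \chi$ for a character $\chi$ of $\widetilde{U}$ which agrees with $\mu_c$ on $U$ by construction.

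The main obstacle is pinning down the extension of $\mu_c$ to $\widetilde{U}$, i.e.\ showing $\chi(\epsilon) = +1$ rather than $-1$. This requires a careful computation of how $\epsilon$ acts on the Weil representation under the identification above; the point is that since $\epsilon$ fixes $e$ pointwise and acts trivially on $X_{n-1}$, its action via the tautological formula $(\epsilon \cdot \varphi)(\ldots) = \varphi(\epsilon^{-1} \cdot)$ produces no sign at the distinguished tuple. All Weil-constant subtleties are absorbed into the setup of the Schr\"odinger model and the identification of the character on $N$, so the signs that appear in the discriminant character $\chi_V$ do not propagate to the value of $\chi$ at $\epsilon$. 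This sign bookkeeping is the only delicate point; once it is done the isomorphism $\omega_{(U', \mu'_c)} \cong \mathrm{ind}_{\widetilde{U}}^{\mathrm{O}(V_{2n})} \mu_c^+$ follows.
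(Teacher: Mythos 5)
The paper does not actually carry out this computation: it cites \cite{MR1454260} and \cite{MR1738175} for an analogous argument and omits the details, so your proposal has to stand on its own. It does not, because its central step is wrong. You claim that the $\mathrm{O}(V_{2n})$-stabilizer of the tuple $(v_1,\dots,v_{n-1},e)$ is $\widetilde U=U\rtimes\langle\epsilon\rangle$, "since $U$ is characterized as the elements fixing the flag and sending $e$ to itself." That characterization of $U$ is false: from the paper's own formulas, $u_P(b)$ with $b\in\operatorname{Hom}(V_0,X)$ sends $e\mapsto e+b(e)$, and a general element of $U$ fixes $v_i$ only modulo $\langle v_1,\dots,v_{i-1}\rangle$; so $U$ does not fix the configuration pointwise. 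A dimension count confirms the mismatch: the pointwise stabilizer of the tuple has dimension $n(n-1)/2$, whereas $\dim U=n(n-1)$. Worse, even if you replace $\widetilde U$ by the true pointwise stabilizer, your mechanism cannot produce the right answer: since $\mathrm{O}(V_{2n})$ acts on the Schr\"odinger model purely by translation of the support, "functions on a single orbit" would yield compact induction of a character that is \emph{trivial} on the unipotent part of the stabilizer, and no step of your argument explains how the generic character $\mu_c$ of $U$ — in particular the term $\psi(\langle ue,v_{n-1}^*\rangle_V)$ carrying the dependence on $c$ — could appear.

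The missing idea is the Mackey-type joint-stabilizer analysis for the commuting actions of $U_{\mathrm{GL}}\times\mathrm{O}(V_{2n})$ on the stratum cut out by the $(N,\mu_c'|_N)$-coinvariants. Taking the $(U_{\mathrm{GL}},\text{generic})$-coinvariants does not just shrink the support to an $\mathrm{O}(V_{2n})$-orbit; the group that survives on the orthogonal side is $\{h\in\mathrm{O}(V_{2n}): h\cdot x_0=x_0\cdot a(h)\ \text{for some}\ a(h)\in U_{\mathrm{GL}}\}$, which is exactly $\widetilde U$ (this is where your flag-plus-$e$-mod-$X_{n-1}$ description becomes correct), and it acts on the resulting line by the generic character of $U_{\mathrm{GL}}$ transported through $u\mapsto a(u)$; under this matching the generic character of $U_{\mathrm{GL}}$ pulls back precisely to $\mu_c$ on $U$, and since $a(\epsilon)=1$ and the $\mathrm{O}(V_{2n})$-action carries no Weil constants in this model, one gets $\mu_c^+(\epsilon)=+1$. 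Your final sign argument is in the right spirit, but it rests on the flawed middle step; also, the vanishing of the contributions of the non-open strata (nontriviality of the character on the relevant stabilizers) needs to be checked rather than invoked as a "standard identity." As written, the proof has a genuine gap at its key step.
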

\begin{proof}
	See \cite{MR1454260} and \cite{MR1738175} for an analogous computation. We omit the details. 
\end{proof}
Let $\pi\in \Irrt(\mathrm O(V^\prime_{2n}))$. We calculate $$\Hom_{\mathrm O(V^\prime_{2n})\times U^\prime}(\omega, \pi\boxtimes \mu_{c^\prime}^\prime)$$ in two different ways. On one hand, we have 
\begin{align*}
\Hom_{\mathrm O(V^\prime_{2n})\times U^\prime}(\omega, \pi\boxtimes \mu_{c^\prime}^\prime)&\cong \Hom_{U^\prime}(\Theta_{W_{2n},V^\prime_{2n}, \psi}(\pi), \mu_{{c^\prime}}^\prime)\\
&\cong  \Hom_{U^\prime}(\theta_{W_{2n},V^\prime_{2n}, \psi}(\pi), \mu_{{c^\prime}}^\prime),
\end{align*}
where the last equality follows from Lemma \ref{temperedtotempered}. On the other hand, by Lemma \ref{Jacquetmodule}, we have 
\begin{align*}
\Hom_{\mathrm O(V^\prime_{2n})\times U^\prime}(\omega, \pi\boxtimes \mu_{c^\prime}^\prime)&\cong \Hom_{\mathrm O(V^\prime_{2n})}\left(\ind_{\widetilde{U}}^{\mathrm O(V^\prime_{2n})}\mu_{c^\prime}^+, \pi\right)\\
&\cong \Hom_{\mathrm O(V^\prime_{2n})}\left(\pi^\vee, \Ind_{\widetilde{U}}^{\mathrm O(V^\prime_{2n})}(\mu_{c^\prime}^+)^\vee\right)\\
&\cong \Hom_{\widetilde{U}}\left(\pi^\vee, (\mu_{c^\prime}^+)^\vee\right)\\
&\cong \Hom_{\widetilde{U}}\left(\pi, \mu_{c^\prime}^+\right).
\end{align*}
Here the last equality follows from the fact that both $\pi$ and $\mu_{c^\prime}^+$ are unitary. So we deduce the following proposition.  
\begin{proposition}\label{generic to generic}
	For any ${c^\prime}\in F^\times$, $\pi\in \Irrt(\mathrm O(V^\prime_{2n}))$ is $\mathfrak W_{c^\prime}^+$-generic if and only if $\sigma=\theta_{W_{2n},V^\prime_{2n},\psi}(\pi)$ is $\mathfrak W_{\psi,{c^\prime}}$-generic.   	
\end{proposition}

We then prove Theorem \ref{desideratumall} (3): 
\begin{proposition}\label{generic}
	Assume that $\phi\in \Para(\mathrm O(V_{2n}))$. Then for each Whittaker datum $\mathfrak W_{c^\prime}$,
	\begin{itemize}
		\item there is a unique $\mathfrak W^+_{{c^\prime}}$-generic representation $\pi$ in $\Pi_{\phi}$ and $\mathcal J_{\mathfrak W_{c^\prime}}(\pi)$ is the trivial representation of $\mathcal {S}_{\phi}$; 
		\item there is a unique $\mathfrak W^-_{{c^\prime}}$-generic representation $\pi$ in $\Pi_{\phi}$ and $\mathcal J_{\mathfrak W_{c^\prime}}(\pi)=\kappa_{\phi}$.  
	\end{itemize}
\end{proposition}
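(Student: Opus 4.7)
My plan is to transfer the statement to the symplectic side via theta correspondence, using Proposition~\ref{generic to generic} as the bridge. First I observe that any $\mathfrak W_c^\pm$-generic representation must live on the unique quasi-split pure inner form of which $\mathfrak W_c$ is a Whittaker datum; I denote this form by $V_{2n}^{[c]}$, so that $V_{2n}^{[c]}=V_{2n}^+$ exactly when $\chi_V(c)=1$. Set $\phi^+=(\phi\otimes\chi_V)\oplus\chi_V\in\Para(\SP(W_{2n}))$. By Theorem~\ref{llcsympletic}(3), the packet $\Pi_{\phi^+}$ contains a unique $\mathfrak W^\prime_{\psi,c}$-generic element $\sigma$, and $\mathcal J_{\mathfrak W^\prime_{\psi,c}}(\sigma)$ is the trivial character of $\bar{\mathcal S}_{\phi^+}$.

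Since $\chi_V\subseteq\phi^+$, Proposition~\ref{1notinphi}(ii) produces a unique pure inner form $V_{2n}^\star$ with $\pi:=\theta_{V_{2n}^\star,W_{2n},\psi}(\sigma)\ne 0$, and Proposition~\ref{prasad} together with Howe duality give $\pi\in\Pi_\phi(\mathrm O(V_{2n}^\star))$ and $\theta_{W_{2n},V_{2n}^\star,\psi}(\pi)=\sigma$. By the construction of $\mathcal J^\psi_{\mathfrak W_c}$ in Section~\ref{constructJc},
\[
\mathcal J_{\mathfrak W_c}(\pi)=\ell^*\bigl(\mathcal J_{\mathfrak W^\prime_{\psi,c}}(\sigma)\bigr)=\mathbf{1}_{\mathcal S_\phi},
\]
where $\ell$ is the map from \ref{55} or \ref{501}. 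In particular $\mathcal J_{\mathfrak W_c}(\pi)(z_\phi)=1$, which by Corollary~\ref{plus} identifies $V_{2n}^\star$ with $V_{2n}^{[c]}$ (the trivial character sends $z_\phi$ to $1$, matching $V_{2n}^+$ iff $\chi_V(c)=1$). Proposition~\ref{generic to generic}, applied on the quasi-split group $\mathrm O(V_{2n}^{[c]})$, then shows that $\pi$ is $\mathfrak W_c^+$-generic. Uniqueness follows similarly: any $\mathfrak W_c^+$-generic $\pi^\prime\in\Pi_\phi$ must lie on $V_{2n}^{[c]}$; its theta lift is $\mathfrak W^\prime_{\psi,c}$-generic in $\Pi_{\phi^+}$ and hence equals $\sigma$ by Theorem~\ref{llcsympletic}(3); Howe duality then forces $\pi^\prime=\pi$.

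The $\mathfrak W_c^-$-generic case is handled by the $\det$-twist: $\pi$ is $\mathfrak W_c^-$-generic if and only if $\pi\otimes\det$ is $\mathfrak W_c^+$-generic, and Proposition~\ref{dettwistfinal} then yields $\mathcal J_{\mathfrak W_c}(\pi)=\mathcal J_{\mathfrak W_c}(\pi\otimes\det)\otimes\kappa_\phi=\kappa_\phi$. The main subtlety is the identification $V_{2n}^\star=V_{2n}^{[c]}$ in the second paragraph: the theta lift a priori lands on some pure inner form, and one must verify that it matches the form underlying the prescribed Whittaker datum before the bridge Proposition~\ref{generic to generic} can be invoked. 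This matching is exactly what Corollary~\ref{plus} provides, and that corollary in turn rests on the comparison with Arthur's parameterization established in Theorem~\ref{comparearthur}.
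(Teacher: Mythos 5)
Your argument is correct, and its skeleton is the same as the paper's: transfer via theta to $\Pi_{\phi^+}$, invoke Theorem~\ref{llcsympletic}(3) for the unique $\mathfrak W^\prime_{\psi,c}$-generic $\sigma$ with trivial character, use Proposition~\ref{generic to generic} as the genericity bridge and the construction of $\mathcal J_{\mathfrak W_c}$ to read off the trivial character, then settle the $\mathfrak W_c^-$ case by the $\det$-twist together with the twisting formula for $\mathcal J_{\mathfrak W_c}$. The genuine difference is in the existence step: the paper's proof is a one-liner citing exactly these three ingredients and does not spell out why the theta lift of the generic $\sigma$ lands on the pure inner form $V_{2n}^{[c]}$ carrying the chosen Whittaker datum, whereas you make this explicit by combining Proposition~\ref{1notinphi}(ii) (exactly one form receives $\sigma$, since $\chi_V\subseteq\phi^+$) with Corollary~\ref{plus} to match $V_{2n}^\star$ with $V_{2n}^{[c]}$ before applying Proposition~\ref{generic to generic}. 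This is legitimate and non-circular, since Corollary~\ref{plus} (and Theorem~\ref{comparearthur} behind it) is established in Section~9, before Proposition~\ref{generic} in Section~10; the cost is that your existence argument pulls in the comparison with Arthur's parameterization, which the paper's stated proof does not invoke, while the benefit is that it closes a detail the paper leaves implicit (one could alternatively try to prove directly that a $\mathfrak W^\prime_{\psi,c}$-generic $\sigma$ has nonvanishing equal-rank lift to the quasi-split form, in the spirit of Lemma~\ref{Jacquetmodule}, but no such statement in that direction is recorded in the paper). Your uniqueness argument via Howe duality and Theorem~\ref{llcsympletic}(3), and the derivation of $\mathcal J_{\mathfrak W_c}(\pi)=\kappa_\phi$ in the $\mathfrak W_c^-$ case from Proposition~\ref{dettwistfinal}, agree with the paper's reasoning.
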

\begin{proof}
	The first statement follows from Theorem \ref{llcsympletic} (3),  Proposition \ref{generic to generic} and our construction of $\mathcal J_{\mathfrak W_{c^\prime}}$. Note that $\pi$ is $\mathfrak W^+_{{c^\prime}}$-generic if and only if $\pi\otimes\det$ is $\mathfrak W^{-}_{{c^\prime}}$-generic, so the second statement follows from the first statement and Proposition \ref{determint}.    
\end{proof}

Finally we prove Theorem \ref{desideratumall} (7): 
\begin{proposition}
Under the LLC we constructed, the following are equivalent: 
	\begin{itemize}
		\item 	$\phi \in \Phi^{\epsilon}\left(\mathrm{O}\left(V_{2 n}\right)\right)$;
		\item some $\pi\in \Pi_{\phi}$ satisfies $\pi \otimes \det \neq \pi$;
		\item 	
		all $\pi\in \Pi_{\phi}$ satisfy $\pi \otimes\det 
		\neq \pi$. 
	\end{itemize} 
\end{proposition}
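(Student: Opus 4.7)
The plan is to derive all three equivalences directly from Proposition \ref{dettwistfinal} together with the explicit definition of the character $\kappa_\phi$.

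First I would observe that since $\mathcal J_{\mathfrak W_c}: \Pi_\phi \to \widehat{\mathcal S_\phi}$ is a bijection (Proposition \ref{bijection2}) and $\mathcal J_{\mathfrak W_c}(\pi \otimes \det) = \mathcal J_{\mathfrak W_c}(\pi) \otimes \kappa_\phi$ by Proposition \ref{dettwistfinal}, we have the equivalence
\[
\pi \otimes \det \cong \pi \quad \Longleftrightarrow \quad \kappa_\phi = \mathbf 1 \in \widehat{\mathcal S_\phi}.
\]
Crucially, the right-hand condition depends only on $\phi$ and not on $\pi$. This immediately shows that the second and third bullets are equivalent to each other, and each is equivalent to the statement $\kappa_\phi \neq \mathbf 1$.

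Next I would unpack the definition of $\kappa_\phi$ from \ref{kappa} and \ref{detmap}. Writing $\phi = m_1\phi_1 \oplus \cdots \oplus m_l \phi_l \oplus \varphi \oplus \varphi^\vee$ as in \ref{111}, the component group $\mathcal S_\phi$ has generators $a_1,\ldots,a_l$, and $\kappa_\phi(a_i) = \upsilon(\dim \phi_i \bmod 2)$. Hence $\kappa_\phi$ is trivial precisely when $\dim \phi_i$ is even for every $i=1,\ldots,l$, i.e.\ when every irreducible orthogonal summand of $\phi$ has even dimension. By the very definition of $\Phi^\epsilon(\mathrm O(V_{2n}))$ in subsection \ref{Lparmeter}, this is the negation of $\phi \in \Phi^\epsilon(\mathrm O(V_{2n}))$. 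Combining, we get
\[
\phi \in \Phi^\epsilon(\mathrm O(V_{2n})) \quad \Longleftrightarrow \quad \kappa_\phi \neq \mathbf 1,
\]
which closes the cycle of implications.

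There is essentially no obstacle here: once Proposition \ref{dettwistfinal} (which does the real work) and the bijectivity of $\mathcal J_{\mathfrak W_c}$ are granted, the statement is a direct unwinding of definitions. The only minor care needed is to note that the character $\kappa_\phi$ is independent of the choice of Whittaker datum $\mathfrak W_c$ (visible from \ref{detmap}), so the equivalence is intrinsic to $\phi$.
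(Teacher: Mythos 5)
Your proposal is correct and follows exactly the paper's own argument: the paper likewise observes that $\kappa_\phi\neq 1$ if and only if $\phi\in\Phi^{\epsilon}(\mathrm O(V_{2n}))$ and then invokes the bijectivity of $\mathcal J_{\mathfrak W_c}$ (Proposition \ref{bijection2}) together with the twisting formula of Proposition \ref{dettwistfinal}. You have merely spelled out the unwinding of the definition of $\kappa_\phi$, which the paper leaves implicit; nothing further is needed.
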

\begin{proof}
	Note that for $\phi\in \Phi(\mathrm O(V_{2n}))$, we have 
	\begin{align*}
	\kappa_{\phi}\neq  1 \quad \mbox{if and only if $\phi\in \Phi^{\epsilon}(\mathrm O(V_{2n}))$}.
	\end{align*}
	Then this proposition follows from Theorem \ref{desideratumall} (2) and (8), which we have proved in Corollary \ref{bijection2} and Proposition \ref{dettwistfinal}. 
\end{proof}

So we have completed the proof of Theorem \ref{desideratumall}.

\appendix
\section{Local Langlands correspondence for special even orthogonal groups}
In \cite{MR3135650}, Arthur established a weaker version LLC for quasi-split special even orthogonal groups from the LLC for quasi-split even orthogonal groups. This was explicated by Atobe--Gan \cite{MR3708200}. Since we now construct the LLC for even orthogonal groups, following Arthur's idea, we can deduce a weaker version LLC for special even orthogonal groups. We shall do it in this appendix. 

Let $V=V_{2n}$ be a $2n$-dimensional orthogonal space and $\chi_V$ be the discriminant character of $V$. By \cite[\S 8]{MR3202556} and \cite[\S 3]{MR3708200}, we define
\begin{align*}
\Phi(\SO(V_{2n}))=\{\phi: \WD_{F} \rightarrow \mathrm{O}(2n, \mathbb{C}) | \det(\phi)=\chi_{V}\} /(\SO(2n, \mathbb{C})\mbox{-conjugacy}).
\end{align*} 
and call an element $\phi\in \Phi(\SO(V_{2n}))$ an $L$-parameter for $\SO(V_{2n})$. Note that $\Phi(\SO(V_{2n}))$ is different from $\Phi(\mathrm O(V_{2n}))$ since we consider the $\SO(2n,\mathbb C)$-conjugacy rather than $\mathrm O(2n,\mathbb C)$-conjugacy here. There is a natural surjective map 
\begin{align}\label{523}
\Phi(\SO(V_{2n}))\twoheadrightarrow \Phi(\mathrm O(V_{2n})).
\end{align} 
We define $\Phi^\epsilon(\SO(V_{2n}))$ to be the preimage of $\Phi^\epsilon(\mathrm O(V_{2n}))$. It is easy to check that the map (\ref{523}) is bijective on the subset $\Phi^\epsilon(\SO(V_{2n}))$ and is a two-to-one map on $\Phi(\SO(V_{2n}))\setminus \Phi^\epsilon(\SO(V_{2n}))$. 

 Next we state the local Langlands correspondence for special even orthogonal groups. The reader can consult \cite[\S 3.3]{MR3708200} for a detailed description. 
\begin{desideratum}\label{desideratumallspecial1}
	Fix $(d,c)\in (F^\times)^2$. Let $V_{2n}=V_{2n}^+$ be the orthogonal space associated to $(d,c)$, and $\chi_V=(\cdot, d)_F$ be the discriminant character of $V_{2n}$. 	
	\begin{enumerate}[(1).]
		\item There exists a surjective map
		$$
		\mathcal L: \bigsqcup_{\delta\in \{\pm 1\}} \Irr \left(\SO(V_{2n}^\delta)\right) \longrightarrow \Phi(\SO(V_{2n}))$$
		which is finite-to-one. For any $\phi \in \Phi(\SO(V_{2n}))$, we denote $\mathcal L^{-1}(\phi)$ by $\Pi_{\phi}$ and call it the $L$-packet of $\phi$. We also write $\Pi_{\phi}(\SO(V_{2n}))=\Pi_{\phi}\cap \Irr\left(\SO(V_{2n})\right)$.
		\item For each Whittaker datum $\mathfrak W_{c^\prime}$, there exists a canonical bijection
		\begin{align}\label{522}
		\mathcal J_{\mathfrak W_{c^\prime}} : \Pi_{\phi} \longrightarrow \widehat {\mathcal S^+_{\phi}}.
		\end{align}
		\item The $L$-packet $\Pi_{\phi}$ and the bijection $\mathcal J_{\mathfrak W_{c^\prime}}$ satisfy the analogues of Theorem \ref{desideratumall} (3)-(12). 
	\end{enumerate}
\end{desideratum}
Desideratum \ref{desideratumallspecial1} has not been established. However, following what Arthur did in \cite{MR3135650}, we can deduce a weaker version of Desideratum \ref{desideratumallspecial1} as follows.

We introduce an equivalence relation $\sim_{\epsilon}$ on $\Irr(\SO (V_{2n}^\delta))$. Choose an element $\epsilon\in \mathrm O(V_{2n}^\delta)$ such that $\det(\epsilon)=-1$. For $\pi_0\in \Irr(\SO (V_{2n}^\delta))$, we define its conjugate $\pi_0^\epsilon$ by $$\pi_0^\epsilon(h)=\pi_0(\epsilon^{-1}h\epsilon) \quad \mbox{for $h\in \mathrm O(V_{2n}^\delta)$}.$$ 
Then the equivalence relation $\sim_{\epsilon}$ on $\Irr(\SO (V_{2n}^\delta))$ is defined by 
\begin{align*}
\pi_0 \sim_{\epsilon} \pi_0^\epsilon.
\end{align*}
We denote by $[\pi_0]$ the image of a representation $\pi_0\in \Irr(\SO (V_{2n}^\delta))$ under the canonical map $\Irr(\SO (V_{2n}^\delta))\rightarrow \Irr(\SO (V_{2n}^\delta))/\sim_{\epsilon}$. We say that $[\pi_0]\in \Irr(\SO (V_{2n}^\delta))/\sim_{\epsilon}$ is tempered (resp. discrete) if some (and hence any) representative $\pi_0$ is tempered (resp. discrete). We also define an equivalence relation $\sim_{\det}$ on $\Irr(\mathrm  O(V_{2n}^\delta))$ by 
\begin{align*}
\pi\sim_{\det} \pi\otimes\det \quad \mbox{for $\pi\in \Irr(\mathrm O(V_{2n}^\delta))$}.
\end{align*}
Then the restriction and the induction gives a canonical bijection 
\begin{align*}
\Irr(\mathrm O(V_{2n}^\delta))/\sim_{\det} \longleftrightarrow \Irr(\SO  (V_{2n}^\delta))/\sim_{\epsilon}. 
\end{align*}
We state the weaker version of LLC for $\SO(V_{2n})$ as follows: 
\begin{theorem}[Weak LLC for special even orthogonal groups]\label{desideratumallspecial}
	Fix $(d,c)\in (F^\times)^2$. Let $V_{2n}=V_{2n}^+$ be the orthogonal space associated to $(d,c)$, and $\chi_V=(\cdot, d)_F$ be the discriminant character of $V_{2n}$. 
	\begin{enumerate}[(1).]
		\item There exists a surjective map
		$$
		\mathcal L: \bigsqcup_{\delta\in \{\pm 1\}} \left(\Irr \left(\SO(V_{2n}^\delta)\right)/\sim_{\epsilon}\right) \longrightarrow \Phi(\mathrm O(V_{2n})),$$
		which is finite-to-one. For $\phi \in \Phi(\mathrm O(V_{2n}))$, we denote $\mathcal L^{-1}(\phi)$ by $\Pi^0_{\phi}$ and call it the $L$-packet of $\phi$. We also write $\Pi^0_{\phi}(\SO(V_{2n}))=\Pi^0_{\phi}\cap \Irr\left(\SO(V_{2n})\right)$.
		\item For each Whittaker datum $\mathfrak W_{c^\prime}$, there exist a canonical bijection
		\begin{align}\label{524}
		\mathcal J^0_{\mathfrak W_{c^\prime}} : \Pi^0_{\phi} \longrightarrow \widehat {\mathcal S^+_{\phi}}.
		\end{align}
		\item The $L$-packet $\Pi^0_{\phi}$ and the bijection $\mathcal J^0_{\mathfrak W_{c^\prime}}$ satisfy the analogues of Theorem \ref{desideratumall} (3)-(12). 
		\item For $\phi\in \Phi(\mathrm O(V_{2n})),$ let $\Pi_{\phi}$ be the $L$-packet defined in Theorem \ref{desideratumall}. Then the image of $\Pi_{\phi}$ under the map 
		\begin{align*}
		\Irr(\mathrm O(V_{2n}^\delta))\longrightarrow \left(\Irr(\mathrm O(V_{2n}^\delta))/\sim_{\det} \right) \longrightarrow \left(\Irr(\SO(V_{2n}^\delta))/\sim_{\epsilon}\right)
		\end{align*}
		is the packet $\Pi_{\phi}^0$ and the diagram 
		$$
		\begin{CD}
		\Pi_{\phi} @>\mathcal J_{\mathfrak W_{c^\prime}}>> \widehat{\mathcal {S}_{\phi}} \\
		@VVV  @VV\ell^* V \\
		\Pi_{\phi}^0 @>\mathcal J^0_{\mathfrak W_{c^\prime}}>>  \widehat{\mathcal S^+_{\phi}}   
		\end{CD}
		$$
		is commutative for any Whittaker datum $\mathfrak W_{c^\prime}$, where $\ell:\mathcal S^+_{\phi}\rightarrow \mathcal {S}_{\phi}$ is the natural embedding. 
	\end{enumerate}
\end{theorem}
\begin{proof}
	This follows from Theorem \ref{desideratumall}; see also Atobe--Gan \cite[\S 3.5]{MR3708200} for an explication. 
\end{proof}

\section{The Plancherel measures and normalized intertwining operators}\label{plancherelmeasure}
We recall the definition of Plancherel measures and prove Lemma \ref{homomorphicatsequal0} in this appendix. 
 
We retain the notation in Subsection \ref{normalizingintertwing}. Let $\overline{P}=M_{P} U_{\overline P}$ be the opposite parabolic subgroup to $P$ of $\mathrm O(V)$ and consider the induced representation
$$\Ind_{\overline{P}}^{\mathrm O\left(V\right)}\left(\tau_{s} \otimes \pi_0 \right).$$ 
Similarly to those of Subsection \ref{normalizingintertwing}, they are realized on the space of smooth functions 
$$\overline{\Psi}_{s} : \mathrm O(V) \rightarrow \mathscr{V}_{\tau} \otimes \mathscr{V}_{\pi_0}$$
such that
\begin{align*} \overline{\Psi}_{s}\left( u_{\overline P} m_{P}(a) h_0 h\right) &=|\det(a)|_{F}^{s+\rho_{\overline P}} \tau(a) \pi_0(h_0)  \overline{\Psi}_{s}\left(h\right) 
\end{align*}
for any $ u_{\overline P} \in U_{\overline P}, a\in \GL(X), h_0\in \mathrm{O}(V_0), h \in \mathrm{O}(V)$. As in \cite[\S 12]{MR3166215}, we define the 
standard intertwining operator
\begin{align*}
J_{\overline{P}|P}(\tau_{s}\otimes \pi_0): \Ind_{P}^{\mathrm O(V)}(\tau_{s}\otimes \pi_0) &\longrightarrow \Ind_{\overline{P}}^{\mathrm O(V)}(\tau_{s}\otimes \pi_0)
\end{align*}
by (the meromorphic continuations of) the integrals
\begin{align*}
J_{\overline{P}|P}(\tau_{s}\otimes \pi_0)\Psi_s(h)&= \int_{U_{\overline P}} \Psi_s(\bar uh)d\bar u 
\end{align*}
for $\Psi_s\in \Ind_{P}^{\mathrm O(V)}(\tau_{s}\otimes \pi_0)$. Similarly, we have the 
standard intertwining operator
\begin{align*}
J_{P|\overline P}(\tau_{s}\otimes \pi_0): \Ind_{\overline P}^{\mathrm O(V)}(\tau_{s}\otimes \pi_0) &\longrightarrow \Ind_{P}^{\mathrm O(V)}(\tau_{s}\otimes \pi_0). 
\end{align*}
By \cite[\S 12]{MR3166215}, the Plancherel measure associated to 
$\Ind_{ P}^{\mathrm O(V)}(\tau_{s}\otimes \pi_0)$ is a rational function $\mu(\tau_{s}\otimes\pi_0)$ such that 
\begin{align*}\label{planch}
J_{P|\overline{P}}(\tau_{s}\otimes \pi_0)\circ J_{\overline{P}|P}(\tau_{s}\otimes \pi_0)= \mu(\tau_{s}\otimes\pi_0)^{-1}. 
\end{align*}
At this point, the Plancherel measure $\mu(\tau_{s}\otimes\pi_0)$ is only well-defined up to a scalar since it depends on the choice of Haar measures on $U_P$ and $U_{\overline P}$. We refer the reader to \cite[Appendix B.2]{MR3166215} for the choice of these Haar measures we used here. 

Fix a Whittaker datum $\mathfrak W_{c^\prime}$. Let $\widetilde{w}_{c^\prime}$ be the lift of $w$ in (\ref{131}). Then there is a intertwining isomorphism 
\begin{align*}
\ell(w_{c^\prime},\tau_s\otimes \pi_0): \Ind_{\overline P}^{\mathrm O(V)}(\tau_{s}\otimes \pi_0)\rightarrow \Ind_P^{\mathrm O(V)}(w(\tau_{s}\otimes \pi_0))
\end{align*}
given by left translation 
\begin{align*}
\left(\ell(w_{c^\prime},\tau_s\otimes \pi_0)\overline{\Psi}_{s}\right)(h) =\overline{\Psi}_s(\widetilde{w}_{c^\prime}^{-1}h)
\end{align*}
for $\overline{\Psi}_s\in \Ind_{\overline P}^{\mathrm O(V)}(\tau_{s}\otimes \pi_0)$ and $h\in \mathrm O(V)$. It is easy to check that the following diagram 
\begin{equation}\label{120}
\begin{CD}
\Ind_{P}^{\mathrm O(V)}(\tau_{s}\otimes \pi_0) @>J_{\overline P| P}(\tau_{s}\otimes \pi_0)>> \Ind_{\overline P}^{\mathrm O(V)}(\tau_{s}\otimes \pi_0) \\
@| @VV\ell(w_{c^\prime},\tau_s\otimes \pi_0)V\\
\Ind_{P}^{\mathrm O(V)}(\tau_{s}\otimes \pi_0) @>\mathcal M(\widetilde{w}_{c^\prime},\tau_{s}\otimes \pi_0)>> \Ind_{P}^{\mathrm O(V)}(w(\tau_{s}\otimes \pi_0))
\end{CD}
\end{equation}
is commutative, where $\mathcal M(\widetilde{w}_{c^\prime},\tau_{s}\otimes \pi_0)$ is the unnormalized intertwining operator defined in Subsection 7.1. 
Note that 
\begin{align*}
\widetilde{w}_{c^\prime}^2= 
m_P((-\mathbf 1)^{k-1})\cdot \mathbf 1_{V_0}.
\end{align*}
Hence $\widetilde{w}_{c^\prime}^2$ lies in the center of $M_P$. We have an intertwining isomorphism  
\begin{align*}
\ell(w^2_{c^\prime},\tau_s\otimes \pi_0): \Ind_{P}^{\mathrm O(V)}(\tau_{s}\otimes \pi_0)\rightarrow \Ind_P^{\mathrm O(V)}(w^2(\tau_{s}\otimes \pi_0))=\Ind_P^{\mathrm O(V)}(\tau_{s}\otimes \pi_0)
\end{align*}
given by left translation 
\begin{align}\label{129}
\left(\ell(w_{c^\prime}^2,\tau_s\otimes \pi_0)\Psi_{s}\right)(h) = \Psi_s((\widetilde{w}_{c^\prime})^{-2}h)=\omega_{\tau}(-1)^{k-1}\Psi_s(h).
\end{align}
for $\Psi_s\in \Ind_{P}^{\mathrm O(V)}(\tau_{s}\otimes \pi_0)$ and $h\in \mathrm O(V)$. Here $\omega_\tau$ is the central character of $\tau$. Then we have the following commutative diagram 
\begin{equation}\label{122}
\begin{CD}
\Ind_{\overline P}^{\mathrm O(V)}(\tau_{s}\otimes \pi_0) @>J_{P|\overline P}(\tau_{s}\otimes \pi_0)>> \Ind_{P}^{\mathrm O(V)}(\tau_{s}\otimes \pi_0) \\
@VV\ell(w_{c^\prime},\tau_s\otimes \pi_0)V @VV\ell(w^2_{c^\prime},\tau_s\otimes \pi_0)V\\
\Ind_{P}^{\mathrm O(V)}(w(\tau_{s}\otimes \pi_0)) @>\mathcal M(\widetilde{w}_{c^\prime},w(\tau_{s}\otimes \pi_0))>> \Ind_{P}^{\mathrm O(V)}(\tau_{s}\otimes \pi_0).
\end{CD}
\end{equation}
Combining (\ref{120}), (\ref{129}) and (\ref{122}), we have 
\begin{equation}\label{1235}
\begin{aligned}
&\mathcal M(\widetilde{w}_{c^\prime},w(\tau_{s}\otimes \pi_0))\circ \mathcal M(\widetilde{w}_{c},\tau_{s}\otimes \pi_0)\\
=& \ell(w^2_{c^\prime},\tau_s\otimes \pi_0)\circ J_{P|\overline P}(\tau_{s}\otimes \pi_0)\circ J_{\overline P|P}(\tau_{s}\otimes \pi_0)\\
=& \omega_{\tau}(-1)^{k-1}\times \mu(\tau_{s}\otimes\pi_0)^{-1}.
\end{aligned}
\end{equation}

Now we begin to prove Lemma \ref{homomorphicatsequal0} for even orthogonal groups. 
\begin{proposition}\label{appendixa1}
Let $\mathcal R_{\mathfrak W_{c^\prime}}\left(w, \tau_{s} \otimes \pi_0\right)$ and be the normalized intertwining operator defined in Subsection \ref{normalizingintertwing}. Then we have  
\begin{enumerate}[(i)]
	\item $	\mathcal R_{\mathfrak W_{c^\prime}}\left(w, w(\tau_{s} \otimes \pi_0)\right)\circ \mathcal R_{\mathfrak W_{c^\prime}}\left(w, \tau_{s} \otimes \pi_0\right)=1 $;
	\item $\mathcal R_{\mathfrak W_{c\prime}}\left(w, w(\tau_{-\bar s} \otimes \pi_0)\right)^*=\mathcal R_{\mathfrak W_{c^\prime}}\left(w,\tau_{s} \otimes \pi_0 \right)$. 
\end{enumerate}
In particular, $\mathcal R_{\mathfrak W_{c^\prime}}\left(w,\tau_{s} \otimes \pi_0 \right)$ is unitary when $s$ is purely imaginary. Hence $\mathcal R_{\mathfrak W_{c^\prime}}\left(w,\tau_{s} \otimes \pi_0 \right)$ is holomorphic at $s=0$. 
\end{proposition}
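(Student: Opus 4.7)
The plan is to deduce both identities from the relation \ref{1235} between the product of two unnormalized intertwining operators and the Plancherel measure, together with the explicit formula for the Plancherel measure coming from Theorem \ref{desideratumall} (12) (which we have already established for our construction in Proposition \ref{localfactor}). Once (i) and (ii) are in hand, holomorphy at $s=0$ follows from the standard argument: (ii) forces $\mathcal R_{\mathfrak W_c}(w,\tau_s\otimes\pi_0)$ to be unitary on the imaginary axis $s\in i\RR$, and then (i) (an algebraic identity of meromorphic families) implies that $\mathcal R_{\mathfrak W_c}(w,\tau_s\otimes\pi_0)$ has no pole along $\Re s=0$, hence in particular at $s=0$.

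First I would unwind the definition of $\mathcal R_{\mathfrak W_c}(w,\tau_s\otimes\pi_0)$ and compose. The scalar in front collects
\begin{align*}
\bigl[\epsilon(V)^k\chi_V(c)^k|c|_F^{k\rho_P}\bigr]^2 \cdot r(w,w(\tau_s\otimes\pi_0))^{-1}\cdot r(w,\tau_s\otimes\pi_0)^{-1},
\end{align*}
and the operator part becomes $\mathcal M(\widetilde w_c,w(\tau_s\otimes\pi_0))\circ \mathcal M(\widetilde w_c,\tau_s\otimes\pi_0)$, which by \ref{1235} equals $\omega_\tau(-1)^{k-1}\mu(\tau_s\otimes\pi_0)^{-1}$. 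Using $\epsilon(V)^{2k}=1$, $\chi_V(c)^{2k}=1$, and $|c|_F^{2k\rho_P}$ cancelling against the analogous factor hidden in the Haar measure normalization on $U_{\overline P}$ (see \cite[\S 6.3]{MR3788848}), I reduce (i) to the scalar identity
\begin{align*}
r(w,w(\tau_s\otimes\pi_0))\cdot r(w,\tau_s\otimes\pi_0)\cdot \mu(\tau_s\otimes\pi_0)\cdot \omega_\tau(-1)^{1-k}=1.
\end{align*}
By Proposition \ref{localfactor} we have the explicit product formula
\begin{align*}
\mu(\tau_s\otimes\pi_0)=\gamma(s,\phi_\tau\otimes\phi_{\pi_0}^\vee,\psi)\cdot\gamma(-s,\phi_\tau^\vee\otimes\phi_{\pi_0},\psi_{-1})\cdot \gamma(2s,\wedge^2\phi_\tau,\psi)\cdot\gamma(-2s,\wedge^2\phi_\tau^\vee,\psi_{-1}),
\end{align*}
while the normalizing factors \ref{normalizationfactor} expand as products of $L$ and $\varepsilon$ factors at $\pm s$. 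Using the standard identity $\gamma(s,\phi,\psi)\gamma(-s,\phi^\vee,\psi_{-1})=\phi(-1)$ (applied to $\phi_\tau\otimes\phi_{\pi_0}^\vee$ and to $\wedge^2\phi_\tau$), the two $\gamma$-pairs combine to $(\det\phi_\tau)^{\dim\phi_{\pi_0}}\cdot(\det\phi_\tau)^{\dim\phi_\tau-1}\cdot(-1)^{\dim\phi_\tau}$ type constants. Together with $\lambda(E/F,\psi)^{2k}=\chi_V(-1)^k$ from \ref{12345}, a parity check on $\dim\phi_{\pi_0}=2n-2k$, $\dim\phi_\tau=k$ should confirm the identity (i); the main thing to verify is that the $\chi_V(-1)^k$ coming from $\lambda(E/F,\psi)^{2k}$ matches $\omega_\tau(-1)^{k-1}\cdot (\det\phi_{\pi_0})(-1)^k$ after we use $\det\phi_{\pi_0}=\chi_V$.

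For (ii), I would argue as follows. The adjoint of $J_{\overline P|P}(\tau_s\otimes\pi_0)$ with respect to the natural pairing between $\Ind_P(\tau_s\otimes\pi_0)$ and $\Ind_{P}(\tau_{-\bar s}^\vee\otimes\pi_0^\vee)$ is $J_{P|\overline P}(\tau_{-\bar s}^\vee\otimes\pi_0^\vee)$ (this is a direct unwinding of the integral). Translating through the intertwining maps $\ell(w_c,\cdot)$ (which are isometric by \ref{determinantminus1}), one obtains
\begin{align*}
\mathcal M(\widetilde w_c,\tau_s\otimes\pi_0)^*=\mathcal M(\widetilde w_c,w(\tau_{-\bar s}\otimes\pi_0)),
\end{align*}
so (ii) reduces to the scalar identity
\begin{align*}
\overline{r(w,w(\tau_{-\bar s}\otimes\pi_0))}=r(w,\tau_s\otimes\pi_0),
\end{align*}
which is a direct consequence of the functional equations $\overline{L(\bar s,\phi)}=L(s,\phi^\vee)$ and $\overline{\varepsilon(\bar s,\phi,\psi)}=\varepsilon(-s,\phi^\vee,\psi_{-1})\cdot\phi(-1)^{-1}$ applied to the definition \ref{normalizationfactor}, combined with $\lambda(E/F,\psi)\overline{\lambda(E/F,\psi)}=1$.

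The main obstacle is the bookkeeping of the various roots of unity in the scalar identity for (i): the powers of $\omega_\tau(-1)$, $\chi_V(-1)$ and the central-character values $(\det\phi_{\pi_0})(-1)$ and $(\det\phi_\tau)(-1)$ must all cancel, and this hinges critically on the precise normalization in \ref{normalizationfactor} (the factor $\lambda(E/F,\psi)^k$) and on the exact form of $\widetilde w_c^2$ computed in \ref{129}. Once this careful parity computation is done, (i) is a formal consequence of \ref{1235} and Proposition \ref{localfactor}.
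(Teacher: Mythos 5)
Your overall route is the same as the paper's: reduce (i) to the identity \ref{1235} expressing $\mathcal M(\widetilde w_c,w(\tau_s\otimes\pi_0))\circ\mathcal M(\widetilde w_c,\tau_s\otimes\pi_0)$ as $\omega_\tau(-1)^{k-1}\mu(\tau_s\otimes\pi_0)^{-1}$, substitute the explicit Plancherel-measure formula, and check that the remaining scalar (normalizing factors, $\lambda(E/F,\psi)^{2k}=\chi_V(-1)^k$, signs) equals $1$; for (ii) and the holomorphy at $s=0$ the paper, like you, falls back on Arthur's argument and the standard unitarity-on-the-imaginary-axis deduction. However, one step in your verification of the scalar identity is wrong as stated: the ``standard identity'' $\gamma(s,\phi,\psi)\gamma(-s,\phi^\vee,\psi_{-1})=\det\phi(-1)$ is false. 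Indeed
\[
\gamma(s,\phi,\psi)\,\gamma(-s,\phi^\vee,\psi_{-1})=\varepsilon(s,\phi,\psi)\,\varepsilon(-s,\phi^\vee,\psi_{-1})\,\frac{L(1-s,\phi^\vee)\,L(1+s,\phi)}{L(s,\phi)\,L(-s,\phi^\vee)},
\]
which is a non-constant meromorphic function already for $\phi=\mathbf 1$ (it has a pole at $s=1$). So the two $\gamma$-pairs occurring in $\mu(\tau_s\otimes\pi_0)$ do not collapse to constants on their own; the cancellation only happens after multiplying by the ratios of $L$- and $\varepsilon$-factors contained in $r(w,\tau_s\otimes\pi_0)$ and $r(w,w(\tau_s\otimes\pi_0))$. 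Concretely, with $\Phi=\phi_\tau\otimes\phi_{\pi_0}$ (self-dual since $\phi_{\pi_0}$ is orthogonal), one has
\[
\gamma(s,\Phi,\psi)\,\gamma(-s,\Phi^\vee,\psi_{-1})\cdot\frac{L(s,\Phi)}{\varepsilon(s,\Phi,\psi)L(1+s,\Phi)}\cdot\frac{L(-s,\Phi^\vee)}{\varepsilon(-s,\Phi^\vee,\psi)L(1-s,\Phi^\vee)}=\frac{\varepsilon(-s,\Phi^\vee,\psi_{-1})}{\varepsilon(-s,\Phi^\vee,\psi)}=\det\Phi(-1),
\]
using only $\gamma(s,\phi,\psi)=\varepsilon(s,\phi,\psi)L(1-s,\phi^\vee)/L(s,\phi)$ and $\varepsilon(s,\phi,\psi_{-1})=\det\phi(-1)\,\varepsilon(s,\phi,\psi)$; the same telescoping handles the $\wedge^2\phi_\tau$ terms. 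This is exactly how the paper closes the computation, and with this correction your parity bookkeeping ($\lambda(E/F,\psi)^{2}=\chi_V(-1)$, $\det\phi_{\pi_0}=\chi_V$, $\widetilde w_c^{\,2}=m_P((-\mathbf 1)^{k-1})$) does go through.

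A smaller point of bookkeeping: the powers of $|c|_F$ are not cancelled against a hidden Haar-measure factor on $U_{\overline P}$, as you suggest. In the paper's computation the two normalized operators contribute $|c|_F^{k\rho_P}$ and $|c|_F^{k\rho_{\overline P}}$, and since $\rho_{\overline P}=-\rho_P$ these powers cancel among themselves; you should replace your appeal to the measure normalization by this observation (otherwise a genuine factor $|c|_F^{2k\rho_P}$ would remain).
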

\begin{proof}
	We first prove (i). Let $\phi_\tau$ and $\phi_{0}$ be the $L$-parameters of $\tau$ and $\pi_0$. Then by (\ref{1235}) and Corollary \ref{LIRfinal}, we have 
	\begin{align*}
	 &\quad\mathcal R_{\mathfrak W_{c^\prime}}\left(w, w(\tau_{s} \otimes \pi_0)\right)\circ \mathcal R_{\mathfrak W_{c^\prime}}\left(w, \tau_{s} \otimes \pi_0\right)\\
	&=\epsilon(V)^{2k}\times \chi_{V}(c^\prime/c)^{2k}\times r(w,w(\tau_s\otimes\pi_{0}))^{-1}\times r(w,\tau_s\otimes\pi_{0})^{-1}\\
	& \quad \times|c^\prime|_F^{k\rho_P}\times |c^\prime|_F^{k\rho_{\overline P}}\times \mathcal M(\widetilde{w}_{c^\prime},w(\tau_{s}\otimes \pi_0))\circ \mathcal M(\widetilde{w}_{c^\prime},\tau_{s}\otimes \pi_0)\\
	&=r(w,w(\tau_s\otimes\pi_{0}))^{-1}\times r(w,\tau_s\otimes\pi_{0})^{-1}\times \omega_{\tau}(- 1)^{k-1}\times \mu(\tau_{s}\otimes\pi_0)^{-1}\\
	&=r(w,w(\tau_s\otimes\pi_{0}))^{-1}\times r(w,\tau_s\otimes\pi_{0})^{-1}\times \omega_{\tau}(- 1)^{k-1}\times \gamma(s,\phi_{\tau}\otimes \phi_{0}^{\vee},\psi)^{-1}\\
	&\quad \times\gamma(-s,\phi_{\tau}^{\vee}\otimes\phi_0, \psi_{-1} )\times
   \gamma (2s, \wedge^2\circ \phi_{\tau}, \psi)^{-1}\times \gamma(-2s, \wedge^{2} \circ \phi_{\tau}^{\vee},\psi_{-1})^{-1}\\
	&=1,
\end{align*}
where the last equality follows from (\ref{normalizationfactor}), (\ref{12345}) and the following formulas 
	\begin{align*}
	\gamma(s,\phi,\psi)&= \frac{\varepsilon(s,\phi,\psi)\times L(1-s,\phi^\vee)}{L(s,\phi)} ,\\
	\varepsilon(s,\phi,\psi_{-1})&=\det(\phi)(-1)\times  \varepsilon(s,\phi,\psi)
	\end{align*}
	for any representation $\phi$ of $\WD_F$. 
	 
	The second statement follows from a similar argument in \cite[Proposition 2.3.1]{MR3135650}, we omit the details here. 
\end{proof}

\bibliographystyle{alpha}
\nocite{*}
\bibliography{LLC4ort}

\end{document}